\documentclass[11pt]{amsart}

\usepackage[T1]{fontenc}
\usepackage[utf8]{inputenc}
\usepackage[english]{babel}
\usepackage{lmodern}
\usepackage{microtype}
\usepackage{amsmath,amssymb,amsthm,mathtools,bm}
\usepackage[shortlabels]{enumitem}
\usepackage[hidelinks]{hyperref}
\usepackage[margin=1.05in]{geometry}
\usepackage{graphicx}
\usepackage{float}
\graphicspath{{figures/}}
\hypersetup{
  pdftitle={Laplace Asymptotics near Stratified Minimum Sets},
  pdfauthor={Pavel Ievlev},
  pdfsubject={Laplace Asymptotics near Stratified Minimum Sets},
  pdfkeywords={Laplace method, Riemannian manifold, stratified set,
    normal-cell decomposition, tubular neighborhood, Weibullian chaos}
}

\allowdisplaybreaks
\numberwithin{equation}{section}

\newtheorem{theorem}{Theorem}[section]
\newtheorem{proposition}[theorem]{Proposition}
\newtheorem{lemma}[theorem]{Lemma}
\newtheorem{corollary}[theorem]{Corollary}
\theoremstyle{definition}
\newtheorem{definition}[theorem]{Definition}
\newtheorem{assumption}[theorem]{Assumption}
\newtheorem{example}[theorem]{Example}
\theoremstyle{remark}
\newtheorem{remark}[theorem]{Remark}

\DeclareMathOperator{\dist}{dist}
\DeclareMathOperator{\Sym}{Sym}
\DeclareMathOperator{\diag}{diag}
\DeclareMathOperator{\erf}{erf}
\DeclareMathOperator{\erfc}{erfc}
\DeclareMathOperator{\rank}{rank}
\DeclareMathOperator{\relint}{relint}
\newcommand{\R}{\mathbb{R}}
\newcommand{\C}{\mathbb{C}}
\newcommand{\ind}{\mathbf{1}}
\newcommand{\dd}{\,\mathrm{d}}
\newcommand{\dvol}{\,\mathrm{dvol}}
\newcommand{\vol}{\operatorname{vol}}
\newcommand{\eps}{\varepsilon}
\newcommand{\cA}{\mathcal{A}}
\newcommand{\cE}{\mathcal{E}}
\newcommand{\cT}{\mathcal{T}}
\newcommand{\cD}{\mathcal{D}}
\newcommand{\fD}{\mathfrak{D}}
\newcommand{\cI}{\mathcal{I}}
\newcommand{\Afib}{\mathsf{A}}
\newcommand{\cL}{\mathcal{L}}
\newcommand{\inner}[2]{\left\langle #1,#2\right\rangle}

\title[Laplace Asymptotics near Stratified Minimum Sets]{Laplace Asymptotics near Stratified Minimum Sets}
\author{Pavel Ievlev}
\address{Department of Actuarial Science, University of Lausanne,\\
UNIL-Dorigny, 1015 Lausanne, Switzerland}
\email{ievlev.pn@gmail.com}
\subjclass[2020]{41A60, 53B20, 28A75}
\keywords{Laplace method, Riemannian manifold, stratified set,
normal-cell decomposition, tubular neighborhood, anisotropic homogeneity,
Bayesian inverse kinematics,
positive-semidefinite cone, Weibullian chaos}
\date{}

\begin{document}

\begin{abstract}
We develop a real Laplace method for integrals
\begin{equation*}
  I(z)=\int_N a(x)e^{-z f(x)}\dvol_g(x),\qquad z\to\infty,
\end{equation*}
near a compact, possibly stratified, minimum set of the phase.  The basic input
is an adapted normal-cell disintegration, defined up to null sets, together with
piecewise anisotropic dilations and homogeneous limiting models.  The framework
simultaneously permits finite $C^2$ stratifications; Borel base refinements and
$C^1$ adapted charts; measurable base-dependent cells, relative domains, and
incident-stratum assignments; different fiber dimensions and weights on
different pieces; a continuous phase and a merely measurable amplitude; and
limiting models controlled either by ambient coercivity or directly by
cellwise exponential tightness.  The cells may converge only in weighted
measure, or pointwise under an integrable base majorant.  To the best of our
knowledge, no existing theorem for real Laplace integrals accommodates this
full combination of geometric and analytic assumptions.  The tradeoff is that
the adapted disintegration and limiting models are supplied hypotheses to be
verified in each application.

On each piece, the coefficient is obtained by integrating the limiting profile
and amplitude over the limiting cell and then over the base.  We prove a
normalized-profile leading theorem under cellwise tightness, a pointwise-cell
theorem with an integrable base majorant, and an additive global comparison
theorem that remains valid when leading coefficients vanish or cancel.  Joint
frontier asymptotics follow by rescaling frontier and normal variables together.
Finite expansions are obtained by representing each scaled cell on a fixed
reference cell and expanding the complete transported density.  The framework
recovers the Morse--Bott formula in the full quadratic normal case and gives
conic, Euclidean, and radial specializations.

The examples show why lower-dimensional and boundary strata cannot be ignored.
For a planar three-link Bayesian inverse-kinematics posterior, a smooth curve of
exact configurations and an isolated joint-limit fold contribute at the same
leading order.  A rank-deficient positive-semidefinite posterior yields the
Gaussian solid-angle factor.  For homogeneous Weibullian chaos, exact radial
integration produces an incomplete-gamma profile and leads to interior-zero and
fractional boundary-frontier tail laws beyond the positive-rate Gaussian-chaos
regime.
\end{abstract} 
\maketitle

\section{Introduction}

\subsection*{Purpose and point of view}

Real Laplace asymptotics are selected by exponential decay away from the global
minimum set.  In the classical smooth nondegenerate case this localization has a
universal local model.  If $x_0$ is an interior minimum and the Hessian of $f$ at
$x_0$ is positive definite, then
\begin{equation}\label{eq:intro-quadratic-model}
  f(x_0+v)-f(x_0)=\frac12\inner{Hv}{v}+o(|v|^2),
\end{equation}
and the dominant scale is $v=z^{-1/2}u$.  For a Morse--Bott minimum manifold,
the same quadratic law holds in the normal directions.  The Gaussian constants
and Hessian determinants in the usual formula come from this canonical
quadratic tangent model.

Outside this class there is no comparable universal normal form.  A one-sided
well, a corner, a conic constraint, a finite-type flat direction, or a nonsmooth
phase may have leading decay of order different from one or two, with different
weights in different directions.  Without a structural assumption, even a
one-dimensional continuous nonnegative function can be chosen so that its
sublevel distribution produces polynomial, logarithmic or stretched-exponential, 
or other irregular asymptotics, and may even fail to possess a single
asymptotic equivalent.  Thus a nonsmooth or nonquadratic Laplace theorem must
either assume sublevel asymptotics directly or provide a geometric mechanism
for computing them.

The invariant scalar object is the sublevel distribution.  If $h=f-f_0\ge0$ and
$\nu$ is a positive measure, set
\begin{equation*}
  F_\nu(t)=\nu\{x:h(x)\le t\}.
\end{equation*}
Then $\int e^{-zh}\dd\nu$ is the Laplace--Stieltjes transform of the pushforward
measure $h_\#\nu$.  For example, if $F_\nu(t)\sim C t^\alpha L(t)$ with $\alpha\ge0$
and $L$ slowly varying, Tauberian theory for regularly varying functions
(see~\cite{BinghamGoldieTeugels}) gives
\begin{equation*}
  \int e^{-zh}\dd\nu
  \sim C\Gamma(\alpha+1)z^{-\alpha}L(1/z)
  \qquad (z\to\infty).
\end{equation*}
This formulation is coordinate-free and sharp,
but it is often not constructive: it does not by itself identify which sector,
stratum, boundary face, or normal cone creates the coefficient.  The present
paper develops a complementary constructive calculus.  Once an adapted
almost-everywhere disjoint normal-cell disintegration and a weighted limiting
model have been supplied, the relevant sublevel and Laplace coefficients are
computed from the corresponding blow-up.

\subsection*{The normal-cell calculus}

Let
\begin{equation}\label{eq:intro-main-integral}
  I(z)=\int_N a(x)e^{-z f(x)}\dvol_g(x),\qquad z\to\infty,
\end{equation}
where $(N,g)$ is a Riemannian manifold and the compact minimum set
$M=\{f=f_0\}$ may be stratified.  The geometric data consist of finitely many
base pieces $P_\alpha\subset M$, coordinate maps
\begin{equation*}
  (p,s)\longmapsto F_\alpha(p,s),\qquad p\in P_\alpha,
\end{equation*}
and admissible normal cells $\Afib_\alpha(p)$ in the fiber variable $s$.
The cells are chosen so that the resulting images cover a neighborhood of
$M$, up to null sets, without double counting.  A base variable $p$ records
where the nearby point is assigned on the minimum set; the fiber variable $s$
records its displacement inside the chosen admissible cell.

In this paper ``normal'' means such an adapted fiber coordinate, not
necessarily a full normal vector space.  In smooth Fermi coordinates the
cells may be polar cones or full normal spaces, but the abstract theory also
allows relative domains, boundary half-spaces, convex normal fans,
nearest-point-type assignments, and cells that depend on the base point.  This
freedom is essential when several strata or boundary faces meet: the pair
$(P_\alpha,\Afib_\alpha(p))$ specifies how nearby mass is assigned before any
asymptotic comparison is made.

\begin{figure}[!htb]
\centering
\includegraphics[width=0.65\linewidth]{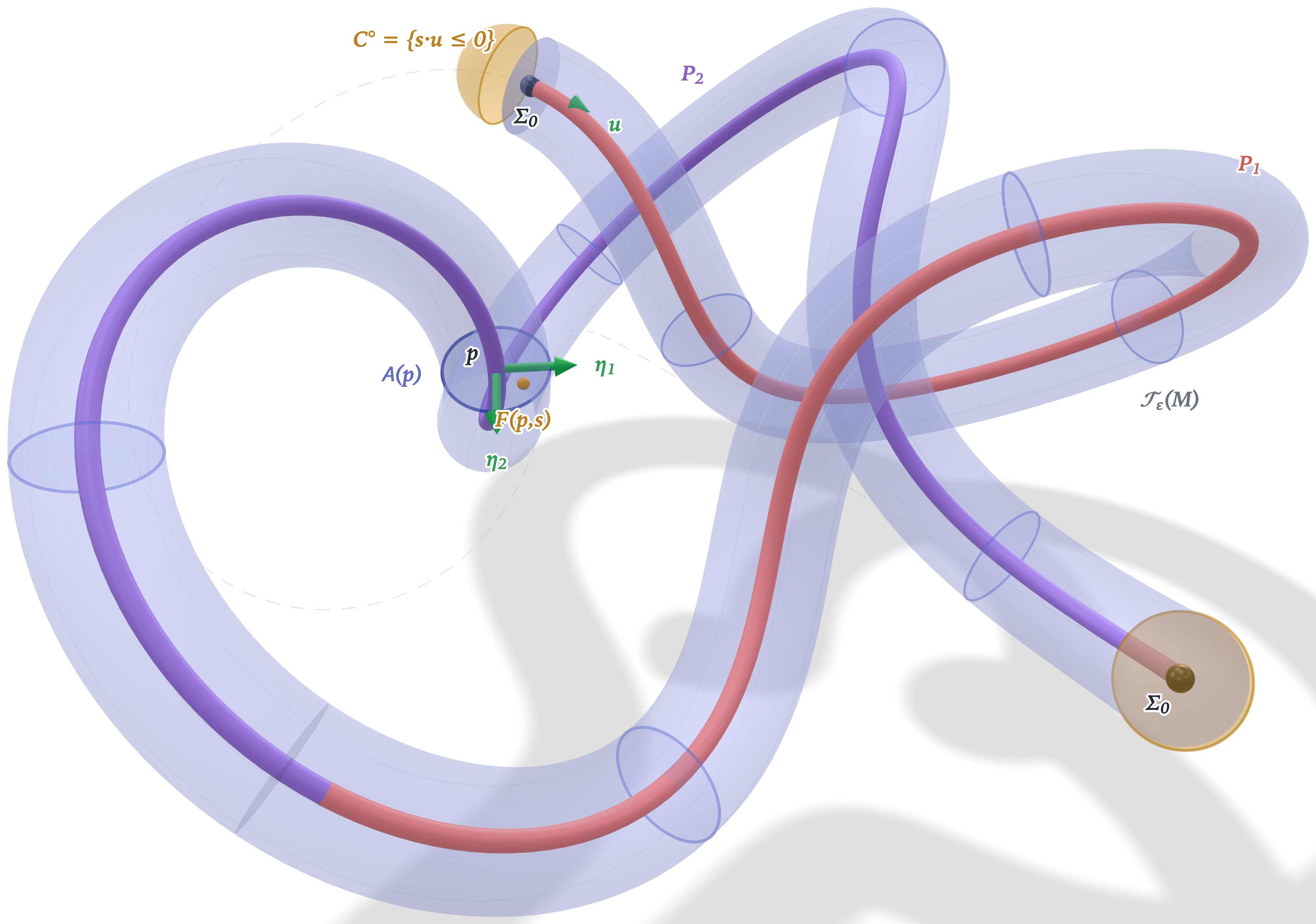}
\caption{The geometric data of the normal-cell calculus for a knotted curve
$M\subset\R^3$ with two endpoints.  The curve is split into base pieces $P_1$
and $P_2$, and the endpoints form a $0$-dimensional stratum $\Sigma_0$.  Over
an interior base point $p$, the coordinate map $F(p,s)$ carried by the normal
frame $\eta_1,\eta_2$ fills the disk cell $\Afib(p)$; over each endpoint the
admissible cell is the polar half-ball $C^\circ=\{s:\inner{s}{u}\le0\}$
opposite the inward tangent $u$.  Together the pieces cover the tube
$\cT_\eps(M)$ exactly once.}
\label{fig:adaptedcells}
\end{figure}

On each piece one chooses anisotropic weights
$\bm\beta=(\beta_1,\ldots,\beta_d)$.  For $u>0$, the associated dilation is
\begin{equation*}
  D_u^{\bm\beta}t
  :=(u^{\beta_1}t_1,\ldots,u^{\beta_d}t_d).
\end{equation*}
When a positive scale coefficient $c_\alpha(p)$ is present, the original
fiber variable is rescaled as
$s=D_{(zc_\alpha(p))^{-1}}^{\bm\beta}t$.  The leading hypotheses say that the
rescaled phase, effective amplitude, and admissible cells converge to limiting
data, with
\begin{equation*}
  z\bigl(f(F_\alpha(p,D_{(zc_\alpha(p))^{-1}}^{\bm\beta}t))-f_0\bigr)
  \longrightarrow G_\alpha(p,t),
\end{equation*}
where $G_\alpha$ is homogeneous with respect to the chosen weights.  In the
ordinary normal-cell regime it is coercive in the ambient fiber variables.  In
a joint frontier rescaling it may instead be integrable only on the limiting
admissible cell; Theorem~\ref{thm:profile-leading} then uses direct
cellwise tightness.  The local coefficient is obtained by integrating the limiting profile and
amplitude over the limiting cell and then over the base piece.  The cells are
part of the computational disintegration.  After the pieces have been summed,
the total at each exponent is a coefficient of the original integral.

The main results are organized around the ways in which this blow-up can
converge.  The profile theorem gives a uniform leading term for a normalized
kernel with a stable shifted limit.  It assumes local convergence together with
exponential tightness on the rescaled cells.  Ambient coercivity is a convenient
sufficient condition, but is not required.  This single theorem therefore
covers both ordinary normal-cell models and joint frontier rescalings, in which
a frontier variable is combined with the normal variables before
Theorem~\ref{thm:profile-leading} is applied.  When the normally rescaled
contribution of a higher stratum is not uniformly integrable near an incident
frontier, a joint collar--normal blow-up captures the mass that escapes toward
that frontier while remaining geometrically assigned to the higher-stratum
piece.  The pointwise-cell
theorem addresses a different loss of uniformity:
the base-dependent coefficient may be unbounded but integrable.  After the
leading terms are obtained piece by piece, the global theorem compares their
powers and adds the first surviving order.  This additive form is important for
signed or complex amplitudes and for cases in which leading coefficients vanish
or cancel.  Higher-order expansions are obtained by first representing the
rescaled cell on a fixed reference cell and then expanding the complete
transported density.  This includes moving-cell corrections when a suitable
straightening is available; weighted jet criteria make the resulting
coefficients explicit.

\subsection*{Relation to existing real-Laplace theory}

The comparisons in this paper are with real, nonoscillatory Laplace integrals.
Stationary-phase integrals have a different selection mechanism: oscillatory
cancellation, rather than positive exponential decay, determines the contributing
critical sets.  We use Morse--Bott terminology only for the real-Laplace case of
a smooth minimum manifold with positive normal Hessian.

We use Wong~\cite{Wong} and Breitung~\cite{Breitung} as standard
references for the classical Laplace method, including multivariate
probability integrals and boundary-sensitive forms.  Higher-order and geometric
refinements include Kirwin's treatment of isolated, possibly nonquadratic
minima~\cite{Kirwin}, Novoseltsev's generalized Laplace calculation for
deformed heat traces near smooth fixed-point submanifolds~\cite{Novoseltsev},
Ludewig's additive expansions for separated Morse--Bott minimum
manifolds~\cite{Ludewig}, and the more recent geometric formulation of
L\'eger--Vialard~\cite{LegerVialard}.  These works cover smooth
isolated-well and clean minimum-manifold regimes.  The present paper adds
several anisotropic weights, supplied almost-everywhere disjoint admissible
cells, relative domains, incident-stratum assignments, and joint frontier
rescalings.

At the level of normalized exponential measures, Hwang gives a classical
weak-convergence formulation of Laplace's method and identifies the limit under
smooth compact minimizer-manifold hypotheses~\cite{Hwang}.  More recently, De
Bortoli--Desolneux obtain quantitative $W_1$ convergence for norm-like
potentials under a generalized-Jacobian condition, using the coarea
formula~\cite{DeBortoliDesolneux}.  These works concern the limiting normalized
measure; the results below primarily compute unnormalized piecewise
coefficients, from which normalized posterior limits follow when the leading
total coefficient is nonzero.

For moving rare-event sets, Barbe develops asymptotic approximations for
integrals of probability densities over sets receding to infinity, with
applications including quadratic forms, suprema of random linear forms, and
random matrices~\cite{BarbeAsymptoticSets}.  This moving-set viewpoint is close
in spirit to the base--fiber geometry used here; the local framework below
further allows supplied a.e.-disjoint cells, relative or incident strata, and
anisotropic nonquadratic dilations.

For the homogeneous integration layer, Lasserre relates sublevel integrals of
positively homogeneous functions to non-Gaussian whole-space
integrals~\cite{Lasserre}, while Bui--Randles develop a generalized
polar-coordinate formula for continuous one-parameter groups of generally
anisotropic dilations~\cite{BuiRandles}.  Tubular and Fermi-coordinate geometry
are standard; we cite Gray and Federer for geometric background
only~\cite{GrayTubes,Federer}.  Error bounds for nondegenerate interior and
smooth-boundary Laplace approximations are developed by
\L{}api\'nski~\cite{Lapinski}; boundary and rank-deficient positive-semidefinite
Laplace asymptotics in quantum-state tomography are treated by
Six--Rouchon~\cite{SixRouchon}.  Recent perturbative higher-order estimates for
nondegenerate minima are given by
Fukuda--Kagaya--Ueda~\cite{FukudaKagayaUeda}.  Bell-polynomial formulas for
classical Laplace coefficients are reviewed by Nemes, together with earlier
recurrences~\cite{Nemes}; the recurrence below is a weighted graded adaptation.

The novelty claim is at the level of this \emph{combined} hypothesis package.
The assumptions are not linearly ordered with every alternative method:
resolution of singularities, for example, treats analytic singularities that
need not be represented by a single supplied homogeneous normal-cell model.
Along the geometric and analytic axes relevant to the present calculus,
however, the assumptions are substantially weaker and the admitted local
configurations substantially more general than in any individual real-Laplace
result cited above.  They simultaneously allow a compact minimum set with a
finite $C^2$ stratification; Borel base refinements, $C^1$ adapted charts, and
measurable base-dependent cells; relative domains and incident-stratum
assignments; different fiber dimensions and anisotropic weights on different
pieces; a continuous phase and merely measurable amplitude; homogeneous models
that need not be smooth or ambiently coercive; moving-cell convergence only in
weighted measure, or pointwise convergence with an integrable base majorant;
and joint rescaling of frontier and normal variables.  To the best of our
knowledge, no existing theorem for real Laplace integrals accommodates this
full combination of geometric and analytic assumptions.  The price for this
generality is explicit: the adapted normal-cell disintegration and the weighted
limiting model are supplied hypotheses to be verified in each application,
rather than consequences of a general stratification theorem.

Resolution-of-singularities methods form a different but important neighboring
approach for positive real Laplace integrals.  In singular learning theory they
produce powers and logarithmic multiplicities through real log canonical
thresholds; see Watanabe and Lin~\cite{WatanabeAGSLT,LinRLCT}.  The present
hypotheses concern a complementary regime in which a weighted principal model
is supplied directly and its tails are controlled either by ambient coercivity
or on the limiting admissible cells.  Geometrically degenerate local models also arise in sub-Riemannian heat-kernel
asymptotics; Neel--Sacchelli provide a recent comparison through the structure
of minimizing geodesics and the cut locus~\cite{NeelSacchelli}.

For the homogeneous Weibullian-chaos application, Barbe's asymptotic-set
framework is an earlier broad geometric treatment of Weibull-type tail
integrals, with applications to quadratic forms, random linear forms, and
random matrices~\cite{BarbeAsymptoticSets}.  Hashorva--Korshunov--Piterbarg
derive tail and density expansions for sufficiently smooth homogeneous
functionals of centered Gaussian vectors~\cite{HashorvaKorshunovPiterbarg}.
In the Gaussian setting the radial law is independent of direction, so angular
selection is governed by maximizers of the homogeneous functional.  The
direction-dependent radial degeneracies studied below are outside that setting.

Neither mechanism studied in Section~\ref{sec:weibull-frontier} can arise there.
Both are produced by letting the radial cost depend on direction and degenerate,
which the Gaussian and elliptic settings exclude by construction.  If the cost
vanishes on a set interior to $\{H>0\}$, the argument of the incomplete-gamma
function stays of order one on the dominant angular scale, the large-argument
expansion loses its uniformity, and the tail becomes polynomial rather than
exponentially small.  If the cost instead vanishes as the minimizing directions
approach $\partial\{H>0\}$, the inward coordinate contributes a one-sided scale
alongside the tangential Gaussian one.  The regularity used here is
correspondingly weaker: the angular data enter through measurability away from
the dominant set together with a continuous homogeneous limit model for the
effective cost near it, and derivatives are used only for the $C^2$ regularity
of $H$ and the $C^1$ collar that describe the frontier.

The inverse-kinematics example is positioned similarly.  Probabilistic and
Bayesian formulations of inverse kinematics, including sequential Monte Carlo
and biomechanical posterior inference, appear in
\cite{CourtyArnaudSMCIK,PatakyBayesianIK}.  Self-motion manifolds of redundant
manipulators are standard in the robotics literature~\cite{BurdickSelfMotion},
and planar $3R$ self-motion curves with joint limits were studied by
Lenar\v{c}i\v{c}~\cite{Lenarcic3R}.  What Section~\ref{sec:robotics-example}
contributes is the small-noise asymptotic splitting of this posterior: a regular
self-motion curve and an isolated one-sided joint-limit fold contribute to the
normalizing constant at the same power of the precision, so the limiting
posterior has both a continuous component and an atom.

\subsection*{Organization of the paper}

Section~\ref{sec:setup} gives the localization and normal-cell assumptions,
including relative domains and polyhedral normal fans.  Section~\ref{sec:leading}
proves the profile theorem under cellwise tightness, its coercive normal-cell
specialization, pointwise-cell limits, joint frontier applications, global
comparison, quantitative estimates, and vanishing amplitudes.
Section~\ref{sec:expansions} proves finite expansions after a general
scaled-cell representation and gives weighted jet criteria after cell
straightening.  Sections~\ref{sec:quadratic-models}--\ref{sec:radial} recover
smooth Morse--Bott, conic quadratic, Euclidean, and radial special cases; the
positive-semidefinite example appears in Section~\ref{sec:quadratic-models}.
Section~\ref{sec:geometric-scope} records the scope of the geometric hypotheses
and the excluded singular features.  Sections~\ref{sec:robotics-example} and
\ref{sec:weibull-frontier} contain the inverse-kinematics and Weibullian-chaos
applications.

\section{Geometric and analytic setup}\label{sec:setup}

Throughout, $(N,g)$ is a $C^2$ Riemannian manifold of dimension $n$ and
$\dvol_g$ denotes its Riemannian volume measure.  No orientability assumption is
needed.  Let $f:N\to\R$ be continuous and bounded below, let
$a:N\to\C$ be measurable, and write
\begin{equation}\label{eq:def-M}
  f_0:=\inf_N f,\qquad M:=\{x\in N:f(x)=f_0\}.
\end{equation}
We assume that $M$ is nonempty and compact.  For $z>0$, write
\begin{equation}\label{eq:standing-global-integral}
  I(z):=\int_N a(x)e^{-zf(x)}\dvol_g(x).
\end{equation}

We use the following notation throughout.  For $E\subset N$ and $\eps>0$, put
\begin{equation*}
  \cT_\eps(E):=\{x\in N:\dist_g(x,E)<\eps\}.
\end{equation*}
In a Euclidean space of the currently indicated dimension,
$B_r(x):=\{s:|s-x|<r\}$ and $B_r:=B_r(0)$.  We write $\R_+:=[0,\infty)$,
$\mathbb N:=\{1,2,\ldots\}$, and
$\mathbb N_0:=\{0,1,2,\ldots\}$.  For $x\in\R$,
$x_+:=\max\{x,0\}$ and $x_-:=\max\{-x,0\}$.  For a measurable set $E$,
$\ind_E$ denotes its indicator and $|E|$ its Lebesgue measure.  We write
$\mathcal H^k$ for $k$-dimensional Hausdorff measure and
$\vol_V(E)$ for the volume of $E$ in a specified Euclidean or Riemannian
space $V$; the subscript is omitted when the ambient space is clear.  If $P$
is a smooth base piece or submanifold, $\dvol_P$ denotes the Riemannian volume
measure induced by the ambient metric.  The notation $A\Subset B$ means that
$\overline A$ is compact and contained in the interior of $B$.  Euclidean
fiber integrals are taken with respect to Lebesgue measure unless stated
otherwise.

\subsection{Localization}

The first step is the standard positivity localization for real Laplace
integrals; compare Ludewig's localization lemma in the Morse--Bott setting and
classical accounts of Laplace's method~\cite{Ludewig,Wong}.

\begin{assumption}[Global integrability and separation]\label{ass:global}
There is $z_*>0$ such that
\begin{equation}\label{eq:A0}
  \int_N |a(x)|e^{-z_*f(x)}\dvol_g(x)<\infty.
\end{equation}
Moreover, for every sufficiently small tubular radius $\eps>0$ there is
$\delta_\eps>0$ such that
\begin{equation}\label{eq:A1}
  f(x)\ge f_0+\delta_\eps
  \qquad\text{for }x\in N\setminus \cT_\eps(M).
\end{equation}
\end{assumption}

\begin{lemma}[Exponential localization]\label{lem:localization}
Under Assumption~\ref{ass:global},
\begin{equation}\label{eq:localization}
  I(z)=I_\eps(z)+O\bigl(e^{-z(f_0+\delta_\eps)}\bigr),
  \qquad
  I_\eps(z):=\int_{\cT_\eps(M)}a(x)e^{-zf(x)}\dvol_g(x).
\end{equation}
The implicit constant is independent of $z\ge z_*$.
\end{lemma}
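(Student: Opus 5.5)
We split $I(z)=I_\eps(z)+R_\eps(z)$ with
\begin{equation*}
  R_\eps(z):=\int_{N\setminus\cT_\eps(M)}a(x)e^{-zf(x)}\dvol_g(x).
\end{equation*}
The claim is then $|R_\eps(z)|\le C_\eps e^{-z(f_0+\delta_\eps)}$ for all $z\ge z_*$. Both integrals are absolutely convergent for $z\ge z_*$, by the argument below and since $f\ge f_0$ on $\cT_\eps(M)$. So the decomposition is legitimate, and we may take $\eps$ small enough that \eqref{eq:A1} holds.

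The key step is to factor the exponential on the complement of the tube. For $x\in N\setminus\cT_\eps(M)$ and $z\ge z_*$ we write
\begin{equation*}
  e^{-zf(x)}=e^{-z_*f(x)}\,e^{-(z-z_*)f(x)}\le e^{-z_*f(x)}\,e^{-(z-z_*)(f_0+\delta_\eps)}.
\end{equation*}
The inequality uses \eqref{eq:A1} together with $z-z_*\ge0$. Integrating $|a|$ against this bound gives
\begin{equation*}
  |R_\eps(z)|\le e^{-(z-z_*)(f_0+\delta_\eps)}\int_{N\setminus\cT_\eps(M)}|a|e^{-z_*f}\dvol_g
  \le \Bigl(e^{z_*(f_0+\delta_\eps)}\int_N|a|e^{-z_*f}\dvol_g\Bigr)e^{-z(f_0+\delta_\eps)}.
\end{equation*}
The bracket is finite by \eqref{eq:A0} and is independent of $z$, which proves \eqref{eq:localization} with a constant uniform in $z\ge z_*$.

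No step is a real obstacle. The only point needing care is the sign of $z-z_*$ in the exponent, which is why the estimate is stated only for $z\ge z_*$. For $z\ge z_*$, finiteness of $I_\eps(z)$ follows from the same factorization, since $e^{-(z-z_*)f}\le e^{-(z-z_*)f_0}$ everywhere. Measurability is automatic: $f$ is continuous, $a$ is measurable, and $\cT_\eps(M)$ is open.
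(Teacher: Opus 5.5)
Your proof is correct and follows the paper's argument exactly: factor $e^{-zf}=e^{-z_*f}e^{-(z-z_*)f}$ on $N\setminus\cT_\eps(M)$, use \eqref{eq:A1} with $z\ge z_*$, and absorb $e^{z_*(f_0+\delta_\eps)}\int_N|a|e^{-z_*f}\dvol_g$, finite by \eqref{eq:A0}, into a $z$-independent constant. Your additional remarks on absolute convergence of $I_\eps(z)$ and on measurability are correct supplementary details that the paper leaves implicit.
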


\begin{proof}
For $z\ge z_*$, Assumption~\ref{ass:global} gives
\begin{align*}
 |I(z)-I_\eps(z)|
 &\le \int_{N\setminus\cT_\eps(M)}
       |a|e^{-z_*f}e^{-(z-z_*)f}\dvol_g \\
 &\le e^{-(z-z_*)(f_0+\delta_\eps)}
       \int_N|a|e^{-z_*f}\dvol_g.
\end{align*}
Absorbing the fixed factor $e^{z_*(f_0+\delta_\eps)}$ proves the claim.
\end{proof}

\subsection{Stratified adapted normal-cell coordinates}

Assume that $M$ is the disjoint union of finitely many $C^2$ embedded strata,
\begin{equation}\label{eq:stratification}
  M=\bigsqcup_{j=0}^{\ell}\Sigma_j,
\end{equation}
where $\Sigma_j$ is a possibly disconnected $j$-dimensional submanifold.  Empty
strata are allowed.  A Whitney stratification is a natural source of the smooth
pieces, but the proofs below use only the explicit normal-cell hypothesis that
follows.  We call strata \emph{incident} when their closures meet along a lower
dimensional stratum; this is the situation in which ordinary disjoint tubular
neighborhoods around the smooth pieces do not by themselves provide a usable
local decomposition.

The next hypothesis is formulated in supplied coordinate pieces and measurable
admissible cells, rather than in canonical polar cones.  In nonsmooth,
finite-type, relative-domain, or incident-stratum problems the useful local
coordinates may be adapted to a curved valley, an active constraint, a
singular-value chart, or a collar.  The familiar Fermi-coordinate construction,
with tangent cones and polar normal cells, is treated below as an important
special source of such charts; the general theorem requires only the stated
change-of-variables formula.

Let $\cA$ be a finite index set.  For each $\alpha\in\cA$ we are given:
\begin{itemize}[leftmargin=2.2em]
\item a Borel set $P_\alpha$ contained in a stratum of dimension $j_\alpha$,
  with finite $j_\alpha$-dimensional Riemannian volume;
\item an open neighborhood $U_\alpha$ of $P_\alpha$ inside that stratum;
\item an integer $d_\alpha:=n-j_\alpha$ and a $C^1$ map
  \begin{equation}\label{eq:F-alpha}
    F_\alpha:U_\alpha\times B_\eps\longrightarrow N,
    \qquad F_\alpha(p,0)=p;
  \end{equation}
\item a measurable family of admissible coordinate cells
  $\Afib_\alpha(p)\subset\R^{d_\alpha}$ such that
  \begin{equation}\label{eq:measurable-cell-graph}
    \{(p,s):p\in P_\alpha,\ s\in\Afib_\alpha(p)\}
  \end{equation}
  is measurable in the product $\sigma$-algebra.
\end{itemize}
Here $p$ is the \emph{base variable}, $s$ is the \emph{fiber} or
\emph{normal-cell variable}, and $\Afib_\alpha(p)$ is the admissible cell in
the local $s$-coordinates.  A cell is simply a measurable fiber set;
convexity or conicity is imposed only when explicitly stated.

The family $\{P_\alpha\}_{\alpha\in\cA}$ is a finite disjoint Borel
refinement of the stratification:
\[
  M=\bigsqcup_{\alpha\in\cA}P_\alpha,
  \qquad P_\alpha\subset \Sigma_{j_\alpha}.
\]
Thus the index $\alpha$ need not label a stratum.  A single smooth stratum may
be split into several $P_\alpha$'s in order to use different coordinate charts,
normal frames, relative-domain cells, or measurable tie-breaking rules; only in
the simplest case does one take $P_\alpha$ to be an entire stratum or connected
component.  When $d_\alpha=0$, we use the conventions $\R^0=\{0\}$,
$\Afib_\alpha(p)=\{0\}$, $F_\alpha(p,0)=p$, and
$J_\alpha(p,0)=1$.

For $d_\alpha>0$, put
\begin{equation}\label{eq:D-alpha}
  \fD_\alpha(\eps):=
  \{(p,s):p\in P_\alpha,\ s\in\Afib_\alpha(p)\cap B_\eps\}.
\end{equation}
For $d_\alpha=0$, put $\fD_\alpha(\eps)=P_\alpha\times\{0\}$.

\begin{assumption}[Stratified adapted normal-cell change of variables]\label{ass:tube}
After decreasing $\eps>0$ if necessary, the following hold.
\begin{enumerate}[(T1),leftmargin=3em]
\item Each $F_\alpha$ is defined and $C^1$ on $U_\alpha\times B_\eps$, and
  $F_\alpha(p,0)=p$.
\item The sets $F_\alpha(\fD_\alpha(\eps))$ are pairwise disjoint up to
  $\dvol_g$-null sets and cover $\cT_\eps(M)$ up to a $\dvol_g$-null set.
\item There is a measurable Jacobian
  $J_\alpha:\fD_\alpha(\eps)\to(0,\infty)$ such that for every nonnegative
  measurable $h$,
  \begin{equation}\label{eq:cov}
    \int_{F_\alpha(\fD_\alpha(\eps))}h(x)\dvol_g(x)
    =\int_{P_\alpha}\int_{\Afib_\alpha(p)\cap B_\eps}
      h(F_\alpha(p,s))J_\alpha(p,s)\dd s\dvol_{P_\alpha}(p).
  \end{equation}
\end{enumerate}
All suprema over an empty set are understood as zero.
\end{assumption}

\begin{figure}[!htb]
\centering
\includegraphics[width=0.62\linewidth]{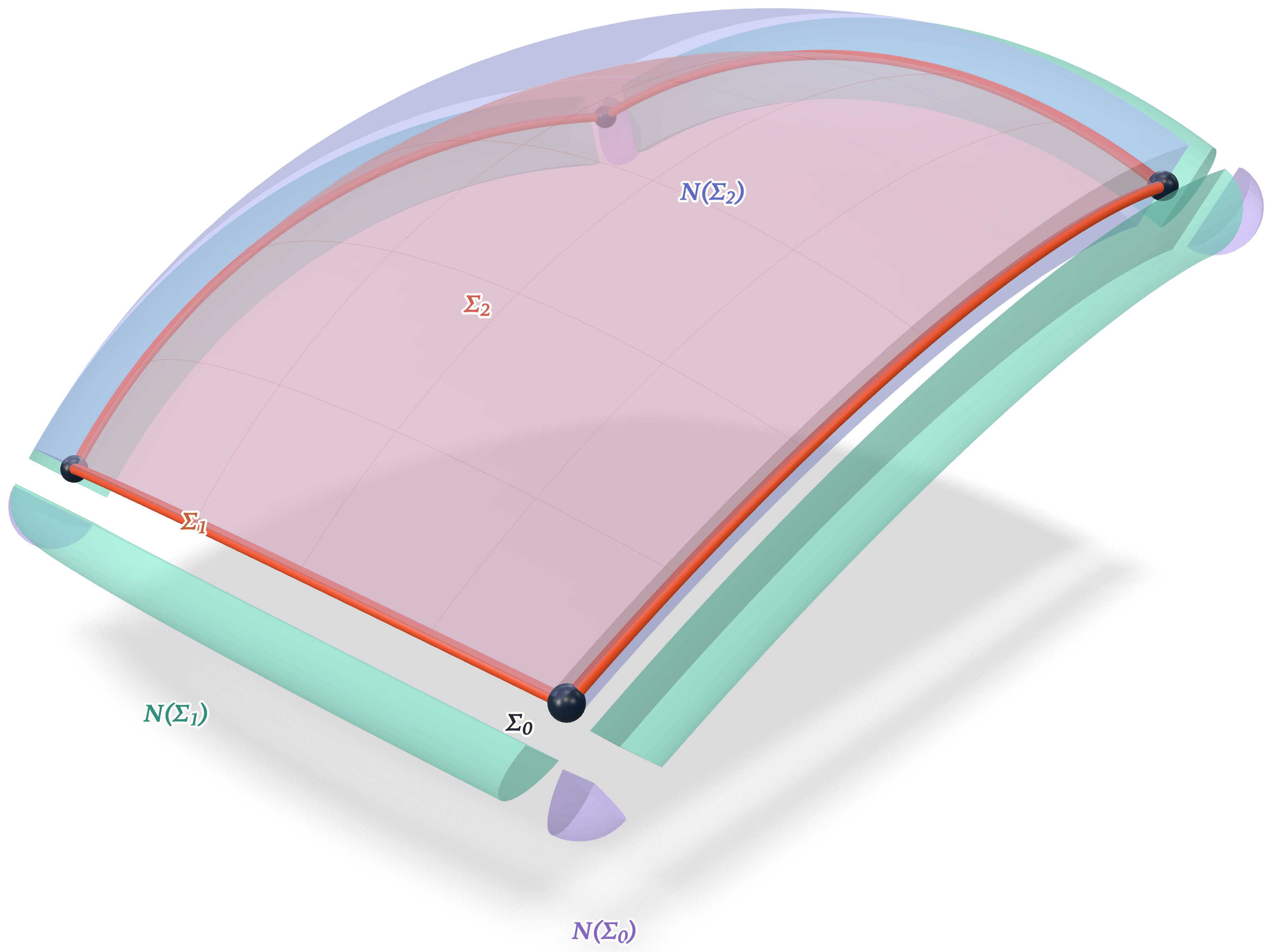}
\caption{The stratified normal-cell decomposition of Assumption~\ref{ass:tube}
for a curved patch $M\subset\R^3$ with corners, stratified as
$M=\Sigma_2\sqcup\Sigma_1\sqcup\Sigma_0$ (face, edge arcs, corner points).
Splitting the tube $\cT_\eps(M)$ by nearest stratum yields a two-sided full
slab $N(\Sigma_2)$ over the face, half-tubes $N(\Sigma_1)$ over the edges, and
quarter-balls $N(\Sigma_0)$ over the corners; the corresponding admissible
cells $\Afib_\alpha(p)$ are the polar cones of
Remark~\ref{rem:fermi-polar-cells}: the full normal line ($d_\alpha=1$, both
sides), a half-plane ($d_\alpha=2$), and a solid quarter-cone
($d_\alpha=3$).  The pieces are pulled slightly apart for visibility; they
meet only along shared null walls, so the tube is covered exactly once, as
required in (T2).}
\label{fig:corners}
\end{figure}

\begin{remark}[Interpretation of the normal-cell data]
\label{rem:normal-cell-interpretation}
The maps $F_\alpha$ and the cells $\Afib_\alpha(p)$ are part of the supplied
local description.  They may come from Fermi coordinates, from nonlinear
coordinates adapted to a finite-type valley or a constraint, or from a collar
near a frontier.  The asymptotic theorems below use only the
change-of-variables formula, the weighted limits of the cells, and the effective
amplitude
\[
  b_\alpha(p,s):=a(F_\alpha(p,s))J_\alpha(p,s).
\]
Thus the leading homogeneous expansion is imposed in the chosen adapted
coordinates.
\end{remark}

\begin{remark}[Fermi coordinates, tangent cones, and polar cells]
\label{rem:fermi-polar-cells}
Classical Fermi and tube-coordinate geometry provides one common source of
Assumption~\ref{ass:tube}~\cite{GrayTubes,Federer}.  Suppose that
$P_\alpha$ lies in a $C^2$ stratum and admits a $C^1$ orthonormal frame
$\eta_{\alpha,1},\ldots,\eta_{\alpha,d_\alpha}$ of the normal bundle of that
stratum.  The Fermi map is
\[
  F_\alpha(p,s)=\exp_p\left(\sum_{i=1}^{d_\alpha}s_i\eta_{\alpha,i}(p)\right).
\]
At $p\in M$, let $T_pM$ be the Bouligand tangent cone of the full minimum set
and define its polar cone by
\begin{equation}\label{eq:normal-cone}
  N_pM:=\{v\in T_pN:\inner{v}{w}_p\le0\text{ for every }w\in T_pM\}.
\end{equation}
When the nearest-point/Fermi construction is valid, an admissible coordinate
cell is the representation of this cone in the chosen normal frame,
\[
 C_\alpha(p):=\left\{s\in\R^{d_\alpha}:
      \sum_i s_i\eta_{\alpha,i}(p)\in N_pM\right\}.
\]
At a smooth interior point of the minimum set this gives the full normal space;
at a boundary, corner, or lower incident stratum it gives a proper cone or a
cone cut out by the surrounding pieces.  For an integral over a relative domain
$\Omega$, the same construction may be intersected with the inward condition
imposed by $\Omega$.  The abstract theorem does not require this Fermi-polar
construction: one may split a stratum into several chart pieces, assign an
overlap region to one piece by a measurable rule, restrict a polar cone to an
inward sector, or use nonlinear adapted coordinates in which the phase has its
homogeneous leading model.
\end{remark}

\begin{remark}[Jacobian normalization as a sufficient condition]
\label{rem:jacobian-normalization}
In many Fermi tubes with uniform Jacobian control, the coordinate Jacobian
itself is normalized at the zero section:
\begin{equation}\label{eq:J-uniform}
  \lim_{r\downarrow0}
  \sup_{\alpha\in\cA}\sup_{p\in P_\alpha}
  \sup_{s\in\Afib_\alpha(p),\,|s|\le r}
  |J_\alpha(p,s)-1|=0.
\end{equation}
This is a sufficient condition, not part of Assumption~\ref{ass:tube}.  The
asymptotic theorems use the Jacobian through the effective amplitude
$(a\circ F_\alpha)J_\alpha$, or through the corresponding factor in product,
collar, or singular-value coordinates.
\end{remark}

\begin{remark}[Relative-domain convention and boundary maxima]\label{rem:relative-domain}
Every result below has the following relative-domain form.  For an integral over
a measurable set $\Omega\subset N$, conditions (T2) and (T3) in
Assumption~\ref{ass:tube} are read with
$\Omega\cap\cT_\eps(M)$ in place of $\cT_\eps(M)$ and with the corresponding
inward cells $\Afib_\alpha(p)$.  The proofs are unchanged because they use only
the resulting change-of-variables formula, convergence of the effective
amplitude, and weighted limits of the cells.  Smooth-boundary Laplace formulas
and rank-deficient positive-semidefinite boundary examples provide standard
special cases~\cite{Lapinski,SixRouchon}.  A boundary maximum of a function $h$
is reduced to the present minimum convention by taking $f=h_{\max}-h$.
\end{remark}

The terminology and basic structure of normal fans are standard in polyhedral
convex geometry~\cite{LuRobinson}.

\begin{figure}[!htb]
\centering
\includegraphics[width=0.48\linewidth]{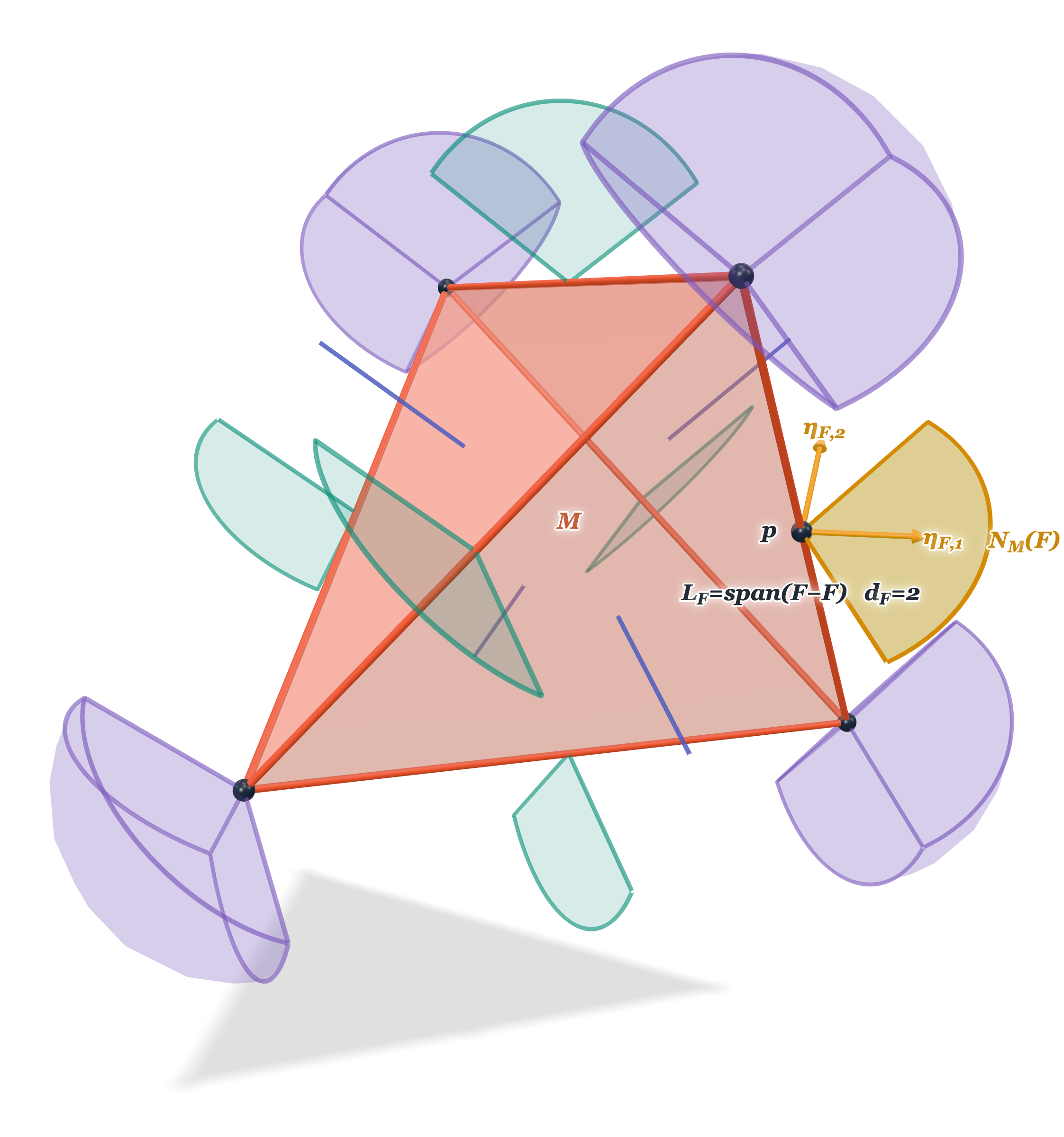}
\caption{The convex-polyhedral normal fan of
Proposition~\ref{prop:polyhedral-normal-fan} for a tetrahedron
$M\subset\R^3$: rays over the facets ($d=1$, blue), planar wedges over the
edges ($d=2$, green), solid cones over the vertices ($d=3$, violet).  The
highlighted face shows $p\in\relint F$, the frame $\eta_{F,i}$, and the
normal cone $N_M(F)$ (amber).  Translating every cone to a common origin
tiles $\R^n$ --- the normal fan.}
\label{fig:normalfan}
\end{figure}

\begin{proposition}[Convex-polyhedral normal fan]
\label{prop:polyhedral-normal-fan}
Let $N=\R^n$ and let $M\subset\R^n$ be a nonempty compact convex polyhedron,
possibly of dimension smaller than $n$.  Stratify $M$ by the relative interiors
$P_F:=\relint F$ of its nonempty faces.  For a face $F$, put
$L_F:=\operatorname{span}(F-F)$, choose a fixed orthonormal frame
$\eta_{F,1},\ldots,\eta_{F,n-\dim F}$ of $L_F^\perp$, and let
$C_F\subset\R^{n-\dim F}$ be the coordinate representation of the convex
normal cone
\[
 N_M(F):=\{v\in\R^n:\inner{v}{y-p}\le0\text{ for every }y\in M\},
 \qquad p\in\relint F.
\]
The cone is independent of the chosen $p\in\relint F$.  With
$\Afib_F(p):=C_F$,
\[
 F_F(p,s)=p+\sum_{i=1}^{n-\dim F}s_i\eta_{F,i},
 \qquad J_F\equiv1,
\]
Assumption~\ref{ass:tube} holds for every $\eps>0$.
\end{proposition}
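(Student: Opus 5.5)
The plan is to derive everything from the nearest-point projection $\pi_M:\R^n\to M$ onto the compact convex set $M$ and the standard normal-fan decomposition; see~\cite{LuRobinson}.

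\emph{Independence of the cone.} Fix $p,p'\in\relint F$ and $v\in N_M(F)$ computed at $p$. Since $p'$ lies in the relative interior, $p\pm t(p'-p)\in F$ for small $t>0$. The inequalities $\inner{v}{y-p}\le0$ at $y=p\pm t(p'-p)$ then give $\inner{v}{p'-p}=0$, so $\inner{v}{y-p'}=\inner{v}{y-p}\le0$ for all $y\in M$. The argument is symmetric in $p,p'$. The same computation shows $N_M(F)\subset L_F^\perp$, so $C_F$ is a well-defined closed convex cone in $\R^{n-\dim F}$, and $s\mapsto\sum_i s_i\eta_{F,i}$ is an isometry of $\R^{n-\dim F}$ onto $L_F^\perp$. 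The trivial items are then immediate. Each $P_F$ is relatively open in the affine hull of $F$, so it is a $C^2$ embedded stratum of dimension $\dim F$. It is Borel, has finite volume since it is bounded, and can be taken with $U_F=P_F$. The maps $F_F$ are affine, so (T1) holds and $F_F(p,0)=p$. Moreover $\fD_F(\eps)=P_F\times(C_F\cap B_\eps)$ is a product set, hence measurable.

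\emph{Disjointness and covering (T2).} For $x\in\R^n$ let $p=\pi_M(x)$. The variational inequality reads $x-p\in N_M(p)$, where $N_M(p)$ is the normal cone at $p$. Let $F$ be the unique face with $p\in\relint F$; by the previous step $N_M(p)=N_M(F)$. Hence $x=p+v$ with $v\in N_M(F)$ and $|v|=\dist(x,M)$. Conversely, if $x=p+v$ with $p\in\relint F$ and $v\in N_M(F)$, then $p=\pi_M(x)$ by the variational characterization, and $F$ is determined by $p$. So the map
\[
  \bigsqcup_F P_F\times C_F\to\R^n,\qquad (p,s)\mapsto p+\textstyle\sum_i s_i\eta_{F,i},
\]
is a bijection, and it restricts to a bijection from $\bigsqcup_F\fD_F(\eps)$ onto $\cT_\eps(M)$ because $|s|=\dist(x,M)$. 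The images are therefore exactly disjoint and cover the tube exactly, which is stronger than (T2). Here the faces range over the nonempty faces of $M$, including $M$ itself, so the strata partition $M$.

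\emph{Change of variables (T3).} On each piece the map $(p,s)\mapsto p+\sum_i s_i\eta_{F,i}$ is the restriction of an affine isometry between $\operatorname{aff}F\times\R^{n-\dim F}$, carrying the product of Hausdorff measure $\dvol_{P_F}$ and Lebesgue measure, and $\R^n$. This holds because $L_F\perp L_F^\perp$ and the frame is orthonormal. So its Jacobian is $1$, and \eqref{eq:cov} with $J_F\equiv1$ follows from the Euclidean change of variables via Fubini. Measurability of the image sets is automatic: each image is the image of a Borel set under an injective affine map, or alternatively it equals $\pi_M^{-1}(P_F)\cap\cT_\eps(M)$ with $\pi_M$ continuous. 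None of this depends on $\eps$, so the assumption holds for every $\eps>0$.

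The main obstacle is the bookkeeping in the second step. It requires that the relative interiors of nonempty faces partition $M$, which is standard for polyhedra, and that the normal cone at a point equals the face cone $N_M(F)$ exactly, not just up to null sets. Both facts follow from the face lattice and the first step. The analytic content is otherwise just the projection theorem for convex sets.
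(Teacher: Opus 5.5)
Your proposal is correct and follows the paper's proof. Both use the unique metric projection $\pi_M$ and the variational characterization placing $x-\pi_M(x)$ in $N_M(F)$ for the unique face $F$ with $\pi_M(x)\in\relint F$; this gives an exact bijection onto $\cT_\eps(M)$ with $|s|=\dist(x,M)$, and the orthogonal splitting $L_F\oplus L_F^\perp$ then gives $J_F\equiv1$. You also supply details the paper leaves implicit: the cone does not depend on the choice of $p$, and $N_M(F)\subset L_F^\perp$, which is needed for $C_F$ to be a faithful coordinate representation. One small slip there: it is $p\in\relint F$, not $p'$, that yields $p-t(p'-p)\in F$, though both points lie in $\relint F$ by hypothesis, so nothing breaks.
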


\begin{proof}
Every $x\in\R^n$ has a unique metric projection $\pi_M(x)\in M$.  The convex
projection characterization gives
\[
 p=\pi_M(x)
 \quad\Longleftrightarrow\quad
 \inner{x-p}{y-p}\le0\quad\text{for every }y\in M.
\]
The point $p$ belongs to the relative interior of a unique face $F$, so
$x-p\in N_M(F)$.  Hence the sets
\[
 \{p+v:p\in\relint F,\ v\in N_M(F),\ |v|<\eps\}
\]
are pairwise disjoint and their union is exactly $\cT_\eps(M)$.  Relative to
the orthogonal splitting $L_F\oplus L_F^\perp$, the map $(p,s)\mapsto p+s$
has determinant one.  The change-of-variables formula therefore holds with
$J_F\equiv1$; in particular, the optional normalization~\eqref{eq:J-uniform} holds.
\end{proof}

By Assumption~\ref{ass:tube},
\begin{equation}\label{eq:sum-pieces}
  I_\eps(z)=\sum_{\alpha\in\cA}I_\alpha(z),
  \qquad
  I_\alpha(z)=\int_{P_\alpha}\cI_\alpha(z,p)\dvol_{P_\alpha}(p),
\end{equation}
where
\begin{equation}\label{eq:fiber-integral}
  \cI_\alpha(z,p)
  :=\int_{\Afib_\alpha(p)\cap B_\eps}
  a(F_\alpha(p,s))J_\alpha(p,s)e^{-zf(F_\alpha(p,s))}\dd s.
\end{equation}
Roman $I_\alpha(z)$ denotes the contribution of the whole base piece,
whereas $\cI_\alpha(z,p)$ denotes its fiber integral at $p$.  Once one
positive-codimensional piece is fixed, we suppress the index $\alpha$ from
$P_\alpha$, $F_\alpha$, $J_\alpha$, $\Afib_\alpha$, and $\cI_\alpha$.

\section{Leading-order normal-cell asymptotics}\label{sec:leading}

The auxiliary moment and tail estimates in this section use homogeneous shell
arguments under the weighted dilation.  Related homogeneous integration and anisotropic polar
formulas appear in Lasserre and in Bui--Randles~\cite{Lasserre,BuiRandles}.

We first introduce the anisotropic dilations used throughout the paper.
Fix $d\ge1$ and $\bm\beta=(\beta_1,\ldots,\beta_d)\in(0,\infty)^d$.  For
$u>0$, put
\begin{equation}\label{eq:dilation}
  D_u^{\bm\beta}t:=(u^{\beta_1}t_1,\ldots,u^{\beta_d}t_d),
  \qquad Q_{\bm\beta}:=\sum_{i=1}^d\beta_i.
\end{equation}
Throughout Sections~\ref{sec:leading} and~\ref{sec:expansions}, $s$ denotes an
unscaled fiber coordinate and $t$ a rescaled fiber coordinate.  After the
weights are fixed, we often abbreviate $Q:=Q_{\bm\beta}$.  The letter $q>0$
denotes a small scaling parameter tending to zero, typically
$q=(zc(p))^{-1}$ when a scale coefficient $c(p)$ is present.
When they occur, $c_-$ and $c_+$ bound positive scale coefficients,
$g_-$ and $g_+$ bound homogeneous models on the weighted unit sphere, and
$\kappa_G$ denotes a lower comparison constant between an exact or rescaled
phase and its model $G$.  We reserve $\mathcal K$ for profile kernels,
$\mathcal L$ for coefficients obtained after integration over one fiber, and
roman $K_\alpha$ for coefficients obtained after integration over the base
piece $P_\alpha$.
For $t\ne0$, let $\rho_{\bm\beta}(t)$ be the unique number $r>0$ such that
$|D_{1/r}^{\bm\beta}t|=1$, and set $\rho_{\bm\beta}(0)=0$.  Then
\begin{equation}\label{eq:gauge-properties}
  \rho_{\bm\beta}(D_u^{\bm\beta}t)=u\rho_{\bm\beta}(t),
  \qquad
  \{t:\rho_{\bm\beta}(t)<r\}=D_r^{\bm\beta}B_1,
\end{equation}
and the latter set has Euclidean volume $r^{Q_{\bm\beta}}\vol(B_1)$.

\begin{lemma}[Uniform exponential moments]\label{lem:moments}
Let $P$ be any parameter set and let
$G:P\times\R^d\to[0,\infty)$ be measurable.  Suppose
\begin{equation}\label{eq:G-homogeneous}
  G(p,D_u^{\bm\beta}t)=uG(p,t)
\end{equation}
for all $p,t,u$, and suppose that for some $0<g_-\le g_+<\infty$,
\begin{equation}\label{eq:G-sphere-bounds}
  g_-\le G(p,t)\le g_+
  \qquad\text{whenever }\rho_{\bm\beta}(t)=1.
\end{equation}
Then
\begin{equation}\label{eq:G-gauge-bounds}
  g_-\rho_{\bm\beta}(t)\le G(p,t)\le g_+\rho_{\bm\beta}(t)
  \qquad(p\in P,\ t\in\R^d),
\end{equation}
and for every $r\ge0$ and $\gamma>0$,
\begin{equation}\label{eq:uniform-moments}
  \sup_{p\in P}\int_{\R^d}(1+\rho_{\bm\beta}(t))^r
  e^{-\gamma G(p,t)}\dd t<\infty.
\end{equation}
The tails in~\eqref{eq:uniform-moments} converge to zero uniformly in $p$.
\end{lemma}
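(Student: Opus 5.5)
The plan is to reduce everything to the weighted gauge $\rho_{\bm\beta}$, using its homogeneity~\eqref{eq:gauge-properties}.

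\emph{Step 1: the gauge bounds~\eqref{eq:G-gauge-bounds}.} For $t=0$, homogeneity with $u\to0$ gives $G(p,0)=G(p,D_u^{\bm\beta}0)=uG(p,0)$ for every $u>0$, so $G(p,0)=0=\rho_{\bm\beta}(0)$. For $t\ne0$, put $r=\rho_{\bm\beta}(t)>0$ and $\omega=D_{1/r}^{\bm\beta}t$. By~\eqref{eq:gauge-properties}, $\rho_{\bm\beta}(\omega)=1$ and $t=D_r^{\bm\beta}\omega$. Then~\eqref{eq:G-homogeneous} gives $G(p,t)=rG(p,\omega)$, and~\eqref{eq:G-sphere-bounds} yields $g_-r\le G(p,t)\le g_+r$. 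Before this, I will check that $\rho_{\bm\beta}$ is well defined: the map $r\mapsto|D_{1/r}^{\bm\beta}t|$ is continuous and strictly decreasing from $\infty$ to $0$ on $(0,\infty)$, because every $\beta_i>0$. I will also check that $\rho_{\bm\beta}$ is Borel, since $\{\rho_{\bm\beta}<r\}=D_r^{\bm\beta}B_1$ is open.

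\emph{Step 2: reduction to a $p$-free integral.} By Step~1, the integrand in~\eqref{eq:uniform-moments} is bounded by
\[
  (1+\rho_{\bm\beta}(t))^re^{-\gamma g_-\rho_{\bm\beta}(t)}
\]
uniformly in $p$. It therefore suffices to show that this majorant is integrable over $\R^d$.

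\emph{Step 3: integrability via the distribution of $\rho_{\bm\beta}$.} Let $\mu$ be the pushforward of Lebesgue measure under $\rho_{\bm\beta}$. By~\eqref{eq:gauge-properties}, $\mu([0,\rho))=\vol(B_1)\rho^{Q}$, so $\mu(d\rho)=Q\vol(B_1)\rho^{Q-1}\dd\rho$. Hence
\[
  \int_{\R^d}(1+\rho_{\bm\beta})^re^{-\gamma g_-\rho_{\bm\beta}}\dd t
  =Q\vol(B_1)\int_0^\infty(1+\rho)^re^{-\gamma g_-\rho}\rho^{Q-1}\dd\rho<\infty,
\]
which is finite by comparison with a Gamma integral, since $Q>0$. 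If one prefers to avoid pushforward measures, the same bound follows from a dyadic shell argument: the shell $\{2^k\le\rho_{\bm\beta}<2^{k+1}\}$ has volume at most $2^{(k+1)Q}\vol(B_1)$, and the resulting series converges.

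\emph{Step 4: uniform tails.} For any of the natural tail regions $\{\rho_{\bm\beta}(t)\ge R\}$ (and likewise $\{|t|\ge R\}$, which is contained in $\{\rho_{\bm\beta}\ge c(R)\}$ with $c(R)\to\infty$), the supremum over $p$ of the tail integral is bounded by the tail of the convergent one-dimensional integral above. That tail tends to $0$ as $R\to\infty$ by dominated convergence. The containment for $\{|t|\ge R\}$ holds because $\rho_{\bm\beta}$ is bounded on bounded sets and $|t|\ge R$ forces $\rho_{\bm\beta}(t)\to\infty$, which follows from $D_r^{\bm\beta}B_1$ being bounded for each $r$.

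I do not expect a real obstacle. The only point needing care is the well-definedness and measurability of $\rho_{\bm\beta}$ together with the volume formula, and these follow from strict monotonicity in $r$ and the linear change of variables $\det D_r^{\bm\beta}=r^{Q}$.
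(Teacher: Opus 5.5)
Your proof is correct and follows the paper's argument. Both derive the gauge bounds by writing $t=D^{\bm\beta}_{\rho_{\bm\beta}(t)}\theta$ with $\rho_{\bm\beta}(\theta)=1$, and both obtain the uniform moments and tails from the $p$-free majorant $(1+\rho_{\bm\beta})^r e^{-\gamma g_-\rho_{\bm\beta}}$ together with the volume identity $\vol(D^{\bm\beta}_r B_1)=r^{Q}\vol(B_1)$. The only cosmetic difference is that you integrate this majorant exactly through the pushforward density $Q\vol(B_1)\rho^{Q-1}\dd\rho$, whereas the paper bounds it by a sum over the unit shells $k\le\rho_{\bm\beta}<k+1$.
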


\begin{proof}
For $t\ne0$ write $t=D_{\rho_{\bm\beta}(t)}^{\bm\beta}\theta$ with
$\rho_{\bm\beta}(\theta)=1$; then~\eqref{eq:G-homogeneous}
and~\eqref{eq:G-sphere-bounds} give~\eqref{eq:G-gauge-bounds}.  Homogeneity
forces $G(p,0)=0$, so~\eqref{eq:G-gauge-bounds} holds at $t=0$ as well.
The volume identity in~\eqref{eq:gauge-properties} and a
sum over the shells $k\le\rho_{\bm\beta}(t)<k+1$ show that the left-hand side
of~\eqref{eq:uniform-moments} is bounded by a constant multiple of
\(
  \sum_{k\ge0}(1+k)^{r+Q_{\bm\beta}}e^{-\gamma g_- k},
\)
which is finite.  The same estimate, with the sum started at a large $k$,
gives the uniform tail statement.
\end{proof}

\begin{remark}[Coercivity in the weighted gauge]
The lower bound in~\eqref{eq:G-sphere-bounds} is coercivity with respect to the
weighted gauge: the principal model is bounded away from zero on the weighted
unit sphere, uniformly in the base parameter.  This uniformity is what makes the
constants in~\eqref{eq:G-gauge-bounds}, and hence the moment and tail bounds,
independent of $p$.
\end{remark}

The next identity is a diagonal-anisotropic conic version of Lasserre's
homogeneous sublevel formula; Bui--Randles provides a broader polar-integration
framework for anisotropic one-parameter dilations~\cite{Lasserre,BuiRandles}.

\begin{proposition}[Anisotropic sublevel formula]\label{prop:sublevel-formula}
Let $C\subset\R^d$ be measurable and invariant under $D_u^{\bm\beta}$.
Let $G:C\to[0,\infty)$ be homogeneous of degree one and suppose that
\[
 g_-\rho_{\bm\beta}(t)\le G(t)\le g_+\rho_{\bm\beta}(t)\qquad(t\in C)
\]
for some $0<g_-\le g_+<\infty$.  Suppose that $B_\kappa:C\to\C$ is measurable,
\[
 B_\kappa(D_u^{\bm\beta}t)=u^\kappa B_\kappa(t),
 \qquad \kappa>-Q_{\bm\beta},
\]
and is bounded on $C\cap\{\rho_{\bm\beta}=1\}$.  Then both integrals in~\eqref{eq:sublevel-formula} are absolutely
convergent and
\begin{equation}\label{eq:sublevel-formula}
 \int_C B_\kappa(t)e^{-G(t)}\dd t
 =\Gamma(Q_{\bm\beta}+\kappa+1)
  \int_{C\cap\{G\le1\}}B_\kappa(t)\dd t.
\end{equation}
In particular,
\begin{equation}\label{eq:sublevel-volume}
 \int_C e^{-G(t)}\dd t
 =\Gamma(Q_{\bm\beta}+1)\,
  \vol\bigl(C\cap\{G\le1\}\bigr).
\end{equation}
\end{proposition}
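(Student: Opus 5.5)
The plan is to prove the identity through the layer-cake representation $e^{-G}=\int_G^\infty e^{-\lambda}\dd\lambda$, combined with the scaling of the sublevel integrals under the dilation $D^{\bm\beta}_\lambda$. First, establish absolute convergence. On $C$ write $t=D^{\bm\beta}_{\rho}\theta$ with $\rho=\rho_{\bm\beta}(t)$ and $\rho_{\bm\beta}(\theta)=1$. Because $C$ is dilation invariant, $\theta\in C$, so homogeneity and boundedness on the unit gauge sphere give $|B_\kappa(t)|\le b\,\rho_{\bm\beta}(t)^\kappa$. The lower bound on $G$ gives $e^{-G}\le e^{-g_-\rho_{\bm\beta}}$. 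Summing over dyadic shells $2^k\le\rho_{\bm\beta}<2^{k+1}$ for $k\in\mathbb Z$, each shell has volume $\asymp 2^{kQ}$ by \eqref{eq:gauge-properties}, so the integrand contributes $\lesssim 2^{k(\kappa+Q)}e^{-g_-2^k}$. This series converges at $k\to-\infty$ precisely because $\kappa>-Q_{\bm\beta}$, and converges at $k\to+\infty$ by the exponential factor. The same near-origin estimate applies to $C\cap\{G\le1\}\subset\{\rho_{\bm\beta}\le 1/g_-\}$. Near the origin one must use dyadic shells rather than the unit shells of Lemma~\ref{lem:moments}, since $\kappa$ may be negative.

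Next, set $\Phi(\lambda):=\int_{C\cap\{G\le\lambda\}}B_\kappa\dd t$ for $\lambda>0$. The change of variables $t=D^{\bm\beta}_\lambda t'$ has Jacobian $\lambda^{Q}$, maps $C\cap\{G\le1\}$ onto $C\cap\{G\le\lambda\}$ (by invariance of $C$ and homogeneity of $G$), and multiplies $B_\kappa$ by $\lambda^\kappa$. Hence
\[
\Phi(\lambda)=\lambda^{Q_{\bm\beta}+\kappa}\Phi(1).
\]
Then write
\[
\int_C B_\kappa e^{-G}\dd t=\int_C B_\kappa(t)\int_0^\infty \ind_{\{G(t)\le\lambda\}}e^{-\lambda}\dd\lambda\dd t .
\]
Fubini is justified by running the same computation with $|B_\kappa|$, which yields the finite quantity $\int_C|B_\kappa|e^{-G}$ from the first step. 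Interchanging the integrals gives
\[
\int_0^\infty e^{-\lambda}\Phi(\lambda)\dd\lambda=\Phi(1)\int_0^\infty\lambda^{Q+\kappa}e^{-\lambda}\dd\lambda=\Gamma(Q+\kappa+1)\Phi(1),
\]
which is \eqref{eq:sublevel-formula}. Taking $B_\kappa\equiv1$ with $\kappa=0$ gives \eqref{eq:sublevel-volume}.

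The main obstacle is only the integrability bookkeeping near the origin for negative $\kappa$, which is why I would carry out that dyadic estimate explicitly. The set $\{G=0\}\cap C$ needs no separate care: it is $\{0\}$ by the lower gauge bound and hence null. Measurability of $(t,\lambda)\mapsto\ind_{\{G(t)\le\lambda\}}$ follows from measurability of $G$.
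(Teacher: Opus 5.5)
Your proof is correct and is the paper's own argument: the layer-cake identity $e^{-G}=\int_G^\infty e^{-r}\dd r$, Fubini justified by absolute integrability, and the scaling $V(r)=r^{Q_{\bm\beta}+\kappa}V(1)$ from the substitution $t=D_r^{\bm\beta}u$. Your dyadic-shell estimate only spells out the integrability near the origin when $-Q_{\bm\beta}<\kappa<0$, which the paper asserts without detail.
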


\begin{proof}
Coercivity and homogeneity imply absolute integrability at infinity, while
$\kappa>-Q_{\bm\beta}$ gives local integrability at the origin.  Hence Fubini's
theorem may be applied to
$e^{-G(t)}=\int_{G(t)}^\infty e^{-r}\dd r$.  If
\[
 V(r):=\int_{C\cap\{G\le r\}}B_\kappa(t)\dd t,
\]
then the change of variables $t=D_r^{\bm\beta}u$ gives
$V(r)=r^{Q_{\bm\beta}+\kappa}V(1)$.  Consequently,
\[
 \int_C B_\kappa e^{-G}\dd t
 =\int_0^\infty e^{-r}V(r)\dd r
 =\Gamma(Q_{\bm\beta}+\kappa+1)V(1).
\]
\end{proof}

We use the standard notation for the upper incomplete gamma function; see the
NIST DLMF for definitions and asymptotics~\cite{DLMF}.

\begin{proposition}[Homogeneous profile sublevel formula]
\label{prop:profile-sublevel-formula}
Retain the assumptions and notation of
Proposition~\ref{prop:sublevel-formula}, and put
\[
 h:=Q_{\bm\beta}+\kappa>0.
\]
Let $\mathcal K:[0,\infty)\to\C$ be measurable and suppose that
\begin{equation}\label{eq:profile-mellin-integrability}
 \int_0^\infty |\mathcal K(r)|r^{h-1}\dd r<\infty.
\end{equation}
Then the integral on the left of~\eqref{eq:profile-sublevel-formula}
is absolutely convergent and
\begin{equation}\label{eq:profile-sublevel-formula}
 \int_C B_\kappa(t)\mathcal K(G(t))\dd t
 =h\left[\int_0^\infty \mathcal K(r)r^{h-1}\dd r\right]
  \int_{C\cap\{G\le1\}}B_\kappa(t)\dd t.
\end{equation}
In particular, for $\lambda>0$, let
\begin{equation}\label{eq:upper-incomplete-gamma}
 \Gamma(\lambda,r):=\int_r^\infty s^{\lambda-1}e^{-s}\dd s
\end{equation}
denote the upper incomplete gamma function.  Then
\begin{align}
 \int_C B_\kappa(t)\Gamma(\lambda,G(t))\dd t
 &=\Gamma(\lambda+h)
   \int_{C\cap\{G\le1\}}B_\kappa(t)\dd t
   \label{eq:gamma-profile-sublevel}\\
 &=\frac{\Gamma(\lambda+h)}{\Gamma(h+1)}
   \int_C B_\kappa(t)e^{-G(t)}\dd t.
   \label{eq:gamma-profile-exponential}
\end{align}
\end{proposition}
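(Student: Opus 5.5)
The plan is to repeat the layer-cake argument of Proposition~\ref{prop:sublevel-formula}, this time pushing $G$ forward instead of writing $e^{-G}$ as a tail integral. Put
\[
 V(r):=\int_{C\cap\{G\le r\}}B_\kappa(t)\dd t,\qquad
 W(r):=\int_{C\cap\{G\le r\}}|B_\kappa(t)|\dd t .
\]
Homogeneity of $G$, of $B_\kappa$, and invariance of $C$ under $t=D_r^{\bm\beta}u$ give $V(r)=r^hV(1)$ and $W(r)=r^hW(1)$.

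Finiteness of $W(1)$ comes from the gauge bounds. The set $C\cap\{G\le1\}$ lies in $\{\rho_{\bm\beta}\le 1/g_-\}$. The bound $|B_\kappa(t)|\le c\,\rho_{\bm\beta}(t)^\kappa$ follows from homogeneity and boundedness on the unit gauge sphere. Summing over dyadic shells $2^{-k-1}<\rho_{\bm\beta}\le 2^{-k}$, each of volume $\lesssim 2^{-kQ_{\bm\beta}}$, yields a geometric series $\sum_k 2^{-k(\kappa+Q_{\bm\beta})}<\infty$, since $h>0$.

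The main step is to identify the pushforward of the signed measure $B_\kappa(t)\ind_C(t)\dd t$ under $G$. Its distribution function is $V(r)=r^hV(1)$, which is absolutely continuous with density $hV(1)r^{h-1}$ on $(0,\infty)$. The pushforward of $|B_\kappa|\ind_C\dd t$ likewise has density $hW(1)r^{h-1}$. Because both are finite on bounded sets, the measures are determined by their distribution functions (intervals $(a,b]$ form a $\pi$-system). Neither pushforward has an atom at $0$: the set $\{G=0\}\cap C$ is contained in $\{0\}$ by the lower gauge bound. The change-of-variables formula for image measures then gives
\[
 \int_C |B_\kappa||\mathcal K(G)|\dd t=hW(1)\int_0^\infty|\mathcal K(r)|r^{h-1}\dd r<\infty
\]
by~\eqref{eq:profile-mellin-integrability}, which is the absolute convergence claim. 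The same formula applied to the signed measure, split into real and imaginary positive and negative parts if needed, yields~\eqref{eq:profile-sublevel-formula}. The only point needing care is the signed, complex-valued case. I would treat it by applying the argument separately to $(\operatorname{Re}B_\kappa)_\pm$ and $(\operatorname{Im}B_\kappa)_\pm$, each of which is again $\kappa$-homogeneous, nonnegative and bounded on the gauge sphere.

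For the incomplete-gamma case, take $\mathcal K(r)=\Gamma(\lambda,r)$. Tonelli gives
\[
 \int_0^\infty\Gamma(\lambda,r)r^{h-1}\dd r=\int_0^\infty s^{\lambda-1}e^{-s}\int_0^s r^{h-1}\dd r\dd s=\Gamma(\lambda+h)/h .
\]
This is finite since $\lambda,h>0$, so~\eqref{eq:profile-mellin-integrability} holds and~\eqref{eq:gamma-profile-sublevel} follows. Finally, \eqref{eq:gamma-profile-exponential} follows by dividing by the identity~\eqref{eq:sublevel-formula}, namely $\int_C B_\kappa e^{-G}\dd t=\Gamma(h+1)V(1)$; equivalently, apply the main formula with $\mathcal K(r)=e^{-r}$.
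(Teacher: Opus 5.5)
Your proof is correct and follows the paper's argument. In both, the key fact is the homogeneity identity $V(r)=r^hV(1)$ (and its analogue $W(r)=r^hW(1)$ for $|B_\kappa|$), which says that $G$ pushes $B_\kappa\ind_C\,\dd t$ forward to $hV(1)r^{h-1}\dd r$; this is followed by the same Tonelli computation for $\Gamma(\lambda,\cdot)$ and substitution of~\eqref{eq:sublevel-formula}. The only difference is in the extension step: the paper passes from interval indicators to general $\mathcal K$ by approximating with step functions in $L^1((0,\infty),r^{h-1}\dd r)$, whereas you identify the pushforward measure by $\pi$-system uniqueness and then use the abstract change-of-variables formula, and you also spell out the finiteness of $W(1)$ and the splitting of complex $B_\kappa$ into four nonnegative homogeneous parts.
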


\begin{proof}
Let
\[
 V(r):=\int_{C\cap\{G\le r\}}B_\kappa(t)\dd t.
\]
As in the proof of Proposition~\ref{prop:sublevel-formula}, anisotropic
homogeneity gives $V(r)=r^hV(1)$.  Hence, first for interval indicators and
then for step functions,
\[
 \int_C B_\kappa(t)\mathcal K(G(t))\dd t
 =hV(1)\int_0^\infty \mathcal K(r)r^{h-1}\dd r.
\]
The same scaling argument applied to $|B_\kappa|$ shows that the error made
by replacing $\mathcal K$ with a step function is bounded by a constant multiple of its
$L^1((0,\infty),r^{h-1}\dd r)$ error.  Thus
\eqref{eq:profile-mellin-integrability} permits approximation by step functions
and proves~\eqref{eq:profile-sublevel-formula} for general measurable $\mathcal K$.
For $\mathcal K(r)=\Gamma(\lambda,r)$, Tonelli's theorem gives
\[
 h\int_0^\infty \Gamma(\lambda,r)r^{h-1}\dd r
 =\int_0^\infty s^{\lambda-1}e^{-s}s^h\dd s
 =\Gamma(\lambda+h).
\]
This proves~\eqref{eq:gamma-profile-sublevel}; combining it with
\eqref{eq:sublevel-formula} proves~\eqref{eq:gamma-profile-exponential}.
\end{proof}

\begin{remark}[Separable coefficients]\label{rem:separable-coefficients}
For
\[
 G(t)=\sum_{i=1}^d c_i|t_i|^{m_i},\qquad
 \beta_i=\frac1{m_i},\qquad c_i,m_i>0,
\]
and a product cone whose $i$th factor is either $\R$ or a coordinate
half-line, the leading coefficient factors into one-dimensional integrals.
The $i$th factor is
\[
 2c_i^{-1/m_i}\Gamma\!\left(1+\frac1{m_i}\right)
 \quad\text{on }\R,
 \qquad
 c_i^{-1/m_i}\Gamma\!\left(1+\frac1{m_i}\right)
 \quad\text{on a half-line}.
\]
This gives a direct closed form for many corner and lower-stratum models.
\end{remark}

\begin{remark}[Affine inequalities in weighted cell limits]
The following proposition is stated for conic, homogeneous
linear inequalities.  If affine offsets are present, their leading scaled part
must be included in the limiting form before the slab estimate is applied.
\end{remark}

\begin{proposition}[Uniform polyhedral weighted cell limits]
\label{prop:polyhedral-cone-limit}
Suppose
\[
 \Afib(p)=\{s\in\R^d:\ell_k(p,s)\ge0,\ 1\le k\le m\},
 \qquad
 \ell_k(p,s)=\sum_{i=1}^d a_{ki}(p)s_i,
\]
where the coefficients are measurable and uniformly bounded.  For every $k$,
assume that there is a number $\delta_k\in\{\beta_1,\ldots,\beta_d\}$,
independent of $p$, such that
\begin{equation}\label{eq:polyhedral-leading-coefficients}
 a_{ki}(p)=0\quad\text{if }\beta_i<\delta_k,
 \qquad
 \sum_{\beta_i=\delta_k}|a_{ki}(p)|^2\ge a_0^2>0.
\end{equation}
Define
\[
 \bm a_k^0(p):=
 \bigl(a_{ki}(p)\ind_{\{\beta_i=\delta_k\}}\bigr)_{i=1}^d,
 \qquad
 \ell_k^0(p,t):=\inner{\bm a_k^0(p)}{t},
\]
and
\[
 \Afib^\infty(p):=\{t:\ell_k^0(p,t)\ge0,\ 1\le k\le m\}.
\]
Let $G:P\times\R^d\to[0,\infty)$ satisfy the hypotheses of
Lemma~\ref{lem:moments} uniformly in $p$.  Then, for every $\eps>0$ and
$0<\delta<1$,
\begin{equation}\label{eq:polyhedral-convergence}
 \sup_{p\in P}\int_{\R^d}
 \left|\ind_{D_u^{\bm\beta}(\Afib(p)\cap B_\eps)}(t)
       -\ind_{\Afib^\infty(p)}(t)\right|
 e^{-(1-\delta)G(p,t)}\dd t\longrightarrow0
 \qquad(u\to\infty).
\end{equation}
In particular, Assumption~\ref{ass:leading-model}
\eqref{eq:weighted-cell-limit} follows when $u=zc(p)$ and $c(p)\ge c_->0$.
\end{proposition}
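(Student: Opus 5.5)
We first compute the rescaled cell explicitly.  Since each $\ell_k(p,\cdot)$ is linear, $t\in D_u^{\bm\beta}\Afib(p)$ iff $\ell_k(p,D_{1/u}^{\bm\beta}t)\ge0$ for every $k$.  Multiplying the $k$th inequality by $u^{\delta_k}>0$ and using \eqref{eq:polyhedral-leading-coefficients}, this becomes
\[
 \ell_k^0(p,t)+R_k(p,u,t)\ge0,
 \qquad
 R_k(p,u,t):=\sum_{\beta_i>\delta_k}a_{ki}(p)\,u^{\delta_k-\beta_i}t_i .
\]
The weights are finitely many, so $\eta:=\min\{\beta_i-\delta_k:\beta_i>\delta_k\}>0$.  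Boundedness of the coefficients gives $|R_k|\le C u^{-\eta}\sum_i|t_i|$.  On $\{\rho_{\bm\beta}(t)\le R\}$ we have $|t_i|\le R^{\beta_i}$, so $|R_k|\le C_R u^{-\eta}$ uniformly in $p$.  The cut-off $B_\eps$ rescales to $D_u^{\bm\beta}B_\eps$, which contains $\{\rho_{\bm\beta}(t)\le R\}$ once $u$ is large (at a rate depending only on $R,\eps,\bm\beta$).  Hence on this gauge ball the symmetric difference of the two indicators is contained in
\[
 \bigcup_{k=1}^m\bigl\{t:|\ell_k^0(p,t)|\le C_Ru^{-\eta}\bigr\}.
\]

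Next we split the integral in \eqref{eq:polyhedral-convergence} at gauge radius $R$.  The outer region $\rho_{\bm\beta}(t)>R$ is handled by bounding the integrand by $e^{-(1-\delta)G(p,t)}\le e^{-(1-\delta)g_-\rho_{\bm\beta}(t)}$, using \eqref{eq:G-gauge-bounds}.  By the uniform tail statement of Lemma~\ref{lem:moments} with $\gamma=1-\delta$, this contribution is smaller than any prescribed $\eta'>0$ once $R$ is fixed large, uniformly in $p$ and $u$.  On the inner region the weight is at most $1$, so it suffices to bound the Lebesgue measure of the slabs found above.

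The slab estimate is the step needing care, since the normal vector $\bm a_k^0(p)$ varies with $p$.  Its Euclidean norm is at least $a_0$ by \eqref{eq:polyhedral-leading-coefficients} and at most a uniform constant.  The gauge ball $\{\rho_{\bm\beta}\le R\}$ sits inside a Euclidean ball of radius $R'$ depending only on $R$.  A slab $\{|\inner{\bm a}{t}|\le h\}$ intersected with that ball has measure at most
\[
 \frac{2h}{|\bm a|}\,\vol_{d-1}\bigl(B^{d-1}_{R'}\bigr)\le \frac{2h}{a_0}\,C'_R ,
\]
by rotating coordinates so that $\bm a$ is a coordinate axis and applying Fubini.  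With $h=C_Ru^{-\eta}$, the inner contribution is $O_R(u^{-\eta})$ uniformly in $p$.  Choosing $R$ first and then $u$ large proves \eqref{eq:polyhedral-convergence}.

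For the final sentence, take $u=zc(p)$.  The condition $c(p)\ge c_->0$ gives $u\ge zc_-\to\infty$ uniformly in $p$, and all bounds above depend on $u$ only through a lower bound.  This yields the weighted cell limit \eqref{eq:weighted-cell-limit} of Assumption~\ref{ass:leading-model}.

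The measurability of the parametrized sets is immediate from the measurability of the coefficients.
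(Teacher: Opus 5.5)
Your proof is correct and takes essentially the same route as the paper: you normalize the $k$th inequality by $u^{\delta_k}$, confine the indicator discrepancy on a fixed gauge ball to thin slabs around the limiting hyperplanes (controlled by $|\bm a_k^0(p)|\ge a_0$), and handle the outer region and the Euclidean truncation with the uniform tails of Lemma~\ref{lem:moments}. The differences are cosmetic: you use Fubini after a rotation instead of the coarea formula, and you remark explicitly that every bound depends on $u$ only through a lower bound, which neatly justifies the substitution $u=zc(p)$.
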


\begin{proof}
We separate the proof into the bounded part of the scaled variables and the
uniform tail.  First ignore the Euclidean truncation $B_\eps$ and write the
inequalities defining the scaled cell in the $t$--variables.  Since
$s=D_{1/u}^{\bm\beta}t$,
\[
 t\in D_u^{\bm\beta}\Afib(p)
 \quad\Longleftrightarrow\quad
 \ell_{k,u}(p,t):=u^{\delta_k}\ell_k(p,D_{1/u}^{\bm\beta}t)\ge0,
 \quad 1\le k\le m.
\]
The normalization by $u^{\delta_k}$ keeps the first nonzero weighted part of
the $k$th inequality at order one.  Indeed
\[
 \ell_{k,u}(p,t)=\ell_k^0(p,t)+r_{k,u}(p,t),
\]
where $r_{k,u}$ contains only variables whose weights are strictly larger than
$\delta_k$.  Because there are only finitely many weights, there is
$\eta>0$ such that, for every fixed anisotropic ball
$B_T^{\bm\beta}:=\{\rho_{\bm\beta}\le T\}$,
\begin{equation}\label{eq:polyhedral-remainder}
 \sup_{p,k,t\in B_T^{\bm\beta}}|r_{k,u}(p,t)|
 \le C_{T,\mathrm{rem}}u^{-\eta}.
\end{equation}

Now fix $T$.  If, for some $p$ and $t\in B_T^{\bm\beta}$, the indicators of
$D_u^{\bm\beta}\Afib(p)$ and $\Afib^\infty(p)$ differ, then the truth value of
at least one defining inequality changes between
$\ell_{k,u}(p,t)=\ell_k^0(p,t)+r_{k,u}(p,t)$ and $\ell_k^0(p,t)$.  Hence either
$\ell_k^0<0\le \ell_k^0+r_{k,u}$ or
$\ell_k^0\ge0>\ell_k^0+r_{k,u}$, and in both cases
\[
 |\ell_k^0(p,t)|\le |r_{k,u}(p,t)|
 \le C_{T,\mathrm{rem}}u^{-\eta}.
\]
Thus the exceptional set lies in the thin slab around the zero hyperplane of
this limiting inequality.

The vector $\bm a_k^0(p)$ is the Euclidean normal to the hyperplane
$\ell_k^0(p,t)=0$, and its norm is at least $a_0$ by
\eqref{eq:polyhedral-leading-coefficients}.  Since
$B_T^{\bm\beta}=D_T^{\bm\beta}B_1$ is contained in the Euclidean ball of
radius $R_T:=\max_iT^{\beta_i}$, every hyperplane section of
$B_T^{\bm\beta}$ has $(d-1)$-dimensional measure at most
$\omega_{d-1}R_T^{d-1}$, where $\omega_{d-1}$ is the volume of the unit ball
in $\R^{d-1}$.  The coarea formula for
$t\mapsto\ell_k^0(p,t)$ therefore gives
\begin{align}
 \bigl|\{t\in B_T^{\bm\beta}:|\ell_k^0(p,t)|\le h\}\bigr|
 &=\frac1{|\bm a_k^0(p)|}\int_{-h}^{h}
   \mathcal H^{d-1}
   \bigl(B_T^{\bm\beta}\cap\{\ell_k^0(p,t)=r\}\bigr)\dd r\notag\\
 &\le 2\omega_{d-1}R_T^{d-1}\frac{h}{a_0}.
 \label{eq:polyhedral-slab-bound}
\end{align}
Taking $h=C_{T,\mathrm{rem}}u^{-\eta}$ in
\eqref{eq:polyhedral-slab-bound} and summing over the finitely many
inequalities gives
\[
 \sup_{p\in P}\int_{B_T^{\bm\beta}}
 \left|\ind_{D_u^{\bm\beta}\Afib(p)}-
       \ind_{\Afib^\infty(p)}\right|\dd t\longrightarrow0.
\]
Multiplying by the bounded weight $e^{-(1-\delta)G(p,t)}$ proves the same
local convergence with the weighted measure.

It remains to remove the restriction to $B_T^{\bm\beta}$ and to restore the
original truncation $\Afib(p)\cap B_\eps$.  The uniform coercive lower bound in
Lemma~\ref{lem:moments} makes
\[
 \sup_{p\in P}\int_{\{\rho_{\bm\beta}>T\}}
 e^{-(1-\delta)G(p,t)}\dd t
\]
tend to zero as $T\to\infty$.  Finally, for each fixed $T$ and all sufficiently
large $u$, the bounded set $B_T^{\bm\beta}$ is contained in
$D_u^{\bm\beta}B_\eps$.  Thus the Euclidean truncation can only affect the
same uniformly small tail.  Combining the local slab estimate with this tail
bound proves~\eqref{eq:polyhedral-convergence}.
\end{proof}

\begin{remark}[Uniformity of the leading faces]
When all weights are equal, $D_u^{\bm\beta}$ is a scalar dilation and every
ordinary cone is invariant.  For unequal weights, coordinate orthants and
product cones remain invariant.  Proposition~\ref{prop:polyhedral-cone-limit}
also allows some moving polyhedral cells.  More concretely, each defining
inequality must have a first nonzero weighted part, that part must involve the
same weight level for all $p$, and its coefficient vector must stay uniformly
away from zero.  Thus a rotating half-space
$\cos\theta(p)s_1+\sin\theta(p)s_2\ge0$ is allowed when
$\beta_1=\beta_2$ and the coefficient vector has norm one.  If
$\beta_1<\beta_2$, however, the coefficient of $s_1$ must either vanish
identically or be uniformly separated from zero; otherwise the limiting face
can switch type as $p$ varies.
\end{remark}

Fix a normal-cell piece $\alpha\in\cA$ with $d_\alpha>0$ and suppress
the index $\alpha$ where no confusion can arise.  Thus $P=P_\alpha$, $d=d_\alpha$, $\Afib(p)=\Afib_\alpha(p)$,
$F=F_\alpha$, $J=J_\alpha$, and $\cI(z,p)=\cI_\alpha(z,p)$.

\begin{assumption}[Uniform coercive anisotropic model]\label{ass:leading-model}
There exist
\begin{itemize}[leftmargin=2.2em]
\item an anisotropy vector $\bm\beta\in(0,\infty)^d$ and
  $Q:=Q_{\bm\beta}$;
\item a measurable function $c:P\to(0,\infty)$ satisfying
  \begin{equation}\label{eq:c-bounds}
    0<c_-\le c(p)\le c_+<\infty;
  \end{equation}
\item a measurable function $G:P\times\R^d\to[0,\infty)$ satisfying
  \eqref{eq:G-homogeneous} and~\eqref{eq:G-sphere-bounds} uniformly in $p$;
\item a measurable remainder $\mathcal R$ such that, on $\Afib(p)\cap B_\eps$,
  \begin{equation}\label{eq:phase-leading}
    f(F(p,s))-f_0=c(p)G(p,s)(1+\mathcal R(p,s)),
  \end{equation}
  with $\mathcal R(p,0)=0$, and
  \begin{equation}\label{eq:R-uniform}
    \lim_{r\downarrow0}\sup_{p\in P}
    \sup_{s\in\Afib(p),\,\rho_{\bm\beta}(s)\le r}|\mathcal R(p,s)|=0;
  \end{equation}
\item a measurable family of limit cells
  $\Afib^\infty(p)\subset\R^d$ such that, for every $0<\delta<1$,
  \begin{equation}\label{eq:weighted-cell-limit}
  \begin{split}
   \sup_{p\in P}\int_{\R^d}
   \Big|&\ind_{D_{zc(p)}^{\bm\beta}(\Afib(p)\cap B_\eps)}(t)
        -\ind_{\Afib^\infty(p)}(t)\Big|\\
   &\hspace{4em}\times e^{-(1-\delta)G(p,t)}\dd t
   \longrightarrow0
   \qquad(z\to\infty).
  \end{split}
  \end{equation}
\end{itemize}
The radius $\eps$ is chosen small enough that $|\mathcal R(p,s)|\le\delta_0<1$
throughout the coordinate domain.  Finally, define the effective amplitude
\begin{equation}\label{eq:effective-amplitude}
  b(p,s):=a(F(p,s))J(p,s).
\end{equation}
Assume that $b$ is bounded on the truncated cell graph and that there is a
bounded measurable function $b_0:P\to\C$ such that
\begin{equation}\label{eq:amplitude-uniform}
  \lim_{r\downarrow0}\sup_{p\in P}
  \sup_{s\in\Afib(p),\,\rho_{\bm\beta}(s)\le r}
  |b(p,s)-b_0(p)|=0.
\end{equation}
\end{assumption}

\begin{remark}[Fermi amplitudes]
If the adapted chart is a Fermi map, written after choosing an orthonormal
normal frame as $F(p,s)=\exp_p(\sum_i s_i\eta_i(p))$, and the tube is uniform
near the zero section--that is, $F(p,s)\to p$ uniformly as $s\to0$ and the
Jacobian normalization~\eqref{eq:J-uniform} holds--then continuity of $a$ on
the closure of the tube implies~\eqref{eq:amplitude-uniform} with
$b_0(p)=a(p)$.  In general adapted coordinates, $b_0$ includes both the
coordinate Jacobian and the leading change in the amplitude.
\end{remark}

\begin{lemma}[Scaled convergence principle]
\label{lem:scaled-integral-convergence}
Let $P$ be a parameter set and let $E_z(p),E_\infty(p)\subset\R^d$ be
measurable families of sets.  Let $H_z,H_\infty:P\times\R^d\to\C$ be
ambient measurable functions.  Their integrals are restricted to $E_z(p)$
and $E_\infty(p)$, respectively; replacement of the two cells is controlled
separately in~(iii).  Suppose that, for the weighted gauge
$\rho_{\bm\beta}$,
\begin{enumerate}[label=(\roman*),leftmargin=2.2em]
\item for every $T<\infty$,
\[
 \sup_{p\in P}\int_{E_z(p)\cap\{\rho_{\bm\beta}\le T\}}
 |H_z(p,t)-H_\infty(p,t)|\dd t\longrightarrow0;
\]
\item the two families are uniformly tight,
\[
 \lim_{T\to\infty}\limsup_{z\to\infty}\sup_{p\in P}
 \left[
 \int_{E_z(p)\cap\{\rho_{\bm\beta}>T\}}|H_z(p,t)|\dd t
 +\int_{E_\infty(p)\cap\{\rho_{\bm\beta}>T\}}|H_\infty(p,t)|\dd t
 \right]=0;
\]
\item the limiting integrand is stable under replacement of the cells,
\[
 \sup_{p\in P}\int_{\R^d}
 |\ind_{E_z(p)}(t)-\ind_{E_\infty(p)}(t)|\,
 |H_\infty(p,t)|\dd t\longrightarrow0.
\]
\end{enumerate}
Then
\begin{equation}\label{eq:scaled-convergence-principle}
 \sup_{p\in P}\left|
 \int_{E_z(p)}H_z(p,t)\dd t
 -\int_{E_\infty(p)}H_\infty(p,t)\dd t
 \right|\longrightarrow0.
\end{equation}
Moreover, if $P$ is a measure space, the fiber convergence in
\eqref{eq:scaled-convergence-principle} holds for almost every $p$, and the
fiber integrals are dominated by an integrable function on $P$, then the same convergence
holds after integration over $P$.
\end{lemma}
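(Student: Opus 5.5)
The plan is a three-term splitting of the difference, followed by letting $z\to\infty$ and then $T\to\infty$. Fix $T<\infty$ and write
\begin{align*}
 \int_{E_z(p)}H_z\dd t-\int_{E_\infty(p)}H_\infty\dd t
 &=\int_{E_z(p)\cap\{\rho_{\bm\beta}\le T\}}(H_z-H_\infty)\dd t
  +\int_{E_z(p)\cap\{\rho_{\bm\beta}>T\}}(H_z-H_\infty)\dd t\\
 &\quad+\int_{\R^d}\bigl(\ind_{E_z(p)}-\ind_{E_\infty(p)}\bigr)H_\infty\dd t .
\end{align*}
This identity is just adding and subtracting $\int_{E_z(p)}H_\infty$. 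That quantity is finite because of the bounds used below: on $\{\rho_{\bm\beta}>T\}$ it is controlled by the tail term in (ii) together with (iii), and on the bounded region by (i) together with the $E_z$ integrability. Before writing the identity I will therefore check that every integral in it is absolutely convergent, at least for large $z$.

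\emph{First and third terms.} The first term is bounded, uniformly in $p$, by the quantity in (i), so it tends to $0$ for each fixed $T$. The third term is bounded by the quantity in (iii), so it also tends to $0$.

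\emph{Middle term.} This is the step needing care. I bound it by
\[
 \int_{E_z\cap\{\rho>T\}}|H_z|\dd t+\int_{E_z\cap\{\rho>T\}}|H_\infty|\dd t .
\]
The first piece is controlled directly by (ii). For the second piece, replace $E_z$ by $E_\infty$ at the cost of $\int|\ind_{E_z}-\ind_{E_\infty}||H_\infty|$, which is controlled by (iii). What remains is $\int_{E_\infty\cap\{\rho>T\}}|H_\infty|$, again covered by (ii).

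\emph{Conclusion of the fiberwise statement.} Taking $\sup_p$, then $\limsup_{z\to\infty}$, and finally $T\to\infty$ yields~\eqref{eq:scaled-convergence-principle}. Note that the (iii) error appears twice, but each occurrence vanishes as $z\to\infty$, independently of $T$.

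\emph{Integration over $P$.} If $P$ is a measure space, the fiber integrals converge for almost every $p$ and are dominated by an integrable function. Dominated convergence applied to the fiber integrals as functions of $p$ then gives convergence of the base integrals. This holds along any sequence $z_k\to\infty$, hence as $z\to\infty$. No new obstacle arises here beyond measurability of the fiber integrals in $p$, which I take to be part of the hypotheses (joint measurability of $H_z$ and of the cell graphs, together with Fubini).
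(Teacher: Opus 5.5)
Your proof is correct and is essentially the paper's argument. Both add and subtract across the level set $\{\rho_{\bm\beta}\le T\}$, control the local term by (i), cell replacement by (iii) and the tails by (ii), let $z\to\infty$ and then $T\to\infty$, and finish the integrated claim by dominated convergence. The only difference is bookkeeping: the paper restricts the cell-replacement term to $\{\rho_{\bm\beta}\le T\}$ and bounds the tails $\int_{E_z(p)\cap\{\rho_{\bm\beta}>T\}}H_z$ and $\int_{E_\infty(p)\cap\{\rho_{\bm\beta}>T\}}H_\infty$ directly by (ii), whereas you apply (iii) on all of $\R^d$ and then use it again to move the $H_\infty$ tail from $E_z$ to $E_\infty$. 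Both versions tacitly assume local integrability of the integrands, which the lemma leaves implicit.
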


\begin{proof}
Put $B_T:=\{\rho_{\bm\beta}\le T\}$.  For each $p$,
\begin{align*}
 \int_{E_z(p)}H_z(p,t)\dd t-\int_{E_\infty(p)}H_\infty(p,t)\dd t
 ={}&\int_{E_z(p)\cap B_T}\bigl(H_z-H_\infty\bigr)(p,t)\dd t\\
 &+\int_{B_T}\bigl(\ind_{E_z(p)}-\ind_{E_\infty(p)}\bigr)(t)
 H_\infty(p,t)\dd t\\
 &+\int_{E_z(p)\setminus B_T}H_z(p,t)\dd t
 -\int_{E_\infty(p)\setminus B_T}H_\infty(p,t)\dd t.
\end{align*}
After taking absolute values and the supremum over $p$, the first term tends to
zero by~(i), the second by~(iii), and the last two are uniformly small by~(ii)
when $T$ is large.  Letting first $z\to\infty$ and then $T\to\infty$ proves the
uniform statement.  The final claim is the dominated convergence theorem
applied to the fiber integrals.
\end{proof}

The rescaled model need not be coercive on the whole ambient space.  The
scaled cells are instead required to converge and to retain their mass in a
weighted integrable region.  Uniform coercivity is one sufficient condition;
direct tightness on the rescaled cells is another.

\begin{theorem}[Uniform profile asymptotics under cellwise tightness]
\label{thm:profile-leading}
Let $P$ be a measurable parameter space, let $U\subset\R^d$ be a fixed bounded
neighborhood of the origin, and let $\Afib(p)\subset U$ and
$\Afib^\infty(p)\subset\R^d$ be measurable families of admissible cells; that is,
both cell graphs are measurable in the corresponding product $\sigma$-algebras.
Fix an anisotropy $\bm\beta$, put $Q:=Q_{\bm\beta}$, and let
$c:P\to(0,\infty)$ be measurable and satisfy~\eqref{eq:c-bounds}.  Let
$G:P\times\R^d\to[0,\infty)$ be measurable and suppose that, for some
$g_+<\infty$,
\begin{equation}\label{eq:profile-model-upper-bound}
 G(p,D_u^{\bm\beta}t)=uG(p,t),
 \qquad
 \sup_{p\in P}\sup_{\rho_{\bm\beta}(t)=1}G(p,t)\le g_+.
\end{equation}

Let $\mu\ge0$.  Let $\phi$ and $b$ be measurable on
\[
 \{(p,s)\in P\times\R^d:s\in\Afib(p)\},
\]
with $\phi(p,s)\ge\mu$, and let $b_0:P\to\C$ be bounded and measurable.
Let $\mathcal K:[0,\infty)\to\C$ be measurable, and put
\begin{equation}\label{eq:profile-integral-definition}
 \cI_{\mathcal K}(z,p):=\int_{\Afib(p)} b(p,s)\mathcal K(z\phi(p,s))\dd s.
\end{equation}
For
$q_z(p):=(zc(p))^{-1}$ define
\begin{equation}\label{eq:profile-rescaled-data}
 E_z(p):=D_{zc(p)}^{\bm\beta}\Afib(p),
 \qquad
 X_z(p,t):=z\bigl[\phi(p,D_{q_z(p)}^{\bm\beta}t)-\mu\bigr].
\end{equation}
Assume that $b$ is uniformly bounded on the graph of $\Afib$ and, for every
$T<\infty$,
\begin{equation}\label{eq:profile-local-convergence}
 \sup_{p\in P}
 \sup_{\substack{t\in E_z(p)\\ \rho_{\bm\beta}(t)\le T}}
 \left(
 |b(p,D_{q_z(p)}^{\bm\beta}t)-b_0(p)|
 +|X_z(p,t)-G(p,t)|
 \right)\longrightarrow0.
\end{equation}
Suppose also that for some $\kappa_G>0$ and all sufficiently large $z$,
\begin{equation}\label{eq:profile-scaled-lower-bound}
 X_z(p,t)\ge\kappa_G G(p,t),
 \qquad p\in P,\quad t\in E_z(p).
\end{equation}

Let $\mathcal N_{\mathcal K}(z)>0$ and define the shifted normalized profiles
\begin{equation}\label{eq:normalized-profile-family}
 \mathcal K_z(v):=\frac{\mathcal K(z\mu+v)}{\mathcal N_{\mathcal K}(z)},
 \qquad v\ge0.
\end{equation}
Assume that $\mathcal K_z\to \mathcal K_\infty$ locally uniformly on $[0,\infty)$ for a
continuous function $\mathcal K_\infty$, and that for some constants $C<\infty$,
$r\ge0$, and $\eta>0$,
\begin{equation}\label{eq:profile-envelope}
 |\mathcal K_z(v)|\le C(1+v)^r e^{-\eta v},
 \qquad v\ge0,
\end{equation}
for all sufficiently large $z$.

Assume finally that there exists
\begin{equation}\label{eq:profile-gamma-range}
 0<\gamma<\min\{1,\eta\kappa_G,\eta\}
\end{equation}
such that the cells converge in the weighted measure
\begin{equation}\label{eq:profile-cell-convergence}
 \sup_{p\in P}\int_{\R^d}
 \left|\ind_{E_z(p)}(t)-\ind_{\Afib^\infty(p)}(t)\right|
 e^{-\gamma G(p,t)}\dd t\longrightarrow0,
\end{equation}
and the exact scaled cells are uniformly tight:
\begin{equation}\label{eq:profile-cell-tightness}
 \lim_{T\to\infty}\limsup_{z\to\infty}\sup_{p\in P}
 \int_{E_z(p)\cap\{\rho_{\bm\beta}>T\}}
 e^{-\gamma G(p,t)}\dd t=0.
\end{equation}
Then the exact and limiting fiber integrals are absolutely convergent for all
sufficiently large $z$, and
\begin{equation}\label{eq:uniform-profile-leading}
 \sup_{p\in P}\left|
 \frac{(zc(p))^Q}{\mathcal N_{\mathcal K}(z)}\cI_{\mathcal K}(z,p)
 -b_0(p)\int_{\Afib^\infty(p)}\mathcal K_\infty(G(p,t))\dd t
 \right|\longrightarrow0.
\end{equation}
If $P$ is equipped with a finite measure $\nu$, then the coefficient below is
absolutely integrable and
\begin{multline}\label{eq:integrated-profile-leading}
 \int_P\cI_{\mathcal K}(z,p)\dd\nu(p)\\
 =\mathcal N_{\mathcal K}(z)z^{-Q}\left[
 \int_P c(p)^{-Q}b_0(p)
 \left(\int_{\Afib^\infty(p)}\mathcal K_\infty(G(p,t))\dd t\right)
 \dd\nu(p)+o(1)\right].
\end{multline}
\end{theorem}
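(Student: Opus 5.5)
We change variables $s=D_{q_z(p)}^{\bm\beta}t$, whose Jacobian is $q_z(p)^Q=(zc(p))^{-Q}$. This gives
\[
 \frac{(zc(p))^Q}{\mathcal N_{\mathcal K}(z)}\cI_{\mathcal K}(z,p)
 =\int_{E_z(p)}H_z(p,t)\dd t,
\]
with
\[
 H_z(p,t):=b(p,D_{q_z(p)}^{\bm\beta}t)\,\mathcal K_z(X_z(p,t)).
\]
Here we used $z\phi=z\mu+X_z$ and $X_z\ge0$, which follows from $\phi\ge\mu$. We set $H_\infty(p,t):=b_0(p)\mathcal K_\infty(G(p,t))$ and $E_\infty(p):=\Afib^\infty(p)$. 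The claim~\eqref{eq:uniform-profile-leading} is then exactly the conclusion~\eqref{eq:scaled-convergence-principle} of Lemma~\ref{lem:scaled-integral-convergence}. It remains to verify (i)--(iii) of that lemma and absolute convergence.

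\emph{Envelope and tails.} By~\eqref{eq:profile-scaled-lower-bound} and~\eqref{eq:profile-envelope}, for large $z$ and $t\in E_z(p)$,
\[
 |H_z|\le \|b\|_\infty C(1+X_z)^re^{-\eta X_z}.
\]
Since $\gamma<\eta\kappa_G$, we have $(1+v)^re^{-\eta v}\le C'e^{-(\gamma/\kappa_G)v}$ on $v\ge0$. Hence $|H_z|\le C''e^{-\gamma G}$ on $E_z(p)$. Letting $z\to\infty$ pointwise in the envelope gives
\[
 |\mathcal K_\infty(v)|\le C(1+v)^re^{-\eta v},
 \qquad\text{so}\qquad
 |H_\infty|\le C'''e^{-\gamma G}
\]
(using $\gamma<\eta$). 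Absolute convergence of the exact fiber integral is immediate since $\Afib(p)\subset U$ is bounded. For the limit integral, we bound
\[
 \int_{\Afib^\infty}e^{-\gamma G}\le\int_{E_z}e^{-\gamma G}+\int|\ind_{E_z}-\ind_{\Afib^\infty}|e^{-\gamma G}.
\]
The first term is finite because $E_z$ lies in the bounded dilate of $U$, on which $G$ is nonnegative, and the second is small by~\eqref{eq:profile-cell-convergence}. Tightness (ii) for the $E_z$ term is then~\eqref{eq:profile-cell-tightness}. For the $\Afib^\infty$ term, we split $\ind_{\Afib^\infty}\le\ind_{E_z}+|\ind_{E_z}-\ind_{\Afib^\infty}|$ and use~\eqref{eq:profile-cell-tightness} together with~\eqref{eq:profile-cell-convergence}. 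This works because $T$-tails of the limit do not depend on $z$, so we may take $\limsup_z$. Condition (iii) follows from the bound $|H_\infty|\le C'''e^{-\gamma G}$ and~\eqref{eq:profile-cell-convergence}.

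\emph{Local convergence (i).} This is the main point. On $\{\rho_{\bm\beta}\le T\}$ we have $G\le g_+T$ by homogeneity and~\eqref{eq:profile-model-upper-bound}. By~\eqref{eq:profile-local-convergence}, $X_z\le g_+T+1$ there for large $z$, so all arguments of $\mathcal K_z$ and $\mathcal K_\infty$ lie in a fixed compact interval $[0,R_T]$. We write
\[
 |H_z-H_\infty|\le|b-b_0||\mathcal K_z(X_z)|+|b_0|\bigl(|\mathcal K_z(X_z)-\mathcal K_\infty(X_z)|+|\mathcal K_\infty(X_z)-\mathcal K_\infty(G)|\bigr).
\]
The first term tends to zero uniformly since $\mathcal K_z$ is uniformly bounded. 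The second tends to zero by locally uniform convergence on $[0,R_T]$. The third tends to zero by uniform continuity of $\mathcal K_\infty$ on $[0,R_T]$ combined with $\sup|X_z-G|\to0$. The domain $\{\rho_{\bm\beta}\le T\}$ has finite volume $T^Q\vol(B_1)$, so the supremum bound integrates to give (i). Measurability of all $p$-sections is inherited from the measurable cell graphs.

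\emph{Integrated statement.} Multiplying~\eqref{eq:uniform-profile-leading} by $\mathcal N_{\mathcal K}(z)(zc(p))^{-Q}$ leaves an $o(1)$ error that is uniform in $p$. Since $c^{-Q}\le c_-^{-Q}$, integrating against the finite measure $\nu$ yields~\eqref{eq:integrated-profile-leading}. The coefficient $p\mapsto c^{-Q}b_0\int_{\Afib^\infty}\mathcal K_\infty(G)$ is measurable by Fubini on the measurable cell graph. It is bounded, hence $\nu$-integrable, because the uniform estimate of the limit mass above gives $\sup_p\int_{\Afib^\infty(p)}e^{-\gamma G}<\infty$. The expected obstacle is exactly this uniform-in-$p$ control of the limiting cell's mass without ambient coercivity, which is handled by transferring it from $E_z$ via~\eqref{eq:profile-cell-convergence}.
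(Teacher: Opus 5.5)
Your proposal is correct and follows the paper's proof essentially step for step. You use the same change of variables to $\int_{E_z(p)}H_z\,dt$, verify conditions (i)--(iii) of Lemma~\ref{lem:scaled-integral-convergence} through the common majorant $Ce^{-\gamma G}$ (with rate $\gamma/\kappa_G$ for $H_z$ and $\gamma$ for $H_\infty$), and transfer tightness to the limiting cells via~\eqref{eq:profile-cell-convergence}, while your explicit bound on $\sup_p\int_{\Afib^\infty(p)}e^{-\gamma G}$ (one fixed large $z$ together with $|E_z(p)|\le (zc_+)^Q|U|$) simply spells out what the paper summarizes as ``the same estimates.''
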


\begin{proof}
The change of variables $s=D_{q_z(p)}^{\bm\beta}t$ gives
\begin{equation}\label{eq:profile-scaled-proof}
 \frac{(zc(p))^Q}{\mathcal N_{\mathcal K}(z)}\cI_{\mathcal K}(z,p)
 =\int_{E_z(p)}H_z(p,t)\dd t,
\end{equation}
where
\[
 H_z(p,t):=b(p,D_{q_z(p)}^{\bm\beta}t)\mathcal K_z(X_z(p,t)),
 \qquad
 H_\infty(p,t):=b_0(p)\mathcal K_\infty(G(p,t)).
\]
We verify Lemma~\ref{lem:scaled-integral-convergence}.  By
\eqref{eq:profile-model-upper-bound}, $G(p,t)\le g_+T$ whenever
$\rho_{\bm\beta}(t)\le T$.  Hence
\eqref{eq:profile-local-convergence}, local uniform convergence of $\mathcal K_z$, and
uniform continuity of $\mathcal K_\infty$ on compact intervals imply local $L^1$
convergence of $H_z$ to $H_\infty$, uniformly in $p$.

Passing to the locally uniform limit in~\eqref{eq:profile-envelope} gives
$|\mathcal K_\infty(v)|\le C(1+v)^r e^{-\eta v}$.  For every
$0<\theta<\eta$,
\[
 (1+v)^r e^{-\eta v}\le C_\theta e^{-\theta v},
 \qquad v\ge0.
\]
Apply this estimate with $\theta=\gamma/\kappa_G$ to $H_z$ and with
$\theta=\gamma$ to $H_\infty$.  The inequalities in
\eqref{eq:profile-gamma-range} and the lower bound
\eqref{eq:profile-scaled-lower-bound} then yield
\begin{equation}\label{eq:profile-common-majorant}
 |H_z(p,t)|\le C'e^{-\gamma G(p,t)}
 \quad(t\in E_z(p)),
 \qquad
 |H_\infty(p,t)|\le C'e^{-\gamma G(p,t)}.
\end{equation}
Thus~\eqref{eq:profile-cell-tightness} controls the exact tails.  The limiting
cells satisfy the same tightness estimate because, for every $T$,
\begin{align*}
 &\int_{\Afib^\infty(p)\cap\{\rho_{\bm\beta}>T\}}
 e^{-\gamma G(p,t)}\dd t\\
 &\quad\le
 \int_{E_z(p)\cap\{\rho_{\bm\beta}>T\}}
 e^{-\gamma G(p,t)}\dd t
 +\int_{\R^d}|\ind_{E_z(p)}-\ind_{\Afib^\infty(p)}|
 e^{-\gamma G(p,t)}\dd t.
\end{align*}
Taking the supremum over $p$, then the upper limit as $z\to\infty$, and finally
$T\to\infty$ proves uniform tightness of the limiting integrands.  Finally,
\eqref{eq:profile-cell-convergence} and
\eqref{eq:profile-common-majorant} give the cell-replacement condition in
Lemma~\ref{lem:scaled-integral-convergence}.  The lemma proves
\eqref{eq:uniform-profile-leading}.

The same estimates give a uniform bound for the limiting fiber integrals;
measurability follows from the measurable graph of $\Afib^\infty$ and Tonelli's
theorem.  The integrated statement follows from uniform convergence, the
bounds~\eqref{eq:c-bounds}, and finiteness of $\nu(P)$.
\end{proof}

\begin{remark}[Two routes to cellwise tightness]
\label{rem:profile-two-routes}
There are two principal ways to verify
\eqref{eq:profile-cell-convergence}--\eqref{eq:profile-cell-tightness}.  If $G$
satisfies the coercivity bounds~\eqref{eq:G-sphere-bounds}, then
Lemma~\ref{lem:moments} supplies tightness and the weighted cell
limit~\eqref{eq:weighted-cell-limit} supplies cell convergence.  Alternatively,
$G$ may vanish along unbounded ambient directions while $e^{-\gamma G}$ remains
integrable and tight on the admissible cells.  This occurs in joint frontier
rescalings after the frontier and normal variables have been combined into one
scaled fiber.
\end{remark}

\begin{corollary}[Coercive exponential leading term on one normal-cell piece]
\label{cor:coercive-leading-piece}
Under Assumptions~\ref{ass:tube} and~\ref{ass:leading-model}, define
\begin{equation}\label{eq:leading-profile}
  \cL(p):=b_0(p)\int_{\Afib^\infty(p)}e^{-G(p,t)}\dd t.
\end{equation}
Then
\begin{equation}\label{eq:uniform-leading}
  \sup_{p\in P}\left|
  e^{zf_0}(zc(p))^Q\cI(z,p)-\cL(p)
  \right|\longrightarrow0.
\end{equation}
Equivalently, there are functions $r_z:P\to\C$ with
$\sup_P|r_z|\to0$ such that
\begin{equation}\label{eq:additive-leading}
  \cI(z,p)=e^{-zf_0}z^{-Q}c(p)^{-Q}\bigl(\cL(p)+r_z(p)\bigr).
\end{equation}
In particular,~\eqref{eq:additive-leading} remains valid when $\cL(p)=0$.  If
$\inf_{p\in P}|\cL(p)|>0$, then the corresponding asymptotic equivalence is
uniform in $p$.
\end{corollary}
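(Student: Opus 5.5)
The plan is to apply Theorem~\ref{thm:profile-leading} with the exponential profile. Take $\Afib(p)$ there to be $\Afib(p)\cap B_\eps$, so $U=B_\eps$. Set $\phi(p,s):=f(F(p,s))$ and $\mu:=f_0$, and let $b$ be the effective amplitude~\eqref{eq:effective-amplitude}. Choose $\mathcal K(v)=e^{-v}$ and $\mathcal N_{\mathcal K}(z):=e^{-zf_0}$. Then $\mathcal K_z(v)=e^{-v}$ for every $z$, so $\mathcal K_\infty=\mathcal K_z$ trivially, and the envelope~\eqref{eq:profile-envelope} holds with $C=1$, $r=0$, $\eta=1$. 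With these choices $\cI_{\mathcal K}(z,p)=\cI(z,p)$, the fiber integral~\eqref{eq:fiber-integral}, and the prefactor in~\eqref{eq:uniform-profile-leading} becomes $e^{zf_0}(zc(p))^Q$. The limiting coefficient is then exactly $\cL(p)$ from~\eqref{eq:leading-profile}, so~\eqref{eq:uniform-profile-leading} is~\eqref{eq:uniform-leading}.

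The work is to verify the hypotheses of the theorem, and I would do them in four steps.

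\emph{Homogeneity and upper bound.} Assumption~\ref{ass:leading-model} gives~\eqref{eq:G-homogeneous} and~\eqref{eq:G-sphere-bounds}, which imply~\eqref{eq:profile-model-upper-bound}.

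\emph{Exact rescaled phase.} By homogeneity of $G$, formula~\eqref{eq:phase-leading} gives
\[
 X_z(p,t)=z\,c(p)\,G\bigl(p,D_{q_z(p)}^{\bm\beta}t\bigr)\bigl(1+\mathcal R\bigr)=G(p,t)\bigl(1+\mathcal R(p,D_{q_z(p)}^{\bm\beta}t)\bigr).
\]
Since $|\mathcal R|\le\delta_0$, this yields the lower bound~\eqref{eq:profile-scaled-lower-bound} with $\kappa_G=1-\delta_0$.

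\emph{Local convergence.} On $\{\rho_{\bm\beta}\le T\}$ the gauge of the unscaled point is $\rho_{\bm\beta}(D_{q_z}^{\bm\beta}t)=q_z\rho_{\bm\beta}(t)\le T/(zc_-)\to0$, uniformly in $p$ by~\eqref{eq:c-bounds}. Hence~\eqref{eq:R-uniform} together with $G\le g_+T$ gives $|X_z-G|\le g_+T\sup|\mathcal R|\to0$. In the same way,~\eqref{eq:amplitude-uniform} gives the amplitude part of~\eqref{eq:profile-local-convergence}, and $b$ is bounded by assumption.

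\emph{Cells and tightness.} Choose $\gamma$ with $0<\gamma<\min\{1,1-\delta_0\}=1-\delta_0$ and write $\gamma=1-\delta$. Then the cell convergence~\eqref{eq:profile-cell-convergence} is exactly~\eqref{eq:weighted-cell-limit}. For tightness~\eqref{eq:profile-cell-tightness}, bound the integral over $E_z(p)\cap\{\rho_{\bm\beta}>T\}$ by the integral over all of $\{\rho_{\bm\beta}>T\}$; by coercivity and Lemma~\ref{lem:moments} this is uniformly small in $p$ as $T\to\infty$. This is the coercive route of Remark~\ref{rem:profile-two-routes}.

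I expect the main obstacle to be the bookkeeping in the phase step. The remainder is controlled in the gauge $\rho_{\bm\beta}(s)$, whereas the truncation is in the Euclidean ball. I would handle this by taking $\eps$ small, which makes $|\mathcal R|\le\delta_0$ on the whole truncated cell; the lower bound is then global, while the smallness of $\mathcal R$ is needed only where $\rho_{\bm\beta}(s)$ is small, which is exactly where the local convergence is used. One should also check that $\mathcal R(p,0)=0$ causes no issue at $t=0$, where both sides of the phase identity vanish.

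Finally, the additive form~\eqref{eq:additive-leading} follows by setting
\[
 r_z(p):=e^{zf_0}(zc(p))^Q\cI(z,p)-\cL(p),
\]
whose supremum tends to zero by~\eqref{eq:uniform-leading}; nothing here requires $\cL\ne0$. If $\inf_P|\cL|>0$, then $\sup_P|r_z/\cL|\to0$, which gives uniform asymptotic equivalence.
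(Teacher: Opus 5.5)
Your hypothesis checks are the same as the paper's: homogeneity, $X_z=G(1+\mathcal R)\ge(1-\delta_0)G$, local convergence from \eqref{eq:R-uniform} and \eqref{eq:amplitude-uniform}, cell convergence from \eqref{eq:weighted-cell-limit} with $\delta=1-\gamma$, and tightness from Lemma~\ref{lem:moments}. Your range $0<\gamma<1-\delta_0$ is admissible under \eqref{eq:profile-gamma-range}; the paper takes a smaller $\gamma$ than necessary.

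The gap is in your normalization. Theorem~\ref{thm:profile-leading} is stated for $\mu\ge0$ and for a kernel $\mathcal K$ defined only on $[0,\infty)$. The corollary, however, only assumes that $f$ is bounded below, so $f_0=\inf f$ may be negative. In that case your choice $\phi=f\circ F$, $\mu=f_0$ violates the hypothesis $\mu\ge0$. The arguments $z\phi(p,s)$ are also negative near the zero section. Moreover, $\mathcal K_z(v)=\mathcal K(zf_0+v)/\mathcal N_{\mathcal K}(z)$ is undefined for $0\le v<z|f_0|$. The paper flags exactly this point: your normalization is an equivalent route only when $f_0\ge0$.

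The repair is the paper's own choice. Write
$\cI(z,p)=e^{-zf_0}\int_{\Afib(p)\cap B_\eps}b(p,s)\,e^{-z(f(F(p,s))-f_0)}\dd s$.
Then apply the theorem with $\phi=f\circ F-f_0\ge0$, $\mu=0$, $\mathcal K(y)=e^{-y}$, and $\mathcal N_{\mathcal K}\equiv1$. The rescaled phase $X_z$ is then literally the same function you computed, so all your verifications carry over unchanged. Restoring the factor $e^{-zf_0}$ yields \eqref{eq:uniform-leading}. Your derivation of \eqref{eq:additive-leading} and of the uniform equivalence when $\inf_P|\cL|>0$ is fine as written.
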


\begin{proof}
Apply Theorem~\ref{thm:profile-leading} with the admissible cell
$\Afib(p)\cap B_\eps$ and with
limiting cell $\Afib^\infty(p)$.
\[
 \phi(p,s)=f(F(p,s))-f_0,
 \qquad \mu=0,
 \qquad \mathcal K(y)=e^{-y},
 \qquad \mathcal N_{\mathcal K}(z)=1.
\]
Then $\mathcal K_z=\mathcal K_\infty=e^{-v}$ and the envelope~\eqref{eq:profile-envelope} is
immediate.  Assumption~\ref{ass:leading-model} gives
\[
 X_z(p,t)=G(p,t)\bigl(1+\mathcal R(p,D_{q_z(p)}^{\bm\beta}t)\bigr)
\]
with local uniform convergence to $G$ and, after shrinking $\eps$ if needed,
$X_z\ge (1-\delta_0)G$.  Choose
$0<\gamma<\min\{1,(1-\delta_0)/2,1/2\}$.  The weighted cell limit
\eqref{eq:weighted-cell-limit}, with $\delta=1-\gamma$, gives
\eqref{eq:profile-cell-convergence}, while Lemma~\ref{lem:moments} gives
\eqref{eq:profile-cell-tightness}.  Restoring the external factor $e^{-zf_0}$
gives~\eqref{eq:uniform-leading} and hence~\eqref{eq:additive-leading}.
\end{proof}

\begin{corollary}[Leading coefficient in Fermi coordinates]
\label{cor:fermi-leading-coefficient}
In Corollary~\ref{cor:coercive-leading-piece}, suppose that $F$ is a Fermi map
on the piece, that the optional normalization~\eqref{eq:J-uniform} holds on that
piece, and that $a$ is continuous at the zero section uniformly over $P$.
Then $b_0(p)=a(p)$ and
\[
 \cL(p)=a(p)\int_{\Afib^\infty(p)}e^{-G(p,t)}\dd t.
\]
\end{corollary}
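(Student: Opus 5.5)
The plan is to reduce the statement to Corollary~\ref{cor:coercive-leading-piece}: the formula for $\cL(p)$ there is $b_0(p)\int_{\Afib^\infty(p)}e^{-G(p,t)}\dd t$. So it suffices to show that, under the Fermi hypotheses, the amplitude condition~\eqref{eq:amplitude-uniform} of Assumption~\ref{ass:leading-model} holds with $b_0(p)=a(p)$. Once that is done, the coefficient formula follows by substitution.

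The key steps are as follows. First, write $b(p,s)=a(F(p,s))J(p,s)$ and split
\[
 b(p,s)-a(p)=\bigl(a(F(p,s))-a(p)\bigr)J(p,s)+a(p)\bigl(J(p,s)-1\bigr).
\]
Second, handle the Jacobian term. By~\eqref{eq:J-uniform}, $\sup|J-1|\to0$ as $|s|\to0$ uniformly over $p\in P$; in particular $J$ is bounded near the zero section. Boundedness of $a$ on $P$ follows because $b$ is assumed bounded and $J$ is close to $1$ near $s=0$. Alternatively, uniform continuity at the zero section gives $a$ bounded on a neighborhood of $P$ once one point is controlled. Hence the second term tends to zero uniformly.

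Third, handle the first term. For a Fermi map $F(p,s)=\exp_p(\sum_i s_i\eta_i(p))$ with an orthonormal frame, we have $\dist_g(F(p,s),p)\le|s|$ for small $|s|$, because the geodesic $\tau\mapsto\exp_p(\tau v)$ has length $|v|$. So $F(p,s)\to p$ uniformly in $p$. Uniform continuity of $a$ at the zero section over $P$ then gives $\sup_p|a(F(p,s))-a(p)|\to0$ as $|s|\to0$.

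Finally, the supremum in~\eqref{eq:amplitude-uniform} is taken over $\rho_{\bm\beta}(s)\le r$, so these Euclidean estimates must be converted to the gauge. This uses the inclusion $\{\rho_{\bm\beta}\le r\}=D_r^{\bm\beta}\overline{B_1}\subset \overline{B}_{\max_i r^{\beta_i}}$. Since $\max_i r^{\beta_i}\to0$ as $r\downarrow0$, the Euclidean smallness transfers to gauge smallness.

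The only genuinely delicate point is interpreting ``continuous at the zero section uniformly over $P$''. I take it to mean that for every $\varepsilon'>0$ there is a $\delta>0$ such that $\dist_g(x,p)<\delta$ with $p\in P$ implies $|a(x)-a(p)|<\varepsilon'$. This, combined with the uniform displacement bound from the exponential map, gives the required uniformity. All other hypotheses of Corollary~\ref{cor:coercive-leading-piece} are assumed unchanged, so its conclusion applies with $b_0=a|_P$.
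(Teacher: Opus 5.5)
Your proposal is correct and is essentially the paper's argument. The paper notes that the Fermi structure and \eqref{eq:J-uniform} give $F(p,s)\to p$ and $J(p,s)\to1$ uniformly as $s\to0$ in the adapted gauge, so $(a\circ F)J\to a|_P$ and Corollary~\ref{cor:coercive-leading-piece} applies with $b_0=a|_P$. Your splitting, the bound $\dist_g(F(p,s),p)\le|s|$, and the inclusion $\{\rho_{\bm\beta}\le r\}\subset\overline B_{\max_i r^{\beta_i}}$ make those steps explicit. One small fix: boundedness of $a|_P$ follows from uniform continuity on $P$ together with total boundedness of $P\subset M$ (a finite $\delta$-net of $P$), not from controlling a single point.
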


\begin{proof}
The assumptions give $F(p,s)\to p$ and $J(p,s)\to1$ uniformly as
$s\to0$ in the adapted gauge.  Hence
$(a\circ F)J\to a|_P$, so
Corollary~\ref{cor:coercive-leading-piece} applies with $b_0=a|_P$.
\end{proof}

\begin{remark}[Adapted coordinates and effective amplitudes]
The profile theorem starts after a local change of variables has been
supplied; all coordinate Jacobians are included in the effective amplitude
$b(p,s)=a(F(p,s))J(p,s)$.  Product, collar, singular-value, and other adapted
coordinates are treated in the same way.  The exponential corollary is the case
$\mathcal K(y)=e^{-y}$.  When $f_0\ge0$, it may equivalently be obtained from
Theorem~\ref{thm:profile-leading} by taking $\phi=f\circ F$, $\mu=f_0$, and
$\mathcal N_{\mathcal K}(z)=e^{-zf_0}$.
\end{remark}

\begin{example}[A curved cell survives the anisotropic blow-up]
\label{ex:curved-anisotropic-cell}
Let
\[
 \Omega_\eps:=\{(s_1,s_2)\in\R^2:s_2\ge s_1^2,\ |s|<\eps\},
 \qquad
 f(s_1,s_2)=s_1^4+s_2^2,
 \qquad a\equiv1.
\]
The minimum set consists only of the origin.  We use the relative-domain
formulation with $P=\{0\}$, $F(s)=s$, $J\equiv1$, $c\equiv1$, and
$\mathcal R\equiv0$.

Corollary~\ref{cor:coercive-leading-piece} applies with
$\bm\beta=(1/4,1/2)$, $Q=3/4$, and
\[
 G(s_1,s_2)=s_1^4+s_2^2.
\]
Indeed,
\[
 D_u^{\bm\beta}\{s_2\ge s_1^2\}=\{s_2\ge s_1^2\}
 \qquad(u>0),
\]
so the relative cell is exactly invariant under the anisotropic dilation and
its truncated rescalings converge to
$\Afib^\infty=\{t_2\ge t_1^2\}$.
Corollary~\ref{cor:coercive-leading-piece} therefore gives
\[
 \int_{\Omega_\eps}e^{-zf(s)}\dd s
 =z^{-3/4}\left[
 \int_{t_2\ge t_1^2}e^{-(t_1^4+t_2^2)}\dd t+o(1)
 \right].
\]
The Euclidean tangent cone of $\Omega_\eps$ at the origin is the half-space
$\{t_2\ge0\}$, but it is not the limiting cell relevant to
Corollary~\ref{cor:coercive-leading-piece}: the curved boundary survives in
the leading coefficient because its geometry is balanced by the anisotropic
phase weights.
\end{example}

\begin{corollary}[Anisotropically invariant cones]\label{cor:invariant-cones}
In Corollary~\ref{cor:coercive-leading-piece}, suppose that the admissible cells are cones,
$\Afib(p)=C(p)$, and
\begin{equation}\label{eq:cone-invariance}
  D_u^{\bm\beta}C(p)=C(p)
  \qquad(p\in P,\ u>0).
\end{equation}
Then~\eqref{eq:weighted-cell-limit} holds with
$\Afib^\infty(p)=C(p)$.  Consequently,
\begin{equation}\label{eq:invariant-cone-leading}
  \cI(z,p)=e^{-zf_0}z^{-Q}c(p)^{-Q}
  \left[b_0(p)\int_{C(p)}e^{-G(p,t)}\dd t+o(1)\right]
\end{equation}
uniformly in $p$.
\end{corollary}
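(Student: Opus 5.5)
The plan is to verify the weighted cell limit~\eqref{eq:weighted-cell-limit} directly with $\Afib^\infty(p)=C(p)$, and then read off~\eqref{eq:invariant-cone-leading} from Corollary~\ref{cor:coercive-leading-piece}. All other parts of Assumption~\ref{ass:leading-model} are untouched by the statement.

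The first step uses invariance. By~\eqref{eq:cone-invariance}, and because $D_u^{\bm\beta}$ is a linear bijection,
\[
 D_{zc(p)}^{\bm\beta}\bigl(C(p)\cap B_\eps\bigr)=C(p)\cap D_{zc(p)}^{\bm\beta}B_\eps .
\]
The integrand in~\eqref{eq:weighted-cell-limit} is therefore
\[
 \ind_{C(p)}(t)\,\ind_{\R^d\setminus D_{zc(p)}^{\bm\beta}B_\eps}(t)\,e^{-(1-\delta)G(p,t)} .
\]
Hence it suffices to bound
\[
 \sup_p\int_{\R^d\setminus D_{zc(p)}^{\bm\beta}B_\eps}e^{-(1-\delta)G(p,t)}\dd t .
\]

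The second step shows that the truncation only removes a tail. Let $u=zc(p)\ge zc_-$. For $\rho_{\bm\beta}(t)\le T$, the point $s=D_{1/u}^{\bm\beta}t$ satisfies $\rho_{\bm\beta}(s)\le T/u$. Its Euclidean norm is at most $\max_i (T/u)^{\beta_i}$, which is $<\eps$ once $zc_-$ is large, uniformly in $p$. So the complement of $D_u^{\bm\beta}B_\eps$ lies in $\{\rho_{\bm\beta}>T\}$. The integral over that set is at most $\sup_p\int_{\rho_{\bm\beta}>T}e^{-(1-\delta)G(p,t)}\dd t$. By Lemma~\ref{lem:moments}, which is available since $G$ satisfies~\eqref{eq:G-homogeneous} and~\eqref{eq:G-sphere-bounds} uniformly, this tends to zero as $T\to\infty$. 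Taking first $z\to\infty$ and then $T\to\infty$ gives~\eqref{eq:weighted-cell-limit}.

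Corollary~\ref{cor:coercive-leading-piece} then gives~\eqref{eq:additive-leading} with $\cL(p)=b_0(p)\int_{C(p)}e^{-G(p,t)}\dd t$, and this is exactly~\eqref{eq:invariant-cone-leading}. The $o(1)$ is uniform because $c(p)^{-Q}$ is bounded by~\eqref{eq:c-bounds}. Measurability of the cell graph of $C$ is already part of the standing data. I expect no real obstacle here. The only point needing care is the uniformity in $p$ of the inclusion $B_T^{\bm\beta}\subset D_{zc(p)}^{\bm\beta}B_\eps$, and the lower bound $c\ge c_-$ handles it.
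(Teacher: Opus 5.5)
Your proposal is correct and follows the paper's proof. Cone invariance turns the cell discrepancy into $C(p)$ minus the expanding ellipsoid $D_{zc(p)}^{\bm\beta}B_\eps$, whose complement escapes to infinity in the gauge uniformly in $p$ because $c\ge c_-$, and Lemma~\ref{lem:moments} then gives~\eqref{eq:weighted-cell-limit}. You make the escape step explicit via the inclusion $\{\rho_{\bm\beta}\le T\}\subset D_{zc(p)}^{\bm\beta}B_\eps$; the uniformity of the bracketed $o(1)$ already follows from $\sup_P|r_z|\to0$ in~\eqref{eq:additive-leading}, so your appeal to the bound on $c(p)^{-Q}$ is not needed.
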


\begin{proof}
Under~\eqref{eq:cone-invariance},
\[
 D_{zc(p)}^{\bm\beta}(C(p)\cap B_\eps)
 =C(p)\cap D_{zc(p)}^{\bm\beta}B_\eps.
\]
The complements of the expanding ellipsoids
$D_{zc(p)}^{\bm\beta}B_\eps$ escape to infinity in the gauge
$\rho_{\bm\beta}$ uniformly in $p$, because $c(p)\ge c_->0$.
Lemma~\ref{lem:moments} therefore gives~\eqref{eq:weighted-cell-limit}.
\end{proof}

Quantitative real-Laplace error estimates for nondegenerate interior and
smooth-boundary extrema are developed by \L{}api\'nski~\cite{Lapinski}, while
recent perturbative higher-order estimates for nondegenerate minima are given
by Fukuda--Kagaya--Ueda~\cite{FukudaKagayaUeda}.  The estimate below is the
corresponding bound inside the present homogeneous normal-cell regime.

\begin{theorem}[Quantitative leading estimate]\label{thm:quantitative-leading}
Retain the hypotheses of Corollary~\ref{cor:invariant-cones}.  Put
\[
 b(p,s):=a(F(p,s))J(p,s),\qquad
 \phi(p,s):=\frac{f(F(p,s))-f_0}{c(p)}.
\]
Suppose that there is a bounded measurable function $b_0:P\to\C$ and
that, for some $\eta>0$ and constants $L<\infty$ and $\kappa_G>0$, uniformly
on the truncated fibers,
\begin{equation}\label{eq:quantitative-jets}
 |b(p,s)-b_0(p)|\le L\rho_{\bm\beta}(s)^\eta,
 \qquad
 |\phi(p,s)-G(p,s)|\le L\rho_{\bm\beta}(s)^{1+\eta},
\end{equation}
and
\begin{equation}\label{eq:quantitative-lower-bound}
 \phi(p,s)\ge\kappa_G G(p,s).
\end{equation}
Then there is a constant $C$ such that, for all sufficiently large $z$,
\begin{equation}\label{eq:quantitative-leading-result}
 \sup_{p\in P}\left|
 e^{zf_0}(zc(p))^Q\cI(z,p)
 -b_0(p)\int_{C(p)}e^{-G(p,t)}\dd t
 \right|\le Cz^{-\eta}.
\end{equation}
Consequently, after integration over a finite-volume base,
\[
 I_\alpha(z)=e^{-zf_0}z^{-Q}\bigl(K_\alpha+O(z^{-\eta})\bigr),
\]
where
\[
 K_\alpha:=\int_P c(p)^{-Q}b_0(p)
 \left(\int_{C(p)}e^{-G(p,t)}\dd t\right)\dvol_P(p).
\]
\end{theorem}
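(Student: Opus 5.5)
Rescale and split the fiber integral into three error terms, each bounded by a moment integral of the type controlled in Lemma~\ref{lem:moments}.

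\textbf{Rescaling.} Set $u=zc(p)$ and $s=D_{1/u}^{\bm\beta}t$. Cone invariance~\eqref{eq:cone-invariance} then gives
\[
 e^{zf_0}(zc(p))^Q\cI(z,p)=\int_{C(p)\cap D_u^{\bm\beta}B_\eps} b(p,D_{1/u}^{\bm\beta}t)\,e^{-u\phi(p,D_{1/u}^{\bm\beta}t)}\dd t .
\]
Homogeneity gives $uG(p,D_{1/u}^{\bm\beta}t)=G(p,t)$ and $\rho_{\bm\beta}(D_{1/u}^{\bm\beta}t)=\rho_{\bm\beta}(t)/u$. Subtract $b_0(p)\int_{C(p)}e^{-G}\dd t$ and split the difference into three pieces:
\begin{itemize}
\item[(a)] $(b-b_0)e^{-u\phi}$ on the truncated scaled cell;
\item[(b)] $b_0\bigl(e^{-u\phi}-e^{-G}\bigr)$ on the truncated scaled cell;
\item[(c)] $b_0e^{-G}$ on $C(p)\setminus D_u^{\bm\beta}B_\eps$.
\end{itemize}

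\textbf{Term (a).} The jet bound~\eqref{eq:quantitative-jets} together with~\eqref{eq:quantitative-lower-bound} gives $|b-b_0|\le L u^{-\eta}\rho_{\bm\beta}(t)^\eta$ and $e^{-u\phi}\le e^{-\kappa_G G(p,t)}$. So (a) is at most
\[
 Lu^{-\eta}\int(1+\rho_{\bm\beta})^\eta e^{-\kappa_G G}\dd t .
\]
This integral is bounded uniformly in $p$ by~\eqref{eq:uniform-moments}. Since $u\ge c_-z$, term (a) is $O(z^{-\eta})$.

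\textbf{Term (c).} On the complement of $D_u^{\bm\beta}B_\eps$ we have $\rho_{\bm\beta}(t)\ge c\,u^{\beta_{\min}/\beta_{\max}}$, or more simply $\rho_{\bm\beta}\ge \min_i\eps^{1/\beta_i}\,u$ up to constants, because $|s|\ge\eps$ forces $\rho_{\bm\beta}(s)\ge r_\eps>0$. The bound~\eqref{eq:G-gauge-bounds} then makes (c) at most $Ce^{-g_-r_\eps u/2}$, which is exponentially small.

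\textbf{Term (b).} This is the main obstacle, since the phase remainder is not small relative to $G$ uniformly on the whole cell. Write $\Delta:=u\phi-G$. The jet bound gives $|\Delta|\le Lu^{-\eta}\rho_{\bm\beta}(t)^{1+\eta}$. Use
\[
 |e^{-u\phi}-e^{-G}|\le|\Delta|\,e^{-\min(u\phi,G)}\le|\Delta|\,e^{-\min(\kappa_G,1)G}.
\]
This uses~\eqref{eq:quantitative-lower-bound} and needs no smallness of $\Delta$, so no region splitting is required. Term (b) is therefore at most
\[
 |b_0|\,Lu^{-\eta}\int\rho_{\bm\beta}^{1+\eta}e^{-\min(\kappa_G,1)G}\dd t=O(z^{-\eta}),
\]
again by Lemma~\ref{lem:moments}. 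The point to check is that the bound $e^{-\min}$ follows from the mean value theorem applied to $x\mapsto e^{-x}$ on the interval between $u\phi$ and $G$, which holds.

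\textbf{Conclusion.} Summing (a), (b) and (c) gives~\eqref{eq:quantitative-leading-result}, with the supremum over $p$ controlled by $c_-\le c\le c_+$ and the boundedness of $b_0$. For the integrated statement, multiply by $c(p)^{-Q}$ and integrate over $P$. The error is $O(z^{-\eta})$ uniformly because $c\ge c_-$ and $\vol(P)<\infty$. The integrand defining $K_\alpha$ is bounded and measurable by Tonelli's theorem, as in the proof of Theorem~\ref{thm:profile-leading}.
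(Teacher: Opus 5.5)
Your proposal is correct and follows the paper's proof essentially step for step. You rescale by $D^{\bm\beta}_{zc(p)}$, use the jets and the same mean-value estimate $|e^{-x}-e^{-G}|\le|x-G|e^{-\min\{x,G\}}$ to bound the discrepancy on the truncated scaled cone by $Cz^{-\eta}(\rho_{\bm\beta}^\eta+\rho_{\bm\beta}^{1+\eta})e^{-\min\{1,\kappa_G\}G}$, integrate with Lemma~\ref{lem:moments}, and discard the part of $C(p)$ outside $D^{\bm\beta}_{zc(p)}B_\eps$, where the gauge is at least a fixed multiple of $zc(p)$. The only difference is cosmetic: you treat the amplitude and phase errors as separate terms (a) and (b), whereas the paper combines them in a single pointwise bound.
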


\begin{proof}
Set $q=(zc(p))^{-1}$ and
$x_q(p,t):=q^{-1}\phi(p,D_q^{\bm\beta}t)$.  On the rescaled coordinate
domain, homogeneity and~\eqref{eq:quantitative-jets} give
\[
 |b(p,D_q^{\bm\beta}t)-b_0(p)|
 \le Lq^\eta\rho_{\bm\beta}(t)^\eta,
 \qquad
 |x_q(p,t)-G(p,t)|
 \le Lq^\eta\rho_{\bm\beta}(t)^{1+\eta}.
\]
Also $x_q\ge\kappa_G G$.  With $\kappa_*:=\min\{1,\kappa_G\}$, the elementary
mean-value estimate
\[
 |e^{-x_q}-e^{-G}|
 \le |x_q-G|e^{-\min\{x_q,G\}}
 \le Lq^\eta\rho_{\bm\beta}(t)^{1+\eta}e^{-\kappa_* G}
\]
and the boundedness of $b_0$ give
\begin{align*}
 &\left|b(p,D_q^{\bm\beta}t)e^{-x_q(p,t)}
       -b_0(p)e^{-G(p,t)}\right|\\
 &\qquad\le Cq^\eta
 \bigl(\rho_{\bm\beta}(t)^\eta
       +\rho_{\bm\beta}(t)^{1+\eta}\bigr)
 e^{-\kappa_* G(p,t)}.
\end{align*}
Thus the integral error on the rescaled domain is $O(q^\eta)$, uniformly in
$p$, by Lemma~\ref{lem:moments}.  The omitted part of $C(p)$ lies outside
$D_{1/q}^{\bm\beta}B_\eps$.  Since
$\min_{|s|=\eps}\rho_{\bm\beta}(s)>0$, its gauge tends to infinity at rate
$q^{-1}$, and Lemma~\ref{lem:moments} makes this tail smaller than every power
of $q$.  Thus the error is $O(q^\eta)$ uniformly.  Finally,
$q^\eta\le c_-^{-\eta}z^{-\eta}$, proving~\eqref{eq:quantitative-leading-result}.
\end{proof}

\subsection{Integrated leading terms under pointwise cells or base degeneration}

The uniform profile theorem and its exponential corollary are convenient when
the cells, local models, and scale coefficient vary uniformly over the base.  The next result allows two common
degenerations.  The rescaled cells may converge only pointwise in the base,
and the coefficient $c(p)$ may approach zero near a frontier so that
$c(p)^{-Q}$ is not uniformly bounded.  Both effects are permitted as long as
the rescaled fiber integrals admit an integrable base majorant.

\begin{theorem}[Integrated leading term under pointwise cell convergence and an integrable base majorant]
\label{thm:integrated-pointwise-cells}
Fix one normal-cell piece with a common anisotropy $\bm\beta$ and put
$Q=Q_{\bm\beta}$.  Let $c:P\to(0,\infty)$ be measurable, with no uniform lower
bound assumed, and define
\[
 b(p,s):=a(F(p,s))J(p,s),\qquad
 \phi(p,s):=\frac{f(F(p,s))-f_0}{c(p)}.
\]
Let $G:P\times\R^d\to[0,\infty)$ be homogeneous and satisfy the uniform
coercivity bounds of Lemma~\ref{lem:moments}.  Suppose that there are a
measurable function $b_0:P\to\C$, measurable limit cells
$\Afib^\infty(p)\subset\R^d$, and constants $\kappa_G>0$ and
$0<\gamma\le\min\{1,\kappa_G\}$ with the following properties.
For almost every $p$,
\begin{equation}\label{eq:pointwise-cell-local-limits}
 b(p,s)\longrightarrow b_0(p),\qquad
 \frac{\phi(p,s)}{G(p,s)}\longrightarrow1
 \quad\text{as }s\to0\text{ within }\Afib(p),
\end{equation}
and throughout the truncated cells,
\begin{equation}\label{eq:pointwise-cell-lower}
 \phi(p,s)\ge\kappa_G G(p,s).
\end{equation}
Moreover, for almost every $p$,
\begin{equation}\label{eq:pointwise-weighted-cell-limit}
 \int_{\R^d}
 \left|\ind_{D_{zc(p)}^{\bm\beta}(\Afib(p)\cap B_\eps)}(t)
       -\ind_{\Afib^\infty(p)}(t)\right|
 e^{-\gamma G(p,t)}\dd t\longrightarrow0.
\end{equation}
Finally, suppose that there is a measurable $B:P\to[0,\infty)$ such that
\begin{equation}\label{eq:pointwise-cell-majorant}
 |b(p,s)|\le B(p),\qquad |b_0(p)|\le B(p),\qquad
 B(p)c(p)^{-Q}\in L^1(P,\dvol_P).
\end{equation}
Then, with
\begin{equation}\label{eq:pointwise-cell-profile}
 \cL(p):=b_0(p)\int_{\Afib^\infty(p)}e^{-G(p,t)}\dd t,
\end{equation}
one has
\begin{equation}\label{eq:pointwise-cell-result}
 I_\alpha(z)=e^{-zf_0}z^{-Q}
 \left[\int_P c(p)^{-Q}\cL(p)\dvol_P(p)+o(1)\right].
\end{equation}
\end{theorem}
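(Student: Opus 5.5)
For each fixed base point $p$ we rescale the fiber integral exactly as in the proof of Theorem~\ref{thm:profile-leading}, and then integrate over the base by dominated convergence; Lemma~\ref{lem:scaled-integral-convergence} is applied with a one-point parameter set, with no uniformity in $p$. Fix $p$ outside the exceptional null set and put $q=(zc(p))^{-1}$. The substitution $s=D_q^{\bm\beta}t$ gives
\[
 e^{zf_0}(zc(p))^Q\cI(z,p)=\int_{E_z(p)}H_z(p,t)\dd t,
\]
where
\[
 E_z(p)=D_{zc(p)}^{\bm\beta}(\Afib(p)\cap B_\eps),\qquad
 H_z(p,t)=b(p,D_q^{\bm\beta}t)\,e^{-x_q(p,t)},\qquad
 x_q(p,t)=q^{-1}\phi(p,D_q^{\bm\beta}t).
\]
The limiting integrand is $H_\infty(p,t)=b_0(p)e^{-G(p,t)}$, integrated over $\Afib^\infty(p)$.

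We verify (i)--(iii) of Lemma~\ref{lem:scaled-integral-convergence} for this fixed $p$. Since $G$ is homogeneous, $x_q(p,t)/G(p,t)=\phi(p,s)/G(p,s)$ with $s=D_q^{\bm\beta}t$. On $\{\rho_{\bm\beta}\le T\}$ we have $\rho_{\bm\beta}(s)\le qT\to0$, so~\eqref{eq:pointwise-cell-local-limits} gives $x_q\to G$ uniformly there, using $G\le g_+T$. The same hypothesis gives $b(p,D_q^{\bm\beta}t)\to b_0(p)$ uniformly on that set. This proves (i).

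For (ii), the lower bound~\eqref{eq:pointwise-cell-lower} gives
\[
 |H_z|\le B(p)e^{-\kappa_G G}\le B(p)e^{-\gamma G},
\]
and Lemma~\ref{lem:moments} makes the tails over $\{\rho_{\bm\beta}>T\}$ small. For (iii), note that $|H_\infty|\le B(p)e^{-G}\le B(p)e^{-\gamma G}$ because $\gamma\le1$; hence~\eqref{eq:pointwise-weighted-cell-limit} controls the cell replacement. Therefore
\[
 e^{zf_0}(zc(p))^Q\cI(z,p)\to\cL(p)\qquad\text{for a.e. }p.
\]

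Next we integrate over the base. Write
\[
 e^{zf_0}z^QI_\alpha(z)=\int_P c(p)^{-Q}\,\bigl[e^{zf_0}(zc(p))^Q\cI(z,p)\bigr]\dvol_P(p).
\]
By the bound in (ii), uniformly in $z$,
\[
 \bigl|e^{zf_0}(zc(p))^Q\cI(z,p)\bigr|\le B(p)\int_{\R^d}e^{-\gamma G(p,t)}\dd t\le C_\gamma B(p),
\]
where $C_\gamma$ does not depend on $p$ by the uniform coercivity in Lemma~\ref{lem:moments}. Hence the integrand is dominated by $C_\gamma B(p)c(p)^{-Q}\in L^1$ by~\eqref{eq:pointwise-cell-majorant}. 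The same bound gives $|c^{-Q}\cL|\le C B c^{-Q}$, so the coefficient is integrable. Dominated convergence (the last clause of Lemma~\ref{lem:scaled-integral-convergence}) then yields~\eqref{eq:pointwise-cell-result}.

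The delicate step is the local convergence (i): the hypothesis~\eqref{eq:pointwise-cell-local-limits} is a limit as $s\to0$ within $\Afib(p)$, and it must be converted into uniform convergence on $E_z(p)\cap\{\rho_{\bm\beta}\le T\}$. This works because $D_q^{\bm\beta}$ maps $\{\rho_{\bm\beta}\le T\}$ into $\{\rho_{\bm\beta}\le qT\}$, which shrinks to the origin. Two further points need care: where $G=0$ only at $t=0$ the ratio statement must be turned into an additive bound, which coercivity provides, via $|x_q-G|\le o(1)\,G\le o(1)g_+T$; and measurability of the fiber integrals in $p$, which follows from the measurable cell graphs and Tonelli's theorem.
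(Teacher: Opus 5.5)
Your proposal is correct and follows the paper's proof essentially step for step. You rescale each fiber at a fixed $p$, apply Lemma~\ref{lem:scaled-integral-convergence} with a one-point parameter set, checking (i)--(iii) as the paper does, and then use dominated convergence in $p$ with the $z$-uniform bound $C\,B(p)c(p)^{-Q}$. The only difference is minor: you verify (i) by using the uniformity built into the limit $s\to0$ on the shrinking sets $\{\rho_{\bm\beta}\le qT\}$, whereas the paper uses pointwise convergence plus dominated convergence on the fixed ball; both are valid.
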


\begin{proof}
Fix $p$ outside the exceptional null set and put $q=(zc(p))^{-1}$.  After the
change of variables $s=D_q^{\bm\beta}t$,
\begin{equation}\label{eq:pointwise-cell-scaled}
 e^{zf_0}z^Q\cI(z,p)
 =c(p)^{-Q}\int_{E_z(p)}H_z(p,t)\dd t,
\end{equation}
where
\[
 E_z(p)=D_{zc(p)}^{\bm\beta}(\Afib(p)\cap B_\eps),\qquad
 H_z(p,t)=b(p,D_q^{\bm\beta}t)
 e^{-q^{-1}\phi(p,D_q^{\bm\beta}t)}.
\]
Let $H_\infty(p,t)=b_0(p)e^{-G(p,t)}$.  Fix $T<\infty$.  For almost every
$t$ with $\rho_{\bm\beta}(t)\le T$, whenever $t\in E_z(p)$ one has
$D_q^{\bm\beta}t\to0$ within $\Afib(p)$, and
\eqref{eq:pointwise-cell-local-limits} gives
$H_z(p,t)\to H_\infty(p,t)$.  Moreover,
\[
 \ind_{E_z(p)}|H_z-H_\infty|(p,t)
 \le B(p)\bigl(e^{-\kappa_G G(p,t)}+e^{-G(p,t)}\bigr).
\]
Dominated convergence on the fixed ball therefore verifies condition~(i) of
Lemma~\ref{lem:scaled-integral-convergence}.  The lower bound
\eqref{eq:pointwise-cell-lower}, the majorant
\eqref{eq:pointwise-cell-majorant}, and Lemma~\ref{lem:moments} give the two
tail bounds required in condition~(ii).  Finally,
\eqref{eq:pointwise-weighted-cell-limit} verifies condition~(iii), since
$|H_\infty(p,t)|\le B(p)e^{-G(p,t)}\le B(p)e^{-\gamma G(p,t)}$.
Lemma~\ref{lem:scaled-integral-convergence}, applied with this single value of
$p$, yields
\[
 e^{zf_0}z^Q\cI(z,p)\longrightarrow c(p)^{-Q}\cL(p)
\]
for almost every $p$.

The same lower bound gives the uniform-in-$z$ base estimate
\[
 e^{zf_0}z^Q|\cI(z,p)|
 \le C B(p)c(p)^{-Q},
\]
where $C$ is independent of $p$ by Lemma~\ref{lem:moments}.  The right-hand
side is integrable by~\eqref{eq:pointwise-cell-majorant}; the integrated part
of Lemma~\ref{lem:scaled-integral-convergence}, equivalently dominated
convergence in $p$, proves~\eqref{eq:pointwise-cell-result}.
\end{proof}

\begin{remark}[Scope of the pointwise-cell theorem]
Theorem~\ref{thm:integrated-pointwise-cells} determines the integrated leading
term even when the usable cells collapse nonuniformly near a frontier.  It does
not by itself compute a subsequent correction created by the collapsing
collar; such a correction may require a joint rescaling followed by
Theorem~\ref{thm:profile-leading}, or a sectorwise decomposition into regions
with different scaled limits.
\end{remark}

\begin{example}[Collapsing cells: pointwise convergence without uniformity]
\label{ex:collapsing-cells}
Consider
\[
 I_{\mathrm{collapse}}(z):=\int_0^1\int_{-p}^{p}e^{-zs^2}\dd s\dd p.
\]
This is a single relative normal-cell piece with $P=(0,1)$,
$F(p,s)=(p,s)$, $\Afib(p)=(-p,p)$, $J\equiv1$, $b_0\equiv1$, and phase
$f(F(p,s))=s^2$.  For every fixed $p>0$, the rescaled cell
$\sqrt z\,(-p,p)$ converges in Gaussian-weighted indicator norm to $\R$.
The convergence is not uniform in $p$: for $p=z^{-1/2}$ the rescaled cell is
only $(-1,1)$.

The remaining hypotheses of
Theorem~\ref{thm:integrated-pointwise-cells} hold with
$c\equiv1$, $G(t)=t^2$, $B\equiv1$, and
$\Afib^\infty(p)=\R$.  In particular, the base majorant is integrable because
$P$ has finite length.  Hence that theorem, rather than the uniform
fiberwise theorem, gives
\[
 I_{\mathrm{collapse}}(z)\sim z^{-1/2}\int_0^1\int_\R e^{-t^2}\dd t\dd p
 =\sqrt\pi\,z^{-1/2}.
\]
This example isolates the purpose of the pointwise-cell theorem: the usable
normal cells may collapse arbitrarily close to a lower stratum while the
integrated leading coefficient remains unchanged.  No assertion is made here about lower-order collar corrections.
\end{example}

\subsection{The global leading term}

The global comparison needs only integrated piece expansions.  For each
$\alpha\in\cA$, assume that a local theorem or direct computation supplies
numbers $Q_\alpha\ge0$ and $K_\alpha\in\C$ such that
\begin{equation}\label{eq:piece-integrated-leading}
 I_\alpha(z)=e^{-zf_0}z^{-Q_\alpha}\bigl(K_\alpha+o(1)\bigr).
\end{equation}
For a zero-codimensional piece this holds with $Q_\alpha=0$ and
$K_\alpha=\int_{P_\alpha}a(p)\dvol_{P_\alpha}(p)$.  When the expansion comes
from Corollary~\ref{cor:coercive-leading-piece} or
Theorem~\ref{thm:integrated-pointwise-cells}, the coefficient has the form
\begin{equation}\label{eq:piece-coefficient}
  K_\alpha:=\int_{P_\alpha}c_\alpha(p)^{-Q_\alpha}
  \cL_\alpha(p)\dvol_{P_\alpha}(p).
\end{equation}
Joint rescalings, vanishing-amplitude results, and finite expansions may also
supply expansions of the form
\eqref{eq:piece-integrated-leading}.  In the clean
locally separated Morse--Bott case, an additive global expansion of this type is
given in Ludewig's Appendix~A~\cite{Ludewig}.

\begin{theorem}[Global leading term]\label{thm:global-leading}
Under Assumptions~\ref{ass:global} and~\ref{ass:tube}, and assuming the
piecewise expansions~\eqref{eq:piece-integrated-leading}, let
\begin{equation}\label{eq:Qstar}
  Q_*:=\min_{\alpha\in\cA}Q_\alpha,
  \qquad
  K_*:=\sum_{\alpha:Q_\alpha=Q_*}K_\alpha.
\end{equation}
Then
\begin{equation}\label{eq:global-additive}
 I(z)=e^{-zf_0}\sum_{\alpha\in\cA}
 z^{-Q_\alpha}\bigl(K_\alpha+r_\alpha(z)\bigr)
 +O\bigl(e^{-z(f_0+\delta_\eps)}\bigr),
\end{equation}
where $r_\alpha(z)\to0$ for every $\alpha$.  If $K_*\ne0$, then
\begin{equation}\label{eq:global-equivalence}
  I(z)\sim e^{-zf_0}z^{-Q_*}K_*.
\end{equation}
If $K_*=0$,~\eqref{eq:global-additive} yields only
$I(z)=o(e^{-zf_0}z^{-Q_*})$.  Identifying the first nonzero term requires
higher-order expansions on the pieces with $Q_\alpha=Q_*$; the leading
terms of pieces with larger $Q_\alpha$ cannot in general be compared with
the unspecified remainders of the dominant pieces.
\end{theorem}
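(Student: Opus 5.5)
The plan is to combine the exponential localization of Lemma~\ref{lem:localization} with the additive decomposition~\eqref{eq:sum-pieces} and then read off~\eqref{eq:global-additive} term by term. Fix $\eps>0$ small enough that both Assumption~\ref{ass:global} and Assumption~\ref{ass:tube} hold. Lemma~\ref{lem:localization} gives $I(z)=I_\eps(z)+O(e^{-z(f_0+\delta_\eps)})$ for $z\ge z_*$. By (T2) the sets $F_\alpha(\fD_\alpha(\eps))$ cover $\cT_\eps(M)$ up to a null set and overlap only in null sets, so $I_\eps(z)=\sum_\alpha\int_{F_\alpha(\fD_\alpha(\eps))}ae^{-zf}\dvol_g$.

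Each summand is then rewritten via (T3) as $I_\alpha(z)$. To apply~\eqref{eq:cov}, which is stated for nonnegative $h$, to the complex integrand $ae^{-zf}$, I would split it into the positive and negative parts of its real and imaginary parts. Each part is dominated by $|a|e^{-z_*f}e^{-(z-z_*)f_0}$, which is integrable by~\eqref{eq:A0}, so every piece is finite and linearity is legitimate. This yields~\eqref{eq:sum-pieces}.

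Inserting the hypothesized expansions~\eqref{eq:piece-integrated-leading}, written as $I_\alpha(z)=e^{-zf_0}z^{-Q_\alpha}(K_\alpha+r_\alpha(z))$ with $r_\alpha\to0$, gives~\eqref{eq:global-additive} directly. The index set $\cA$ is finite, so no uniformity in $\alpha$ is needed.

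For~\eqref{eq:global-equivalence}, multiply by $e^{zf_0}z^{Q_*}$. The pieces with $Q_\alpha=Q_*$ contribute $K_*+\sum r_\alpha(z)\to K_*$. The pieces with $Q_\alpha>Q_*$ contribute $z^{Q_*-Q_\alpha}(K_\alpha+o(1))\to0$. The localization error contributes $z^{Q_*}e^{-z\delta_\eps}\to0$. Hence $e^{zf_0}z^{Q_*}I(z)\to K_*$, which is the equivalence when $K_*\ne0$ and gives $o(e^{-zf_0}z^{-Q_*})$ when $K_*=0$.

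The closing caveat is a remark rather than a claim to prove. I would justify it by noting that the remainders $r_\alpha$ on the dominant pieces carry no rate, so $e^{-zf_0}z^{-Q_*}r_\alpha(z)$ may dominate $e^{-zf_0}z^{-Q_\beta}K_\beta$ for $Q_\beta>Q_*$. A one-dimensional example with a logarithmic correction would illustrate this.

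The only genuinely delicate point is the bookkeeping in the second step: the null overlaps in (T2), and the use of~\eqref{eq:cov} for complex amplitudes through integrability. Everything else is finite-sum arithmetic.
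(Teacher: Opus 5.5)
Your proposal is correct and follows the paper's proof: localize with Lemma~\ref{lem:localization}, decompose $I_\eps$ via~\eqref{eq:sum-pieces}, insert the piecewise expansions~\eqref{eq:piece-integrated-leading}, and observe that both the pieces with $Q_\alpha>Q_*$ and the localization error are $o(e^{-zf_0}z^{-Q_*})$. You also justify applying~\eqref{eq:cov} to the complex integrand $ae^{-zf}$ through the integrable majorant $|a|e^{-z_*f}e^{-(z-z_*)f_0}$; the paper leaves this step implicit when it writes~\eqref{eq:sum-pieces}, and your argument for it is sound.
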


\begin{proof}
Sum the finitely many piecewise expansions
\eqref{eq:piece-integrated-leading} using~\eqref{eq:sum-pieces}, and apply
Lemma~\ref{lem:localization}.  Pieces with $Q_\alpha>Q_*$ are
$o(e^{-zf_0}z^{-Q_*})$, while those with $Q_\alpha=Q_*$ sum to
$e^{-zf_0}z^{-Q_*}(K_*+o(1))$.  This proves~\eqref{eq:global-equivalence}
when $K_*\ne0$.
\end{proof}

\begin{remark}[Why dimension alone is misleading]
It is tempting to expect the top-dimensional stratum of $M$ to dominate: in a
purely quadratic model each normal variable contributes $1/2$ to $Q_\alpha$, so
larger strata have fewer normal variables and hence smaller exponents.  The
anisotropic case breaks this intuition.  A lower-dimensional component has
more transverse variables, but those variables may enter the phase only at high
orders; a variable of order $m$ contributes $1/m$, not $1/2$, to
$Q_\alpha$.  Thus sufficiently flat high-order directions can make a
lower-dimensional component dominate a quadratically nondegenerate
higher-dimensional one.  Moreover, even if the top stratum has the smallest
exponent, its integrated coefficient may cancel for a signed or complex
amplitude.
\end{remark}

\begin{example}[A lower-dimensional component dominates]\label{ex:lower-dim}
Let $N=\R^3$, $a\equiv1$, and
\[
 f(x,y,w)=\bigl((x^2+y^2-1)^2+w^2\bigr)
          \bigl(x^2+y^2+(w-2)^2\bigr)^4.
\]
Then
\[
 M=\{x^2+y^2=1,\ w=0\}\sqcup\{(0,0,2)\}.
\]
On the circular component we use the usual tubular coordinates
\[
 (x,y,w)=((1+\xi)\cos\theta,(1+\xi)\sin\theta,\eta),
 \qquad 0\le\theta<2\pi.
\]
Here $\xi$ is the signed radial displacement from the unit circle and $\eta$ is
the vertical displacement.  Then
\[
 x^2+y^2-1=(1+\xi)^2-1=2\xi+\xi^2,
 \qquad
 x^2+y^2+(w-2)^2=5+2\xi-4\eta+O(|(\xi,\eta)|^2),
\]
and therefore
\[
 f=625(4\xi^2+\eta^2)+O\bigl(|(\xi,\eta)|^3\bigr).
\]
Thus Corollary~\ref{cor:coercive-leading-piece} applies there with quadratic weights,
$Q_{\mathrm{circ}}=1$, and coefficient
\[
 K_{\mathrm{circ}}
 =(2\pi)\int_{\R^2}e^{-(2500t_1^2+625t_2^2)}\dd t
 =\frac{\pi^2}{625}.
\]
At $p=(0,0,2)$,
\[
 f(p+h)=5|h|^8\bigl(1+O(|h|)\bigr),
\]
so Corollary~\ref{cor:coercive-leading-piece} applies with weights
$(1/8,1/8,1/8)$,
$Q_p=3/8$, and positive coefficient
\[
 K_p
 =5^{-3/8}\int_{\R^3}e^{-|t|^8}\dd t
 =5^{-3/8}\frac{\pi}{2}\Gamma\!\left(\frac38\right).
\]
The global comparison in Theorem~\ref{thm:global-leading} selects the smaller
exponent $Q_p=3/8$ and therefore yields
\[
 I(z)\sim
 5^{-3/8}\frac{\pi}{2}\Gamma\!\left(\frac38\right)z^{-3/8}.
\]
Hence an isolated, sufficiently degenerate minimum can dominate a smooth
positive-dimensional minimum component.
\end{example}

\begin{corollary}[Comparison after cancellation]\label{cor:comparison-after-cancellation}
Suppose that every normal-cell piece satisfies, for some $\eta_\alpha>0$,
\begin{equation}\label{eq:piece-rate}
 I_\alpha(z)=e^{-zf_0}z^{-Q_\alpha}
 \bigl(K_\alpha+O(z^{-\eta_\alpha})\bigr).
\end{equation}
Let $Q^\dagger$ be one of the distinct exponents $Q_\alpha$.  Assume that for
every distinct exponent $Q'<Q^\dagger$,
\begin{equation}\label{eq:lower-level-cancellation}
 \sum_{\alpha:Q_\alpha=Q'}K_\alpha=0,
 \qquad
 Q'+\min_{\alpha:Q_\alpha=Q'}\eta_\alpha>Q^\dagger.
\end{equation}
Then
\begin{equation}\label{eq:comparison-after-cancellation}
 I(z)=e^{-zf_0}z^{-Q^\dagger}
 \left[\sum_{\alpha:Q_\alpha=Q^\dagger}K_\alpha+o(1)\right].
\end{equation}
If the displayed coefficient is nonzero, it is the true leading coefficient.
The rates in~\eqref{eq:piece-rate} are supplied, for example, by
Theorem~\ref{thm:quantitative-leading}.
\end{corollary}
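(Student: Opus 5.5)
The approach is to sum the piecewise expansions \eqref{eq:piece-rate} by means of \eqref{eq:sum-pieces} and Lemma~\ref{lem:localization}, exactly as in the proof of Theorem~\ref{thm:global-leading}, and to group the pieces by distinct exponent. Write $Q'_1<Q'_2<\dots$ for the distinct values of $Q_\alpha$, and let $S(Q'):=\sum_{\alpha:Q_\alpha=Q'}K_\alpha$. Multiplying $I(z)$ by $e^{zf_0}z^{Q^\dagger}$ gives
\[
 e^{zf_0}z^{Q^\dagger}I(z)=\sum_{Q'}z^{Q^\dagger-Q'}\Bigl(S(Q')+\sum_{\alpha:Q_\alpha=Q'}O(z^{-\eta_\alpha})\Bigr)+O\bigl(z^{Q^\dagger}e^{-z\delta_\eps}\bigr),
\]
and the last term is $o(1)$.

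I then treat the three ranges of $Q'$ separately. For $Q'>Q^\dagger$, the factor $z^{Q^\dagger-Q'}\to0$ and the bracket is bounded, so these groups contribute $o(1)$. For $Q'=Q^\dagger$, the group contributes $S(Q^\dagger)+o(1)$, which is the displayed coefficient. For $Q'<Q^\dagger$, the cancellation hypothesis \eqref{eq:lower-level-cancellation} removes $S(Q')$, leaving
\[
 z^{Q^\dagger-Q'}\sum_{\alpha:Q_\alpha=Q'}O(z^{-\eta_\alpha})=O\bigl(z^{Q^\dagger-Q'-\min_\alpha\eta_\alpha}\bigr).
\]
This is $o(1)$ precisely because $Q'+\min\eta_\alpha>Q^\dagger$. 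Since $\cA$ is finite, adding up the groups yields \eqref{eq:comparison-after-cancellation}.

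For the final sentence: if $S(Q^\dagger)\neq0$, then \eqref{eq:comparison-after-cancellation} gives $I(z)\sim e^{-zf_0}z^{-Q^\dagger}S(Q^\dagger)$, which is the true leading coefficient. The only subtle step is the lower-exponent groups, where the remainder rate has to beat the gap $Q^\dagger-Q'$; there is a real obstacle only if that rate is too slow, and the second condition in \eqref{eq:lower-level-cancellation} rules this out. The pointer to Theorem~\ref{thm:quantitative-leading} as a source of the rates needs nothing beyond citing it.
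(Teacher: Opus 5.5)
Your proposal is correct and follows essentially the same argument as the paper. Both sum the piecewise expansions, group them by exponent, and use the cancellation together with the strict gap $Q'+\min\eta_\alpha>Q^\dagger$ to make the lower-exponent remainders $o(e^{-zf_0}z^{-Q^\dagger})$, noting that higher exponents and the localization error are of smaller order.
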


\begin{proof}
After summing~\eqref{eq:piece-rate} over pieces with the same exponent, the
leading terms below $Q^\dagger$ vanish by~\eqref{eq:lower-level-cancellation},
and their remainders are $o(e^{-zf_0}z^{-Q^\dagger})$ by the strict exponent
gap.  Pieces with exponent larger than $Q^\dagger$, as well as the localization
error, are also of smaller order.  The pieces at exponent $Q^\dagger$ give the
stated coefficient.
\end{proof}

\begin{example}[Cancellation on a connected minimum set]\label{ex:cancellation}
Let $N=\R^2$,
\[
 f(x,y)=y^2+(-x)_+^2+(x-1)_+^2,
 \qquad
 a(x,y)=x^2-\frac13.
\]
Then $M=[0,1]\times\{0\}$.  The open segment is a normal-cell piece with one
quadratic normal variable and exponent $Q=1/2$.  Its leading coefficient from
Corollary~\ref{cor:coercive-leading-piece} is
\[
 \sqrt\pi\int_0^1\left(x^2-\frac13\right)\dd x=0.
\]
In fact, the whole product-form contribution of this piece vanishes, so its
remainder cannot mask the endpoint scale.

Let $I_0(z)$ and $I_1(z)$ denote the left- and right-endpoint pieces.  Their
normal cells are invariant half-planes and their phases are exactly
quadratic.  At the left endpoint the amplitude has no linear normal term, so
Theorem~\ref{thm:quantitative-leading} gives
\[
 I_0(z)=z^{-1}\left(-\frac{\pi}{6}+O(z^{-1})\right).
\]
At the right endpoint the amplitude has a linear normal term, and the same
theorem with $\eta=1/2$ gives
\[
 I_1(z)=z^{-1}\left(\frac{\pi}{3}+O(z^{-1/2})\right).
\]
The hypotheses of
Corollary~\ref{cor:comparison-after-cancellation} are therefore satisfied with
$Q^\dagger=1$.  It follows that
\[
 I(z)\sim\frac{\pi}{6}z^{-1}.
\]
The first nonzero term comes entirely from the lower-dimensional endpoint
pieces.
\end{example}

\begin{remark}[Positivity prevents cancellation]\label{rem:positivity}
If $a\ge0$, then every leading coefficient $K_\alpha$ is nonnegative.  If, on
some piece with $Q_\alpha=Q_*$, the leading profile $\cL_\alpha(p)$ is positive
on a set of positive base measure, then $K_*>0$ and
Theorem~\ref{thm:global-leading} gives the leading equivalence without any
higher-order calculation.  Under Corollary~\ref{cor:coercive-leading-piece}, this positivity follows,
for example, when the limiting amplitude $b_{0,\alpha}(p)>0$ and $\Afib_\alpha^\infty(p)$ has positive
Lebesgue measure on such a set.  In Fermi coordinates with $J\to1$, this reduces
to the familiar condition $a(p)>0$.  For signed or complex amplitudes,
Corollary~\ref{cor:comparison-after-cancellation} or the finite-expansion
results of Section~\ref{sec:expansions} are needed.
\end{remark}

\subsection{Joint rescaling at a nonintegrable frontier}

The normal cone of a lower stratum need not contain the directions tangent to
an incident higher stratum.  Consequently, a degenerating collar in the base
cannot in general be reassigned to the lower-stratum normal piece.  When the
fiberwise leading coefficient is not integrable over the collar, the frontier
coordinate must be rescaled together with the normal variables.  Analytically,
this joint rescaling is again handled by Theorem~\ref{thm:profile-leading};
no separate convergence theorem is needed.

\begin{figure}[!htb]
\centering
\includegraphics[width=\linewidth]{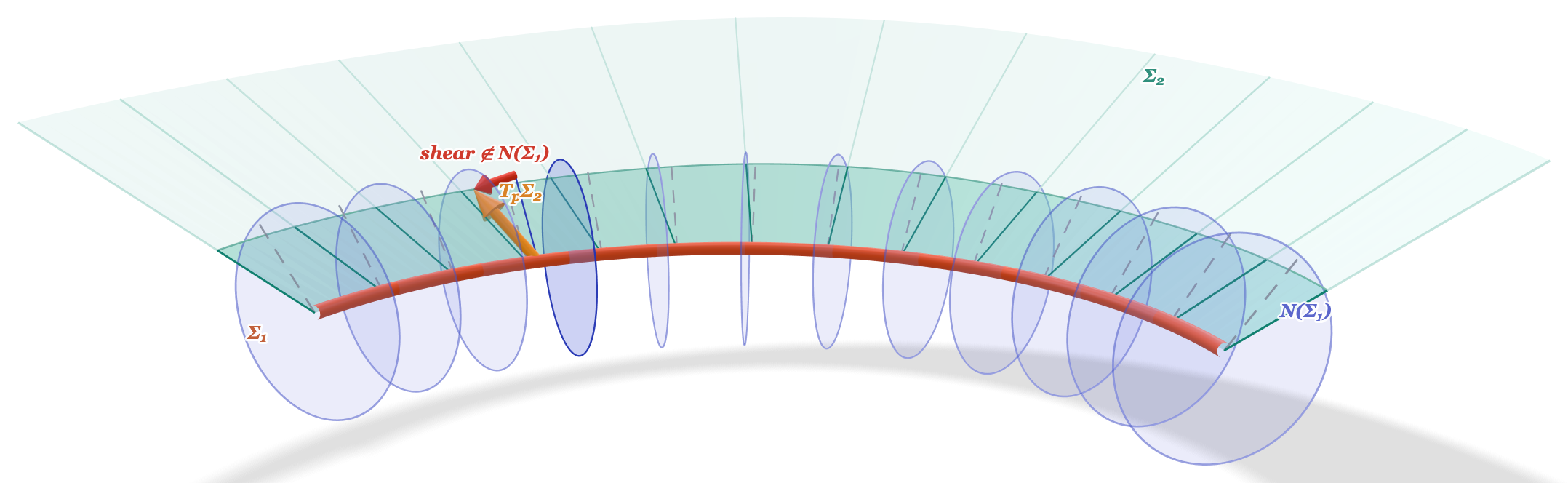}
\caption{Why a nonuniform collar cannot be reassigned to the lower-stratum
normal piece.  The normal cone of the lower stratum $\Sigma_1$ is the stack
of disk fibers orthogonal to $\Sigma_1$ (blue).  Since
$\Sigma_1\subset\partial\Sigma_2$, the tangent plane $T_p\Sigma_2$ of the
incident sheet (orange) contains the along-$\Sigma_1$ direction, so it
carries a shear component (red) that is not normal to $\Sigma_1$: the normal
cone need not contain the tangent directions of $\Sigma_2$.  When the shear
varies along $\Sigma_1$, no reparametrization removes it: the collar
directions must be retained as part of a joint model, as in
Remark~\ref{rem:joint-collar-construction}.}
\label{fig:collar}
\end{figure}

\begin{remark}[Joint rescaling from a stratum collar]
\label{rem:joint-collar-construction}
Suppose a neighborhood of a frontier set $S\subset\overline P$ inside a base
piece $P$ is parameterized by a collar $p=\Upsilon(y,u)$, where $y\in S$ and
$u\in\mathcal U\subset\R^r$, and equip $S$ with a finite measure $\nu$.
Combining the collar and transverse variables into
$w=(u,s)$ rewrites the contribution as
\begin{equation}\label{eq:joint-collar-representation}
 \int_S e^{-zf_0}\int_{\Afib_{\mathrm{joint}}(y)}
 b(y,w)e^{-zc(y)\phi(y,w)}\dd w\dd\nu(y).
\end{equation}
Here $b$ contains both the collar Jacobian of $\Upsilon$ and the transverse
coordinate Jacobian, while $\Afib_{\mathrm{joint}}(y)$ records simultaneously
the collar condition on $u$ and the normal-cell condition on $s$.  Apply
Theorem~\ref{thm:profile-leading} with parameter $y$, admissible cell
$\Afib_{\mathrm{joint}}(y)$,
phase $c(y)\phi(y,w)$, scaling coefficient $c(y)$, $\mathcal K(v)=e^{-v}$,
$\mu=0$, and $\mathcal N_{\mathcal K}(z)=1$.  Global coercivity of the joint model is unnecessary;
the required information is the weighted convergence and tightness of the
joint scaled cells.  The collar directions remain tangent to the higher
stratum geometrically, but they belong to the scaled fiber for this asymptotic
calculation.
\end{remark}

\begin{example}[A linear cone suppresses the soft direction]
\label{ex:cone-changes-anisotropy}
Let $0<\eps<1$ and
\[
 C:=\{(s_1,s_2)\in\R^2:s_2\ge|s_1|\},
 \qquad
 f(s_1,s_2)=s_1^4+s_2^2,
\]
and integrate over $C\cap B_\eps$.

First apply Corollary~\ref{cor:coercive-leading-piece} with the unconstrained phase
weights $(1/4,1/2)$.  The rescaled cells are
\[
 D_z^{(1/4,1/2)}C
 =\{(t_1,t_2):t_2\ge z^{1/4}|t_1|\},
\]
which converge almost everywhere to a Lebesgue-null ray.
Corollary~\ref{cor:coercive-leading-piece} therefore gives a zero coefficient at order $z^{-3/4}$, and hence only
$o(z^{-3/4})$ at that nominal scale.

To identify the true scale, apply Theorem~\ref{thm:profile-leading} with
$P$ a singleton,
$\Afib=C\cap B_\eps$, $\Afib^\infty=C$, $c\equiv1$, equal weights
$\bm\beta=(1/2,1/2)$, $G(t)=t_2^2$, and the exponential profile.  The rescaled
phase is
\[
 X_z(t)=t_2^2+z^{-1}t_1^4,
\]
so it converges locally uniformly to $G$ and satisfies $X_z\ge G$.  The
rescaled truncations are $C\cap B_{\eps\sqrt z}$ and increase to $C$.  Since
$|t_1|\le t_2$ on $C$, the weight $e^{-\gamma t_2^2}$ is integrable on $C$
for every $\gamma>0$; hence the cell convergence and tightness assumptions
\eqref{eq:profile-cell-convergence}--\eqref{eq:profile-cell-tightness} hold.
Theorem~\ref{thm:profile-leading} gives
\[
 \int_{C\cap B_\eps}e^{-z(s_1^4+s_2^2)}\dd s
 \sim z^{-1}\int_C e^{-t_2^2}\dd t
 =z^{-1},
\]
because $\int_C e^{-t_2^2}\dd t=1$.
The geometric constraint makes the nominally soft quartic direction no wider
than the quadratic direction and changes the exponent from $3/4$ to $1$.
\end{example}

\begin{proposition}[Critical logarithm for the model collar]
\label{prop:critical-collar-logarithm}
For $k>1$ and $\eps>0$, define
\[
 I^{\mathrm{col}}_{2,k}(z):=\int_0^\eps\int_{-\eps}^{\eps}
 e^{-z(u^2s^2+|s|^{2k})}\dd s\dd u.
\]
Then
\[
 I^{\mathrm{col}}_{2,k}(z)
 \sim \sqrt\pi\frac{k-1}{2k}\,z^{-1/2}\log z.
\]
\end{proposition}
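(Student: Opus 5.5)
The plan is to compute the integral almost exactly by integrating out the collar variable $u$ first, and then to read off the logarithm from an intermediate range of $s$. The general profile theorem is not needed here: the logarithm signals precisely that no single homogeneous blow-up captures the mass.

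\emph{Step 1: integrate out $u$.} The integrand is even in $s$, and for fixed $s\neq0$ the substitution $v=u|s|\sqrt z$ gives
\[
 \int_0^\eps e^{-zu^2s^2}\dd u=\frac{\sqrt\pi}{2|s|\sqrt z}\,\erf\bigl(\eps|s|\sqrt z\bigr).
\]
Combining the two halves of the $s$-interval,
\[
 I^{\mathrm{col}}_{2,k}(z)=\sqrt\pi\,z^{-1/2}J(z),\qquad
 J(z):=\int_0^\eps e^{-zs^{2k}}\,\frac{\erf(\eps s\sqrt z)}{s}\dd s .
\]
It then suffices to show that $J(z)\sim\frac{k-1}{2k}\log z$.

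\emph{Step 2: split $J$ at two scales.} Because $k>1$, we have $z^{-1/2}\ll z^{-1/(2k)}$. Fix constants $A$ large and $\delta$ small, and split $(0,\eps)$ at $s_1:=Az^{-1/2}$ and $s_2:=\delta z^{-1/(2k)}$.
\begin{itemize}
\item On $(0,s_1)$, use $\erf(x)\le Cx$. This gives $\erf(\eps s\sqrt z)/s\le C\eps\sqrt z$, so this piece is at most $C\eps A$, which is $O(1)$.
\item On $(s_2,\eps)$, substitute $s=z^{-1/(2k)}v$. This piece is bounded by $\int_\delta^\infty e^{-v^{2k}}v^{-1}\dd v=O_\delta(1)$.
\item On the middle range $(s_1,s_2)$, the integrand is squeezed between two multiples of $1/s$. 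From below, $e^{-zs^{2k}}\ge e^{-\delta^{2k}}$ and $\erf(\eps s\sqrt z)\ge\erf(\eps A)$. From above, both factors are at most $1$. In both directions the remaining factor integrates to
\[
 \int_{s_1}^{s_2}\frac{\dd s}{s}=\log\frac{s_2}{s_1}=\frac{k-1}{2k}\log z+\log\frac{\delta}{A}.
\]
\end{itemize}

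\emph{Step 3: conclude.} Combining the three pieces gives
\[
 \frac{k-1}{2k}\le\liminf\frac{J(z)}{\log z}\le\limsup\frac{J(z)}{\log z}\le\frac{k-1}{2k}\cdot 1 ,
\]
where the lower bound is first obtained with the factor $e^{-\delta^{2k}}\erf(\eps A)$ and then let $A\to\infty$, $\delta\to0$. The $O(1)$ errors disappear after dividing by $\log z$, so $J(z)\sim\frac{k-1}{2k}\log z$, which is the claim.

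There is no serious obstacle here. The only point needing care is that every bound outside the middle range is $O(1)$ for fixed $A,\delta$, so that these constants can be sent to their limits only after dividing by $\log z$. The hypothesis $k>1$ enters exactly by making the middle window $(s_1,s_2)$ nonempty, with logarithmic length $\frac{k-1}{2k}\log z$.
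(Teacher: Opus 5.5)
Your proof is correct and takes essentially the same route as the paper: integrate out $u$ to obtain the $\erf$ representation, split the $s$-integral at the two scales $z^{-1/2}$ and $z^{-1/(2k)}$, bound the outer pieces by $O(1)$, and read the logarithm off the middle window. The only difference is in the middle range: the paper directly bounds the cost of replacing $\erf$ and $e^{-zs^{2k}}$ by one there (via $\int_1^\infty \erfc(r)/r\,\dd r<\infty$ and $1-e^{-x}\le x$), which gives the slightly sharper remainder $\frac{k-1}{2k}\log z+O(1)$, while your squeeze with free constants $A,\delta$ yields exactly the stated equivalence.
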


\begin{proof}
Integrating first in $u$ and then using symmetry in $s$ gives
\[
 I^{\mathrm{col}}_{2,k}(z)
 =\frac{\sqrt\pi}{\sqrt z}\int_0^\eps
 e^{-zs^{2k}}\frac{\erf(\eps\sqrt z\,s)}{s}\dd s,
\]
where
\[
 \erf(x):=\frac{2}{\sqrt\pi}\int_0^x e^{-r^2}\dd r,
 \qquad
 \erfc(x):=1-\erf(x).
\]
Set
\[
 t=z^{1/(2k)}s,
 \qquad
 A_z:=\eps z^{(k-1)/(2k)},
 \qquad
 B_z:=\eps z^{1/(2k)}.
\]
Then
\[
 I^{\mathrm{col}}_{2,k}(z)=\frac{\sqrt\pi}{\sqrt z}\,L_z^{\mathrm{crit}},
 \qquad
 L_z^{\mathrm{crit}}:=\int_0^{B_z}
 e^{-t^{2k}}\frac{\erf(A_z t)}{t}\dd t.
\]
Since $k>1$, both $A_z$ and $B_z$ tend to infinity.  For all sufficiently
large $z$, split the integral at $A_z^{-1}$ and at $1$.  The first part is
uniformly bounded because $\erf(r)\le C r$ for $r\ge0$:
\[
 \int_0^{A_z^{-1}}
 e^{-t^{2k}}\frac{\erf(A_z t)}{t}\dd t
 \le C A_z\int_0^{A_z^{-1}}\dd t=O(1).
\]
On $A_z^{-1}<t<1$, the error made by replacing the error function by one is
also uniformly bounded:
\[
 \int_{A_z^{-1}}^1
 \frac{\erfc(A_z t)}{t}\dd t
 =\int_1^{A_z}\frac{\erfc(r)}{r}\dd r
 \le\int_1^\infty\frac{\erfc(r)}{r}\dd r<\infty.
\]
The exponential factor may be replaced by one on the same interval at bounded
cost, since $1-e^{-x}\le x$:
\[
 \int_{A_z^{-1}}^1
 \frac{1-e^{-t^{2k}}}{t}\dd t
 \le\int_0^1t^{2k-1}\dd t=\frac1{2k}.
\]
Finally,
\[
 \int_1^{B_z}e^{-t^{2k}}
 \frac{\erf(A_z t)}{t}\dd t
 \le\int_1^\infty\frac{e^{-t^{2k}}}{t}\dd t=O(1).
\]
Combining the four bounds yields
\[
 L_z^{\mathrm{crit}}=\int_{A_z^{-1}}^1\frac{\dd t}{t}+O(1)
 =\log A_z+O(1)
 =\frac{k-1}{2k}\log z+O(1),
\]
where the constant $\log\eps$ is absorbed into $O(1)$.  Multiplication by
$\sqrt\pi z^{-1/2}$ proves the result.
\end{proof}

\begin{example}[A frontier trichotomy in a collar]
\label{ex:frontier-trichotomy}
Let $k>1$, $m>0$, and consider
\begin{equation}\label{eq:frontier-model-integral}
 I^{\mathrm{col}}_{m,k}(z):=\int_0^\eps\int_{-\eps}^{\eps}
 e^{-z(u^m s^2+|s|^{2k})}\dd s\dd u.
\end{equation}
Here $u$ approaches a lower stratum inside the closure of a higher stratum,
whereas $s$ is normal to that higher stratum.  For each fixed $u>0$, the
normal model is quadratic with $c(u)=u^m$ and $Q=1/2$.

If $m<2$, then $c(u)^{-Q}=u^{-m/2}$ is integrable.  Since the cells are
independent of $u$, Theorem~\ref{thm:integrated-pointwise-cells} applies
directly to the fiberwise quadratic model and gives
\begin{equation}\label{eq:frontier-subcritical}
 I^{\mathrm{col}}_{m,k}(z)\sim
 \sqrt\pi\frac{\eps^{1-m/2}}{1-m/2}\,z^{-1/2}.
\end{equation}

If $m>2$, the fiberwise coefficient is not integrable.  Use the joint weights
\begin{equation}\label{eq:frontier-joint-weights}
 \beta_u=\frac{k-1}{km},
 \qquad
 \beta_s=\frac1{2k},
 \qquad
 Q=\frac{k-1}{km}+\frac1{2k}.
\end{equation}
Use on the ambient plane the nonnegative homogeneous extension
\[
 \widetilde G(u,s):=|u|^m s^2+|s|^{2k}.
\]
It agrees with the phase on the admissible half-plane $u\ge0$ and is weighted
homogeneous for~\eqref{eq:frontier-joint-weights}.  Its coefficient on the
limiting cell is
\begin{equation}\label{eq:frontier-model-coefficient}
 \int_0^\infty\int_\R e^{-(u^m s^2+|s|^{2k})}\dd s\dd u
 =\frac1k\Gamma\!\left(1+\frac1m\right)
  \Gamma\!\left(\frac{m-2}{2km}\right)<\infty.
\end{equation}
The rescaled rectangles increase to $\R_+\times\R$.  The same calculation
with $\gamma\widetilde G$ in place of $\widetilde G$ shows that
$e^{-\gamma\widetilde G}$ is integrable on the limiting half-plane for every
$\gamma>0$.  Monotone exhaustion therefore verifies
\eqref{eq:profile-cell-convergence}--\eqref{eq:profile-cell-tightness}.
Apply Theorem~\ref{thm:profile-leading} with $P$ a singleton,
\[
 \Afib=(0,\eps)\times(-\eps,\eps),
 \qquad
 \Afib^\infty=\R_+\times\R,
 \qquad
 c\equiv1,
\]
with phase $\phi(u,s)=u^m s^2+|s|^{2k}$, model $G=\widetilde G$,
$b\equiv1$, and the exponential profile.  The rescaled phase equals $G$
exactly, and the rescaled rectangles increase to $\Afib^\infty$.  Consequently,
\begin{equation}\label{eq:frontier-supercritical}
 I^{\mathrm{col}}_{m,k}(z)\sim
 \frac1k\Gamma\!\left(1+\frac1m\right)
 \Gamma\!\left(\frac{m-2}{2km}\right)z^{-Q}.
\end{equation}
Since $Q<1/2$, the frontier collar dominates the uniformly quadratic part of
the higher stratum.

At $m=2$, Proposition~\ref{prop:critical-collar-logarithm} gives
\begin{equation}\label{eq:frontier-critical}
 I^{\mathrm{col}}_{2,k}(z)\sim
 \sqrt\pi\frac{k-1}{2k}\,z^{-1/2}\log z.
\end{equation}
Thus the same model family displays three regimes: integrable fiberwise
asymptotics, a critical logarithm, and a joint rescaling with cellwise tightness.
\end{example}

\subsection{A leading homogeneous amplitude}

Higher-order Laplace expansions incorporate both phase and amplitude
jets~\cite{Kirwin,FukudaKagayaUeda}; boundary and rank-deficient analogues are
treated by Six--Rouchon~\cite{SixRouchon}.  The homogeneous normal-cell statement
below isolates the leading vanishing amplitude and avoids a multiplicative
formula of the form ``$a=A(1+o(1))$'', which is not meaningful at zeros of $A$.

\begin{corollary}[Vanishing amplitude]\label{cor:vanishing-amplitude}
Retain the phase assumptions of
Corollary~\ref{cor:coercive-leading-piece}, and put
\[
 b(p,s):=a(F(p,s))J(p,s).
\]
Assume in addition that the effective amplitude is uniformly bounded on
the truncated fibers:
\begin{equation}\label{eq:vanishing-amplitude-bounded}
 \sup_{p\in P}\sup_{s\in\Afib(p)\cap B_\eps}|b(p,s)|<\infty.
\end{equation}
Let $\kappa\ge0$ and let the measurable function
$B_\kappa:P\times\R^d\to\C$ satisfy
\begin{equation}\label{eq:B-kappa-hom}
  B_\kappa(p,D_u^{\bm\beta}t)=u^\kappa B_\kappa(p,t),
\end{equation}
with $B_\kappa$ uniformly bounded on
$\{(p,t):\rho_{\bm\beta}(t)=1\}$.  We fix the null-set representative
$B_\kappa(p,0)=0$.  Assume
\begin{equation}\label{eq:B-kappa-remainder}
 \lim_{r\downarrow0}\sup_{p\in P}
 \sup_{\substack{s\in\Afib(p)\\0<\rho_{\bm\beta}(s)\le r}}
 \frac{|b(p,s)-B_\kappa(p,s)|}
      {\rho_{\bm\beta}(s)^\kappa}=0,
\end{equation}
and assume that, for every $0<\delta<1$,
\begin{equation}\label{eq:weighted-cell-limit-kappa}
\begin{split}
 \sup_{p\in P}\int_{\R^d}
 \Big|&\ind_{D_{zc(p)}^{\bm\beta}(\Afib(p)\cap B_\eps)}(t)
      -\ind_{\Afib^\infty(p)}(t)\Big|\\
 &\hspace{2em}\times
 \bigl(1+\rho_{\bm\beta}(t)^\kappa\bigr)
 e^{-(1-\delta)G(p,t)}\dd t\longrightarrow0.
\end{split}
\end{equation}
Then
\begin{equation}\label{eq:vanishing-amplitude-result}
 \sup_{p\in P}\left|
 e^{zf_0}(zc(p))^{Q+\kappa}\cI(z,p)
 -\int_{\Afib^\infty(p)}B_\kappa(p,t)e^{-G(p,t)}\dd t
 \right|\longrightarrow0.
\end{equation}
\end{corollary}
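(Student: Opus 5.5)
The plan is to rescale exactly as in the proof of Corollary~\ref{cor:coercive-leading-piece}, but to divide by the extra factor $q^\kappa$ so that the vanishing amplitude becomes an order-one homogeneous function, and then to invoke Lemma~\ref{lem:scaled-integral-convergence} directly. Put $q=q_z(p)=(zc(p))^{-1}$ and substitute $s=D_q^{\bm\beta}t$ in $\cI(z,p)$. This gives
\[
 e^{zf_0}(zc(p))^{Q+\kappa}\cI(z,p)=\int_{E_z(p)}H_z(p,t)\dd t,
\]
where $E_z(p)=D_{zc(p)}^{\bm\beta}(\Afib(p)\cap B_\eps)$ and
\[
 H_z(p,t)=q^{-\kappa}b(p,D_q^{\bm\beta}t)e^{-X_z(p,t)},
 \qquad
 X_z(p,t)=G(p,t)\bigl(1+\mathcal R(p,D_q^{\bm\beta}t)\bigr).
\]
The limit integrand is $H_\infty(p,t)=B_\kappa(p,t)e^{-G(p,t)}$ on $E_\infty(p)=\Afib^\infty(p)$. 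Homogeneity of $B_\kappa$ gives $q^{-\kappa}B_\kappa(p,D_q^{\bm\beta}t)=B_\kappa(p,t)$. Homogeneity of the gauge gives $\rho_{\bm\beta}(D_q^{\bm\beta}t)=q\rho_{\bm\beta}(t)$.

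For condition~(i) of the lemma, fix $T$ and work on $E_z(p)\cap\{\rho_{\bm\beta}\le T\}$. Split
\[
 H_z-H_\infty
 =q^{-\kappa}\bigl(b-B_\kappa\bigr)(p,D_q^{\bm\beta}t)\,e^{-X_z}
 +B_\kappa(p,t)\bigl(e^{-X_z}-e^{-G}\bigr).
\]
By \eqref{eq:B-kappa-remainder}, the first factor is at most $\rho_{\bm\beta}(t)^\kappa\,o(1)\le T^\kappa o(1)$, uniformly in $p$, because $q\rho_{\bm\beta}(t)\le qT\le T/(zc_-)\to0$. The second term is $o(1)$ uniformly, since $|B_\kappa(p,t)|\le C\rho_{\bm\beta}(t)^\kappa\le CT^\kappa$ and \eqref{eq:R-uniform} gives $X_z\to G$ uniformly on the weighted ball. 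The ball has finite volume, so the $L^1$ error on it tends to zero uniformly in $p$.

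The main obstacle is tightness (condition~(ii)), because $q^{-\kappa}b$ is not bounded uniformly in $t$. I would first try the following pointwise bound, valid on all of $E_z(p)$:
\[
 q^{-\kappa}|b(p,D_q^{\bm\beta}t)|\le C\bigl(1+\rho_{\bm\beta}(t)^\kappa\bigr).
\]
Choose $r_0$ such that the ratio in \eqref{eq:B-kappa-remainder} is at most $1$ whenever $\rho_{\bm\beta}(s)\le r_0$.
\begin{itemize}
\item In the region $\rho_{\bm\beta}(s)\le r_0$, with $s=D_q^{\bm\beta}t$, we get $|b|\le|B_\kappa(p,s)|+\rho_{\bm\beta}(s)^\kappa\le C'\rho_{\bm\beta}(s)^\kappa=C'q^\kappa\rho_{\bm\beta}(t)^\kappa$.
\item In the region $\rho_{\bm\beta}(s)>r_0$, boundedness \eqref{eq:vanishing-amplitude-bounded} gives $|b|\le C\le Cr_0^{-\kappa}\rho_{\bm\beta}(s)^\kappa=Cr_0^{-\kappa}q^\kappa\rho_{\bm\beta}(t)^\kappa$.
\end{itemize}
Combining this with $X_z\ge(1-\delta_0)G$ and Lemma~\ref{lem:moments} gives the common majorant $C(1+\rho_{\bm\beta}^\kappa)e^{-(1-\delta_0)G}$ for $H_z$. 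The same bound holds for $H_\infty$. Lemma~\ref{lem:moments} then shows the tails over $\{\rho_{\bm\beta}>T\}$ are uniformly small. For condition~(iii), use $|H_\infty|\le C(1+\rho_{\bm\beta}^\kappa)e^{-G}$ together with \eqref{eq:weighted-cell-limit-kappa}, taking any $\delta\in(0,1)$. Lemma~\ref{lem:scaled-integral-convergence} then yields \eqref{eq:vanishing-amplitude-result}; absolute convergence of the limiting integral follows from the same majorant.
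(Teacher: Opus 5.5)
Your proof is correct and follows the paper's argument. It uses the same rescaling with the extra factor $q^{-\kappa}$, the same near/far split at $q\rho_{\bm\beta}(t)=r_0$ based on the modulus in \eqref{eq:B-kappa-remainder}, local convergence on weighted balls, and cell replacement via \eqref{eq:weighted-cell-limit-kappa}. The only difference is the far region: you note that $q^{-\kappa}\le(\rho_{\bm\beta}(t)/r_0)^\kappa$ there, which gives one global majorant $C(1+\rho_{\bm\beta}^\kappa)e^{-(1-\delta_0)G}$ and lets you apply Lemma~\ref{lem:scaled-integral-convergence} verbatim, whereas the paper bounds that region separately by a superpolynomially small tail integral; both work.
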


\begin{proof}
Write $\rho=\rho_{\bm\beta}$ and $q_p=(zc(p))^{-1}$, and set
\[
 E_z(p):=D_{1/q_p}^{\bm\beta}(\Afib(p)\cap B_\eps).
\]
The zero section has zero Lebesgue measure in every positive-dimensional
fiber.  We therefore take $b(p,0)=0$, consistently with the chosen
representative $B_\kappa(p,0)=0$, without changing any fiber integral.  After
$s=D_{q_p}^{\bm\beta}t$, the normalized fiber integral is
\begin{equation}\label{eq:vanishing-amplitude-scaled}
 e^{zf_0}(zc(p))^{Q+\kappa}\cI(z,p)
 =\int_{E_z(p)}q_p^{-\kappa}b(p,D_{q_p}^{\bm\beta}t)
 e^{-G(p,t)(1+\mathcal R(p,D_{q_p}^{\bm\beta}t))}\dd t.
\end{equation}
Homogeneity gives
$q_p^{-\kappa}B_\kappa(p,D_{q_p}^{\bm\beta}t)=B_\kappa(p,t)$ and, by boundedness on
the anisotropic unit sphere,
\begin{equation}\label{eq:B-kappa-growth}
 |B_\kappa(p,t)|\le C(1+\rho(t)^\kappa).
\end{equation}

Let
\[
 \omega(r):=\sup_{p\in P}
 \sup_{\substack{s\in\Afib(p)\\0<\rho(s)\le r}}
 \frac{|b(p,s)-B_\kappa(p,s)|}{\rho(s)^\kappa},
\]
so that $\omega(r)\to0$.  Choose $r_0>0$ so small that $\omega$ is bounded on
$(0,r_0]$.  On the part of the scaled domain where
$q_p\rho(t)\le r_0$,
\begin{equation}\label{eq:vanishing-amplitude-near-bound}
 q_p^{-\kappa}|b(p,D_{q_p}^{\bm\beta}t)|
 \le |B_\kappa(p,t)|+\omega(q_p\rho(t))\rho(t)^\kappa
 \le C(1+\rho(t)^\kappa).
\end{equation}
Choose $\delta$ with $\delta_0<\delta<1$.  The phase bound
$|\mathcal R|\le\delta_0$ therefore makes the corresponding integrand in
\eqref{eq:vanishing-amplitude-scaled} dominated by
$C(1+\rho^\kappa)e^{-(1-\delta)G}$.

On the complementary part $q_p\rho(t)>r_0$, the uniform bound
\eqref{eq:vanishing-amplitude-bounded} and Lemma~\ref{lem:moments} give
\begin{align}
 &\sup_{p\in P}q_p^{-\kappa}
 \int_{E_z(p)\cap\{q_p\rho>r_0\}}
 |b(p,D_{q_p}^{\bm\beta}t)|
 e^{-G(p,t)(1+\mathcal R(p,D_{q_p}^{\bm\beta}t))}\dd t\notag\\
 &\qquad\le C\sup_{p\in P}q_p^{-\kappa}
 \int_{\{\rho>r_0/q_p\}}
 e^{-(1-\delta)G(p,t)}\dd t=o(1).
 \label{eq:vanishing-amplitude-far-tail}
\end{align}
The estimate is uniform in $p$; in fact the tail integral is smaller than
every power of $q_p$.  Equations~\eqref{eq:vanishing-amplitude-near-bound} and
\eqref{eq:vanishing-amplitude-far-tail} give uniform integrability with the
polynomial weight $1+\rho^\kappa$.

On each fixed anisotropic ball,~\eqref{eq:B-kappa-remainder} and
\eqref{eq:R-uniform} imply local $L^1$ convergence, uniformly in $p$, of the
integrand in~\eqref{eq:vanishing-amplitude-scaled} to
$B_\kappa(p,t)e^{-G(p,t)}$; the value at $t=0$ is immaterial.  The preceding
near--far split and Lemma~\ref{lem:moments} extend this convergence to the
whole scaled domain.  Finally,~\eqref{eq:B-kappa-growth} and
\eqref{eq:weighted-cell-limit-kappa} replace $E_z(p)$ by
$\Afib^\infty(p)$ uniformly in $p$.  This proves
\eqref{eq:vanishing-amplitude-result}.
\end{proof}

\section{Finite asymptotic expansions}\label{sec:expansions}

Kirwin gives higher expansions for isolated real Laplace minima from phase and
amplitude asymptotics~\cite{Kirwin}.  Leading-order results require only the
first rescaled limits of the phase, amplitude, and admissible cell.  At higher
orders these effects interact, and motion of the rescaled cell can contribute
through its boundary.  The general formulation therefore permits a representation, depending on a
small scale $q\downarrow0$, of the scaled cell on a fixed reference cell.  The
cell motion and the Jacobian of this representation enter the
same complete scaled density as the phase and amplitude.  An ambient indicator form
is also given for cases in which no straightening is used.

Fix one positive-codimensional normal-cell piece and suppress its index.  Fix
an anisotropy vector $\bm\beta\in(0,\infty)^d$, put
$Q:=Q_{\bm\beta}$.  Let $c:P\to(0,\infty)$ satisfy
\eqref{eq:c-bounds}, and let
$G:P\times\R^d\to[0,\infty)$ be a measurable degree-one homogeneous
model.  Define
\begin{equation}\label{eq:b-phi}
 b(p,s):=a(F(p,s))J(p,s),
 \qquad
 \phi(p,s):=\frac{f(F(p,s))-f_0}{c(p)},
\end{equation}
and, for $q>0$,
\begin{equation}\label{eq:finite-scaled-cell}
 \Afib_q(p):=D_{1/q}^{\bm\beta}\bigl(\Afib(p)\cap B_\eps\bigr).
\end{equation}
When $q=(zc(p))^{-1}$, this is the cell denoted by $E_z(p)$ in the
leading-order section.  The notation $\Afib_q$ is used here because the
expansion is organized in the independent small parameter $q$.

\begin{definition}[Scaled cell representation]
\label{def:scaled-cell-representation}
A \emph{scaled cell representation} of the family $\Afib_q(p)$ consists of a
fixed measurable family of reference cells $C(p)\subset\R^d$, whose
graph in $P\times\R^d$ is measurable, together with families
$\Theta_q(p,t)$ and $j_q(p,t)\ge0$ that are jointly measurable in $(q,p,t)$
on that graph and satisfy
\begin{equation}\label{eq:finite-cell-change-of-variables}
 \int_{\Afib_q(p)}\psi(u)\dd u
 =\int_{C(p)}\psi(\Theta_q(p,t))j_q(p,t)\dd t
\end{equation}
for every nonnegative measurable $\psi$.  Equivalently,
$(\Theta_q(p,\cdot))_\#(j_q(p,\cdot)\dd t)$ is Lebesgue measure restricted to
$\Afib_q(p)$, where $T_\#\mu$ denotes the pushforward of a measure $\mu$ by a
map $T$.  No injectivity or smoothness is required.  On the set where
$j_q>0$, the map may be taken to have values in $\Afib_q(p)$; its values elsewhere
are immaterial.  A smooth almost-everywhere bijection with $j_q$ equal to its
Jacobian is an important special case.

Every jointly measurable family $\Afib_q(p)$ has the tautological representation
\begin{equation}\label{eq:tautological-cell-representation}
 C(p)=\R^d,\qquad \Theta_q(p,t)=t,\qquad
 j_q(p,t)=\ind_{\Afib_q(p)}(t).
\end{equation}
Thus Theorem~\ref{thm:finite-expansion} imposes no invariance or fixed-shape
condition on the scaled cells.  A nontrivial representation is useful when it
turns cell motion into ordinary phase, amplitude, and Jacobian corrections.
\end{definition}

For a scaled cell representation, define on $\{j_q>0\}$
\begin{equation}\label{eq:transported-amplitude-phase}
 \widetilde b_q(p,t):=b\bigl(p,D_q^{\bm\beta}\Theta_q(p,t)\bigr)j_q(p,t),
 \qquad
 \Phi_q(p,t):=q^{-1}
 \phi\bigl(p,D_q^{\bm\beta}\Theta_q(p,t)\bigr),
\end{equation}
and set $\widetilde b_q=0$ and $\Phi_q=G$ on $\{j_q=0\}$.  The complete pulled-back
scaled density is
\begin{equation}\label{eq:straightened-density}
 \widetilde{\cD}_q(p,t):=\widetilde b_q(p,t)e^{G(p,t)-\Phi_q(p,t)}.
\end{equation}

\begin{assumption}[Weighted expansion of the transported density]
\label{ass:scaled-expansion}
Choose a scaled cell representation as in
Definition~\ref{def:scaled-cell-representation}.  Let $R>0$.  Assume that there
is a finite set $\Lambda_R\subset[0,R]$ and measurable profiles
$\cD_\lambda$ on the reference-cell graph such that
\begin{equation}\label{eq:D-moments}
 \sup_{p\in P}\int_{C(p)}|\cD_\lambda(p,t)|e^{-G(p,t)}\dd t<\infty
\end{equation}
for every $\lambda$, and
\begin{equation}\label{eq:scaled-L1-expansion}
 \sup_{p\in P}q^{-R}\int_{C(p)}
 \left|\widetilde{\cD}_q(p,t)-
 \sum_{\lambda\in\Lambda_R}q^\lambda \cD_\lambda(p,t)\right|
 e^{-G(p,t)}\dd t\longrightarrow0
 \qquad(q\downarrow0).
\end{equation}
\end{assumption}

\begin{theorem}[Finite expansion on one normal-cell piece]
\label{thm:finite-expansion}
Under the setup of~\eqref{eq:b-phi}--\eqref{eq:straightened-density}
and Assumption~\ref{ass:scaled-expansion}, set
\begin{equation}\label{eq:L-lambda}
 \cL_\lambda(p):=\int_{C(p)}\cD_\lambda(p,t)e^{-G(p,t)}\dd t.
\end{equation}
Then, uniformly in $p\in P$,
\begin{equation}\label{eq:finite-expansion-result}
 \cI(z,p)=e^{-zf_0}
 \sum_{\lambda\in\Lambda_R}
 z^{-(Q+\lambda)}c(p)^{-(Q+\lambda)}\cL_\lambda(p)
 +o\left(e^{-zf_0}z^{-(Q+R)}\right).
\end{equation}
If $P$ has finite volume, the same expansion holds after integration over
$P$, with $c(p)^{-(Q+\lambda)}\cL_\lambda(p)$ integrated against
$\dvol_P(p)$.
\end{theorem}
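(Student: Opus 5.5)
The plan is to reduce the fiber integral to a single integral over the fixed reference cell, on which the assumed weighted $L^1$ expansion can be integrated term by term. Fix $p$ and set $q=q_z(p)=(zc(p))^{-1}$. Since $f(F(p,s))-f_0=c(p)\phi(p,s)$, we have $e^{-zf(F(p,s))}=e^{-zf_0}e^{-q^{-1}\phi(p,s)}$. The substitution $s=D_q^{\bm\beta}u$ has Jacobian $q^{Q}$ and maps $\Afib(p)\cap B_\eps$ onto $\Afib_q(p)$. Hence
\[
 e^{zf_0}\cI(z,p)=q^{Q}\int_{\Afib_q(p)}b(p,D_q^{\bm\beta}u)\,e^{-q^{-1}\phi(p,D_q^{\bm\beta}u)}\dd u .
\]
Next we apply the scaled cell representation~\eqref{eq:finite-cell-change-of-variables}. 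That identity is stated for nonnegative $\psi$, so we apply it separately to the real and imaginary, positive and negative parts of the integrand. The resulting integrals are finite: $|\widetilde{\cD}_q|e^{-G}$ is integrable over $C(p)$ for small $q$, because by~\eqref{eq:D-moments} and~\eqref{eq:scaled-L1-expansion} it lies within a finite $L^1$ distance of the finite sum $\sum_\lambda q^\lambda|\cD_\lambda|e^{-G}$. The pulled-back integrand on $\{j_q>0\}$ is $\widetilde b_q e^{-\Phi_q}=\widetilde{\cD}_q e^{-G}$, and it vanishes where $j_q=0$, consistent with the conventions $\widetilde b_q=0$ and $\Phi_q=G$ there. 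This gives the exact identity
\[
 e^{zf_0}\cI(z,p)=q^{Q}\int_{C(p)}\widetilde{\cD}_q(p,t)e^{-G(p,t)}\dd t .
\]

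With the identity in hand, subtract $q^Q\sum_{\lambda\in\Lambda_R}q^\lambda\cL_\lambda(p)$; each $\cL_\lambda(p)$ is finite by~\eqref{eq:D-moments}. The error is then bounded in absolute value by
\[
 q^{Q}\int_{C(p)}\Bigl|\widetilde{\cD}_q-\sum_\lambda q^\lambda\cD_\lambda\Bigr|e^{-G}\dd t=q^{Q+R}\,\omega(q),
\]
where $\sup_p\omega(q)\to0$ as $q\downarrow0$ by~\eqref{eq:scaled-L1-expansion}. The bounds~\eqref{eq:c-bounds} make $q\to0$ uniformly in $p$, with $q\le(zc_-)^{-1}$, and give $q^{Q+R}\le c_-^{-(Q+R)}z^{-(Q+R)}$. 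Since $q^{Q+\lambda}=z^{-(Q+\lambda)}c(p)^{-(Q+\lambda)}$, this is exactly~\eqref{eq:finite-expansion-result}, uniformly in $p$.

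For the integrated statement we integrate over the finite-volume base. The coefficients $c(p)^{-(Q+\lambda)}\cL_\lambda(p)$ are bounded, by~\eqref{eq:c-bounds} and~\eqref{eq:D-moments}. They are measurable by Tonelli, since the reference-cell graph and the profiles $\cD_\lambda$ are jointly measurable. A uniform $o(\cdot)$ error integrates to an $o(\cdot)$ error over a base of finite measure, which gives the integrated expansion.

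The main obstacle is bookkeeping rather than analysis. We must justify the change-of-variables identity for complex integrands, including absolute integrability and the treatment of $\{j_q=0\}$. We must also check measurability in $p$ of the remainder, so that the uniform bound can be integrated over $P$. No convergence argument beyond the assumed expansion~\eqref{eq:scaled-L1-expansion} is needed, because that assumption already packages the phase, amplitude, Jacobian and cell motion into the single density $\widetilde{\cD}_q$.
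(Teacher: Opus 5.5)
Your proposal is correct and follows essentially the same route as the paper. You rescale by $s=D_{q_p}^{\bm\beta}u$ and pull back through the scaled cell representation to get the exact identity $e^{zf_0}q_p^{-Q}\cI(z,p)=\int_{C(p)}\widetilde{\cD}_{q_p}e^{-G}\dd t$, apply the weighted $L^1$ expansion~\eqref{eq:scaled-L1-expansion} uniformly in $p$ using $q_p\le(zc_-)^{-1}$, and integrate the uniform remainder over the finite-volume base; your extra bookkeeping (splitting the complex integrand for the nonnegative pushforward identity, and deducing absolute integrability from~\eqref{eq:D-moments} and~\eqref{eq:scaled-L1-expansion}) supplies details the paper leaves implicit.
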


\begin{proof}
For $q_p=(zc(p))^{-1}$, the scaling
$s=D_{q_p}^{\bm\beta}u$ followed by the pushforward identity
\eqref{eq:finite-cell-change-of-variables} gives the exact formula
\begin{equation}\label{eq:exact-expansion-identity}
 e^{zf_0}q_p^{-Q}\cI(z,p)
 =\int_{C(p)}\widetilde{\cD}_{q_p}(p,t)e^{-G(p,t)}\dd t.
\end{equation}
Assumption~\ref{ass:scaled-expansion} yields
\[
 \int_{C(p)}\widetilde{\cD}_{q_p}e^{-G}\dd t
 =\sum_{\lambda\in\Lambda_R}q_p^\lambda \cL_\lambda(p)
 +o(q_p^R)
\]
uniformly in $p$.  Since $q_p\to0$ uniformly by~\eqref{eq:c-bounds},
multiplication by $q_p^Q$ proves~\eqref{eq:finite-expansion-result}.  The
graph measurability and Tonelli's theorem make each $\cL_\lambda$ measurable.
The integrated statement follows from~\eqref{eq:D-moments}, the bounds on
$c$, and the finite volume of $P$.
\end{proof}

\begin{corollary}[Ambient indicator form]
\label{cor:ambient-indicator-expansion}
Define the complete scaled density on $\R^d$ by
\begin{equation}\label{eq:Dq}
 \cD_q(p,t):=
 \begin{cases}
 b(p,D_q^{\bm\beta}t)
 \exp\left\{G(p,t)-q^{-1}\phi(p,D_q^{\bm\beta}t)\right\},
   &t\in \Afib_q(p),\\
 0,&t\notin \Afib_q(p).
 \end{cases}
\end{equation}
Suppose that a finite set $\Lambda_R\subset[0,R]$ and measurable functions
$\cD_\lambda:P\times\R^d\to\C$ satisfy
\begin{equation}\label{eq:D-moments-ambient}
 \sup_{p\in P}\int_{\R^d}|\cD_\lambda(p,t)|e^{-G(p,t)}\dd t<\infty
\end{equation}
for every $\lambda$, and
\begin{equation}\label{eq:scaled-L1-expansion-ambient}
 \sup_{p\in P}q^{-R}\int_{\R^d}
 \left|\cD_q(p,t)-
 \sum_{\lambda\in\Lambda_R}q^\lambda \cD_\lambda(p,t)\right|
 e^{-G(p,t)}\dd t\longrightarrow0.
\end{equation}
Then the conclusion of Theorem~\ref{thm:finite-expansion} holds with
\begin{equation}\label{eq:L-lambda-density}
 \cL_\lambda(p)=\int_{\R^d}\cD_\lambda(p,t)e^{-G(p,t)}\dd t.
\end{equation}
\end{corollary}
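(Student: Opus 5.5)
The corollary reduces to Theorem~\ref{thm:finite-expansion} applied to the tautological scaled cell representation~\eqref{eq:tautological-cell-representation}. We take $C(p)=\R^d$, $\Theta_q(p,t)=t$ and $j_q(p,t)=\ind_{\Afib_q(p)}(t)$. The reference-cell graph $P\times\R^d$ is trivially measurable. Joint measurability of $j_q$ in $(q,p,t)$ follows from the measurable cell graph~\eqref{eq:measurable-cell-graph} together with the measurability of $(q,p,t)\mapsto(p,D_q^{\bm\beta}t)$. The pushforward identity~\eqref{eq:finite-cell-change-of-variables} holds because $\int_{\Afib_q(p)}\psi=\int_{\R^d}\psi(t)\ind_{\Afib_q(p)}(t)\dd t$.

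The main step is to identify the transported density~\eqref{eq:straightened-density} with the ambient density $\cD_q$ of~\eqref{eq:Dq}.

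On $\{j_q>0\}=\Afib_q(p)$ we have $j_q=1$ and $\Theta_q=\mathrm{id}$. Hence $\widetilde b_q(p,t)=b(p,D_q^{\bm\beta}t)$ and $\Phi_q(p,t)=q^{-1}\phi(p,D_q^{\bm\beta}t)$, so $\widetilde\cD_q$ coincides with the first line of~\eqref{eq:Dq}.

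On the complement the convention sets $\widetilde b_q=0$ and $\Phi_q=G$, which gives $\widetilde\cD_q=0\cdot e^{0}=0$. This matches the second line of~\eqref{eq:Dq}.

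Thus $\widetilde\cD_q=\cD_q$ everywhere on $P\times\R^d$. With $C(p)=\R^d$, the hypotheses~\eqref{eq:D-moments-ambient} and~\eqref{eq:scaled-L1-expansion-ambient} are then literally~\eqref{eq:D-moments} and~\eqref{eq:scaled-L1-expansion}, so Assumption~\ref{ass:scaled-expansion} holds.

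Theorem~\ref{thm:finite-expansion} now gives~\eqref{eq:finite-expansion-result}, uniformly in $p$, with $\cL_\lambda$ from~\eqref{eq:L-lambda}. Since $C(p)=\R^d$, these coefficients reduce to~\eqref{eq:L-lambda-density}. The integrated statement over a finite-volume base transfers unchanged.

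No step is a real obstacle; the only point needing care is the bookkeeping on $\{j_q=0\}$. There the convention $\Phi_q=G$ makes the exponential factor equal to one, so the zero amplitude gives $\widetilde\cD_q=0$ rather than an undefined expression. This is what allows the ambient integrals over $\R^d$ to be used directly.
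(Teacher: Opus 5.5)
Your proposal is correct and matches the paper's proof: the paper also applies Theorem~\ref{thm:finite-expansion} to the tautological representation~\eqref{eq:tautological-cell-representation}, observes $\widetilde{\cD}_q=\cD_q$, and concludes. Your extra checks (joint measurability of $j_q$, and the convention on $\{j_q=0\}$ giving $\widetilde{\cD}_q=0$) are exactly the details that one-line argument leaves implicit.
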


\begin{proof}
Use the tautological representation~\eqref{eq:tautological-cell-representation}.
Then $\widetilde{\cD}_q=\cD_q$, so the assertion is exactly
Theorem~\ref{thm:finite-expansion}.
\end{proof}

\begin{remark}[Why a nontrivial representation can matter]\label{rem:nontrivial-cell-representation}
The tautological representation covers arbitrary moving cells, but their
indicators need not possess an $L^1$ expansion.  For example, fix
$\vartheta_{\mathrm{cell}}>0$, let $h$ be smooth and compactly supported, let
$C=\{(y,r):r\ge0\}$, and put
$\Afib_q=\{(y,r):r\ge q^{\vartheta_{\mathrm{cell}}}h(y)\}$.  The translation
\[
 \Theta_q(y,r)=(y,r+q^{\vartheta_{\mathrm{cell}}}h(y))
\]
straightens $\Afib_q$ to $C$, and its effect enters the composed phase, amplitude,
and Jacobian in~\eqref{eq:straightened-density}.  If the two domains are
compared without straightening, then for every compactly supported smooth
$\psi$,
\[
 q^{-\vartheta_{\mathrm{cell}}}\left(\int_{\Afib_q}\psi-\int_C\psi\right)
 \longrightarrow-\int h(y)\psi(y,0)\dd y,
\]
so the first boundary correction is distributional rather than an ordinary
$L^1$ density.  A distributional formulation is possible, but is not needed
when a suitable reference-cell parametrization is available.
\end{remark}

\begin{corollary}[Global finite expansion]\label{cor:global-finite-expansion}
Assume the global hypotheses of Theorem~\ref{thm:global-leading}, including the
piecewise leading expansions~\eqref{eq:piece-integrated-leading}, and fix
$T\ge Q_*$.  For every piece with $Q_\alpha\le T$, suppose that there are a
finite set
\[
 \Lambda_{\alpha,T}\subset[0,T-Q_\alpha],
 \qquad 0\in\Lambda_{\alpha,T},
\]
and integrated coefficients $\widehat K_{\alpha,\lambda}\in\C$, with
$\widehat K_{\alpha,0}=K_\alpha$, such that
\begin{equation}\label{eq:piece-integrated-finite-expansion}
 I_\alpha(z)=e^{-zf_0}
 \sum_{\lambda\in\Lambda_{\alpha,T}}
 z^{-(Q_\alpha+\lambda)}\widehat K_{\alpha,\lambda}
 +o\left(e^{-zf_0}z^{-T}\right).
\end{equation}
When $Q_\alpha=T$, this assumption is just the leading expansion with
$\Lambda_{\alpha,T}=\{0\}$.  For a zero-codimensional piece the contribution is
exact, with $Q_\alpha=0$ and
\[
 \widehat K_{\alpha,0}=K_\alpha
 =\int_{P_\alpha}a(p)\dvol_{P_\alpha}(p).
\]
Define
\[
 \mathcal S_T:=\left\{Q_\alpha+\lambda:
 \alpha\in\cA,\ Q_\alpha\le T,\
 \lambda\in\Lambda_{\alpha,T}\right\},
\]
and, for $\xi\in\mathcal S_T$,
\begin{equation}\label{eq:global-expansion-coeff}
 K^{\mathrm{glob}}_\xi:=
 \sum_{\substack{\alpha\in\cA,\ Q_\alpha\le T,\
                   \lambda\in\Lambda_{\alpha,T}\\
                   Q_\alpha+\lambda=\xi}}
 \widehat K_{\alpha,\lambda}.
\end{equation}
Then
\begin{equation}\label{eq:global-finite-expansion}
 I(z)=e^{-zf_0}\sum_{\xi\in\mathcal S_T}z^{-\xi}K^{\mathrm{glob}}_\xi
 +o\left(e^{-zf_0}z^{-T}\right).
\end{equation}
Consequently, if $\xi_0$ is the smallest exponent in $\mathcal S_T$ for which
$K^{\mathrm{glob}}_{\xi_0}\ne0$, then
\[
 I(z)\sim e^{-zf_0}z^{-\xi_0}K^{\mathrm{glob}}_{\xi_0}.
\]
\end{corollary}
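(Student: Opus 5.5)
The argument is a bookkeeping step on top of Lemma~\ref{lem:localization} and the decomposition~\eqref{eq:sum-pieces}, in the spirit of the proof of Theorem~\ref{thm:global-leading}. First, by Lemma~\ref{lem:localization}, $I(z)=\sum_{\alpha\in\cA}I_\alpha(z)+O(e^{-z(f_0+\delta_\eps)})$. The localization error is $o(e^{-zf_0}z^{-T})$ for every fixed $T$, because $e^{-z\delta_\eps}z^{T}\to0$. Thus it suffices to expand the finite sum $\sum_\alpha I_\alpha(z)$ up to $o(e^{-zf_0}z^{-T})$.

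Next split $\cA$ according to whether $Q_\alpha\le T$ or $Q_\alpha>T$.
\begin{itemize}
\item For pieces with $Q_\alpha>T$, the leading expansion~\eqref{eq:piece-integrated-leading} gives $I_\alpha(z)=O(e^{-zf_0}z^{-Q_\alpha})$. This is $o(e^{-zf_0}z^{-T})$, so these pieces are absorbed into the remainder and contribute no exponents to $\mathcal S_T$, consistent with the definition.
\item For pieces with $Q_\alpha\le T$, substitute the supplied expansion~\eqref{eq:piece-integrated-finite-expansion}. When $Q_\alpha=T$ this is just~\eqref{eq:piece-integrated-leading} with $\Lambda_{\alpha,T}=\{0\}$. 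For a zero-codimensional piece it is the exact identity $I_\alpha(z)=e^{-zf_0}K_\alpha$, since $f\equiv f_0$ on $P_\alpha$; so it fits the same format with $Q_\alpha=0$ and $\Lambda=\{0\}$, provided $T\ge0$, which holds because $T\ge Q_*\ge0$.
\end{itemize}

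Summing finitely many expansions, the remainders add to $o(e^{-zf_0}z^{-T})$. The main terms $z^{-(Q_\alpha+\lambda)}\widehat K_{\alpha,\lambda}$ are regrouped by the value $\xi=Q_\alpha+\lambda\in\mathcal S_T$, which gives exactly $K^{\mathrm{glob}}_\xi$ in~\eqref{eq:global-expansion-coeff}. This proves~\eqref{eq:global-finite-expansion}. Here $\mathcal S_T$ is finite and contained in $[Q_*,T]$, because each $\Lambda_{\alpha,T}\subset[0,T-Q_\alpha]$.

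For the final claim, let $\xi_0$ be the smallest exponent with nonzero coefficient. Every $\xi\in\mathcal S_T$ with $\xi>\xi_0$ contributes $o(z^{-\xi_0})$, and so does the remainder $o(z^{-T})$, since $\xi_0\le T$. Exponents below $\xi_0$ have zero coefficient by the choice of $\xi_0$. Hence $I(z)=e^{-zf_0}z^{-\xi_0}(K^{\mathrm{glob}}_{\xi_0}+o(1))$, which is the stated equivalence because $K^{\mathrm{glob}}_{\xi_0}\ne0$.

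There is no real obstacle. The only point needing care is the zero-codimensional convention: one must check that its exact contribution, and pieces with $Q_\alpha$ exactly $T$, fit the indexing without double counting. The remaining point, that the localization error is beyond all polynomial orders, is immediate.
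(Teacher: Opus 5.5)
Your proposal is correct and follows the paper's own proof: localize via Lemma~\ref{lem:localization}, absorb pieces with $Q_\alpha>T$ and the localization error into $o(e^{-zf_0}z^{-T})$, then sum the finitely many expansions and regroup by total exponent. Your treatment of the zero-codimensional piece and of the final equivalence just spells out the details the paper leaves implicit.
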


\begin{proof}
Sum~\eqref{eq:piece-integrated-finite-expansion} over the pieces with
$Q_\alpha\le T$ and group equal total exponents.  A piece with
$Q_\alpha>T$ is $o(e^{-zf_0}z^{-T})$ by
\eqref{eq:piece-integrated-leading}.  The localization error in
Lemma~\ref{lem:localization} is also $o(e^{-zf_0}z^{-T})$.  Since the family of
pieces is finite, the remainders sum to the stated remainder.
\end{proof}

\begin{remark}[Coefficients from local finite expansions]
\label{rem:integrated-finite-coefficients}
If Theorem~\ref{thm:finite-expansion} applies to a positive-codimensional piece
with baseline exponent $Q_\alpha$ and $R=T-Q_\alpha$, then one may take
\begin{equation}\label{eq:integrated-finite-coefficients}
 \widehat K_{\alpha,\lambda}
 :=\int_{P_\alpha}
 c_\alpha(p)^{-(Q_\alpha+\lambda)}
 \cL_{\alpha,\lambda}(p)\dvol_{P_\alpha}(p).
\end{equation}
A locally vanishing amplitude may shift the sharp baseline.  In that case the
shift is absorbed into $Q_\alpha$ and the local orders are reindexed before
using~\eqref{eq:integrated-finite-coefficients}.  A missing zero-order profile
may be adjoined with coefficient zero.
\end{remark}

\subsection{Verifiable criteria after cell straightening}

A practical criterion for the weighted $L^1$ expansion is obtained from
transported phase and amplitude jets.  It applies both to invariant cells and
to moving cells after a suitable straightening.  Nonquadratic local models occur in isolated Laplace expansions, while
geometrically degenerate models also arise in sub-Riemannian heat-kernel
asymptotics~\cite{Kirwin,NeelSacchelli}.

Retain the notation $\widetilde b_q$ and $\Phi_q$ from
\eqref{eq:transported-amplitude-phase}, so that
$\widetilde{\cD}_q=\widetilde b_q e^{G-\Phi_q}$.

\begin{proposition}[Weighted jets after cell straightening]
\label{prop:homogeneous-jets}
Assume that, uniformly in $p$,
\begin{equation}\label{eq:reference-cell-coercivity}
 g_-\rho_{\bm\beta}(t)\le G(p,t)\le g_+\rho_{\bm\beta}(t)
 \qquad(t\in C(p))
\end{equation}
for some $0<g_-\le g_+<\infty$.  Fix $R>0$.  Suppose there are finite sets
$\Lambda_b\subset[0,R]$ and $\Lambda_\phi\subset(0,R]$, measurable profiles
$b_\lambda$ and $\phi_\nu$ on the reference-cell graph, and an exponent
$L_{\mathrm{gr}}<\infty$ such that
\begin{equation}\label{eq:transported-profile-growth}
 |b_\lambda(p,t)|+|\phi_\nu(p,t)|
 \le C(1+\rho_{\bm\beta}(t))^{L_{\mathrm{gr}}}.
\end{equation}
For every fixed $A>0$, let $\ell_q:=A\log(1/q)$.  Assume that on
$C(p)\cap\{\rho_{\bm\beta}\le \ell_q\}$,
\begin{align}
 \widetilde b_q(p,t)&=\sum_{\lambda\in\Lambda_b}q^\lambda b_\lambda(p,t)
            +r_{b,q}(p,t),\label{eq:transported-b-jet}\\
 \Phi_q(p,t)&=G(p,t)+\sum_{\nu\in\Lambda_\phi}q^\nu
 \phi_\nu(p,t)+r_{\phi,q}(p,t),\label{eq:transported-phi-jet}
\end{align}
and
\begin{equation}\label{eq:transported-jet-remainders}
 \sup_{p\in P}\sup_{\substack{t\in C(p)\\
              \rho_{\bm\beta}(t)\le \ell_q}}
 \frac{|r_{b,q}(p,t)|+|r_{\phi,q}(p,t)|}
 {q^R(1+\rho_{\bm\beta}(t))^{L_{\mathrm{gr}}}}\longrightarrow0.
\end{equation}
Finally, assume that for all sufficiently small $q$,
\begin{equation}\label{eq:transported-global-bounds}
 |\widetilde b_q(p,t)|\le C(1+\rho_{\bm\beta}(t))^{L_{\mathrm{gr}}},
 \qquad
 \Phi_q(p,t)\ge\kappa_G G(p,t)
 \qquad(t\in C(p))
\end{equation}
for some $\kappa_G>0$.

Let $\cE_R$ be the additive semigroup generated by $\Lambda_\phi$, truncated
at $R$, with $0$ adjoined.  Define
\begin{equation}\label{eq:E-eta}
 E_0:=1,
 \qquad
 E_\eta(p,t):=
 \sum_{m\ge1}\frac{(-1)^m}{m!}
 \sum_{\substack{\nu_1,\ldots,\nu_m\in\Lambda_\phi\\
                  \nu_1+\cdots+\nu_m=\eta}}
 \prod_{r=1}^m\phi_{\nu_r}(p,t)
 \quad(0<\eta\le R).
\end{equation}
All sums in~\eqref{eq:E-eta} are finite.  Put
\begin{equation}\label{eq:D-xi-formula}
 \Lambda_R:=(\Lambda_b+\cE_R)\cap[0,R],
 \qquad
 \cD_\xi(p,t):=
 \sum_{\substack{\lambda\in\Lambda_b,\ \eta\in\cE_R\\
                  \lambda+\eta=\xi}}
 b_\lambda(p,t)E_\eta(p,t).
\end{equation}
Then Assumption~\ref{ass:scaled-expansion} holds with these profiles, and
Theorem~\ref{thm:finite-expansion} applies.
\end{proposition}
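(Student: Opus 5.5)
The plan is to verify the two requirements of Assumption~\ref{ass:scaled-expansion}: the moment bound \eqref{eq:D-moments} and the weighted $L^1$ expansion \eqref{eq:scaled-L1-expansion}. The key device is to split the reference cell at the logarithmic gauge radius $\ell_q=A\log(1/q)$, with $A$ to be chosen large. The moment bound is immediate. Each $\cD_\xi$ is a finite sum of products of finitely many $b_\lambda$ and $\phi_\nu$. By \eqref{eq:transported-profile-growth} it is therefore bounded by $C(1+\rho_{\bm\beta})^{L'}$ for some finite $L'$ depending on $R$ and the smallest element of $\Lambda_\phi$. Lemma~\ref{lem:moments}, applied on $C(p)$ using the lower bound in \eqref{eq:reference-cell-coercivity}, gives a uniform bound on $\int_{C(p)}(1+\rho)^{L'}e^{-G}\dd t$.

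\emph{Far region} $\rho_{\bm\beta}>\ell_q$. Here \eqref{eq:transported-global-bounds} gives
\[
 |\widetilde{\cD}_q|e^{-G}=|\widetilde b_q|e^{-\Phi_q}\le C(1+\rho)^{L_{\mathrm{gr}}}e^{-\kappa_G g_-\rho}.
\]
The finite expansion satisfies $\bigl|\sum q^\xi\cD_\xi\bigr|e^{-G}\le C(1+\rho)^{L'}e^{-g_-\rho}$. Integrating over $\{\rho>\ell_q\}$ with the shell estimate of Lemma~\ref{lem:moments} yields a bound of order $\ell_q^{K}q^{A\min\{\kappa_G,1\}g_-}$. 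This is $o(q^R)$ once $A>R/(g_-\min\{1,\kappa_G\})$.

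\emph{Near region} $\rho_{\bm\beta}\le\ell_q$. Write $\Phi_q-G=S_q+r_{\phi,q}$ with $S_q:=\sum_\nu q^\nu\phi_\nu$. On this region $|S_q|\le Cq^{\nu_{\min}}\ell_q^{L_{\mathrm{gr}}}\to0$ uniformly, and \eqref{eq:transported-jet-remainders} makes $r_{\phi,q}$ small as well. Expand $e^{-(S_q+r_{\phi,q})}$ by Taylor's formula to order $M$ with $M\nu_{\min}>R$. The multinomial terms with total $q$-exponent at most $R$ reproduce exactly the $E_\eta$ of \eqref{eq:E-eta}. Three kinds of error remain:
\begin{itemize}
\item terms with exponent exceeding $R$, which are $O(q^{R+\nu_{\min}'}(1+\rho)^{L''})$ for some positive gap, since $\Lambda_\phi$ is finite;
\item the Taylor remainder $O(|S_q|^{M+1})$;
\item the contribution of $r_{\phi,q}$, estimated by $|e^{-x}-1|\le|x|e^{|x|}$, which is $o(q^R)(1+\rho)^{L_{\mathrm{gr}}}$.
\end{itemize}
Multiplying by $\widetilde b_q=\sum q^\lambda b_\lambda+r_{b,q}$ and discarding the products with exponent above $R$ gives $\widetilde{\cD}_q-\sum_{\xi\in\Lambda_R}q^\xi\cD_\xi=o(q^R)(1+\rho)^{L'''}$ pointwise on the near region, uniformly in $p$. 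Integrating against $e^{-G}$ and applying Lemma~\ref{lem:moments} then gives $o(q^R)$.

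The main obstacle is the near-region bookkeeping. The polynomial growth factors $\ell_q^{L}$ and the factor $e^{|S_q|}$ must remain harmless. Since $e^{|S_q|+|r_{\phi,q}|}\to1$ uniformly on $\rho\le\ell_q$, and every power of $\ell_q$ is absorbed by the strict gap $q^{\nu_{\min}}$ or by the $o(q^R)$ remainders after integration against $e^{-g_-\rho}$, both issues can be closed by choosing $M$ sufficiently large.
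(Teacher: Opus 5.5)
Your proposal is correct and follows essentially the same route as the paper's proof: split the reference cell at the logarithmic radius $\ell_q=A\log(1/q)$, Taylor-expand $e^{-(\Phi_q-G)}$ on the near region beyond order $R/\min\Lambda_\phi$ (using that the finitely many discarded exponents sit a positive gap above $R$), and dispose of the far region via \eqref{eq:transported-global-bounds} and \eqref{eq:reference-cell-coercivity} with $A$ large. Your explicit check of \eqref{eq:D-moments} and the threshold $A>R/(g_-\min\{1,\kappa_G\})$ make precise what the paper leaves implicit; the case $\Lambda_\phi=\varnothing$, which the paper treats separately and you do not mention, is trivial.
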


\begin{proof}
Put
\[
 V_q^{[R]}:=\sum_{\nu\in\Lambda_\phi}q^\nu\phi_\nu,
 \qquad
 \widetilde b_q^{[R]}:=\sum_{\lambda\in\Lambda_b}q^\lambda b_\lambda.
\]
On every logarithmic region $\rho_{\bm\beta}\le \ell_q$, one has
$V_q^{[R]}\to0$ uniformly: this is immediate if $\Lambda_\phi=\varnothing$, while
for $\Lambda_\phi\ne\varnothing$ it follows from polynomial growth and
$\min\Lambda_\phi>0$.  The remainder estimate gives
$r_{b,q}=o(q^R)(1+\rho_{\bm\beta})^{L_{\mathrm{gr}}}$ and
$r_{\phi,q}=o(q^R)(1+\rho_{\bm\beta})^{L_{\mathrm{gr}}}$ there.  Consequently,
\[
 \widetilde{\cD}_q=(\widetilde b_q^{[R]}+r_{b,q})e^{-(V_q^{[R]}+r_{\phi,q})}.
\]
If $\Lambda_\phi\ne\varnothing$, put
$\nu_0:=\min\Lambda_\phi$ and choose $m_*$ with
$(m_*+1)\nu_0>R$.  Taylor expansion of the exponential through order $m_*$,
followed by multiplication by $\widetilde b_q^{[R]}$, produces the profiles
\eqref{eq:E-eta}--\eqref{eq:D-xi-formula}.  If
$\Lambda_\phi=\varnothing$, this step is absent.  Since the generating set is
finite and contains only positive orders, its additive semigroup has only
finitely many elements in each bounded interval.  The discarded orders are
therefore separated from $R$ by a positive gap.  After division by $q^R$, the
discarded terms and both remainders converge to zero on the logarithmic region,
up to a fixed polynomial in $\rho_{\bm\beta}$.

By~\eqref{eq:reference-cell-coercivity}, multiplication by $e^{-G}$ makes that
polynomial integrable uniformly in $p$.  On the complementary region,
\eqref{eq:transported-global-bounds} gives
\[
 |\widetilde{\cD}_q(p,t)|e^{-G(p,t)}
 =|\widetilde b_q(p,t)|e^{-\Phi_q(p,t)}
 \le C(1+\rho_{\bm\beta}(t))^{L_{\mathrm{gr}}} e^{-\kappa_G G(p,t)}.
\]
Every formal profile times $e^{-G}$ has the same type of
polynomial-exponential bound.  Since $\ell_q=A\log(1/q)$ and
$G\ge g_-\rho_{\bm\beta}$ on $C(p)$, these tails decay faster than every
prescribed power of $q$ once $A$ is sufficiently large; in particular, they
are $o(q^R)$.  This proves~\eqref{eq:scaled-L1-expansion}.
\end{proof}

\begin{remark}[Graph straightening]\label{rem:moving-boundary-straightening}
In the graph situation of Remark~\ref{rem:nontrivial-cell-representation},
\[
 \Afib_q(p)=\{(y,r):r\ge h_q(p,y)\},
 \qquad C(p)=\{(y,r):r\ge0\},
\]
the map $\Theta_q(p,y,r)=(y,r+h_q(p,y))$ has Jacobian one.  A finite
$q$-expansion of $h_q$, together with Taylor expansions of the phase and
amplitude after composition with $\Theta_q$, gives the transported jets in
Proposition~\ref{prop:homogeneous-jets}.  More generally, a smooth family of
cell-straightening diffeomorphisms may be used whenever both $\Theta_q$ and its
Jacobian admit expansions with the polynomial control required there.  Such a
straightening need not exist for every moving cell.
\end{remark}

\begin{corollary}[Homogeneous jets on a locally stationary cell]
\label{cor:homogeneous-stable-cell}
Fix $R>0$.  Assume that the scaled cell is represented without moving points on the support
of the transported measure,
\begin{equation}\label{eq:stationary-cell-support}
 \Theta_q(p,t)=t\qquad\text{whenever }j_q(p,t)>0,
\end{equation}
and that the representation is locally stationary after scaling: for every
fixed $A>0$, for all sufficiently small $q$,
\begin{equation}\label{eq:locally-stationary-cell}
 j_q(p,t)=1
 \quad\text{when }t\in C(p),\ 
 \rho_{\bm\beta}(t)\le A\log(1/q).
\end{equation}
This holds, for example, with $C(p)$ equal to an anisotropically invariant
untruncated cell, $\Theta_q(p,t)=t$, and $j_q$ the indicator of the expanding
Euclidean truncation.

Assume that, uniformly for $p\in P$ and for $s\in C(p)$ with
$\rho_{\bm\beta}(s)$ sufficiently small, there are finite sets
$\Lambda_b\subset[0,R]$ and $\Lambda_\phi\subset(0,R]$ and measurable
homogeneous profiles satisfying
\begin{align}
 b(p,s)&=\sum_{\lambda\in\Lambda_b}b_\lambda(p,s)+r_b(p,s),
 &b_\lambda(p,D_u^{\bm\beta}s)&=u^\lambda b_\lambda(p,s),
 \label{eq:b-jet}\\
 \phi(p,s)&=G(p,s)+\sum_{\nu\in\Lambda_\phi}\phi_\nu(p,s)+r_\phi(p,s),
 &\phi_\nu(p,D_u^{\bm\beta}s)&=u^{1+\nu}\phi_\nu(p,s),
 \label{eq:phi-jet}
\end{align}
where the profiles are uniformly bounded on the anisotropic unit sphere and
\begin{align}
 |r_b(p,s)|&\le\omega_b(\rho_{\bm\beta}(s))
 \rho_{\bm\beta}(s)^R,
 &\omega_b(r)&\to0,\label{eq:b-jet-rem}\\
 |r_\phi(p,s)|&\le\omega_\phi(\rho_{\bm\beta}(s))
 \rho_{\bm\beta}(s)^{1+R},
 &\omega_\phi(r)&\to0.\label{eq:phi-jet-rem}
\end{align}
Assume also that $0\le j_q\le j_+<\infty$, that $b$ is uniformly bounded on the
truncated coordinate domains, and that, after shrinking $\eps$ if necessary,
\begin{equation}\label{eq:phase-lower-bound}
 \phi(p,s)\ge\kappa_G G(p,s)
\end{equation}
on the admissible fibers for some $\kappa_G>0$.  If
\eqref{eq:reference-cell-coercivity} holds, then the hypotheses of
Proposition~\ref{prop:homogeneous-jets} are satisfied.
\end{corollary}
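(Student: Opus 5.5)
On the support of the transported measure we have $\Theta_q(p,t)=t$ by \eqref{eq:stationary-cell-support}. Hence the transported data reduce to
\[
 \widetilde b_q(p,t)=b(p,D_q^{\bm\beta}t)\,j_q(p,t),
 \qquad
 \Phi_q(p,t)=q^{-1}\phi(p,D_q^{\bm\beta}t)
\]
on $\{j_q>0\}$. Since $j_q=1$ on the logarithmic region, the plan is to define the transported profiles by rescaling the homogeneous jets:
\[
 b_\lambda^{\mathrm{tr}}(p,t):=q^{-\lambda}b_\lambda(p,D_q^{\bm\beta}t)=b_\lambda(p,t),
 \qquad
 \phi_\nu^{\mathrm{tr}}(p,t):=q^{-1-\nu}\phi_\nu(p,D_q^{\bm\beta}t)=\phi_\nu(p,t).
\]
By homogeneity these are independent of $q$. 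We also have $q^{-1}G(p,D_q^{\bm\beta}t)=G(p,t)$. Boundedness of the profiles on the anisotropic unit sphere, combined with homogeneity, gives $|b_\lambda(p,t)|\le C\rho_{\bm\beta}(t)^\lambda$ and $|\phi_\nu(p,t)|\le C\rho_{\bm\beta}(t)^{1+\nu}$. This yields the growth bound \eqref{eq:transported-profile-growth} with $L_{\mathrm{gr}}:=1+R$.

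Next I would check the remainder condition \eqref{eq:transported-jet-remainders}. Fix $A>0$ and let $t\in C(p)$ with $\rho_{\bm\beta}(t)\le\ell_q=A\log(1/q)$. Then $\rho_{\bm\beta}(D_q^{\bm\beta}t)=q\rho_{\bm\beta}(t)\le Aq\log(1/q)\to0$ uniformly, so for small $q$ the point $s=D_q^{\bm\beta}t$ lies in the region where the jet expansions \eqref{eq:b-jet}--\eqref{eq:phi-jet} hold. The point $s$ also lies in the truncated cell, because $j_q=1$ there and $\Theta_q=t$ maps into $\Afib_q(p)$. The transported remainders are $r_{b,q}(p,t)=r_b(p,D_q^{\bm\beta}t)$ and $r_{\phi,q}(p,t)=q^{-1}r_\phi(p,D_q^{\bm\beta}t)$. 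By \eqref{eq:b-jet-rem}--\eqref{eq:phi-jet-rem} they are bounded by
\[
 \omega(q\rho_{\bm\beta}(t))\,q^R\rho_{\bm\beta}(t)^{R}
 \quad\text{and}\quad
 \omega(q\rho_{\bm\beta}(t))\,q^R\rho_{\bm\beta}(t)^{1+R},
\]
respectively. Dividing by $q^R(1+\rho_{\bm\beta})^{1+R}$ leaves $\sup_{r\le Aq\log(1/q)}\omega(r)\to0$, which is \eqref{eq:transported-jet-remainders}.

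For the global bounds \eqref{eq:transported-global-bounds}, boundedness of $b$ on the truncated domains and $0\le j_q\le j_+$ give $|\widetilde b_q|\le C$ wherever $j_q>0$; on $\{j_q=0\}$ we have $\widetilde b_q=0$ by convention. On $\{j_q>0\}$ the lower bound \eqref{eq:phase-lower-bound} and homogeneity give
\[
 \Phi_q(p,t)\ge\kappa_G\,q^{-1}G(p,D_q^{\bm\beta}t)=\kappa_G G(p,t).
\]
On $\{j_q=0\}$ we have $\Phi_q=G\ge\kappa_G G$ once $\kappa_G\le1$ is arranged. Together with the assumed coercivity \eqref{eq:reference-cell-coercivity}, all hypotheses of Proposition~\ref{prop:homogeneous-jets} are then in place.

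The main obstacle is domain bookkeeping. The jet identities \eqref{eq:b-jet}--\eqref{eq:phi-jet} are assumed for $s\in C(p)$ small, whereas the transported points come from $\Afib(p)\cap B_\eps$. I would use \eqref{eq:locally-stationary-cell}: $j_q=1$ on the logarithmic region, and the pushforward identity then forces $t\in\Afib_q(p)$ a.e. there, so $D_q^{\bm\beta}t$ lies in both $C(p)$ (after rescaling, using the stated invariant-cell setting) and the true cell. I would handle the remaining null-set ambiguity by modifying $\Theta_q$ on a null set, which does not affect \eqref{eq:finite-cell-change-of-variables}. A secondary point is that $R$ need not lie in $\Lambda_b$; the growth exponent $1+R$ absorbs all profiles regardless.
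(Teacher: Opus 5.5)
Your proposal is correct and follows essentially the same route as the paper. On the logarithmic region, $j_q=1$ and $\Theta_q(p,t)=t$ reduce the transported data to $b(p,D_q^{\bm\beta}t)$ and $q^{-1}\phi(p,D_q^{\bm\beta}t)$, and homogeneity turns the jets into $q$-expansions whose remainders are controlled because $q\rho_{\bm\beta}(t)\le Aq\log(1/q)\to0$. The global bounds then come from boundedness of $b$, $j_q\le j_+$, the phase lower bound, and the convention on $\{j_q=0\}$. Your remark that this convention ($\Phi_q=G$ on $\{j_q=0\}$) requires replacing $\kappa_G$ by $\min\{\kappa_G,1\}$ is a correct detail that the paper leaves implicit.
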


\begin{proof}
On the logarithmic region,~\eqref{eq:locally-stationary-cell} gives
$\widetilde b_q(p,t)=b(p,D_q^{\bm\beta}t)$ and
$\Phi_q(p,t)=q^{-1}\phi(p,D_q^{\bm\beta}t)$.  Homogeneity yields
\begin{align*}
 \widetilde b_q(p,t)&=\sum_{\lambda\in\Lambda_b}q^\lambda b_\lambda(p,t)
 +r_b(p,D_q^{\bm\beta}t),\\
 \Phi_q(p,t)&=G(p,t)+\sum_{\nu\in\Lambda_\phi}q^\nu\phi_\nu(p,t)
 +q^{-1}r_\phi(p,D_q^{\bm\beta}t).
\end{align*}
Since $q\rho_{\bm\beta}(t)\to0$ uniformly on every logarithmic region, the
remainder bounds imply~\eqref{eq:transported-jet-remainders}.  Homogeneity and
unit-sphere boundedness give polynomial growth of the profiles.  On
$\{j_q>0\}$, condition~\eqref{eq:stationary-cell-support}, the boundedness of $b$ and $j_q$, homogeneity of $G$, and
\eqref{eq:phase-lower-bound} give
\[
 |\widetilde b_q(p,t)|\le C,
 \qquad
 \Phi_q(p,t)\ge\kappa_G G(p,t).
\]
On $\{j_q=0\}$ these bounds hold by the convention $\widetilde b_q=0$, $\Phi_q=G$.
Thus~\eqref{eq:transported-global-bounds} holds, and
Proposition~\ref{prop:homogeneous-jets} applies.
\end{proof}

\begin{remark}[Relation with Taylor expansions]\label{rem:Taylor}
For a multiindex $\alpha\in\mathbb{N}_0^d$, the monomial $s^\alpha$ has
anisotropic degree
\begin{equation}\label{eq:weighted-degree}
 |\alpha|_{\bm\beta}:=\sum_{i=1}^d\beta_i\alpha_i.
\end{equation}
Thus an ordinary Taylor expansion is converted into homogeneous normal jets by
grouping monomials with the same weighted degree.  A phase monomial of weighted
degree $1+\nu$ contributes to $\phi_\nu$.  For a moving cell, this grouping is
performed after composing with the chosen straightening map and including its
Jacobian in the transported amplitude $\widetilde b_q$.
\end{remark}

\begin{corollary}[Smooth finite-type criterion on a locally stationary cell]
\label{cor:smooth-finite-type}
Assume the locally stationary setup of
Corollary~\ref{cor:homogeneous-stable-cell}.  Fix $R>0$ and set
$\beta_{\min}:=\min_i\beta_i$.  Choose integers $m_b,m_\phi$ such that
\begin{equation}\label{eq:smooth-orders}
 m_b\beta_{\min}>R,
 \qquad
 m_\phi\beta_{\min}>1+R.
\end{equation}
Assume that $b$ and $\phi$ admit uniform Taylor expansions in the normal
variable,
\begin{align}
 b(p,s)&=\sum_{|\alpha|\le m_b}
 \frac{\partial_s^\alpha b(p,0)}{\alpha!}s^\alpha
 +o(|s|^{m_b}),\label{eq:b-uniform-taylor}\\
 \phi(p,s)&=\sum_{|\alpha|\le m_\phi}
 \frac{\partial_s^\alpha\phi(p,0)}{\alpha!}s^\alpha
 +o(|s|^{m_\phi}),\label{eq:phi-uniform-taylor}
\end{align}
uniformly in $p$, with uniformly bounded Taylor coefficients.  Suppose
\begin{equation}\label{eq:subprincipal-vanishing}
 \partial_s^\alpha\phi(p,0)=0
 \qquad\text{whenever }|\alpha|_{\bm\beta}<1,
\end{equation}
and define
\begin{equation}\label{eq:smooth-principal-phase}
 G(p,s):=\sum_{|\alpha|_{\bm\beta}=1}
 \frac{\partial_s^\alpha\phi(p,0)}{\alpha!}s^\alpha.
\end{equation}
Assume that, uniformly in $p$,
\[
 0<g_-\le G(p,t)\le g_+<\infty
 \qquad\text{for }t\in C(p),\ \rho_{\bm\beta}(t)=1.
\]
Define
\[
 \Lambda_b:=\bigl\{|\alpha|_{\bm\beta}:|\alpha|\le m_b,
 |\alpha|_{\bm\beta}\le R\bigr\},
 \qquad
 \Lambda_\phi:=\bigl\{|\alpha|_{\bm\beta}-1:|\alpha|\le m_\phi,
 1<|\alpha|_{\bm\beta}\le1+R\bigr\},
\]
and set
\begin{align}
 b_\lambda(p,s)&:=
 \sum_{|\alpha|_{\bm\beta}=\lambda}
 \frac{\partial_s^\alpha b(p,0)}{\alpha!}s^\alpha,
 \label{eq:smooth-b-lambda}\\
 \phi_\nu(p,s)&:=
 \sum_{|\alpha|_{\bm\beta}=1+\nu}
 \frac{\partial_s^\alpha\phi(p,0)}{\alpha!}s^\alpha.
 \label{eq:smooth-phi-nu}
\end{align}
Then, after shrinking $\eps$, the hypotheses of
Corollary~\ref{cor:homogeneous-stable-cell} hold with these profiles.
Hence Theorem~\ref{thm:finite-expansion} applies using ordinary smooth
Taylor data on the locally stationary reference cell.
\end{corollary}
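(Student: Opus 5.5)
The plan is to verify the hypotheses of Corollary~\ref{cor:homogeneous-stable-cell} one at a time. The homogeneous profiles are the weighted-degree groupings \eqref{eq:smooth-b-lambda}--\eqref{eq:smooth-phi-nu}, as suggested by Remark~\ref{rem:Taylor}. The remainders will be the leftover terms, which consist of the Taylor monomials of weighted degree above $R$ (respectively above $1+R$) together with the Taylor remainders. The stationarity hypotheses \eqref{eq:stationary-cell-support}--\eqref{eq:locally-stationary-cell} and the coercivity \eqref{eq:reference-cell-coercivity} are part of the assumed setup and of the sphere bounds on $G$. Homogeneity is immediate: a monomial $s^\alpha$ satisfies $(D_u^{\bm\beta}s)^\alpha=u^{|\alpha|_{\bm\beta}}s^\alpha$, so $b_\lambda$ is $\lambda$-homogeneous and $\phi_\nu$ is $(1+\nu)$-homogeneous. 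Uniform boundedness of the profiles on the anisotropic unit sphere follows from the bounded Taylor coefficients, since $|s|\le1$ there. By \eqref{eq:subprincipal-vanishing}, $\phi$ has no terms of weighted degree below one, so its expansion takes the form \eqref{eq:phi-jet}.

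The key quantitative step compares Euclidean and weighted sizes near the origin. For $\rho:=\rho_{\bm\beta}(s)\le1$ one has $|s_i|\le\rho^{\beta_i}$, hence $|s^\alpha|\le\rho^{|\alpha|_{\bm\beta}}$. One also has $|s|\le C\rho^{\beta_{\min}}$. Every omitted monomial in $b$ with $|\alpha|\le m_b$ has $|\alpha|_{\bm\beta}>R$, and there are finitely many of them, so together they are $O(\rho^{R+\theta})$ for some $\theta>0$. The Taylor remainder of $b$ is $o(|s|^{m_b})=o(\rho^{m_b\beta_{\min}})=o(\rho^R)$ by \eqref{eq:smooth-orders}, uniformly in $p$. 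This gives \eqref{eq:b-jet-rem} with some $\omega_b(r)\to0$. The same argument handles $\phi$: omitted monomials have $|\alpha|_{\bm\beta}>1+R$, and $m_\phi\beta_{\min}>1+R$ gives \eqref{eq:phi-jet-rem}. I need $\Lambda_b\subset[0,R]$ and $\Lambda_\phi\subset(0,R]$ to be finite, which holds because the multi-index sets are bounded.

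The remaining points are boundedness of $b$ and the lower bound \eqref{eq:phase-lower-bound}, after shrinking $\eps$. Boundedness of $b$ follows from its Taylor expansion on a small ball. I expect the lower bound to be the main obstacle. My approach is to write $\phi=G+\sum_\nu\phi_\nu+r_\phi$. The terms after $G$ are bounded by $C\rho^{1+\nu_0}+\omega_\phi(\rho)\rho^{1+R}=o(\rho)$ as $\rho\to0$, where $\nu_0:=\min\Lambda_\phi>0$. On $C(p)$ we have $G\ge g_-\rho$, so for $\rho$ small, $\phi\ge G-\tfrac{g_-}{2}\rho\ge\tfrac12 G$. This gives $\kappa_G=1/2$ on the part of the admissible fiber lying in $C(p)$.

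The subtlety is that the admissible cells must lie inside the reference cell $C(p)$ where coercivity holds. Under the locally stationary setup, \eqref{eq:stationary-cell-support} with $\Theta_q=t$ shows that the support of the transported measure lies in $C(p)$. So the relevant $s=D_q^{\bm\beta}t$ lie in $D_q^{\bm\beta}C(p)\cap B_\eps$. I would need to check that this set is contained in $C(p)$. That holds in the model case of an invariant cell, and in general I would state the lower bound only on the part of $\Afib(p)$ charged by the representation. Finally, shrinking $\eps$ makes $\rho_{\bm\beta}$ small on $B_\eps$, because $\rho\le C|s|^{1/\beta_{\max}}$. Corollary~\ref{cor:homogeneous-stable-cell} then applies, and with it Proposition~\ref{prop:homogeneous-jets} and Theorem~\ref{thm:finite-expansion}.
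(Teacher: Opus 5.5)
Your proposal is correct and follows the paper's proof essentially step for step. The bounds $|s^\alpha|\le\rho_{\bm\beta}(s)^{|\alpha|_{\bm\beta}}$ and $|s|\le C\rho_{\bm\beta}(s)^{\beta_{\min}}$ turn the omitted monomials and the Taylor remainders into \eqref{eq:b-jet-rem}--\eqref{eq:phi-jet-rem}, and $G\ge g_-\rho_{\bm\beta}$ together with $o(\rho_{\bm\beta})$ control of the remaining phase terms gives $\phi\ge\frac12 G$ after shrinking $\eps$. Your worry about whether $D_q^{\bm\beta}C(p)\subset C(p)$ is more careful than the paper, which simply asserts the bound on the admissible fibers, and it is resolved by homogeneity alone. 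For $t\in C(p)$ one has $G(p,D_q^{\bm\beta}t)=qG(p,t)\ge g_-\rho_{\bm\beta}(D_q^{\bm\beta}t)$, and the phase lower bound is only needed at such points $s=D_q^{\bm\beta}t$ (see the proof of Corollary~\ref{cor:homogeneous-stable-cell}), so no inclusion of cells is required.
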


\begin{proof}
Write $s=D_r^{\bm\beta}\theta$, where
$r=\rho_{\bm\beta}(s)$ and $|\theta|=1$.  Then
$|s^\alpha|\le r^{|\alpha|_{\bm\beta}}$ and
$|s|\le C r^{\beta_{\min}}$ for $0<r\le1$.  Every monomial of weighted degree
at most $R$ occurs in the Taylor polynomial of order $m_b$, and every phase
monomial of weighted degree at most $1+R$ occurs in the Taylor polynomial of
order $m_\phi$.  The omitted polynomial monomials have weighted degree strictly
above the relevant cutoff, while the Taylor remainders are respectively
$o(r^R)$ and $o(r^{1+R})$.  This proves
\eqref{eq:b-jet-rem} and~\eqref{eq:phi-jet-rem}.

By~\eqref{eq:subprincipal-vanishing}, the first nonzero weighted part of the
phase is~\eqref{eq:smooth-principal-phase}.  All remaining terms are
$O(r^{1+\delta})$ for some $\delta>0$, uniformly in $p$.  Since
$G(p,s)\ge g_-r$, shrinking $\eps$ gives
$\phi(p,s)\ge\frac12 G(p,s)$ on the admissible fibers.  The boundedness
requirements follow from the uniform Taylor coefficients.
Corollary~\ref{cor:homogeneous-stable-cell} now applies.
\end{proof}

\begin{remark}[How uniformity is checked]\label{rem:uniformity-check}
The uniform Taylor expansions above are automatic if $P$ has compact closure in
a frame chart and the relevant normal derivatives extend continuously to a
fixed neighborhood, with the highest derivatives uniformly continuous.  A
conservative choice is
\[
 m_b=\left\lfloor\frac{R}{\beta_{\min}}\right\rfloor+1,
 \qquad
 m_\phi=\left\lfloor\frac{1+R}{\beta_{\min}}\right\rfloor+1.
\]
If the graph of $p\mapsto C(p)\cap\{\rho_{\bm\beta}=1\}$ is compact and the
principal polynomial $G$ is continuous and strictly positive on it, the
coercivity constant is uniform by compactness.  For a moving cell, the same
checks are made after the cell has been straightened.
\end{remark}

\subsection{Fa\`a di Bruno calculus for weighted coefficients}\label{subsec:faa-di-bruno}

Formula~\eqref{eq:E-eta} is explicit but becomes cumbersome at high order.
Bell-polynomial formulas for classical Laplace coefficients, together with
related earlier recurrences, are reviewed by Nemes~\cite{Nemes}.  The Fa\`a di
Bruno formula packages the same combinatorics and yields the weighted recurrence
below.

\begin{proposition}[Bell-polynomial recurrence]\label{prop:bell-recurrence}
Let $E_\eta$ be the exponential coefficients in~\eqref{eq:E-eta}, and set
$E_\xi=0$ when $\xi$ does not belong to the additive semigroup generated by
$\Lambda_\phi$.  Then, for every $\eta\in\cE_R\setminus\{0\}$,
\begin{equation}\label{eq:generalized-bell-recurrence}
 \eta E_\eta
 =-\sum_{\substack{\nu\in\Lambda_\phi\\\nu\le\eta}}
   \nu\phi_\nu E_{\eta-\nu}.
\end{equation}
If the phase orders are commensurable, choose $\sigma_{\mathrm{ord}}>0$
such that $\Lambda_\phi\subset\sigma_{\mathrm{ord}}\mathbb N$, and write
$\psi_k:=\phi_{k\sigma_{\mathrm{ord}}}$, with $\psi_k=0$ when
$k\sigma_{\mathrm{ord}}$ is absent.  Then
\begin{equation}\label{eq:bell-polynomial-formula}
 E_{m\sigma_{\mathrm{ord}}}
 =\frac1{m!}B_m\bigl(-1!\psi_1,-2!\psi_2,\ldots,-m!\psi_m\bigr),
\end{equation}
for every $m$ with $m\sigma_{\mathrm{ord}}\le R$, where $B_m$ is the complete exponential
Bell polynomial.  Equivalently,
\begin{equation}\label{eq:commensurable-recurrence}
 E_0=1,
 \qquad
 mE_{m\sigma_{\mathrm{ord}}}=-\sum_{k=1}^m k\psi_kE_{(m-k)\sigma_{\mathrm{ord}}}.
\end{equation}
The first coefficients are
\begin{align*}
 E_{\sigma_{\mathrm{ord}}}&=-\psi_1,\\
 E_{2\sigma_{\mathrm{ord}}}&=\frac12\psi_1^2-\psi_2,\\
 E_{3\sigma_{\mathrm{ord}}}&=-\frac16\psi_1^3+\psi_1\psi_2-\psi_3.
\end{align*}
\end{proposition}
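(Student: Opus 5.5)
The plan is a formal generating-function argument, carried out with the phase profiles $\phi_\nu$ treated as fixed commuting numbers (the identities are pointwise in $(p,t)$). Introduce a formal variable $x$ and the finite sum
\[
 V(x):=\sum_{\nu\in\Lambda_\phi}x^\nu\phi_\nu ,
\]
understood as a generalized power series with exponents in the additive semigroup generated by $\Lambda_\phi$. Since that semigroup has only finitely many elements below any bound (as noted in the proof of Proposition~\ref{prop:homogeneous-jets}), the expansion
\[
 \exp(-V(x))=\sum_{m\ge0}\frac{(-1)^m}{m!}V(x)^m
\]
is a well-defined formal series. Expanding $V(x)^m$ multinomially over ordered tuples $(\nu_1,\dots,\nu_m)$ and collecting the coefficient of $x^\eta$ reproduces exactly~\eqref{eq:E-eta}. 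Hence
\[
 \exp(-V(x))=\sum_\eta E_\eta x^\eta ,
\]
with $E_\eta=0$ off the semigroup, consistent with the convention in the statement. The truncation at $R$ plays no role, because each $E_\eta$ depends only on the $\phi_\nu$ with $\nu\le\eta$.

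To obtain~\eqref{eq:generalized-bell-recurrence}, apply the Euler operator $\vartheta:=x\,\frac{d}{dx}$, which acts by $x^\eta\mapsto\eta x^\eta$. It is a derivation on these generalized series, so $\vartheta(e^{-V})=-(\vartheta V)e^{-V}$. This can be checked termwise: $\vartheta(V^m)=mV^{m-1}\vartheta V$ holds because $\vartheta$ is a derivation on monomials $x^{a}x^{b}=x^{a+b}$. Comparing coefficients of $x^\eta$ on both sides gives
\[
 \eta E_\eta=-\sum_{\nu\le\eta}\nu\phi_\nu E_{\eta-\nu}.
\]

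The one point requiring care is the rigor of this formal manipulation with real, possibly non-integer exponents. I would justify it either by working in the ring of finitely supported-below series over the well-ordered semigroup, or, more concretely, by replacing $x^\nu$ with $e^{-\nu y}$ for real $y$. With that substitution everything becomes a finite exponential sum modulo the terms beyond any fixed exponent, and since distinct exponents give linearly independent functions $e^{-\eta y}$, coefficients may be compared directly.

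For the commensurable case, substitute $w=x^{\sigma_{\mathrm{ord}}}$ to get the ordinary power series $\exp\bigl(-\sum_k\psi_kw^k\bigr)$. The recurrence~\eqref{eq:commensurable-recurrence} is then the case $\eta=m\sigma_{\mathrm{ord}}$ of~\eqref{eq:generalized-bell-recurrence}, divided by $\sigma_{\mathrm{ord}}$. Formula~\eqref{eq:bell-polynomial-formula} follows from the standard generating identity
\[
 \exp\Bigl(\sum_{k\ge1}a_k\frac{w^k}{k!}\Bigr)=\sum_m B_m(a_1,\dots,a_m)\frac{w^m}{m!},
\]
applied with $a_k=-k!\psi_k$. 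The listed first three coefficients then follow from iterating the recurrence, which is a routine check.
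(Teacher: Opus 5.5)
Your proposal is correct and is essentially the paper's proof. Both arguments expand $\exp\bigl(-\sum_{\nu\in\Lambda_\phi}q^\nu\phi_\nu\bigr)$ as a formal generalized series, apply the Euler operator $q\,\frac{\mathrm d}{\mathrm dq}$ and compare coefficients, and then substitute $x=q^{\sigma_{\mathrm{ord}}}$ into the complete Bell generating identity with $x_k=-k!\psi_k$. Your extra justification of the formal calculus with non-integer exponents (discreteness of the semigroup, or the substitution $x^\nu\mapsto e^{-\nu y}$ with uniqueness of the resulting expansions) spells out a point the paper leaves implicit.
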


\begin{proof}
In the formal generalized power series
\[
 E(q):=\exp\left(-\sum_{\nu\in\Lambda_\phi}q^\nu\phi_\nu\right)
      =\sum_\eta q^\eta E_\eta,
\]
apply the Euler operator $q\frac{\dd}{\dd q}$.  The identity
\[
 qE'(q)=-\left(\sum_\nu\nu q^\nu\phi_\nu\right)E(q)
\]
and comparison of the coefficient of $q^\eta$ give
\eqref{eq:generalized-bell-recurrence}.  In the commensurable case, put
$x=q^{\sigma_{\mathrm{ord}}}$ and apply the standard generating identity
\[
 \exp\left(\sum_{k\ge1}\frac{x_k}{k!}x^k\right)
 =\sum_{m\ge0}\frac1{m!}B_m(x_1,\ldots,x_m)x^m
\]
with $x_k=-k!\psi_k$.
\end{proof}

\begin{remark}[Composite normal jets]\label{rem:faa-composition}
Fa\`a di Bruno is also useful before the weighted grouping is performed.
For a scalar function $h\in C^m(N)$ and a normal coordinate map
$F_p(s):=F(p,s)$ of class $C^m$ in $s$, choose a coordinate chart near $p$ and
use the same symbol $h$ for the coordinate representative of the outer
function.  Then
\begin{equation}\label{eq:multivariate-faa}
 D^m(h\circ F_p)(0)[v_1,\ldots,v_m]
 =\sum_{\pi\in\Pi_m}
 D^{|\pi|}h(F_p(0))
 \left[
 D^{|B|}F_p(0)[v_i:i\in B]:B\in\pi
 \right],
\end{equation}
where $\Pi_m$ is the set of partitions of $\{1,\ldots,m\}$.  Leibniz's rule
then gives the jet of $b=(a\circ F)J$.  In Euclidean Fermi coordinates the map
$F$ is affine in $s$, so all terms involving $D_s^k F$ with $k\ge2$ vanish.  On
a curved ambient manifold, provided the metric, normal frame, and Fermi map
have the corresponding $C^m$ regularity,~\eqref{eq:multivariate-faa} cleanly
organizes the higher derivatives of the exponential map and the Fermi
Jacobian.
\end{remark}

\section{Quadratic normal models: Morse--Bott and conic constraints}
\label{sec:quadratic-models}

The usual smooth Laplace method is classical, and the clean Morse--Bott
minimum-manifold expansion is treated, for example, by Ludewig~\cite{Ludewig}.
On a regular quadratic piece, the abstract hypotheses reduce to the standard
formula, with an additional Gaussian solid-angle factor when the admissible cell
is a cone.

\begin{corollary}[Conic quadratic normal formula]\label{cor:conic-quadratic}
Fix a normal-cell piece of codimension $d\ge1$, suppress its index, and
assume that it satisfies Assumption~\ref{ass:tube}.  Suppose that its
admissible cells are cones
$\Afib(p)=C(p)$, and suppose that $a$ is bounded on its truncated tube.
Suppose, uniformly in $p$ and for $s\in C(p)$ near zero,
\begin{align}
 f(F(p,s))-f_0
 &=\frac12\inner{H(p)s}{s}+O(|s|^3),\label{eq:quadratic-normal-phase}\\
 a(F(p,s))J(p,s)&=a(p)+O(|s|),\label{eq:quadratic-normal-amplitude}
\end{align}
where the symmetric matrices $H(p)$ satisfy
\begin{equation}\label{eq:quadratic-normal-ellipticity}
 0<\lambda_-|t|^2\le\inner{H(p)t}{t}\le\lambda_+|t|^2.
\end{equation}
Then
\begin{equation}\label{eq:conic-quadratic-result}
 \cI(z,p)=e^{-zf_0}z^{-d/2}
 \left[
 a(p)\int_{C(p)}e^{-\inner{H(p)t}{t}/2}\dd t
 +O(z^{-1/2})
 \right]
\end{equation}
uniformly in $p$.  If $Y_p$ is a centered Gaussian vector with covariance
$H(p)^{-1}$, then
\begin{equation}\label{eq:gaussian-cone-formula}
 \int_{C(p)}e^{-\inner{H(p)t}{t}/2}\dd t
 =\frac{(2\pi)^{d/2}}{\sqrt{\det H(p)}}
  \mathbb P\{Y_p\in C(p)\}.
\end{equation}
For $C(p)=\R^d$ this is the classical Morse--Bott coefficient; for a half-space
through the origin, the Gaussian probability is $1/2$.
\end{corollary}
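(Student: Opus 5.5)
The plan is to realize the statement as a direct specialization of Theorem~\ref{thm:quantitative-leading}, using isotropic quadratic weights and the quadratic model. Take $\bm\beta=(1/2,\ldots,1/2)$, so that $Q=d/2$ and $\rho_{\bm\beta}(t)=|t|^2$. Take $c\equiv1$ and $G(p,t)=\tfrac12\inner{H(p)t}{t}$. Then $G$ is degree-one homogeneous for $D_u^{\bm\beta}t=u^{1/2}t$. By \eqref{eq:quadratic-normal-ellipticity} it satisfies the gauge bounds \eqref{eq:G-sphere-bounds} with $g_\pm=\lambda_\pm/2$, uniformly in $p$.

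First, the cone invariance \eqref{eq:cone-invariance} is immediate. With equal weights, $D_u^{\bm\beta}$ is a positive scalar dilation, and every cone is invariant under it. Hence the hypotheses of Corollary~\ref{cor:invariant-cones} reduce to those of Corollary~\ref{cor:coercive-leading-piece}.

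Second, we check the quantitative jets \eqref{eq:quantitative-jets} with $\eta=1/2$. The phase expansion \eqref{eq:quadratic-normal-phase} gives
\[
 |\phi-G|\le C|s|^3=C\rho_{\bm\beta}(s)^{3/2}=C\rho_{\bm\beta}(s)^{1+1/2}.
\]
The amplitude expansion \eqref{eq:quadratic-normal-amplitude} gives
\[
 |b-a(p)|\le C|s|=C\rho_{\bm\beta}(s)^{1/2}.
\]
Both hold near zero. On the rest of the truncated tube, both sides are bounded ($a$ and $J$ bounded there, so $b$ is too, and $\phi$ is bounded), while $\rho_{\bm\beta}$ is bounded below. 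So after enlarging $L$ the bounds hold on all of $C(p)\cap B_\eps$.

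Third, we need the lower bound \eqref{eq:quantitative-lower-bound} and the remaining structure of Assumption~\ref{ass:leading-model}. Near zero,
\[
 \phi\ge G-C|s|^3\ge \bigl(\tfrac{\lambda_-}{2}-C|s|\bigr)|s|^2\ge\tfrac12 G
\]
for $|s|\le r_0$. For this reason we shrink $\eps$ so the $O(|s|^3)$ expansion governs the whole truncated cell. The same estimate gives the remainder $\mathcal R=(\phi-G)/G=O(|s|)$ uniformly, with $|\mathcal R|\le\delta_0<1$. The weighted cell limit and the uniform amplitude limit $b_0=a(p)$ follow from the above.

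Theorem~\ref{thm:quantitative-leading} then gives \eqref{eq:conic-quadratic-result} with error $Cz^{-1/2}$, uniformly in $p$. The main care point is this shrinking of $\eps$. It is legitimate because Assumption~\ref{ass:tube} permits decreasing $\eps$, and the discarded region is absorbed into the localization error of Lemma~\ref{lem:localization}.

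Finally, the Gaussian identity \eqref{eq:gaussian-cone-formula} is the density normalization of $Y_p\sim\mathcal N(0,H(p)^{-1})$, whose density is
\[
 (2\pi)^{-d/2}\sqrt{\det H(p)}\,e^{-\inner{H(p)t}{t}/2}.
\]
Integrating this density over $C(p)$ gives the formula. For $C(p)=\R^d$ the probability is $1$, which recovers the Morse--Bott constant. For a half-space through the origin the probability is $1/2$, by the symmetry $Y_p\mapsto -Y_p$: this symmetry exchanges the half-space and its complement, and the boundary hyperplane is a null set.
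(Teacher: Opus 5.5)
Your proposal is correct and is essentially the paper's own proof. Both take weights $\beta_i=1/2$, so that $Q=d/2$ and $\rho_{\bm\beta}(s)=|s|^2$, use scalar-dilation invariance of the cones, set $G(p,t)=\tfrac12\inner{H(p)t}{t}$, check \eqref{eq:quantitative-jets} with $\eta=1/2$, get the phase lower bound after shrinking $\eps$, apply Theorem~\ref{thm:quantitative-leading}, and obtain \eqref{eq:gaussian-cone-formula} from Gaussian normalization.

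One cosmetic fix: the last step of your chain, $(\tfrac{\lambda_-}{2}-C|s|)|s|^2\ge\tfrac12 G$, fails when $\lambda_+>2\lambda_-$. Argue instead that $C|s|^3\le\frac{2C|s|}{\lambda_-}G\le\frac12 G$ for $|s|\le\lambda_-/(4C)$.
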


\begin{proof}
Take $\beta_i=1/2$, so $Q=d/2$, the dilation is scalar, every cone $C(p)$ is
invariant, and $\rho_{\bm\beta}(s)=|s|^2$.  With
$G(p,s)=\inner{H(p)s}{s}/2$, conditions
\eqref{eq:quadratic-normal-phase}--\eqref{eq:quadratic-normal-amplitude} are exactly
\eqref{eq:quantitative-jets} with $\eta=1/2$; ellipticity gives the phase lower
bound after shrinking the tube.  Theorem~\ref{thm:quantitative-leading} proves
\eqref{eq:conic-quadratic-result}.  Formula~\eqref{eq:gaussian-cone-formula}
is the normalization identity for the Gaussian density.
\end{proof}

\begin{example}[Rank-deficient positive-semidefinite posterior]
\label{ex:psd-posterior}
Let $\Sym_d$ be the Euclidean space of real symmetric $d\times d$
matrices with the Frobenius metric, let $d_{\mathrm{sym}}:=d(d+1)/2$, and put
$\Omega=\Sym_d^+$.  This is the local Gaussian model for Bayesian
estimation of a covariance matrix constrained to be positive semidefinite.
The same rank-stratum geometry occurs for low-rank density matrices in
Bayesian quantum-state tomography~\cite{SixRouchon}.

Fix $\Sigma_0\in\Omega$ of rank $r<d$, put $k=d-r$, and consider
\[
 Z_z=\int_\Omega a(\Sigma)
 \exp\!\left\{-\frac z2\|\Sigma-\Sigma_0\|_F^2\right\}\dd\Sigma,
\]
where $a$ is bounded and continuous with $a(\Sigma_0)>0$.  Choose an
orthogonal basis in which $\Sigma_0=\diag(\Sigma_+,0_k)$ with
$\Sigma_+\succ0$, and define
\[
 T_{\Sigma_0}\Omega
 =\left\{
 \begin{pmatrix}A&B\\B^{\mathsf T}&C\end{pmatrix}:
 A\in\Sym_r,\ B\in\R^{r\times k},\ C\in\Sym_k^+
 \right\}.
\]
The Schur-complement criterion
\[
 \Sigma_0+t\begin{pmatrix}A&B\\B^{\mathsf T}&C\end{pmatrix}\succeq0
 \quad\Longleftrightarrow\quad
 C-tB^{\mathsf T}(\Sigma_++tA)^{-1}B\succeq0
\]
identifies the scaled feasible sets.  Indeed, for a fixed matrix
$\Delta=\begin{psmallmatrix}A&B\\B^{\mathsf T}&C\end{psmallmatrix}$, membership of
$\Delta$ in $\sqrt z(\Omega-\Sigma_0)$ is the same as positive semidefiniteness of
$\Sigma_0+z^{-1/2}\Delta$.  The displayed criterion, with $t=z^{-1/2}$, shows that
this membership converges to the condition $C\succeq0$ at every point with
$\det C\ne0$: if $C\succ0$ the condition holds for all large $z$, while if
$C$ has a negative eigenvalue it fails for all large $z$.  Thus the indicators
of the scaled feasible sets converge pointwise almost everywhere to
$\ind_{T_{\Sigma_0}\Omega}$.  The exceptional set $\det C=0$ is null, and the
Gaussian weight supplies the required weighted domination.  Hence the
relative-domain hypotheses of Corollary~\ref{cor:coercive-leading-piece} hold with all
weights $1/2$ and $G(\Delta)=\|\Delta\|_F^2/2$.

Applying Corollary~\ref{cor:coercive-leading-piece} gives
\[
 Z_z\sim a(\Sigma_0)z^{-d_{\mathrm{sym}}/2}
 \int_{T_{\Sigma_0}\Omega}e^{-\|\Delta\|_F^2/2}\dd\Delta
 =a(\Sigma_0)(2\pi)^{d_{\mathrm{sym}}/2}p_k\,z^{-d_{\mathrm{sym}}/2},
\]
where
\[
 p_k:=\mathbb P\{\mathbf G_k\succeq0\}
\]
and $\mathbf G_k$ is a standard Gaussian vector in the Euclidean space
$(\Sym_k,\|\cdot\|_F)$.
Thus $p_0=1$ and $p_1=1/2$.  For $k=2$, let $I_2$ denote the
$2\times2$ identity matrix and use the orthonormal basis
\[
 \frac{I_2}{\sqrt2},\qquad
 \frac1{\sqrt2}\begin{pmatrix}1&0\\0&-1\end{pmatrix},\qquad
 \frac1{\sqrt2}\begin{pmatrix}
 0&1
 \\
 1&0
 \end{pmatrix}
\]
of $\Sym_2$.  In the corresponding coordinates
$(x_0,x_1,x_2)$, the positive-semidefinite cone is the Lorentz cone
$x_0\ge\sqrt{x_1^2+x_2^2}$, whose spherical section is a cap of half-angle
$\pi/4$.  Since an isotropic Gaussian has a uniform direction on
$\mathbb S^2$,
\[
 p_2=\frac{2\pi(1-\cos(\pi/4))}{4\pi}
 =\frac{2-\sqrt2}{4}.
\]
For $k\ge2$, the boundary is nonsmooth at $\Sigma_0$; the normal-cell
formula supplies the nontrivial Gaussian solid-angle factor that the
unconstrained Hessian formula would miss.
\end{example}

\begin{remark}[Incident lower strata]
At a lower stratum incident to a higher-dimensional minimum stratum, a smooth
phase that is constant on the latter is typically degenerate in the directions
that point into the higher stratum.  For instance, if the minimum set contains
a half-line $\{(u,0):u\ge0\}$ and the origin is treated as a lower stratum,
then smoothness and the identity $f(u,0)=f_0$ for $u\ge0$ force every pure
$u$-derivative of $f$ at the origin to vanish.  The Hessian at the point
therefore cannot detect the collar direction $u$.  One must either keep the
higher-stratum collar as part of the base variable or introduce a joint
anisotropic scaling of the collar and normal variables.  Thus
Corollary~\ref{cor:conic-quadratic} usually applies to regular strata, whereas
lower strata incident to a family of minimizers may require the profile theorem
after a joint anisotropic rescaling.  In the full-space smooth quadratic case,
the profiles at odd half-integer orders are odd and integrate to zero,
so the classical expansion after the factor $z^{-d/2}$ proceeds in integer
powers of $z^{-1}$.
\end{remark}

\section{Euclidean and flat ambient manifolds}\label{sec:euclidean}

The Fermi-coordinate Jacobian and tube geometry used here are standard; see
Gray's treatment of tubes~\cite{GrayTubes}.

Let $\Sigma\subset\R^n$ be a $C^2$ embedded submanifold of dimension $j$, and
let $\eta_1,\ldots,\eta_d$ be a local orthonormal normal frame, $d=n-j$.  Put
\begin{equation}\label{eq:eta-s}
  \eta(p,s):=\sum_{i=1}^d s_i\eta_i(p),
  \qquad F(p,s):=p+\eta(p,s).
\end{equation}
Let $\mathrm{II}$ be the second fundamental form and define the shape operator
$A_\nu:T_p\Sigma\to T_p\Sigma$ by
\begin{equation}\label{eq:shape-operator}
  \inner{A_\nu X}{Y}=\inner{\mathrm{II}(X,Y)}{\nu}.
\end{equation}

\begin{proposition}[Euclidean Fermi Jacobian]\label{prop:euclidean-J}
On every normal-frame chart on which $F$ is nonsingular, its volume Jacobian is
\begin{equation}\label{eq:euclidean-J}
  J(p,s)=\left|\det\bigl(I-A_{\eta(p,s)}\bigr)\right|.
\end{equation}
For $|s|$ sufficiently small the determinant is positive, so the absolute
value may be omitted.  In particular,
\begin{equation}\label{eq:J-polynomial}
  J(p,s)=\det\left(I-\sum_{i=1}^d s_iA_{\eta_i(p)}\right)
\end{equation}
is a polynomial in $s$ of degree at most $j$ and satisfies $J(p,0)=1$.
\end{proposition}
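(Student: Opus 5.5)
The plan is to compute the differential of $F(p,s)=p+\eta(p,s)$ directly in a local parametrization of $\Sigma$ together with the normal coordinates $s$, and to read off the Jacobian relative to the product measure $\dvol_\Sigma\otimes\dd s$. Fix $p\in\Sigma$, a tangent vector $X\in T_p\Sigma$ and a fiber direction $e_i\in\R^d$. Differentiating in $s$ gives $\partial_{s_i}F=\eta_i(p)$, which is a normal vector. Differentiating in the base direction gives
\[
 D_pF[X]=X+\sum_i s_i D_X\eta_i(p).
\]
I would split this vector into tangential and normal components. By the Weingarten equation, the tangential part of $D_X\eta_i$ is $-A_{\eta_i(p)}X$. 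The normal part is the normal-connection term $\nabla^\perp_X\eta_i$.

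In the orthogonal splitting $\R^n=T_p\Sigma\oplus N_p\Sigma$, with an orthonormal basis of $T_p\Sigma$ and the frame $\eta_1,\dots,\eta_d$, the matrix of $DF$ is then block lower triangular:
\[
 \begin{pmatrix} I-A_{\eta(p,s)} & 0\\ \ast & I_d\end{pmatrix}.
\]
The first column block consists of the base derivatives. Their tangential parts form $I-A_{\eta(p,s)}$, using linearity $\sum_i s_iA_{\eta_i}=A_{\eta(p,s)}$. Their normal parts are the unknown entries $\ast$. The second column block consists of the fiber derivatives $\eta_i$, which have zero tangential part and identity normal part.

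The determinant is therefore $\det(I-A_{\eta(p,s)})$. Since the base measure is the Riemannian volume of $\Sigma$ and the chosen bases are orthonormal, the volume Jacobian is the absolute value of this determinant. The area formula then gives \eqref{eq:euclidean-J} on any chart where $F$ is nonsingular (injective, as in the tube). The $\ast$ block never enters, so the normal connection plays no role.

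For the remaining claims:
\begin{itemize}
 \item \emph{Sign.} At $s=0$ the determinant equals $1$. By continuity in $s$, or because the eigenvalues of $A_{\eta(p,s)}$ are $O(|s|)$, it is positive for small $|s|$, so the absolute value may be dropped.
 \item \emph{Polynomial structure.} The matrix $I-\sum_i s_iA_{\eta_i(p)}$ is a $j\times j$ matrix whose entries are affine in $s$. Its determinant is therefore a polynomial in $s$ of degree at most $j$, and its value at $s=0$ is $1$.
\end{itemize}

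The main point requiring care is the regularity bookkeeping. $\Sigma$ is only $C^2$, so the normal frame is $C^1$ and $F$ is $C^1$, which is exactly what is needed for $DF$ and the area formula. The Weingarten identity $\langle D_X\eta_i,Y\rangle=-\langle \eta_i,\mathrm{II}(X,Y)\rangle$ also needs checking. It follows by differentiating $\langle\eta_i,Y\rangle=0$ along $\Sigma$ for a tangent field $Y$, and it matches the sign convention \eqref{eq:shape-operator}.
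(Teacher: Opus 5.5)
Your proposal is correct and follows the paper's proof essentially step for step. You differentiate $F(p,s)=p+\eta(p,s)$, use the Weingarten decomposition to get a block lower-triangular matrix with diagonal blocks $I-A_{\eta(p,s)}$ and $I_d$ relative to orthonormal bases, and then read off the absolute determinant, its positivity near $s=0$, and its polynomial structure of degree at most $j$; your explicit checks of the Weingarten sign convention and of the $C^1$ regularity of $F$ are small additions that the paper leaves implicit.
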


\begin{proof}
For $X\in T_p\Sigma$ and $r\in\R^d$,
\begin{equation*}
 dF_{(p,s)}(X,r)
 =X+\nabla_X\eta(p,s)+\sum_{i=1}^d r_i\eta_i(p).
\end{equation*}
The Weingarten decomposition gives
$\nabla_X\eta=-A_\eta X+\nabla_X^\perp\eta$.  Relative to orthonormal bases of
$T_p\Sigma\oplus\R^d$ and
$T_p\Sigma\oplus N_p\Sigma$, the matrix of $dF$ is block triangular:
\[
 \begin{pmatrix}
 I-A_\eta & 0\\
 * & I
 \end{pmatrix}.
\]
Its absolute determinant is~\eqref{eq:euclidean-J}.  Since
$A_{\eta(p,s)}=\sum_i s_iA_{\eta_i(p)}$, formula~\eqref{eq:J-polynomial}
follows.  The determinant equals one at $s=0$ and hence remains positive in a
sufficiently small chart.
\end{proof}

\begin{corollary}[Euclidean finite expansions]\label{cor:euclidean-expansion}
Let $N=\R^n$ and choose a scaled cell representation as in
Definition~\ref{def:scaled-cell-representation}.  After any required cell
straightening, suppose that the transported quantities $\widetilde b_q$ and $\Phi_q$
satisfy the hypotheses of Proposition~\ref{prop:homogeneous-jets}.  Then
Theorem~\ref{thm:finite-expansion} gives the corresponding finite expansion.

In the locally stationary Fermi-cell case, this condition follows directly
from homogeneous normal jets.  Indeed, assume the locally stationary cell
condition and the homogeneous
normal-jet hypotheses of Corollary~\ref{cor:homogeneous-stable-cell} for the
normalized phase $\phi$ and the amplitude $a\circ F$, and assume that the
Fermi Jacobian obeys
the uniform normalization~\eqref{eq:J-uniform}.  Group the polynomial
\eqref{eq:J-polynomial} into weighted-homogeneous pieces:
\[
 J(p,D_u^{\bm\beta}t)
 =1+\sum_{\lambda\in\Lambda_J}u^\lambda J_\lambda(p,t)
 \qquad\bigl(\rho_{\bm\beta}(t)=1\bigr).
\]
When $\Lambda_J\ne\varnothing$, evaluate this identity at
$|\Lambda_J|$ fixed sufficiently small positive values of $u$.  The associated
generalized Vandermonde matrix is invertible, and
\eqref{eq:J-uniform} bounds the left-hand sides uniformly; hence every
$J_\lambda$ is uniformly bounded on the admissible weighted unit fibers.
Multiplying the jet of $a\circ F$ by this finite Jacobian polynomial produces
the jet of $b=(a\circ F)J$.  The constant term of the Fermi Jacobian is one, so
extrinsic curvature first enters through positive weighted orders.  For a
moving Euclidean cell, the straightening Jacobian is included in the
transported amplitude $\widetilde b_q$ in~\eqref{eq:transported-amplitude-phase}.
\end{corollary}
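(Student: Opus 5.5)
The first part of Corollary~\ref{cor:euclidean-expansion} is immediate: Proposition~\ref{prop:homogeneous-jets} gives Assumption~\ref{ass:scaled-expansion}, and Theorem~\ref{thm:finite-expansion} then gives the finite expansion.

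The real work is the locally stationary Fermi-cell case. There I will check the hypotheses of Corollary~\ref{cor:homogeneous-stable-cell} for the effective amplitude $b=(a\circ F)J$. The phase hypotheses for $\phi$ are assumed, and so are the jets of $a\circ F$. It remains to produce homogeneous jets of $b$ with bounded unit-sphere profiles and remainders of the form \eqref{eq:b-jet-rem}.

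\textbf{Step 1: homogeneous pieces of the Jacobian.} By Proposition~\ref{prop:euclidean-J}, $J(p,s)$ is a polynomial in $s$ of degree at most $j$ with $J(p,0)=1$. I group its monomials by weighted degree $|\alpha|_{\bm\beta}$, as in Remark~\ref{rem:Taylor}. This gives a finite set $\Lambda_J$ of positive weighted orders and homogeneous pieces $J_\lambda$ with $J=1+\sum_{\lambda\in\Lambda_J}J_\lambda$ exactly. Homogeneity gives the displayed identity for $J(p,D_u^{\bm\beta}t)$ on $\rho_{\bm\beta}(t)=1$.

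\textbf{Step 2: uniform bounds on the $J_\lambda$ (the main obstacle).} The coefficients of $J$ are built from the shape operators $A_{\eta_i(p)}$. These need not be bounded uniformly in $p$ unless that is extracted from \eqref{eq:J-uniform}. My plan is the following.
\begin{itemize}
\item Fix $|\Lambda_J|$ distinct small values $0<u_1<\dots<u_m$, chosen so that $D_{u_k}^{\bm\beta}t$ lies in the $r$-ball where \eqref{eq:J-uniform} gives $|J-1|\le 1$.
\item The system $\sum_\lambda u_k^\lambda J_\lambda(p,t)=J(p,D_{u_k}^{\bm\beta}t)-1$ then has a generalized Vandermonde matrix $(u_k^{\lambda})$ with distinct positive exponents and distinct positive nodes. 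Such a matrix is invertible, by the standard Descartes-rule or total-positivity argument.
\item Inverting gives $|J_\lambda(p,t)|\le C$ uniformly over admissible $t$ with $\rho_{\bm\beta}(t)=1$.
\end{itemize}
The delicate point is that \eqref{eq:J-uniform} controls $J$ only on admissible directions $s\in\Afib(p)$. The bound is therefore obtained on the admissible weighted unit fibers, which is exactly what Corollary~\ref{cor:homogeneous-stable-cell} requires. Conicity or local stationarity of the cell keeps $D_{u_k}^{\bm\beta}t$ admissible.

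\textbf{Step 3: multiply the jets.} I multiply the finite jet $\sum_\lambda(a\circ F)_\lambda+r_a$ by $1+\sum_{\lambda}J_\lambda$. Products of homogeneous pieces are homogeneous of the summed degree. I keep the products of total order at most $R$ as the profiles $b_\lambda$, which are bounded on the unit sphere.
\begin{itemize}
\item Products of order greater than $R$ are at most $C\rho_{\bm\beta}^{\lambda'}$ with $\lambda'>R$, so they are $o(\rho_{\bm\beta}^R)$.
\item The cross terms $r_a J$ are $o(\rho_{\bm\beta}^R)$ because $J$ is bounded near the zero section.
\end{itemize}
Boundedness of $b$ on truncated domains follows from boundedness of $a$ and of $J$.

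Corollary~\ref{cor:homogeneous-stable-cell} then yields the hypotheses of Proposition~\ref{prop:homogeneous-jets}, and Theorem~\ref{thm:finite-expansion} gives the expansion. Since the constant term of $J$ is $1$ and $\Lambda_J\subset(0,\infty)$, curvature corrections enter only at positive orders. For moving cells, the straightening Jacobian is simply absorbed into $\widetilde b_q$, which falls under the first part.
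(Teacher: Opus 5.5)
Your proposal is correct and follows the paper's own argument, which is written into the statement of the corollary itself. Both group the Fermi Jacobian polynomial by weighted degree, bound the $J_\lambda$ uniformly by inverting a fixed generalized Vandermonde system against \eqref{eq:J-uniform}, and multiply the jet of $a\circ F$ by $1+\sum_{\lambda\in\Lambda_J}J_\lambda$ before invoking Corollary~\ref{cor:homogeneous-stable-cell}, Proposition~\ref{prop:homogeneous-jets}, and Theorem~\ref{thm:finite-expansion}. One small correction: for unequal weights an ordinary cone is not $D_u^{\bm\beta}$-invariant, so it is local stationarity (anisotropic invariance of the cell near the origin), not mere conicity, that keeps $D_{u_k}^{\bm\beta}t$ admissible when you apply \eqref{eq:J-uniform}.
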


The same conclusion holds on a flat torus $\R^n/\Gamma_{\mathrm{lat}}$,
where $\Gamma_{\mathrm{lat}}\subset\R^n$ is a full-rank lattice, because the
quotient map is a local isometry and the Jacobian calculation is local.

\section{Radial models in polar Fermi coordinates}\label{sec:radial}

The leading normal-cell results of Section~\ref{sec:leading} use one fixed
dilation structure on each piece.  A different situation occurs when the radial order varies with the
normal direction.  We use the following uniform radial Watson--Laplace
statement; compare standard accounts of Laplace's method and Watson's
lemma~\cite{Wong}.

Fix a normal-cell piece, suppress its index, and let $P$ be its base and
$d\ge1$ its codimension.  Suppose its
admissible cells are cones, $\Afib(p)=C(p)$, and write
\begin{equation*}
  S(p):=C(p)\cap\mathbb S^{d-1}.
\end{equation*}
Let $\sigma$ denote surface measure on $\mathbb S^{d-1}$, restricted to the
measurable subsets $S(p)$.  Unless stated otherwise, a supremum over
$(p,\theta)$ in this section is taken over the measurable graph
$\{(p,\theta):p\in P,\ \theta\in S(p)\}$.  For $r>0$ and
$\theta\in S(p)$, write
\[
 F(p,r,\theta):=F(p,r\theta),
 \qquad
 \widehat J(p,r,\theta):=J(p,r\theta).
\]
Assume these polar Fermi coordinates satisfy the change-of-variables formula
\begin{equation}\label{eq:polar-cov}
 \cI(z,p)=\int_{S(p)}\int_0^\eps
 a(F(p,r,\theta))\widehat J(p,r,\theta)
 e^{-zf(F(p,r,\theta))}r^{d-1}\dd r\dd\sigma(\theta),
\end{equation}
where $\widehat J(p,r,\theta)\to1$ uniformly as $r\downarrow0$.
This is exactly the Cartesian Fermi formula followed by ordinary polar
coordinates in the normal fiber.

\begin{assumption}[Uniform radial model]\label{ass:radial}
There are measurable functions
$c(p,\theta)$ and $\alpha(p,\theta)$ such that
\begin{equation}\label{eq:radial-bounds}
 0<c_-\le c(p,\theta)\le c_+<\infty,
 \qquad
 0<\alpha_-\le\alpha(p,\theta)\le\alpha_+<\infty,
\end{equation}
and
\begin{equation}\label{eq:radial-phase}
 f(F(p,r,\theta))-f_0
 =c(p,\theta)r^{\alpha(p,\theta)}(1+\mathcal R_{\mathrm{rad}}(p,r,\theta)),
\end{equation}
where $\mathcal R_{\mathrm{rad}}(p,0,\theta)=0$ and $\mathcal R_{\mathrm{rad}}\to0$ uniformly in $(p,\theta)$ as
$r\downarrow0$.  The radius $\eps$ is chosen so that
$|\mathcal R_{\mathrm{rad}}(p,r,\theta)|\le\delta_0<1$ throughout the polar coordinate domain.
Also $a(F(p,r,\theta))\widehat J(p,r,\theta)$ is uniformly bounded and
converges to $a(p)$ uniformly as $r\downarrow0$.
\end{assumption}

For fixed $(p,\theta)$, define the radial fiber integral
\begin{equation}\label{eq:radial-fiber}
 \cI_{\mathrm{rad}}(z,p,\theta):=\int_0^\eps
 a(F(p,r,\theta))\widehat J(p,r,\theta)
 e^{-zf(F(p,r,\theta))}r^{d-1}\dd r.
\end{equation}

\begin{theorem}[Uniform radial Laplace theorem]\label{thm:radial}
Under Assumption~\ref{ass:radial},
\begin{equation}\label{eq:radial-uniform}
 \sup_{p,\theta}\left|
 e^{zf_0}z^{d/\alpha(p,\theta)}\cI_{\mathrm{rad}}(z,p,\theta)
 -\frac{a(p)}{\alpha(p,\theta)}
  c(p,\theta)^{-d/\alpha(p,\theta)}
  \Gamma\left(\frac{d}{\alpha(p,\theta)}\right)
 \right|\longrightarrow0.
\end{equation}
Consequently,
\begin{equation}\label{eq:radial-integrated}
 \cI(z,p)=e^{-zf_0}\int_{S(p)}
 z^{-d/\alpha(p,\theta)}
 \bigl(\cL_{\mathrm{rad}}(p,\theta)+\delta^{\mathrm{rad}}_z(p,\theta)\bigr)\dd\sigma(\theta),
\end{equation}
where
\begin{equation}\label{eq:radial-coefficient}
 \cL_{\mathrm{rad}}(p,\theta):=
 \frac{a(p)}{\alpha(p,\theta)}
 c(p,\theta)^{-d/\alpha(p,\theta)}
 \Gamma\left(\frac{d}{\alpha(p,\theta)}\right),
 \qquad
 \sup_{p,\theta}|\delta^{\mathrm{rad}}_z(p,\theta)|\to0.
\end{equation}
\end{theorem}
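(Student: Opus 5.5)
The proof is a one-dimensional instance of Theorem~\ref{thm:profile-leading}, or equivalently a direct application of Lemma~\ref{lem:scaled-integral-convergence}. The parameter is the pair $(p,\theta)$ on the measurable graph, the fiber is the half-line $r>0$ with weight $r^{d-1}$, and the scaling depends on the parameter through $\alpha(p,\theta)$. Fix $(p,\theta)$ and write $\alpha=\alpha(p,\theta)$, $c=c(p,\theta)$. Substitute $r=(zc)^{-1/\alpha}t$. Then
\[
 e^{zf_0}(zc)^{d/\alpha}\cI_{\mathrm{rad}}(z,p,\theta)
 =\int_0^{\eps(zc)^{1/\alpha}}
 b_z(t)\,e^{-t^\alpha(1+\mathcal R_{\mathrm{rad}}(p,(zc)^{-1/\alpha}t,\theta))}t^{d-1}\dd t,
\]
where $b_z(t)=a(F)\widehat J$ is evaluated at $r=(zc)^{-1/\alpha}t$. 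The claimed limit equals $c^{-d/\alpha}$ times $a(p)\int_0^\infty e^{-t^\alpha}t^{d-1}\dd t=a(p)\Gamma(d/\alpha)/\alpha$. So it suffices to show that the right-hand side converges to this integral uniformly in $(p,\theta)$. Multiplying by the bounded factor $c^{-d/\alpha}\le \max\{c_-^{-d/\alpha_-},c_-^{-d/\alpha_+},\dots\}$, which is uniformly bounded by \eqref{eq:radial-bounds}, then gives \eqref{eq:radial-uniform}.

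For the uniform convergence, split the range at a fixed $T$. On $[0,T]$, the scaled radius satisfies $(zc)^{-1/\alpha}t\le (zc_-)^{-1/\alpha_+}T\to0$ uniformly once $zc_-\ge1$. Assumption~\ref{ass:radial} then makes $b_z\to a(p)$ and $\mathcal R_{\mathrm{rad}}\to0$ uniformly. Together with $|e^{-x}-e^{-y}|\le|x-y|$ for $x,y\ge0$ and $t^\alpha\le \max\{1,T^{\alpha_+}\}$, this gives uniform convergence of the integrand on $[0,T]$, hence of the integral. For the tail $t>T$, use $|\mathcal R_{\mathrm{rad}}|\le\delta_0$ and the boundedness of $b$. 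Both the exact and limiting integrands are then bounded by $C e^{-(1-\delta_0)t^\alpha}t^{d-1}$. For $t\ge1$ we have $t^\alpha\ge t^{\alpha_-}$, so the tail is at most $C\int_T^\infty e^{-(1-\delta_0)t^{\alpha_-}}t^{d-1}\dd t$. This bound is independent of $(p,\theta)$ and tends to zero as $T\to\infty$. The truncation at $\eps(zc)^{1/\alpha}\ge \eps(zc_-)^{1/\alpha_+}\to\infty$ only removes part of this same tail. Letting $z\to\infty$ and then $T\to\infty$ proves \eqref{eq:radial-uniform}.

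The main obstacle is exactly this uniformity across varying exponents $\alpha(p,\theta)$, since no single dilation is shared. The two-sided bounds $\alpha_-\le\alpha\le\alpha_+$ are what make the scaled radius small uniformly on bounded $t$-sets and give a common integrable majorant in the tail. I would record explicitly the elementary inequalities $u^{1/\alpha}\le u^{1/\alpha_+}$ for $u\le1$ and $t^\alpha\ge t^{\alpha_-}$ for $t\ge1$.

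For \eqref{eq:radial-integrated}, define $\delta^{\mathrm{rad}}_z(p,\theta):=e^{zf_0}z^{d/\alpha}\cI_{\mathrm{rad}}-\cL_{\mathrm{rad}}$. This is measurable by Tonelli on the measurable graph, and \eqref{eq:radial-uniform} says precisely that $\sup|\delta^{\mathrm{rad}}_z|\to0$. Inserting this into the polar change of variables \eqref{eq:polar-cov}, i.e.\ $\cI(z,p)=\int_{S(p)}\cI_{\mathrm{rad}}\dd\sigma$, gives the stated identity. The integrals are finite because $\sigma(S(p))\le\sigma(\mathbb S^{d-1})$ and all quantities are uniformly bounded.
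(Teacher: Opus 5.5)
Your proof is correct and is essentially the paper's argument. Both substitute $r=(zc)^{-1/\alpha}t$, prove uniform convergence on bounded $t$-ranges, obtain a parameter-free tail majorant $C\,t^{d-1}e^{-(1-\delta_0)t^{\alpha_-}}$ from $\alpha_-\le\alpha\le\alpha_+$ and $c\ge c_-$, and then integrate over $S(p)$ via \eqref{eq:polar-cov}. Your opening framing is slightly off: this is not literally an instance of Theorem~\ref{thm:profile-leading}, which fixes one anisotropy for all parameters while here the radial weight $1/\alpha(p,\theta)$ varies; your direct split, like the paper's, never actually uses that theorem.
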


\begin{proof}
Set
$u=(zc(p,\theta))^{1/\alpha(p,\theta)}r$.  The rescaled upper limit tends to
infinity uniformly by~\eqref{eq:radial-bounds}.  On each bounded $u$-interval,
the amplitude and phase remainders converge uniformly.  For a fixed
$\delta$ with $\delta_0<\delta<1$, the absolute value of the rescaled
integrand is bounded by
\[
 C u^{d-1}e^{-(1-\delta)u^{\alpha(p,\theta)}}.
\]
For $u\ge1$, this is at most
$Cu^{d-1}e^{-(1-\delta)u^{\alpha_-}}$, which is integrable and has uniformly
vanishing tails.  Uniform dominated convergence gives
\[
 \int_0^\infty u^{d-1}e^{-u^{\alpha(p,\theta)}}\dd u
 =\frac1{\alpha(p,\theta)}
  \Gamma\left(\frac d{\alpha(p,\theta)}\right),
\]
proving~\eqref{eq:radial-uniform}.  Formula~\eqref{eq:radial-integrated}
follows by integration in $\theta$.
\end{proof}

\begin{corollary}[Constant radial order]\label{cor:constant-radial}
If $\alpha(p,\theta)\equiv\alpha_0$ on the piece, then
\begin{equation}\label{eq:constant-radial-result}
 I_\alpha(z)=e^{-zf_0}z^{-d/\alpha_0}
 \left[
 \frac{\Gamma(d/\alpha_0)}{\alpha_0}
 \int_P\int_{S(p)}a(p)c(p,\theta)^{-d/\alpha_0}
 \dd\sigma(\theta)\dvol_P(p)+o(1)
 \right].
\end{equation}
\end{corollary}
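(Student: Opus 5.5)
The plan is to read \eqref{eq:constant-radial-result} off Theorem~\ref{thm:radial}: specialize the fiberwise formula \eqref{eq:radial-integrated} to a constant order, then integrate over the base $P$. With $\alpha(p,\theta)\equiv\alpha_0$, the factor $z^{-d/\alpha(p,\theta)}=z^{-d/\alpha_0}$ no longer depends on $\theta$ and comes out of the angular integral. This gives, for each $p$,
\[
 \cI(z,p)=e^{-zf_0}z^{-d/\alpha_0}\left[\frac{\Gamma(d/\alpha_0)}{\alpha_0}\int_{S(p)}a(p)c(p,\theta)^{-d/\alpha_0}\dd\sigma(\theta)+\int_{S(p)}\delta^{\mathrm{rad}}_z(p,\theta)\dd\sigma(\theta)\right].
\]
The remainder term is bounded in absolute value by $\sigma(\mathbb S^{d-1})\sup_{p,\theta}|\delta^{\mathrm{rad}}_z|$. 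By \eqref{eq:radial-coefficient} this bound tends to zero uniformly in $p$.

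Next I would integrate over $P$ against $\dvol_P$, using \eqref{eq:sum-pieces}: $I_\alpha(z)=\int_P\cI(z,p)\dvol_P(p)$. Two points need checking.

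\emph{Measurability of the coefficient.} The integrand $(p,\theta)\mapsto a(p)c(p,\theta)^{-d/\alpha_0}\ind_{S(p)}(\theta)$ must be measurable on $P\times\mathbb S^{d-1}$. This follows from measurability of the cone graph (hence of the graph of $S$) and of $c$, so Tonelli's theorem makes the inner angular integral measurable in $p$. For $a(p)$, I would use that $a|_P$ is the uniform limit of the measurable functions $(a\circ F)\widehat J$ as $r\downarrow0$.

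\emph{Finiteness.} The integrand is bounded: $|a(p)|$ is bounded because the effective amplitude is uniformly bounded in Assumption~\ref{ass:radial}, and $c^{-d/\alpha_0}\le c_-^{-d/\alpha_0}$. Since $P$ has finite $j$-dimensional volume by the setup, both the coefficient and the remainder are integrable. The remainder integrates to at most $\vol(P)\,\sigma(\mathbb S^{d-1})\sup|\delta^{\mathrm{rad}}_z|=o(1)$.

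Pulling out the common factor $e^{-zf_0}z^{-d/\alpha_0}$ then yields \eqref{eq:constant-radial-result}. The argument is essentially bookkeeping. The only step needing care is the measurability of $p\mapsto a(p)$ and of the angular integral, which the uniform-limit observation above settles. No real obstacle is expected, because Theorem~\ref{thm:radial} already provides uniformity in $(p,\theta)$.
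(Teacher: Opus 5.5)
Your proposal is correct and follows the paper's own argument: you factor $z^{-d/\alpha_0}$ out of \eqref{eq:radial-integrated}, bound the integrated remainder by $\sigma(\mathbb S^{d-1})\vol_P(P)\sup_{p,\theta}|\delta^{\mathrm{rad}}_z|$ using the finite base volume, and then integrate the coefficient over $P$. Your remarks on the measurability and boundedness of $a(p)\,c(p,\theta)^{-d/\alpha_0}\ind_{S(p)}(\theta)$ spell out points that the paper's short proof leaves implicit.
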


\begin{proof}
With $\alpha\equiv\alpha_0$, factor $z^{-d/\alpha_0}$ from
\eqref{eq:radial-integrated}.  Since $S(p)\subset\mathbb S^{d-1}$ and $P$
has finite volume,
\[
 \int_P\sigma(S(p))\dvol_P(p)
 \le\sigma(\mathbb S^{d-1})\vol_P(P)<\infty.
\]
The uniform remainder in~\eqref{eq:radial-integrated} is therefore integrable,
and integrating~\eqref{eq:radial-coefficient} gives
\eqref{eq:constant-radial-result}.
\end{proof}

\begin{remark}[Variable radial order]
If $\alpha$ varies with $\theta$, no single power of $z$ emerges, and
formula~\eqref{eq:radial-integrated} itself is the general leading statement.  Subject to
nonvanishing of the corresponding coefficient, the potentially dominant
directions maximize $\alpha$, and a further Laplace analysis with large
parameter $\log z$ is needed near that maximizing set.  Without hypotheses
on the maximizers, no universal power or logarithmic correction exists.
\end{remark}

\section{Scope of the normal-cell hypotheses and regimes requiring
additional construction}
\label{sec:geometric-scope}

The scope of the results is best described in terms of the data that must be
supplied, rather than by prescribing a particular class of stratified sets.
There are three logically distinct levels.

First, Assumption~\ref{ass:tube} requires a finite almost-everywhere disjoint
normal-cell disintegration, with measurable base-dependent cells and an exact
change-of-variables formula.  The paper does not assert that an arbitrary
stratified minimum set admits such a disintegration.

Second, the leading-order results require convergence of the rescaled phase,
amplitude, and cells, together with sufficient tightness.  This may be verified
uniformly, directly on noncoercive limiting cells, or pointwise in the base
under an integrable majorant, according to the theorem being used.

Third, a finite expansion requires a scaled-cell representation on a reference
cell for which the complete transported density admits the weighted
$L^1$ expansion of Assumption~\ref{ass:scaled-expansion}.  The framework allows
the cell motion to enter through the representation and its Jacobian, but it
does not construct such a representation automatically.

The following situations are therefore not covered automatically.  In each
case, the obstruction is not the named geometry by itself, but the failure to
supply the corresponding normal-cell data after an appropriate finite
refinement:

\begin{enumerate}[leftmargin=2.3em]

\item \emph{Unresolved branching or crossing assignments.}
Branching and crossing strata are admissible when a finite Borel refinement or
measurable tie-breaking rule assigns almost every nearby point to exactly one
base piece.  They fall outside Assumption~\ref{ass:tube} when incident strata
produce coordinate images that overlap on positive volume and no
almost-everywhere one-to-one assignment has been constructed.

\item \emph{Collapse of the adapted coordinate geometry.}
At a cusp or another singular point, ordinary normal coordinates may exist
along the regular strata while their usable radius tends to zero or their
Jacobians lose uniform control.  Such a geometry is still admissible if a
different finite family of adapted charts provides the required
change-of-variables formula.  It is not covered automatically when no such
uniform local description has been supplied.

\item \emph{Failure of scaled-cell convergence.}
A change of limiting cell across different base regions is not by itself an
obstruction: the regions may be separated into finitely many base pieces, and
the integrated pointwise-cell theorem also permits almost-everywhere cell
convergence under an integrable base majorant.  Additional analysis is required
when, on every available finite refinement, the weighted blow-ups fail to have
the uniform or pointwise limiting behavior required by the relevant theorem.

\item \emph{Iterated or unresolved frontier interactions.}
Several collar variables may be promoted together with the normal variables to
one joint scaled fiber, as in Section~\ref{sec:leading}.  The present results do
not automatically cover nested frontier interactions for which this
finite-dimensional joint scaling still leaves non-tight mass, creates another
unresolved frontier scale, or changes the dominant model on successively
smaller regions.

\item \emph{Uncontrolled higher-order motion of cell boundaries.}
Moving cells are covered at finite order when they admit a scaled-cell
representation for which the complete transported density has a controlled
weighted $L^1$ expansion.  This includes graph boundaries and other moving
domains that can be straightened with sufficiently controlled phase,
amplitude, and Jacobian expansions.  A separate shape or distributional
analysis may be needed when no such representation and expansion can be
verified; in that case the first correction can be supported on a limiting
boundary rather than represented by an ordinary $L^1$ density.

\item \emph{Absence of an integrable and tight homogeneous model.}
Ambient coercivity is not required: a model with flat ambient directions is
admissible when the limiting cell cuts off those directions and the resulting
weighted mass is integrable and tight.  Likewise, competing principal parts may
be handled by a finite sector decomposition.  The present hypotheses do not
apply directly when no finite collection of sectors and anisotropic scalings
produces homogeneous models that are integrable and tight on their admissible
limiting cells.

\item \emph{Non-locally-finite or non-tame decompositions.}
The setup uses finitely many base pieces of finite intrinsic volume and a finite
almost-everywhere disjoint coordinate cover.  A minimum set with infinitely
many strata or cell pieces accumulating in every neighborhood is outside the
stated framework when it cannot be reorganized into such a finite
normal-cell atlas.  An extension to countably many pieces would require
additional summability and uniformity assumptions not imposed here.

\end{enumerate}

These items are failure modes of the supplied data, not a blacklist of
singular geometric shapes.  For example, measurable tie-breaking may resolve
item~1, nonlinear adapted coordinates may resolve item~2, the pointwise-cell
theorem or a finite sector decomposition may resolve item~3, a joint
frontier-normal model may resolve item~4, and a scaled-cell straightening may
resolve item~5.  Likewise, item~6 concerns integrability on the admissible
limiting cell, not coercivity on the whole ambient fiber.  When one of these
constructions succeeds, the resulting pieces fall within the abstract
theorems.

Newton-polyhedron and resolution methods are complementary upstream tools,
rather than alternative cases in a linear hierarchy.  The present paper
neither assumes nor proves that arbitrary analytic or ideal-theoretic
singularities admit an adapted normal-cell disintegration.  In problems where a
Newton-sector decomposition or a resolution procedure produces finitely many
charts, transformed domains, and weighted principal models
\cite{WatanabeAGSLT,LinRLCT}, the results of this paper may be applied
chartwise once the normal-cell change of variables, cell convergence,
tightness, and, when needed, transported-density expansion have been verified.
Conversely, resolution methods may treat analytic singularities for which no
single finite normal-cell description of the present type is available.
Thus resolution is not needed when the adapted data can be checked directly,
but the two approaches are not linearly ordered.

\section{A planar inverse-kinematics problem with joint limits}
\label{sec:robotics-example}

\subsection{Problem formulation and terminology}

A configuration of a robot arm is a list of its joint variables.  In the
present example the joints are rotating hinges, so a configuration is a triple
of angles $\bm q=(q_1,q_2,q_3)$.  The \emph{end effector} is the tip of the last
link.  The \emph{forward-kinematics map} $X$ sends the joint angles to the
Cartesian position of that tip.  The inverse-kinematics problem reverses this
map: for a prescribed target $x^\star$, find all admissible configurations
$\bm q$ satisfying
\[
 X(\bm q)=x^\star.
\]
Bounds imposed on individual joint angles are called \emph{joint limits};
they make the admissible configuration set a domain with boundary and corners.

In redundant or uncertain inverse kinematics, one may seek the full exact
solution set or a probability distribution over exact and approximate
configurations.  We use \emph{soft inverse kinematics} for the latter
formulation: a configuration whose tip misses the
target by a vector $X(\bm q)-x^\star$ is assigned a weight proportional to
\[
 \exp\left\{-\frac z2|X(\bm q)-x^\star|^2\right\}.
\]
Here $z$ is the observation precision, so $z^{-1/2}$ is the characteristic
Cartesian error scale and the limit $z\to\infty$ is the small-noise limit.
Here we choose a Gaussian task-space likelihood.  Related probabilistic and
Bayesian formulations appear in sequential Monte Carlo inverse kinematics and
biomechanical posterior inference~\cite{CourtyArnaudSMCIK,PatakyBayesianIK}.
Optimization-based treatments of serial-chain inverse kinematics can incorporate
joint limits~\cite{MaricEtAlIK}, and generative inverse-kinematics methods seek
diverse solutions rather than one selected configuration~\cite{AmesEtAlIKFlow}.
The use of Laplace approximation for posterior moments and marginal densities is
standard; see Tierney--Kadane~\cite{TierneyKadane}.  The purpose of the example is the
asymptotic analysis of this posterior in a singular small-noise regime.

The dimensional mechanism behind the example is as follows.  Suppose
an $(m+1)$-dimensional configuration space is mapped to an $m$-dimensional
\emph{task space}, meaning the space of measured output coordinates.  The arm
then has one \emph{redundant degree of freedom}: at a regular exact solution,
the derivative of the task map has rank $m$, and the exact solutions locally
form a one-dimensional curve.  Motion along this curve changes the joints while
leaving the task output fixed; such self-motion manifolds are classical for
redundant manipulators \cite{BurdickSelfMotion}, and planar $3R$ self-motion
curves with joint limits were studied by Lenar\v{c}i\v{c} \cite{Lenarcic3R}.
There are $m$ transverse directions in which the task error is linear, so the
squared-error phase is quadratic and the curve contributes at order $z^{-m/2}$.

We use the term \emph{one-sided joint-limit fold} for the following boundary
singularity.  The configuration lies at a joint-limit corner, so two local
joint coordinates $u,v$ are allowed only in their inward cone.  The task map
loses first-order sensitivity in those two directions, and their first
nonzero task displacement is the quadratic form $\mathcal Q(u,v)$.  In suitable source
and target coordinates, the leading residual has the form
\begin{equation}\label{eq:robot-generic-fold}
 (x_1,\ldots,x_{m-1},u,v)
 \longmapsto
 \bigl(x_1,\ldots,x_{m-1},\mathcal Q(u,v)\bigr),
\end{equation}
where $\mathcal Q$ is strictly positive on the inward cone away from the origin.  The
word ``one-sided'' refers to the joint limits: only inward values of $(u,v)$
are feasible.  After squaring the residual, the ordinary directions have
quadratic scale $z^{-1/2}$, whereas the two fold directions have quartic scale
$z^{-1/4}$.  Equivalently, the weights are
\begin{equation}\label{eq:robot-generic-weights}
 \beta_{x_j}=\frac12,
 \qquad
 \beta_u=\beta_v=\frac14,
 \qquad
 Q_{\mathrm{fold}}
 =\frac{m-1}{2}+\frac14+\frac14
 =\frac m2.
\end{equation}
Thus an isolated configuration at such a corner can have exactly the same
small-noise mass order as an entire regular self-motion curve.  The concrete arm
below realizes this balance for a three-link planar arm whose task is the full
two-dimensional position of its tip.

\subsection{The concrete three-link arm and its soft posterior}

We identify the plane $\R^2$ with $\C$ only as a compact notation for planar
rotations: multiplication by $e^{i\theta}$ rotates a vector through the angle
$\theta$.  Let
$\mathbb T:=\R/(2\pi\mathbb Z)\cong\mathrm{SO}(2)$ denote the circle of possible
angles.  Consider three rigid links of lengths $2,1,2$, connected in sequence
by three revolute joints.  The first angle $q_1$ sets the orientation of the
first link, while $q_2$ and $q_3$ are the successive relative turning angles.
Without limits, the configuration space is the three-torus
$\mathsf Q:=\mathbb T^3$, equipped with its standard product metric.

The first joint may rotate freely, while the second and third joints satisfy
\begin{equation}\label{eq:robot-domain}
 \Omega:=\mathbb T\times
 \left[0,\frac{3\pi}{2}\right]\times
 \left[\pi,\frac{4\pi}{3}\right].
\end{equation}
Thus $\Omega$ is the set of physically allowed angle triples.  The two
intervals are closed arcs in their rotation groups, so their endpoints are
actual joint limits rather than points identified modulo $2\pi$.  For
$\bm q=(q_1,q_2,q_3)\in\Omega$, the forward-kinematics map (the position of
the arm tip) is
\begin{equation}\label{eq:robot-forward-map}
 X(\bm q)
 =2e^{iq_1}+e^{i(q_1+q_2)}+2e^{i(q_1+q_2+q_3)}\in\C.
\end{equation}

We prescribe the target $x^\star=1\in\C$.  Both Cartesian coordinates are
observed; this is what ``full-position'' means here.  Equivalently, one may
write the observation model as
\[
 x^\star=X(\bm q)+\bm\epsilon,
 \qquad
 \bm\epsilon\sim\mathcal N(0,z^{-1}I_2),
\]
where $I_2$ is the $2\times2$ identity matrix.
Bayes' rule then gives the soft inverse-kinematics posterior
\begin{equation}\label{eq:robot-posterior}
 \Pi_z(\dd \bm q)
 =\frac1{Z_z}\,\varpi(\bm q)
 \exp\left\{-\frac z2|X(\bm q)-1|^2\right\}\dd \bm q,
 \qquad \bm q\in\Omega,
\end{equation}
where $\varpi\in C^2(\Omega)$ is a strictly positive prior weight expressing
preferences among configurations before the target observation,
$\dd \bm q=\dd q_1\dd q_2\dd q_3$, and $Z_z$ is the normalizing constant.  We do
not require $\varpi$ itself to be normalized: multiplying it by a positive
constant changes $Z_z$ by the same factor and leaves the posterior unchanged.
The adjective ``soft'' emphasizes that configurations missing the target still
receive positive, though exponentially smaller, weight.

\subsection{Exact configurations and the task Jacobian}

Define the shape vector, the tip position when the first joint angle is set to
zero, by
\begin{equation}\label{eq:robot-shape-vector}
 S(u,v):=2+e^{iu}+2e^{i(u+v)}.
\end{equation}
Since $X(\bm q)=e^{iq_1}S(q_2,q_3)$, changing $q_1$ only rotates the shape
vector.  Therefore a pair of internal joint angles $(u,v)$ can reach the
target $1$ if and only if $|S(u,v)|=1$; in that case there is a unique first
joint angle, modulo a full turn, that rotates $S(u,v)$ onto the target.
Direct expansion gives the factorization
\begin{equation}\label{eq:robot-shape-factorization}
 |S(u,v)|^2-1
 =8\cos\frac{u+v}{2}
 \left(
  \cos\frac{u-v}{2}+2\cos\frac{u+v}{2}
 \right).
\end{equation}
For $u\in[4\pi/3,3\pi/2]$, set
\begin{equation}\label{eq:robot-vGamma}
 v_\Gamma(u):=
 2\pi-2\arctan\left(-\frac{3}{\tan(u/2)}\right),
\end{equation}
and let $\vartheta(u)\in\mathbb T$ be the unique angular class satisfying
\begin{equation}\label{eq:robot-vartheta}
 e^{i\vartheta(u)}S(u,v_\Gamma(u))=1.
\end{equation}
Then the exact inverse-kinematics set is
\begin{equation}\label{eq:robot-minimum-set}
 M:=\{\bm q\in\Omega:X(\bm q)=1\}
 =\{\bm q_0\}\sqcup\Gamma,
 \qquad
 \bm q_0:=(0,0,\pi),
\end{equation}
where
\begin{equation}\label{eq:robot-regular-arc}
 \Gamma:=\left\{
  \bigl(\vartheta(u),u,v_\Gamma(u)\bigr):
  \frac{4\pi}{3}\le u\le\frac{3\pi}{2}
 \right\}.
\end{equation}
The set $\Gamma$ is a smooth compact curve of exact configurations.  Along
$\Gamma$ the task Jacobian has rank two, so the two scalar equations specifying
the Cartesian target are locally independent constraints.  Moving along $\Gamma$ changes the three joint angles while
keeping the arm tip fixed at the target, so $\Gamma$ is the self-motion curve
in this example.  Its endpoints lie on the joint-limit faces $q_3=4\pi/3$ and
$q_2=3\pi/2$, respectively, and meet those faces transversely.  The point
$\bm q_0$ is a second, isolated exact configuration.  It is at the corner where
both constrained joints attain their lower limits, namely $q_2=0$ and
$q_3=\pi$.

The \emph{task Jacobian} $DX(\bm q)$ is the real $2\times3$ derivative of the
forward-kinematics map: it converts an infinitesimal change of the three joint
angles into the corresponding infinitesimal Cartesian displacement of the
arm tip.  Its two-dimensional Jacobian factor is
\begin{equation}\label{eq:robot-task-jacobian}
 J_X(\bm q):=\sqrt{\det\bigl(DX(\bm q)DX(\bm q)^{\mathsf T}\bigr)}.
\end{equation}
The scalar $J_X$ is positive exactly when $DX$ has full row rank two.
Proposition~\ref{prop:robot-posterior-splitting} verifies this positivity
everywhere on $\Gamma$.

\begin{lemma}[Relative normal cells for a transverse embedded arc]
\label{lem:transverse-arc-normal-cells}
Let $N$ be an $n$-dimensional Riemannian manifold, let $\Omega\subset N$ be a
relative domain, and let $\Gamma=\gamma([0,L])$ be a compact $C^2$ embedded
unit-speed arc.  Assume that $\Gamma^\circ\subset\operatorname{int}\Omega$.
For each endpoint $e$, let $\tau_e$ be the unit tangent pointing from $e$ into
$\Gamma^\circ$, and suppose that near $e$ there is a $C^2$ defining function
$h_e$ such that
\[
 \Omega=\{h_e\ge0\},\qquad \dd h_e(e)\ne0,
 \qquad \dd h_e(e)[\tau_e]>0.
\]
Then, after decreasing $\eps>0$, the nearest-point projection
$\pi_\Gamma:\cT_\eps(\Gamma)\to\Gamma$ is well defined and
$\Omega\cap\cT_\eps(\Gamma)$ has a relative normal-cell decomposition with
base pieces $\Gamma^\circ$ and the two endpoints.

More precisely, choose a $C^1$ orthonormal normal frame
$E_p:\R^{n-1}\to N_p\Gamma$ along the arc.  Over $\Gamma^\circ$ set
\[
 F_\Gamma(p,s):=\exp_p(E_p s),\qquad
 \Afib_\Gamma(p):=
 \{s\in\R^{n-1}:|s|<\eps,\ F_\Gamma(p,s)\in\Omega,\
                    \pi_\Gamma(F_\Gamma(p,s))=p\}.
\]
At an endpoint $e$, identify $T_eN$ with $\R^n$ by an orthonormal frame and set
\[
 F_e(v):=\exp_e(v),\qquad
 \Afib_e:=\{v\in T_eN:|v|<\eps,\ F_e(v)\in\Omega,\
                      \pi_\Gamma(F_e(v))=e\}.
\]
The cells are measurable.  Their three images are pairwise disjoint up to
null sets, cover $\Omega\cap\cT_\eps(\Gamma)$ up to null sets, and satisfy the
corresponding change-of-variables formulas.  Their Jacobians equal one on the
zero sections.
For each fixed $p\in\Gamma^\circ$, the scalar blow-ups of
$\Afib_\Gamma(p)$ converge locally to $\R^{n-1}$.  At an endpoint,
\begin{equation}\label{eq:transverse-arc-endpoint-cone}
 r^{-1}\Afib_e\longrightarrow
 C_e:=\{v\in T_eN:\inner{v}{\tau_e}\le0,\
                 \dd h_e(e)[v]\ge0\}
 \qquad(r\downarrow0)
\end{equation}
locally in measure.  In addition, the directions in the endpoint cell approach
those of the limiting cone:
\begin{equation}\label{eq:transverse-arc-directional-convergence}
 \sup_{\substack{v\in\Afib_e\\0<|v|\le r}}
 \dist\!\left(\frac{v}{|v|},
 C_e\cap\{w\in T_eN:|w|=1\}\right)\longrightarrow0
 \qquad(r\downarrow0).
\end{equation}
Moreover, for every $a>0$ and every continuous coercive function
$G_e:T_eN\to[0,\infty)$ satisfying $G_e(rv)=r^2G_e(v)$,
\begin{equation}\label{eq:transverse-arc-weighted-cell-limit}
 \int_{T_eN}
 \left|\ind_{r^{-1}\Afib_e}(v)-\ind_{C_e}(v)\right|
 e^{-aG_e(v)}\dd v\longrightarrow0.
\end{equation}
\end{lemma}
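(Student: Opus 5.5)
We would prove Lemma~\ref{lem:transverse-arc-normal-cells} in four steps: the nearest-point structure of the tube, the decomposition and change of variables, the interior blow-ups, and the endpoint cone analysis, which carries the only real difficulty.

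\textbf{Nearest-point structure.} Since $\Gamma$ is a compact $C^2$ embedded arc, we first extend $\gamma$ to a $C^2$ embedded open arc $\widetilde\gamma$ on $(-\delta,L+\delta)$. The classical tubular neighborhood theorem for $\widetilde\Gamma$ \cite{GrayTubes,Federer} then gives a radius $\eps_0$ on which the normal exponential map of $\widetilde\Gamma$ restricted over $[0,L]$ is a $C^1$ diffeomorphism onto its image. Next, for $x\in\cT_\eps(\Gamma)$ we minimize $\dist(x,\gamma(t))$ over $t\in[0,L]$. Compactness and the positive reach of $\widetilde\Gamma$ give a unique minimizer for $\eps$ small, so $\pi_\Gamma$ is well defined and Borel. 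If the minimizer is interior, $x=\exp_p(E_ps)$ with $s\perp$ tangent and $p=\pi_\Gamma(x)$. If the minimizer is the endpoint $e$, the first-order condition gives $x=\exp_e(v)$ with $\inner{v}{\tau_e}\le0$. Hence $\cT_\eps(\Gamma)$ is the disjoint union of the interior preimage and the two endpoint preimages. Intersecting with $\Omega$ shows that the three images $F_\Gamma(\cdot)$, $F_e(\Afib_e)$ are disjoint and cover $\Omega\cap\cT_\eps(\Gamma)$. Measurability of the cells follows because $\pi_\Gamma$ is continuous and $\Omega$ is measurable.

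\textbf{Change of variables.} On the interior piece, $F_\Gamma$ is injective on the cell graph, since the cells are defined through $\pi_\Gamma$. The area formula therefore gives (T3) with $J_\Gamma=|\det dF_\Gamma|$, which equals $1$ on the zero section by the Gauss lemma. At an endpoint, $\exp_e$ is a diffeomorphism on $B_\eps$ with Jacobian $1$ at $0$. The two endpoint images and the interior image overlap only on the hypersurfaces $\{\pi_\Gamma(x)\in\{e\}\}$ boundaries, and we expect these to be null sets (they lie in normal hypersurfaces at $e$). This gives (T2).

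\textbf{Interior blow-ups.} For fixed $p\in\Gamma^\circ$ we have $p\in\operatorname{int}\Omega$, so small $s$ satisfy $F_\Gamma(p,s)\in\Omega$. By the diffeomorphism property, $\pi_\Gamma(F_\Gamma(p,s))=p$ for $|s|$ small relative to $\dist(p,\{e\})$. Hence $r^{-1}\Afib_\Gamma(p)\supset B_{c/r}$, which gives the local convergence to $\R^{n-1}$.

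\textbf{Endpoints.} This is the main obstacle. We write the two conditions defining $\Afib_e$ in exponential coordinates at $e$. Since $h_e\circ\exp_e$ is $C^2$, we have $h_e(\exp_e v)=\dd h_e(e)[v]+O(|v|^2)$. For the projection condition, a first-order expansion of the minimization problem shows that $\pi_\Gamma(\exp_e v)=e$ if and only if $\inner{v}{\tau_e}+O(|v|^2)\le0$; to see this, differentiate $t\mapsto\tfrac12\dist^2(\exp_ev,\gamma(t))$ at $t=0$, whose derivative is $-\inner{v}{\tau_e}+O(|v|^2)$, and use convexity in $t$ near $0$ for small $|v|$. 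After scaling by $r^{-1}$, each defining inequality becomes its linear part plus an $O(r|w|^2)$ perturbation. The slab argument of Proposition~\ref{prop:polyhedral-cone-limit}, with all weights equal, then gives convergence in measure on every ball. It also gives the directional statement~\eqref{eq:transverse-arc-directional-convergence}: a unit vector violating a limiting inequality by $\delta$ cannot satisfy the perturbed one once $r\ll\delta$. Both linear functionals are nonzero, so the slabs have small measure. Finally,~\eqref{eq:transverse-arc-weighted-cell-limit} follows by combining local convergence with the Gaussian-type tail bound $e^{-aG_e}\le e^{-ag_-|v|^2}$. Here $g_->0$ because $G_e$ is continuous, coercive and $2$-homogeneous, so it has a positive minimum on the unit sphere; this is exactly as in Proposition~\ref{prop:polyhedral-cone-limit}. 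The delicate point is making the projection inequality uniform: the expansion must hold for all $t\in[0,L]$, not just near $0$, which we handle by noting that distant $t$ are excluded for small $|v|$.
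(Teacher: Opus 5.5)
Your proposal is correct and follows essentially the paper's route: extend the arc and apply the tubular neighborhood theorem, partition the relative tube by the value of $\pi_\Gamma$, characterize the endpoint cell through strict convexity of $t\mapsto\frac12\dist^2(\exp_e v,\gamma(t))$ together with the linearization of $h_e$, and finish with a fixed-ball/tail decomposition using coercivity of $G_e$. The differences are cosmetic. You use a quantitative slab bound where the paper uses pointwise convergence off the two null hyperplanes plus dominated convergence. Your hedge about overlaps on hypersurfaces is unnecessary, because the three images are exactly disjoint: each cell is defined by the value of $\pi_\Gamma$.
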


\begin{proof}
Extend $\gamma$ slightly past both endpoints as a $C^2$ embedded curve.
The tubular-neighborhood theorem gives unique normal coordinates near the
extended curve.  Away from the endpoints these are the ordinary nearest-point
coordinates for $\Gamma$; near an endpoint, the strict convexity argument below
shows that the distance to the truncated arc has a unique minimizer.  By
compactness, one value of $\eps$ works along the whole arc.  This is the local
tube/reach construction for embedded submanifolds; see
\cite{GrayTubes,Federer}.

Partition the relative tube according to whether this nearest point lies in
$\Gamma^\circ$ or is one of the endpoints.  On the first part the standard
normal exponential map gives the displayed Fermi coordinates.  On an endpoint
part the exponential map at that endpoint is a diffeomorphism on a sufficiently
small ball.  The projection partition makes the images disjoint, and the two
standard coordinate formulas give the change of variables.  The Jacobians are
one at the zero sections.  Since a fixed $p\in\Gamma^\circ$ lies in the interior
of $\Omega$, its admissible normal cell contains a neighborhood of the origin;
its scalar blow-ups therefore exhaust $\R^{n-1}$ locally.

It remains to identify an endpoint blow-up.  Parametrize the arc from $e$ by
unit speed, $\gamma_e(0)=e$ and $\dot\gamma_e(0)=\tau_e$, and define
\[
 \varphi_e(v,t):=\frac12\dist_g\bigl(\exp_e(v),\gamma_e(t)\bigr)^2.
\]
In normal coordinates at $e$, uniformly for small $v$ and $t$,
\[
 \partial_t\varphi_e(v,0)
 =-\inner{v}{\tau_e}+O(|v|^2),
 \qquad
 \partial_t^2\varphi_e(v,t)=1+O(|v|+t).
\]
After shrinking the neighborhood, $t\mapsto\varphi_e(v,t)$ is strictly convex
near zero, while the remainder of the compact arc stays farther away.  Hence
$e$ is the nearest point precisely when
\[
 -\inner{v}{\tau_e}+O(|v|^2)\ge0.
\]
The feasibility condition has the expansion
\[
 h_e(\exp_e v)=\dd h_e(e)[v]+O(|v|^2).
\]
Substituting $v=rw$ shows that the indicator of $r^{-1}\Afib_e$ converges at
every $w$ outside the two boundary hyperplanes to the indicator of $C_e$.
Those hyperplanes are null, so dominated convergence gives local $L^1$
convergence of the indicators, and hence local convergence in measure.  The
same expansions also give
\eqref{eq:transverse-arc-directional-convergence}.  Indeed, otherwise there
would be $v_j\in\Afib_e$ with $v_j\to0$ whose normalized directions remain a
fixed positive distance from $C_e$.  After passing to a subsequence,
$v_j/|v_j|\to w$; division of the two endpoint inequalities by $|v_j|$ then
gives $w\in C_e$, a contradiction.  Finally, a fixed-ball/tail decomposition
and coercivity of $G_e$ give
\eqref{eq:transverse-arc-weighted-cell-limit}.
\end{proof}

\begin{proposition}[A solution curve and an isolated joint-limit fold contribute
at the same order]\label{prop:robot-posterior-splitting}
For every continuous test function $\psi:\Omega\to\C$, put
\[
 Z_z[\psi]:=\int_\Omega
 \psi(\bm q)\varpi(\bm q)
 \exp\left\{-\frac z2|X(\bm q)-1|^2\right\}\dd \bm q.
\]
Thus $Z_z=Z_z[1]$.  Then
\begin{equation}\label{eq:robot-test-asymptotic}
 Z_z[\psi]=z^{-1}\left[
 2\pi\int_\Gamma
 \frac{\psi(p)\varpi(p)}{J_X(p)}\dd\ell(p)
 +C_{\mathrm{fold}}\,\psi(\bm q_0)\varpi(\bm q_0)
 +o(1)\right],
\end{equation}
where $\dd\ell$ is arclength for the product metric and
\begin{equation}\label{eq:robot-fold-constant}
 \begin{split}
 C_{\mathrm{fold}}
 &:=\int_{\R\times\R_+^2}
 \exp\left\{-\frac12\left[
 s^2+\bigl(u^2+4uv+3v^2\bigr)^2
 \right]\right\}\dd s\dd u\dd v\\
 &=\frac{\pi}{4}\log 3.
 \end{split}
\end{equation}
Consequently, if
\begin{align}
 K_{\mathrm{reg}}
 &:=2\pi\int_\Gamma
 \frac{\varpi(p)}{J_X(p)}\dd\ell(p),
 \label{eq:robot-Kreg}\\
 K_{\mathrm{fold}}
 &:=\frac{\pi}{4}\log 3\,\varpi(\bm q_0),
 \label{eq:robot-Kfold}
\end{align}
then
\begin{equation}\label{eq:robot-evidence}
 Z_z=z^{-1}\bigl(K_{\mathrm{reg}}+K_{\mathrm{fold}}+o(1)\bigr).
\end{equation}
Moreover, $\Pi_z$ converges weakly to the probability measure $\Pi_\infty$
defined by
\begin{equation}\label{eq:robot-weak-limit}
 \int_\Omega\psi\dd\Pi_\infty
 =\frac{
 \displaystyle
 2\pi\int_\Gamma
 \frac{\psi(p)\varpi(p)}{J_X(p)}\dd\ell(p)
 +C_{\mathrm{fold}}\psi(\bm q_0)\varpi(\bm q_0)}
 {K_{\mathrm{reg}}+K_{\mathrm{fold}}}.
\end{equation}
In particular,
\begin{equation}\label{eq:robot-atom-mass}
 \Pi_\infty(\{\bm q_0\})
 =\frac{K_{\mathrm{fold}}}
 {K_{\mathrm{reg}}+K_{\mathrm{fold}}}>0.
\end{equation}
For the flat prior $\varpi\equiv1$, the coefficients are explicit:
\begin{equation}\label{eq:robot-flat-prior-coefficients}
 K_{\mathrm{reg}}=\frac{\pi}{2}\log3,
 \qquad
 K_{\mathrm{fold}}=\frac{\pi}{4}\log3,
 \qquad
 \Pi_\infty(\{\bm q_0\})=\frac13.
\end{equation}
\end{proposition}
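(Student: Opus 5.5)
The plan is to treat $Z_z[\psi]$ as the Laplace integral of the paper with phase $f(\bm q)=\frac12|X(\bm q)-1|^2$, $f_0=0$, and amplitude $a=\psi\varpi\ind_\Omega$, in the relative-domain form of Remark~\ref{rem:relative-domain}. Since $\Omega$ is compact and $f$ is continuous, Assumption~\ref{ass:global} is immediate, and Lemma~\ref{lem:localization} reduces everything to a tube around $M=\{\bm q_0\}\sqcup\Gamma$.

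\emph{Base pieces.} Because $\bm q_0\notin\Gamma$, the tubes of the two components can be taken disjoint. Near $\Gamma$ I use Lemma~\ref{lem:transverse-arc-normal-cells} to obtain three pieces: $\Gamma^\circ$, and the two endpoints on the faces $q_3=4\pi/3$ and $q_2=3\pi/2$. Near $\bm q_0$ I use one nonlinear adapted chart.

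\emph{The arc $\Gamma^\circ$.}
\begin{itemize}
\item The Hessian of $f$ at $p\in\Gamma$ is $DX^{\mathsf T}DX$. Its restriction $H(p)$ to the two-dimensional normal plane has $\det H(p)=J_X(p)^2$; this uses the full rank of $DX$, which I will verify by checking $J_X>0$ along the explicit parametrization $u\mapsto(\vartheta(u),u,v_\Gamma(u))$.
\item Over interior points the cells are eventually all of $\R^2$. Theorem~\ref{thm:integrated-pointwise-cells} with $B$ bounded, or Corollary~\ref{cor:conic-quadratic} on compact sub-arcs plus a uniform bound, then gives $z^{-1}\cdot 2\pi\int_\Gamma\psi\varpi/J_X\dd\ell$.
\end{itemize}

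\emph{The endpoints.} Each endpoint piece has codimension $3$ with quadratic weights, so $Q=3/2$.
\begin{itemize}
\item The limiting cone $C_e$ of \eqref{eq:transverse-arc-endpoint-cone} meets the kernel line $\R\tau_e$ of the Hessian only at $0$. Indeed, $\tau_e$ violates $\inner{v}{\tau_e}\le0$, and $-\tau_e$ violates $\dd h_e(e)[v]\ge0$ by transversality.
\item Hence $G_e$ is positive on $C_e\cap\mathbb S^2$. Together with the directional convergence \eqref{eq:transverse-arc-directional-convergence}, this gives a lower bound $f\ge\kappa|v|^2$ on the endpoint cells.
\item These pieces are therefore $O(z^{-3/2})=o(z^{-1})$.
\end{itemize}

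\emph{The fold at $\bm q_0$.} Write $q_2=u\ge0$ and $q_3=\pi+v$ with $v\ge0$. Then $S(u,\pi+v)=2+e^{iu}-2e^{i(u+v)}$.
\begin{itemize}
\item To first order, $\operatorname{Im}S\approx-(u+2v)$, and $\partial_{q_1}X=iX$ equals $i$ at $\bm q_0$. So $s:=\operatorname{Im}(X(\bm q)-1)$ is a valid $C^2$ coordinate replacing $q_1$, with Jacobian $1$ at $\bm q_0$.
\item Solving for $q_1\approx u+2v$ and expanding the real part gives
\[
 \operatorname{Re}(X-1)=\mathcal Q(u,v)+O(|s||(u,v)|+|(u,v)|^3),\qquad \mathcal Q(u,v)=u^2+4uv+3v^2 .
\]
I will redo this expansion carefully; a quick pass confirms $-\tfrac12u^2+(u+v)^2+\tfrac12(u+2v)^2=\mathcal Q$.
\item Take weights $\beta_s=\tfrac12$ and $\beta_u=\beta_v=\tfrac14$, so $Q=1$ and $G=\tfrac12(s^2+\mathcal Q^2)$. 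The cell $\R\times\R_+^2$ is invariant under this dilation.
\item The error terms have weighted degree at least $3/4>1/2$, which gives the relative remainder $\mathcal R=O(\rho^{1/4})$. Coercivity holds because $\mathcal Q=(u+v)(u+3v)>0$ on the closed quadrant minus $0$.
\item Corollary~\ref{cor:invariant-cones} then gives $z^{-1}\psi(\bm q_0)\varpi(\bm q_0)C_{\mathrm{fold}}$.
\item For the constant: the $s$-integral is $\sqrt{2\pi}$. Polar coordinates in $(u,v)$ together with $\int_0^\infty re^{-ar^4}\dd r=\tfrac14\sqrt{\pi/a}$ reduce the remaining integral to $\frac{\sqrt{2\pi}}4\int_0^\infty\frac{\dd v}{(1+v)(1+3v)}=\frac{\sqrt{2\pi}}4\cdot\frac12\log3$. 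The product is $\frac{\pi}{4}\log3$.
\end{itemize}

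\emph{Assembly.} Theorem~\ref{thm:global-leading} adds the two $z^{-1}$ contributions and discards the $z^{-3/2}$ endpoint terms, giving \eqref{eq:robot-test-asymptotic}. Taking $\psi=1$ gives \eqref{eq:robot-evidence}. Dividing $Z_z[\psi]$ by $Z_z$, with positive limit, gives weak convergence and the atom mass \eqref{eq:robot-atom-mass}.

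\emph{Flat-prior value and main obstacle.} For $\varpi\equiv1$, I compute $\int_\Gamma\dd\ell/J_X$ by the coarea formula for $X=e^{iq_1}S(u,v)$. Integrating out the rotation $q_1$ reduces it to $\int\dd u/|\partial_v|S||$ along $|S|=1$, that is, along $v=v_\Gamma(u)$ for $u\in[4\pi/3,3\pi/2]$. The factorization \eqref{eq:robot-shape-factorization} makes $\partial_v|S|^2$ explicit. The target is $\int_\Gamma\dd\ell/J_X=\tfrac14\log3$, i.e.\ $K_{\mathrm{reg}}=\tfrac{\pi}{2}\log3$. I expect the main obstacle to be this explicit integral, together with the uniform fold remainder estimate near the corner (in particular, controlling $O(|s||(u,v)|)$ terms against $G$). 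For the integral I will first try the substitution $w=\tan(u/2)$, which I expect to give a rational integrand whose antiderivative is a logarithm.
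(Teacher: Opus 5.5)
Your proposal is correct and follows the paper's proof essentially step for step. The shared steps are localization, Lemma~\ref{lem:transverse-arc-normal-cells} with Theorem~\ref{thm:integrated-pointwise-cells} on $\Gamma^\circ$, $O(z^{-3/2})$ endpoint caps from $C_e\cap\ker DX(e)=\{0\}$, the $(\tfrac12,\tfrac14,\tfrac14)$ blow-up on the invariant cell $\R\times\R_+^2$ at $\bm q_0$, Theorem~\ref{thm:global-leading}, and the coarea/half-angle evaluation $\int_\Gamma\dd\ell/J_X=\tfrac14\log3$. The differences are cosmetic: the paper uses the volume-preserving linear shear $s=q_1-u-2v$ instead of $s=\operatorname{Im}(X-1)$, and an explicit coercive endpoint model instead of your direct bound $f\ge\kappa|v|^2$. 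Two small repairs are needed. Your displayed remainder for $\operatorname{Re}(X-1)$ must also contain the harmless weighted-degree-one term $-s^2/2$. And $\tfrac12(s^2+\mathcal Q^2)$ vanishes along the lines $\{s=0,\ u=-v\}$ and $\{s=0,\ u=-3v\}$, so Corollary~\ref{cor:invariant-cones} must be applied to a coercive homogeneous extension that agrees with it on $\R\times\R_+^2$, as the paper does with $\widehat G$.
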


\begin{proof}
\smallskip
\noindent\emph{Step 1: Exact configurations and regularity.}
We first determine all exact configurations.  In the allowed rectangle for
the internal joint angles,
$u+v\in[\pi,17\pi/6]$.  Hence the first factor in
\eqref{eq:robot-shape-factorization} vanishes only when $u+v=\pi$, which gives
$(u,v)=(0,\pi)$.  Write
\[
 \Xi(u,v):=\cos\frac{u-v}{2}+2\cos\frac{u+v}{2}
 =3\cos\frac u2\cos\frac v2-\sin\frac u2\sin\frac v2.
\]
At $v=\pi$, the equation $\Xi=0$ again gives $u=0$.  For $v>\pi$, no solution
with $u\le\pi$ is possible.  If $u>\pi$, then $\Xi=0$ is equivalent to
\begin{equation}\label{eq:robot-tangent-equation}
 \tan\frac u2\tan\frac v2=3.
\end{equation}
On the prescribed intervals,~\eqref{eq:robot-tangent-equation} has a solution
precisely for $4\pi/3\le u\le3\pi/2$, and that solution is
$v=v_\Gamma(u)$.  This proves~\eqref{eq:robot-minimum-set} and
\eqref{eq:robot-regular-arc}.

The gradient of $\Xi$ cannot vanish on $\{\Xi=0\}$.  Indeed, if both partial
derivatives vanished, then
\[
 \sin\frac{u-v}{2}=\sin\frac{u+v}{2}=0,
\]
which is incompatible with $\Xi=0$.  Along $\Gamma$, the other factor
$\cos((u+v)/2)$ in~\eqref{eq:robot-shape-factorization} is nonzero, so
$\nabla_{u,v}|S|^2\ne0$.  At an exact solution,
$\partial_{q_1}X=i$ is tangent to the unit circle in task space, whereas a
nonzero derivative of $|S|^2$ supplies a radial task direction.  Thus
$\rank DX=2$ on $\Gamma$, and $J_X$ is bounded away from zero
there.  At $(u,v)=(4\pi/3,4\pi/3)$, implicit differentiation gives
$v_\Gamma'(u)=-1$; the other endpoint is parametrized with nonzero $u$-velocity at
$u=3\pi/2$.  Hence the two intersections with the joint-limit faces are
transverse.

\smallskip
\noindent\emph{Step 2: Localization near the two solution components.}
Choose disjoint neighborhoods $U_\Gamma$ and $U_0$ of $\Gamma$ and $\bm q_0$,
with smooth cutoffs $\chi_\Gamma$ and $\chi_0$ equal to one near the respective
components.  Since $\Omega$ is compact and~\eqref{eq:robot-minimum-set} lists
all exact solutions, the remaining part of $Z_z[\psi]$ is exponentially
small.

\smallskip
\noindent\emph{Step 3: The regular solution curve.}
For the regular component, put
\[
 e_-:=\bigl(\vartheta(4\pi/3),4\pi/3,4\pi/3\bigr),
 \qquad
 e_+:=\bigl(\vartheta(3\pi/2),3\pi/2,v_\Gamma(3\pi/2)\bigr),
 \qquad
 \Gamma^\circ:=\Gamma\setminus\{e_-,e_+\}.
\]
We verify the hypotheses of the normal-cell theorems rather than derive this
contribution by a separate coarea asymptotic.  The transversality established in
Step~1 permits the active joint-limit defining functions to be oriented so that
$\dd h_e(e)[\tau_e]>0$.  Lemma~\ref{lem:transverse-arc-normal-cells} then supplies
ordinary nearest-point normal cells over $\Gamma^\circ$ and two endpoint caps,
assigned to the zero-dimensional pieces $\{e_-\}$ and $\{e_+\}$.  These pieces
are almost everywhere disjoint, cover a sufficiently small relative
neighborhood of $\Gamma$, and satisfy the relative-domain form of
Assumption~\ref{ass:tube}.

Choose a $C^1$ orthonormal normal frame along $\Gamma^\circ$ and write
$E_p:\R^2\to N_p\Gamma$ for the associated isometry.  Define
\begin{equation}\label{eq:robot-regular-Hessian}
 H_\Gamma(p):=E_p^{\mathsf T}DX(p)^{\mathsf T}DX(p)E_p,
 \qquad
 G_\Gamma(p,s):=\frac12\inner{H_\Gamma(p)s}{s}.
\end{equation}
Since $\ker DX(p)=T_p\Gamma$ and $DX(p)$ has rank two on the compact arc,
$H_\Gamma(p)$ is uniformly positive definite up to both endpoints.  Taylor's
theorem in normal coordinates gives, uniformly as $s\to0$,
\begin{equation}\label{eq:robot-regular-normal-model}
 \frac12|X(F(p,s))-1|^2
 =G_\Gamma(p,s)+O(|s|^3),
 \qquad p\in\Gamma^\circ.
\end{equation}
For each fixed $p\in\Gamma^\circ$, the point lies in the interior of $\Omega$,
so its relative normal cell contains a neighborhood of zero in $\R^2$.
Consequently, under the scalar quadratic dilation,
\[
 z^{1/2}\Afib_\Gamma(p)\longrightarrow\R^2
\]
in the weighted-indicator sense of
Theorem~\ref{thm:integrated-pointwise-cells}.  This convergence need not be
uniform as $p$ approaches an endpoint, which is precisely why the pointwise
cell theorem is the appropriate result here.

Set $A_\Gamma=\chi_\Gamma\psi\varpi$.  In the corresponding Fermi
coordinates,
\[
 A_\Gamma(F(p,s))J(p,s)\longrightarrow\psi(p)\varpi(p).
\]
The amplitude is bounded, the base $\Gamma^\circ$ has finite length, and the
uniform ellipticity of~\eqref{eq:robot-regular-Hessian} supplies both the
coercive lower bound and an integrable Gaussian majorant.  Thus all hypotheses
of Theorem~\ref{thm:integrated-pointwise-cells} hold with
$\bm\beta=(1/2,1/2)$, $Q=1$, $c\equiv1$, and limit cell $\R^2$.  Denoting the
contribution of this normal-cell piece by $I_{\Gamma^\circ}(z)$,
Theorem~\ref{thm:integrated-pointwise-cells} gives
\begin{equation}\label{eq:robot-regular-interior-asymptotic}
 I_{\Gamma^\circ}(z)
 =z^{-1}\left[
 \int_{\Gamma}
 \psi(p)\varpi(p)
 \int_{\R^2}e^{-G_\Gamma(p,t)}\dd t\,\dd\ell(p)
 +o(1)\right],
\end{equation}
where adding the two endpoints to the outer integral does not change it.
Because $DX(p)$ annihilates the unit tangent to $\Gamma$, an orthonormal
splitting into that tangent and the columns of $E_p$ gives
\[
 \det H_\Gamma(p)
 =\det\bigl(DX(p)DX(p)^{\mathsf T}\bigr)
 =J_X(p)^2.
\]
The Gaussian integral in~\eqref{eq:robot-regular-interior-asymptotic} is
therefore $2\pi/J_X(p)$.

\smallskip
\noindent\emph{Step 4: Endpoint caps of the regular curve.}
It remains only to check that the two cap pieces are lower order.  Fix an
endpoint $e\in\{e_-,e_+\}$, let $\tau_e$ be the unit tangent to $\Gamma$
pointing from $e$ into $\Gamma^\circ$, and let $h_e$ be a local defining
function for the active joint-limit face, oriented so that
$\Omega=\{h_e\ge0\}$.  Transversality gives
$\dd h_e(e)[\tau_e]>0$.  Lemma~\ref{lem:transverse-arc-normal-cells} identifies the scalar blow-up of
the endpoint normal cell as
\begin{equation}\label{eq:robot-endpoint-cone}
 C_e:=\{v\in T_e\mathsf Q:
       \inner{v}{\tau_e}\le0,\ \dd h_e(e)[v]\ge0\}.
\end{equation}
The first inequality is the nearest-point end-cap condition and the second is
feasibility.  Since $\ker DX(e)=\R\tau_e$, the two inequalities and
$\dd h_e(e)[\tau_e]>0$ imply
\[
 C_e\cap\ker DX(e)=\{0\}.
\]
Hence $|DX(e)v|^2$ is bounded below by a positive multiple of $|v|^2$ on
$C_e$.  Choose a closed conic neighborhood $\mathcal V_e$ of $C_e\setminus\{0\}$ that
is still disjoint from the unit kernel directions.  By
\eqref{eq:transverse-arc-directional-convergence}, all nonzero directions in a
sufficiently small endpoint cell lie in $\mathcal V_e$.  We use the homogeneous model
\[
 \widetilde G_e(v)
 :=\frac12|DX(e)v|^2+\dist(v,\mathcal V_e)^2
\]
on the ambient tangent space.  It is coercive everywhere, and on the actual
cell the distance term vanishes, so $\widetilde G_e(v)=|DX(e)v|^2/2$ there.
Taylor expansion gives
\[
 \frac12|X(\exp_e v)-1|^2
 =\widetilde G_e(v)\bigl(1+O(|v|)\bigr)
\]
within that cell, while
\eqref{eq:transverse-arc-weighted-cell-limit} supplies the required weighted
convergence of the scaled cell to $C_e$.  The relative-domain form of
Corollary~\ref{cor:coercive-leading-piece}, with model
$\widetilde G_e$, three quadratic weights, and $Q=3/2$, therefore gives
\[
 I_e(z)=O(z^{-3/2}).
\]
Applying this to both endpoints and combining it with
\eqref{eq:robot-regular-interior-asymptotic} gives the regular contribution
\begin{equation}\label{eq:robot-regular-asymptotic}
 \int_\Omega A_\Gamma(\bm q)e^{-z|X(\bm q)-1|^2/2}\dd \bm q
 =z^{-1}\left[
 2\pi\int_\Gamma\frac{\psi(p)\varpi(p)}{J_X(p)}\dd\ell(p)
 +o(1)\right].
\end{equation}
This piece has exponent $Q_\Gamma=1$; its two endpoint strata have the larger
exponent $3/2$ and do not enter the leading coefficient.

\smallskip
\noindent\emph{Step 5: The isolated joint-limit fold.}
It remains to analyze the isolated exact configuration at the joint-limit
corner.  Near $\bm q_0$, use the following linear coordinates.  Their determinant
is one, so they preserve the volume element:
\begin{equation}\label{eq:robot-corner-coordinates}
 u:=q_2,
 \qquad
 v:=q_3-\pi,
 \qquad
 s:=q_1-u-2v.
\end{equation}
After shrinking the coordinate neighborhood, the relative cell is exactly
\[
 C_0:=\R\times\R_+^2
\]
up to the Euclidean truncation $B_\eps$; the upper joint limits are inactive
there.  The adapted map
\[
 F_0(s,u,v)=(s+u+2v,u,\pi+v)
\]
is linear with volume Jacobian one.  Assumption~\ref{ass:tube} does not require
an orthonormal normal frame, so the normal-cell framework applies directly in
these shear coordinates.  In particular, no auxiliary orthonormalization or
moving-cell comparison is needed at the corner.

Put
\begin{equation}\label{eq:robot-corner-quadratic}
 \mathcal P_{\mathrm{fold}}(u,v):=u^2+4uv+3v^2=(u+v)(u+3v).
\end{equation}
For $h\downarrow0$, Taylor expansion of~\eqref{eq:robot-forward-map} gives,
uniformly for $(\xi,U,V)$ in compact subsets of
$\R\times\R_+^2$,
\begin{equation}\label{eq:robot-weighted-residual}
 \begin{split}
 &X\bigl(h^2\xi+hU+2hV,hU,\pi+hV\bigr)-1\\
 &\hspace{7em}
 =h^2\bigl(\mathcal P_{\mathrm{fold}}(U,V)+i\xi\bigr)+O(h^3).
 \end{split}
\end{equation}
Equivalently, with the weighted quasi-radius
\[
 r_{\mathrm{fold}}(s,u,v):=\bigl[s^2+(u^2+v^2)^2\bigr]^{1/4},
\]
the unscaled residual admits the uniform estimate
\begin{equation}\label{eq:robot-weighted-residual-bound}
 X(s+u+2v,u,\pi+v)-1=\mathcal P_{\mathrm{fold}}(u,v)+is+\mathcal R_{\mathrm{fold}}(s,u,v),
 \qquad |\mathcal R_{\mathrm{fold}}(s,u,v)|\le C r_{\mathrm{fold}}(s,u,v)^3
\end{equation}
on the feasible cone near the origin.  This follows either by grouping the
ordinary Taylor series by the weights $(1/2,1/4,1/4)$ or by applying
\eqref{eq:robot-weighted-residual} on the compact weighted unit sphere.
Thus the squared-residual phase has weighted principal model
\begin{equation}\label{eq:robot-principal-model}
 G(s,u,v):=\frac12\left[s^2+\mathcal P_{\mathrm{fold}}(u,v)^2\right],
 \qquad
 \bm\beta=\left(\frac12,\frac14,\frac14\right),
 \qquad Q=1.
\end{equation}
On $\R\times\R_+^2$,
$\mathcal P_{\mathrm{fold}}(u,v)\ge u^2+3v^2$, so $G$ is comparable to $r_{\mathrm{fold}}(s,u,v)^4$ and is coercive
on the admissible cone.  From~\eqref{eq:robot-weighted-residual-bound},
$|\mathcal P_{\mathrm{fold}}(u,v)+is|=O(r_{\mathrm{fold}}^2)$ and $|\mathcal R_{\mathrm{fold}}|=O(r_{\mathrm{fold}}^3)$; hence the cross term in the squared
residual is $O(r_{\mathrm{fold}}^5)$ and $|\mathcal R_{\mathrm{fold}}|^2=O(r_{\mathrm{fold}}^6)$.  Dividing by the lower bound
$G\ge cr_{\mathrm{fold}}^4$ gives the uniform relative estimate
\begin{equation}\label{eq:robot-phase-relative-error}
 \frac12|X(\bm q)-1|^2
 =G(s,u,v)\left(1+O\left(
 [s^2+(u^2+v^2)^2]^{1/4}
 \right)\right)
\end{equation}
on the cone near the origin.  Although $\mathcal P_{\mathrm{fold}}$ is not coercive on all of $\R^2$,
Corollary~\ref{cor:coercive-leading-piece} requires only an ambient homogeneous
extension agreeing with the phase model on $C_0$.  Put
\begin{equation}\label{eq:robot-extended-model}
 \widehat G(s,u,v):=
 \frac12\left[
 s^2+\mathcal P_{\mathrm{fold}}(u_+,v_+)^2+u_-^4+v_-^4
 \right],
\end{equation}
with the standing positive- and negative-part notation.  This function is continuous,
nonnegative, coercive, and homogeneous for the weights in
\eqref{eq:robot-principal-model}.  On the admissible cell $C_0$ it coincides
with $G$.  The cell $C_0$ is anisotropically invariant, while the truncation by
$B_\eps$ recedes to infinity after scaling.  The relative-domain form of
Corollary~\ref{cor:coercive-leading-piece}, applied in the determinant-one
coordinates
\eqref{eq:robot-corner-coordinates}, therefore gives
\begin{equation}\label{eq:robot-fold-asymptotic}
 \int_\Omega\chi_0(\bm q)\psi(\bm q)\varpi(\bm q)
 e^{-z|X(\bm q)-1|^2/2}\dd \bm q
 =z^{-1}\left[
 C_{\mathrm{fold}}\psi(\bm q_0)\varpi(\bm q_0)+o(1)
 \right].
\end{equation}
This is the step for which the anisotropic scaling is essential.  At
$\bm q_0$, the ordinary Hessian of the squared error has rank one: only the $s$
direction is visible to second order.  The two inward joint motions $u,v$
first appear through the quartic term $\mathcal P_{\mathrm{fold}}(u,v)^2$, so an ordinary
three-dimensional Gaussian Laplace approximation cannot produce the correct
coefficient.

\smallskip
\noindent\emph{Step 6: Coefficient evaluation and global comparison.}
It remains to evaluate the model integral.  Integrating first in $s$ and then
putting $v=tu$ in the first quadrant gives
\begin{align*}
 C_{\mathrm{fold}}
 &=\sqrt{2\pi}\int_{\R_+^2}
 e^{-\mathcal P_{\mathrm{fold}}(u,v)^2/2}\dd u\dd v\\
 &=\sqrt{2\pi}\,\frac{\sqrt{2\pi}}4
 \int_0^\infty\frac{\dd t}{(1+t)(1+3t)}\\
 &=\frac{\pi}{4}\log3,
\end{align*}
which proves~\eqref{eq:robot-fold-constant}.  The regular interior piece
and the isolated fold both have exponent $Q=1$, whereas the two endpoint caps
have exponent $3/2$ and the complement is exponentially small.  The global
comparison theorem, Theorem~\ref{thm:global-leading}, therefore sums
\eqref{eq:robot-regular-asymptotic} and
\eqref{eq:robot-fold-asymptotic} and proves
\eqref{eq:robot-test-asymptotic}.  Taking $\psi=1$ gives
\eqref{eq:robot-evidence}, and division of the two expansions proves
\eqref{eq:robot-weak-limit} and~\eqref{eq:robot-atom-mass}.

\smallskip
\noindent\emph{Step 7: Flat-prior constants.}
For the flat prior, the regular coefficient supplied by
Theorem~\ref{thm:integrated-pointwise-cells} can be evaluated explicitly.
Put $\Delta(u,v):=|S(u,v)|^2-1$.  At this stage coarea is used
only to compute that already established geometric coefficient, not to derive
a separate asymptotic formula.  Since $q_1$ only rotates the shape vector,
polar task coordinates and the coarea formula give the following identity,
where $\dd\ell_{uv}$ denotes Euclidean arclength in the $(u,v)$-plane:
\begin{equation}\label{eq:robot-flat-regular-reduction}
 \int_\Gamma\frac{\dd\ell}{J_X}
 =\int_{\{\Delta=0\}\cap\mathrm{regular\ shape\ arc}}
 \frac{2}{|\nabla\Delta|}\dd\ell_{uv}
 =2\int_{4\pi/3}^{3\pi/2}
 \frac{\dd u}{|\partial_v\Delta(u,v_\Gamma(u))|}.
\end{equation}
Set $x=-\tan(u/2)$.  Along the arc,
$-\tan(v_\Gamma(u)/2)=3/x$, and direct differentiation of
\eqref{eq:robot-shape-factorization} gives
\[
 |\partial_v\Delta(u,v_\Gamma(u))|=\frac{8x}{1+x^2},
 \qquad
 \dd u=-\frac{2\dd x}{1+x^2}.
\]
As $u$ increases from $4\pi/3$ to $3\pi/2$, $x$ decreases from $\sqrt3$ to
$1$.  Hence
\[
 \int_\Gamma\frac{\dd\ell}{J_X}
 =\frac14\log3.
\]
Together with~\eqref{eq:robot-fold-constant}, this proves
\eqref{eq:robot-flat-prior-coefficients}.
\end{proof}

\begin{remark}[Why the isolated configuration matters]
A method that represents the inverse-kinematics solutions only by the smooth
curve $\Gamma$ misses the isolated configuration $\bm q_0$.  Equivalently, a
generative method trained to sample only from the regular solution curve
would omit a genuine small-noise mode.  For a flat prior that omission loses
exactly one third of the limiting posterior mass.  Writing the task-noise
standard deviation as
$\sigma_z:=z^{-1/2}$, the fold neighborhood has the anisotropic widths
\[
 s=O(\sigma_z),
 \qquad
 u,v=O(\sqrt{\sigma_z}).
\]
Thus the isolated mode is not a conventional Gaussian component.  The two
$u,v$ directions lie in the kernel of the first derivative of the task map,
meaning that infinitesimal motion in those directions does not move the tip to
first order.  Their neighborhoods are therefore much wider than the ordinary
$s$ direction, in which the tip error changes linearly.
The additive comparison in Theorem~\ref{thm:global-leading} places this
isolated anisotropic contribution on the same scale as the smooth solution
curve, whose localization is Gaussian only in the two transverse
directions.  In Proposition~\ref{prop:robot-posterior-splitting}, the regular curve is
treated by Theorem~\ref{thm:integrated-pointwise-cells}; the endpoint caps and
the fold are handled by Corollary~\ref{cor:coercive-leading-piece}, with
quadratic and anisotropic weights, respectively.  Theorem~\ref{thm:global-leading} then sums the pieces and gives the
limiting mixture in~\eqref{eq:robot-weak-limit}.
\end{remark}

\section{Weibullian chaos: tails of homogeneous functionals}
\label{sec:weibull-frontier}

This section studies the rare-event probability
\[
 \mathbb P\{H(\bm\eta)>x\},\qquad x\to\infty,
\]
for a homogeneous function $H$ of a random vector $\bm\eta$.  The word
\emph{chaos} is used here in its probabilistic sense: it refers to the random
variable obtained by applying a homogeneous function to a random input, and
has no connection with dynamical chaos.  The familiar special case in which
$\bm\eta$ is Gaussian and $H$ is a homogeneous polynomial is called a
homogeneous Gaussian chaos.  We allow a more general, direction-dependent
Weibull-type radial decay.

Let $D\ge2$, let $\sigma$ denote surface measure on $\mathbb S^{D-1}$,
and let $\bm\eta$ be an $\R^D$-valued random vector with density
\begin{equation}\label{eq:weibull-density}
 p_{\bm\eta}(v)=A(v)e^{-F(v)},
\end{equation}
where the right-hand side is assumed to be normalized.  We regard $F$ as the
radial cost of observing a large vector and $A$ as its homogeneous prefactor.
Assume that $A\ge0$ and $F\ge0$ are homogeneous of orders $\beta_A$ and
$\beta_F>0$, respectively, and let $H:\R^D\to\R$ be homogeneous of order
$\alpha_H>0$.  Thus, for $r>0$ and $\theta\in\mathbb S^{D-1}$,
\[
 A(r\theta)=r^{\beta_A}A(\theta),\qquad
 F(r\theta)=r^{\beta_F} F(\theta),\qquad
 H(r\theta)=r^{\alpha_H}H(\theta).
\]
Their angular restrictions are assumed measurable; the additional regularity
needed in the two local regimes is stated below.  For a centered isotropic
Gaussian vector one has $\beta_F=2$ and $F(\theta)=1/2$ on the sphere.  The
present model permits the radial cost $F(\theta)$ to vary with direction and,
on sets of spherical measure zero, even to vanish.

The homogeneity makes the tail event transparent on each ray
$v=r\theta$.  A direction contributes only when $H(\theta)>0$, and in that
direction the inequality $H(r\theta)>x$ is equivalent to
\[
 r>\left(\frac{x}{H(\theta)}\right)^{1/\alpha_H}.
\]
Consequently,
\[
 \mathbb P\{H(\bm\eta)>x\}
 =\int_{\{H>0\}} A(\theta)
   \int_{(x/H(\theta))^{1/\alpha_H}}^\infty
   r^{D-1+\beta_A}e^{-F(\theta)r^{\beta_F}}\dd r\dd\sigma(\theta).
\]
The competition is therefore between two angular quantities: $H(\theta)$,
which measures how efficiently a large radius produces a large value of the
functional, and $F(\theta)$, which measures how costly that radius is under
the density.

Set
\begin{equation}\label{eq:weibull-parameters}
 \lambda:=\frac{D+\beta_A}{\beta_F}>0,
 \qquad
 \varrho:=\frac{\beta_F}{\alpha_H},
 \qquad
 z:=x^\varrho.
\end{equation}
Here $\lambda$ is the shape exponent produced by the polar Jacobian and the
homogeneous amplitude, while $z$ is the large parameter associated with the
threshold $x$.  Write
\begin{equation}\label{eq:weibull-angular-data}
 \mathcal G:=\{\theta\in\mathbb S^{D-1}:H(\theta)>0\},
 \qquad
 \mathcal Z_F:=\{\theta\in\mathcal G:F(\theta)=0\},
 \qquad
 \Phi(\theta):=\frac{F(\theta)}{H(\theta)^\varrho}.
\end{equation}
The measurable set $\mathcal G$ is the set of directions in which a positive
exceedance is possible.  The quotient $\Phi$ is the \emph{effective angular
cost}: at the smallest radius that achieves $H(r\theta)>x$, the exponential
cost equals
\[
 F(\theta)\left(\frac{x}{H(\theta)}\right)^{\beta_F/\alpha_H}
 =z\Phi(\theta).
\]
Subject to the angular weight $W$ introduced below, the tail is governed by
directions where $\Phi$ is smallest.

On $\mathcal G\setminus\mathcal Z_F$, set
\begin{equation}\label{eq:weibull-angular-weight}
 W(\theta):=\frac{A(\theta)}{F(\theta)^\lambda}.
\end{equation}
We assume that $\mathcal Z_F$ has spherical measure zero, that $F>0$ on
$\mathcal G\setminus\mathcal Z_F$, and that $W$ admits an $L^1(\mathcal G,\sigma)$
representative.  Values of that representative on $\mathcal Z_F$ do not affect
any integral.  Radial integration itself produces exactly this angular weight,
so no auxiliary choice of amplitude is involved.  In particular, when $F$
approaches zero, the condition controls the compensating vanishing of $A$
needed for the density to remain integrable.

Throughout this section, $\Gamma(\lambda,y)$ denotes the upper incomplete
gamma function defined in~\eqref{eq:upper-incomplete-gamma}.  The substitution
$s=F(\theta)r^{\beta_F}$ in the preceding ray integral gives the
exact identity
\begin{equation}\label{eq:weibull-exact-angular-reduction}
 \mathbb P\{H(\bm\eta)>x\}
 =\frac{1}{\beta_F}\int_{\mathcal G}
 W(\theta)\Gamma\bigl(\lambda,z\Phi(\theta)\bigr)\dd\sigma(\theta).
\end{equation}
Since $\mathcal Z_F$ is null, the integrand there may be assigned any finite
representative value.  Formula~\eqref{eq:weibull-exact-angular-reduction}
reduces the original $D$-dimensional tail problem to an angular localization
problem on the sphere.  It resembles a Laplace integral, but the profile is the
incomplete gamma function rather than a pure exponential.

Three behaviors should be distinguished.  If the effective cost $\Phi$ stays bounded away from zero and has a
nondegenerate minimum, it produces an ordinary exponentially small Laplace
tail.  If $\Phi$ vanishes on a submanifold inside $\mathcal G$, a
shrinking angular neighborhood of that zero set produces a polynomial tail.
Finally, $\Phi$ may approach its minimum at the boundary $H=0$ of
$\mathcal G$ because $F$ vanishes there as well; the relative orders of these
two vanishings determine whether the boundary law is polynomial or
exponential.  In the two zero-cost regimes the argument $z\Phi$ remains of
order one on the leading scale, so the exact gamma profile cannot be replaced
by its large-argument expansion.

We shall repeatedly use the bound
\begin{equation}\label{eq:incomplete-gamma-bound}
 0\le\Gamma(\lambda,y)
 \le C_\lambda\bigl(1+y^{(\lambda-1)_+}\bigr)e^{-y/2},
 \qquad y\ge0.
\end{equation}
For comparisons between two strictly positive exponential rates we also use
the sharper large-argument estimate
\begin{equation}\label{eq:incomplete-gamma-sharp-bound}
 \Gamma(\lambda,y)
 \le C_\lambda\bigl(1+y^{(\lambda-1)_+}\bigr)e^{-y},
 \qquad y\ge1.
\end{equation}
Indeed,
\[
 \Gamma(\lambda,y)
 =e^{-y}\int_0^\infty (y+t)^{\lambda-1}e^{-t}\dd t.
\]
If $0<\lambda\le1$, the integral is at most $y^{\lambda-1}\le1$ for
$y\ge1$.  If $\lambda>1$, the inequality
$(y+t)^{\lambda-1}\le C_\lambda(y^{\lambda-1}+t^{\lambda-1})$ proves
\eqref{eq:incomplete-gamma-sharp-bound}.  On $0\le y\le1$, boundedness by
$\Gamma(\lambda)$, followed by a harmless weakening of the exponential,
gives~\eqref{eq:incomplete-gamma-bound}.

All asymptotic statements below are obtained by verifying the hypotheses of
Theorem~\ref{thm:profile-leading} for the exact angular identity
\eqref{eq:weibull-exact-angular-reduction}; the section does not introduce a
separate rescaling theorem for the gamma kernel.

\subsection{An interior zero set of the radial phase}
\label{subsec:weibull-interior-zero}

An interior zero is an angular direction $p\in\mathcal G$ for which
$F(p)=0$ while $H(p)>0$.  Along such directions the radial exponential factor no longer decays.  The
set of these directions has zero spherical measure, so the probability is
instead determined by how quickly the cost returns as the direction moves
away from the zero set.  This is the source of the resulting polynomial
tail: the dominant angular neighborhood shrinks as $x$ grows, but there
is no positive exponential rate on that neighborhood.

Because $H>0$ on $\mathcal G$, the zero sets of $F$ and $\Phi$ agree there.
The assumption that $W=A/F^\lambda$ extends boundedly near the zero set is the
corresponding integrability condition: the amplitude $A$ compensates for the
vanishing radial cost.  The anisotropic weights $\beta_j$ describe how fast
$\Phi$ rises in the different normal directions.  A direction with weight
$\beta_j$ has width $z^{-\beta_j}$, so the total angular volume of the
leading tube is $z^{-Q}$ with $Q=\sum_j d_j\beta_j$.

Write $N_{\mathbb S}\mathcal Z$ for the normal bundle of $\mathcal Z$ in
$\mathbb S^{D-1}$, equipped with the metric induced by the sphere, and let
\[
 \operatorname{Exp}_{\mathcal Z}(p,\xi):=\exp_p^{\mathbb S}(\xi)
\]
be its normal exponential map on a sufficiently small disk bundle.

\begin{theorem}[Interior zero set of the radial phase]
\label{thm:weibull-interior-zero}
Suppose that $\mathcal Z_F=\mathcal Z$ is a compact $C^2$ submanifold of
codimension $d\ge1$ in the sphere and is compactly contained in
$\operatorname{int}\mathcal G$, written
\[
 \mathcal Z\Subset\operatorname{int}\mathcal G.
\]
Assume that $W$ extends continuously and boundedly to a neighborhood of
$\mathcal Z$.
Suppose that the normal bundle has a $C^1$ orthogonal decomposition
\[
 N_{\mathbb S}\mathcal Z=E_1\oplus\cdots\oplus E_\ell,
 \qquad \rank E_j=d_j,
\]
with weights $\beta_j>0$.  Define the fiber dilations and their homogeneous
dimension by
\begin{equation}\label{eq:weibull-interior-dilation}
 D_q^{\bm\beta}(\xi_1+\cdots+\xi_\ell)
 :=q^{\beta_1}\xi_1+\cdots+q^{\beta_\ell}\xi_\ell,
 \qquad
 Q:=\sum_{j=1}^\ell d_j\beta_j.
\end{equation}
For $\xi\ne0$, let $\rho_{\bm\beta}(\xi)$ be the unique $r>0$ such
that $|D_{1/r}^{\bm\beta}\xi|=1$, and set
$\rho_{\bm\beta}(0)=0$.

Assume that there is a continuous function
$G_{\mathcal Z}:N_{\mathbb S}\mathcal Z\to[0,\infty)$ such that, uniformly in
$p\in\mathcal Z$,
\begin{align}
 G_{\mathcal Z}\bigl(p,D_q^{\bm\beta}\xi\bigr)
 &=qG_{\mathcal Z}(p,\xi),
 \label{eq:weibull-interior-model-homogeneity}\\
 0<g_-\le G_{\mathcal Z}(p,\xi)&\le g_+<\infty
 \quad\text{when }\rho_{\bm\beta}(\xi)=1,
 \label{eq:weibull-interior-model-coercivity}\\
 q^{-1}\Phi\!\left(\operatorname{Exp}_{\mathcal Z}\bigl(p,D_q^{\bm\beta}\xi\bigr)\right)
 &\longrightarrow G_{\mathcal Z}(p,\xi)
 \quad\text{locally uniformly in }\xi
 \quad(q\downarrow0).
 \label{eq:weibull-interior-model-convergence}
\end{align}
Assume in addition that, for some $c,\eps,\delta>0$,
\begin{align}
 \Phi(\operatorname{Exp}_{\mathcal Z}(p,\xi))&\ge c\rho_{\bm\beta}(\xi),
 &&p\in\mathcal Z,
 \quad |\xi|<\eps,
 \label{eq:weibull-interior-lower-bound}\\
 \Phi(\theta)&\ge\delta,
 &&\theta\in\mathcal G\setminus
 \operatorname{Exp}_{\mathcal Z}\bigl(\{(p,\xi):|\xi|<\eps\}\bigr).
 \label{eq:weibull-interior-gap}
\end{align}
Then, as $x\to\infty$ and $z=x^{\beta_F/\alpha_H}$,
\begin{equation}\label{eq:weibull-interior-asymptotic}
 \mathbb P\{H(\bm\eta)>x\}
 =z^{-Q}\bigl(C_{\mathcal Z}+o(1)\bigr),
\end{equation}
where
\begin{equation}\label{eq:weibull-interior-constant}
 C_{\mathcal Z}
 :=\frac{1}{\beta_F}\int_{\mathcal Z}W(p)
 \left[
   \int_{N_{\mathbb S,p}\mathcal Z}
   \Gamma\bigl(\lambda,G_{\mathcal Z}(p,\xi)\bigr)\dd\xi
 \right]\dvol_{\mathcal Z}(p).
\end{equation}
For every $p\in\mathcal Z$, the fiber integral has the equivalent forms
\begin{align}
 \int_{N_{\mathbb S,p}\mathcal Z}
 \Gamma\bigl(\lambda,G_{\mathcal Z}(p,\xi)\bigr)\dd\xi
 &=\Gamma(\lambda+Q)
   \vol\bigl\{\xi\in N_{\mathbb S,p}\mathcal Z:
   G_{\mathcal Z}(p,\xi)\le1\bigr\}
 \label{eq:weibull-interior-sublevel-form}\\
 &=\frac{\Gamma(\lambda+Q)}{\Gamma(Q+1)}
   \int_{N_{\mathbb S,p}\mathcal Z}e^{-G_{\mathcal Z}(p,\xi)}\dd\xi.
 \label{eq:weibull-interior-exponential-form}
\end{align}
In particular, if $W|_{\mathcal Z}$ is not identically zero, then
$C_{\mathcal Z}>0$ and
\begin{equation}\label{eq:weibull-interior-x-form}
 \mathbb P\{H(\bm\eta)>x\}
 \sim C_{\mathcal Z}x^{-\beta_F Q/\alpha_H}.
\end{equation}
\end{theorem}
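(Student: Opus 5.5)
The starting point is the exact angular identity~\eqref{eq:weibull-exact-angular-reduction}, which writes the tail as $\beta_F^{-1}\int_{\mathcal G}W(\theta)\Gamma(\lambda,z\Phi(\theta))\dd\sigma(\theta)$. I would split $\mathcal G$ into the normal tube $\mathcal T:=\operatorname{Exp}_{\mathcal Z}(\{|\xi|<\eps\})$ and its complement.

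\emph{Complement.} On $\mathcal G\setminus\mathcal T$, the gap~\eqref{eq:weibull-interior-gap} gives $z\Phi\ge z\delta$. Combined with~\eqref{eq:incomplete-gamma-bound} and $W\in L^1(\mathcal G,\sigma)$, this part is $O\bigl(z^{(\lambda-1)_+}e^{-z\delta/2}\bigr)=o(z^{-Q})$.

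\emph{Tube.} After shrinking $\eps$ so that $\mathcal T\Subset\operatorname{int}\mathcal G$ (possible since $\mathcal Z$ is compact in the interior and is a $C^2$ submanifold), Fermi coordinates on the sphere give a change of variables $\dd\sigma=J(p,\xi)\dd\xi\dvol_{\mathcal Z}(p)$ with $J$ continuous and $J(p,0)=1$. Choosing local $C^1$ orthonormal frames adapted to $E_1\oplus\cdots\oplus E_\ell$ (splitting $\mathcal Z$ into finitely many Borel pieces if the bundles are not trivial) turns $\xi$ into coordinates in $\R^d$ in which $D_q^{\bm\beta}$ is a diagonal anisotropic dilation. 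I would then apply Theorem~\ref{thm:profile-leading} on each piece with the following data:
\begin{itemize}
\item $\Afib(p)=B_\eps$, $\Afib^\infty(p)=\R^d$, and $c\equiv1$;
\item $\phi(p,\xi)=\Phi(\operatorname{Exp}_{\mathcal Z}(p,\xi))$ and $\mu=0$;
\item $b=W\circ\operatorname{Exp}_{\mathcal Z}\cdot J$ and $b_0=W|_{\mathcal Z}$;
\item $\mathcal K=\Gamma(\lambda,\cdot)$ and $\mathcal N_{\mathcal K}\equiv1$, so that $\mathcal K_z=\mathcal K_\infty=\Gamma(\lambda,\cdot)$.
\end{itemize}

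Now I check the hypotheses of Theorem~\ref{thm:profile-leading}.
\begin{itemize}
\item The homogeneity and upper bound~\eqref{eq:profile-model-upper-bound} follow from~\eqref{eq:weibull-interior-model-homogeneity}--\eqref{eq:weibull-interior-model-coercivity}.
\item The local convergence~\eqref{eq:profile-local-convergence} uses~\eqref{eq:weibull-interior-model-convergence} for $X_z$. For $b$, note that $D_q^{\bm\beta}\xi\to0$ uniformly on gauge balls, and $W$ is continuous and bounded near $\mathcal Z$ while $J\to1$.
\item For the lower bound~\eqref{eq:profile-scaled-lower-bound}, combine~\eqref{eq:weibull-interior-lower-bound} with the homogeneity of $\rho_{\bm\beta}$ to get $X_z(p,t)=z\phi(p,D_{1/z}^{\bm\beta}t)\ge c\rho_{\bm\beta}(t)\ge(c/g_+)G_{\mathcal Z}(p,t)$, so $\kappa_G=c/g_+$.
\item The envelope~\eqref{eq:profile-envelope} is~\eqref{eq:incomplete-gamma-bound} with $\eta=1/2$.
\item Since $G_{\mathcal Z}$ is coercive, Remark~\ref{rem:profile-two-routes} and Lemma~\ref{lem:moments} give tightness. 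Cell convergence holds because the scaled balls $D_z^{\bm\beta}B_\eps$ exhaust $\R^d$ uniformly in the gauge, as in the proof of Corollary~\ref{cor:invariant-cones}.
\end{itemize}
The integrated conclusion~\eqref{eq:integrated-profile-leading} then yields $z^{-Q}(C_{\mathcal Z}+o(1))$ for the tube. Adding the complement part gives~\eqref{eq:weibull-interior-asymptotic}.

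\emph{Identities and positivity.} The fiber identities~\eqref{eq:weibull-interior-sublevel-form}--\eqref{eq:weibull-interior-exponential-form} are Proposition~\ref{prop:profile-sublevel-formula} with $C=\R^d$, $\kappa=0$ and $h=Q$. For positivity, $W\ge0$ is continuous and not identically zero on $\mathcal Z$, so it is positive on a set of positive volume, and the fiber integral is positive by coercivity. Finally $z^{-Q}=x^{-\beta_FQ/\alpha_H}$.

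\emph{Main obstacle.} The delicate step is the coordinate setup. The tube $\mathcal T$ need not be covered by globally defined coordinates, and the anisotropic dilation is defined only fiberwise through the bundle splitting. So the coercivity, convergence, and lower-bound constants must be made uniform in $p$ after trivializing by measurable frames that respect each $E_j$. The frames must be orthonormal within each $E_j$ so that $\rho_{\bm\beta}$ and the Lebesgue measure $\dd\xi$ match their coordinate versions. I would handle this with a finite Borel partition of $\mathcal Z$ into pieces over which each $E_j$ is trivialized by $C^1$ orthonormal frames, and check that~\eqref{eq:weibull-interior-lower-bound} and the Jacobian limit $J\to1$ hold uniformly on each piece.
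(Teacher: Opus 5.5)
Your proposal is correct and follows essentially the same route as the paper. The complement of the tube is discarded via the gap and the incomplete-gamma bound. Theorem~\ref{thm:profile-leading} is then applied piecewise with $\mathcal K=\Gamma(\lambda,\cdot)$, $\mathcal N_{\mathcal K}\equiv1$, $c\equiv1$, $b_0=W|_{\mathcal Z}$ and $\kappa_G=c/g_+$. The fiber identities come from Proposition~\ref{prop:profile-sublevel-formula}.

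Your insistence on frames orthonormal within each $E_j$ only makes explicit what the paper leaves implicit. One small point: if you shrink $\eps$ after using the gap, the gap still holds on the removed annulus by \eqref{eq:weibull-interior-lower-bound}, since $\rho_{\bm\beta}$ is bounded below on $\{|\xi|\ge\eps'\}$.
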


\begin{proof}
Let $U_\eps$ be the tubular neighborhood appearing in
\eqref{eq:weibull-interior-gap}.  The contribution of
$\mathcal G\setminus U_\eps$ to
\eqref{eq:weibull-exact-angular-reduction} is exponentially small by
\eqref{eq:incomplete-gamma-bound}, the gap
\eqref{eq:weibull-interior-gap}, and $W\in L^1(\mathcal G,\sigma)$.

Choose a finite orthonormal trivializing cover of
$N_{\mathbb S}\mathcal Z$ and a subordinate Borel partition of the compact
base.  On each resulting normal-cell piece, write
\[
 \dd\sigma(\operatorname{Exp}_{\mathcal Z}(p,\xi))
 =J(p,\xi)\dd\xi\dvol_{\mathcal Z}(p),
 \qquad J(p,0)=1.
\]
Let $\Afib(p)$ be the normal disk in this trivialization and let
$\Afib^\infty(p)=N_{\mathbb S,p}\mathcal Z$.  Apply
Theorem~\ref{thm:profile-leading} with
\[
 \mu=0,
 \quad \mathcal K(v)=\Gamma(\lambda,v),
 \quad \mathcal N_{\mathcal K}(z)=1,
 \quad \mathcal K_\infty(v)=\Gamma(\lambda,v),
 \quad c\equiv1,
 \quad G=G_{\mathcal Z},
\]
with coordinate amplitude
$b(p,\xi)=W(\operatorname{Exp}_{\mathcal Z}(p,\xi))J(p,\xi)$ and limit amplitude $b_0(p)=W(p)$.
The local convergence~\eqref{eq:profile-local-convergence} is exactly
\eqref{eq:weibull-interior-model-convergence} together with continuity of
$W$ and $J$.  The lower bound
\eqref{eq:profile-scaled-lower-bound} follows from
\eqref{eq:weibull-interior-lower-bound} and the upper bound for $G_{\mathcal Z}$ on the weighted unit sphere in
\eqref{eq:weibull-interior-model-coercivity}.  The rescaled normal disks converge
to the full normal spaces, and coercivity of $G_{\mathcal Z}$ together with
Lemma~\ref{lem:moments} verifies
\eqref{eq:profile-cell-convergence}--\eqref{eq:profile-cell-tightness}.
Finally,~\eqref{eq:incomplete-gamma-bound} verifies
the profile envelope~\eqref{eq:profile-envelope}.  Summing the finitely many
pieces and restoring the factor $1/\beta_F$ from
\eqref{eq:weibull-exact-angular-reduction} gives
\eqref{eq:weibull-interior-asymptotic} and
\eqref{eq:weibull-interior-constant}.

For each $p$, Proposition~\ref{prop:profile-sublevel-formula}, with
$C=N_{\mathbb S,p}\mathcal Z$, $B_0\equiv1$, $G=G_{\mathcal Z}(p,\cdot)$, and
$h=Q$, gives
\eqref{eq:weibull-interior-sublevel-form} and
\eqref{eq:weibull-interior-exponential-form}.  Positivity follows from
$W\ge0$ and strict positivity of the profile integral.
\end{proof}

The most familiar local geometry is a quadratic, nondegenerate return of the
effective cost in directions normal to $\mathcal Z$; this is usually called
the Morse--Bott situation.  Every one of the $d$ normal directions then has
width $z^{-1/2}$, so the polynomial power is $z^{-d/2}$; the normal Hessian
determines only the coefficient.

\begin{corollary}[Quadratic (Morse--Bott) interior zero set]
\label{cor:weibull-interior-morse-bott}
Under the hypotheses of Theorem~\ref{thm:weibull-interior-zero}, suppose more
specifically that, uniformly for $p\in\mathcal Z$,
\begin{equation}\label{eq:weibull-interior-morse-bott-model}
 \Phi(\operatorname{Exp}_{\mathcal Z}(p,\xi))
 =\frac12\inner{L_p\xi}{\xi}+o(|\xi|^2),
\end{equation}
where $L_p:N_{\mathbb S,p}\mathcal Z\to N_{\mathbb S,p}\mathcal Z$ is a
continuous family of positive-definite self-adjoint maps.  Then
\begin{equation}\label{eq:weibull-interior-morse-bott-asymptotic}
 \mathbb P\{H(\bm\eta)>x\}
 \sim C_{\mathrm{MB}}x^{-\beta_F d/(2\alpha_H)}
\end{equation}
whenever $W|_{\mathcal Z}$ is not identically zero, where
\begin{equation}\label{eq:weibull-interior-morse-bott-constant}
 C_{\mathrm{MB}}
 :=\frac{(2\pi)^{d/2}\Gamma(\lambda+d/2)}
 {\beta_F\,\Gamma(d/2+1)}
 \int_{\mathcal Z}
 \frac{W(p)}{\sqrt{\det L_p}}\dvol_{\mathcal Z}(p).
\end{equation}
For an isolated interior zero, $d=D-1$.
\end{corollary}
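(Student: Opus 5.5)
The corollary is Theorem~\ref{thm:weibull-interior-zero} specialized to a single weight $\beta_j=\tfrac12$ on the whole normal bundle ($\ell=1$, $E_1=N_{\mathbb S}\mathcal Z$, $d_1=d$). The plan is to verify the hypotheses of that theorem and then evaluate the constant. Then $D_q^{\bm\beta}\xi=q^{1/2}\xi$, $\rho_{\bm\beta}(\xi)=|\xi|^2$, $Q=d/2$. Take $G_{\mathcal Z}(p,\xi):=\tfrac12\inner{L_p\xi}{\xi}$.

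\emph{Homogeneity and coercivity.} Homogeneity \eqref{eq:weibull-interior-model-homogeneity} is immediate. Continuity of $p\mapsto L_p$ on the compact $\mathcal Z$, together with positive definiteness, gives uniform eigenvalue bounds $0<\ell_-\le L_p\le\ell_+$. This yields \eqref{eq:weibull-interior-model-coercivity} with $g_\pm=\ell_\pm/2$.

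\emph{Convergence of the rescaled cost.} Substituting $\xi\mapsto q^{1/2}\xi$ into \eqref{eq:weibull-interior-morse-bott-model} gives
\[
 q^{-1}\Phi(\operatorname{Exp}_{\mathcal Z}(p,q^{1/2}\xi))=G_{\mathcal Z}(p,\xi)+q^{-1}o(q|\xi|^2).
\]
The error is uniformly small on bounded $\xi$-sets because the $o(|\xi|^2)$ in the hypothesis is uniform in $p$. This proves \eqref{eq:weibull-interior-model-convergence}.

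\emph{Lower bound and gap.} The remaining hypotheses \eqref{eq:weibull-interior-lower-bound} and \eqref{eq:weibull-interior-gap} are part of the standing hypotheses of Theorem~\ref{thm:weibull-interior-zero}, which the corollary retains. Even without that, the local lower bound follows after shrinking $\eps$: take $\eps$ so small that the remainder is at most $\tfrac{\ell_-}{4}|\xi|^2$, giving $\Phi\ge\tfrac{\ell_-}{4}|\xi|^2=c\rho_{\bm\beta}(\xi)$.

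\emph{The constant.} Theorem~\ref{thm:weibull-interior-zero} now gives \eqref{eq:weibull-interior-asymptotic} with $Q=d/2$. Use the exponential form \eqref{eq:weibull-interior-exponential-form} of the fiber integral together with the Gaussian integral
\[
 \int_{N_{\mathbb S,p}\mathcal Z}e^{-\inner{L_p\xi}{\xi}/2}\dd\xi=\frac{(2\pi)^{d/2}}{\sqrt{\det L_p}}.
\]
This applies because the normal fiber carries the Euclidean structure induced by the sphere metric. Substituting into \eqref{eq:weibull-interior-constant} yields exactly \eqref{eq:weibull-interior-morse-bott-constant}.

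\emph{Conversion to $x$ and the isolated case.} Since $z=x^{\beta_F/\alpha_H}$, we get $z^{-d/2}=x^{-\beta_F d/(2\alpha_H)}$. Positivity of $C_{\mathrm{MB}}$ when $W|_{\mathcal Z}\not\equiv0$ follows from $W\ge0$, continuity of $W$, and $\det L_p>0$, as in the final clause of the theorem. For an isolated zero, $\mathcal Z$ is a point in $\mathbb S^{D-1}$, so the codimension is $d=D-1$.

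No step is a genuine obstacle. The only point needing care is the uniformity in $p$ of the Taylor remainder, which is what makes the rescaled convergence hold uniformly; that uniformity is assumed in the statement.
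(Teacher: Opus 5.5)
Your proposal is correct and follows the paper's proof: specialize Theorem~\ref{thm:weibull-interior-zero} to the single normal weight $\tfrac12$, so that $Q=d/2$ and $G_{\mathcal Z}(p,\xi)=\tfrac12\inner{L_p\xi}{\xi}$, and then evaluate the fiber integral. The only difference is cosmetic. The paper uses the sublevel form \eqref{eq:weibull-interior-sublevel-form} with the ellipsoid volume $(2\pi)^{d/2}/(\Gamma(d/2+1)\sqrt{\det L_p})$, while you use the equivalent exponential form \eqref{eq:weibull-interior-exponential-form} with the Gaussian integral, and both give the same $C_{\mathrm{MB}}$.
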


\begin{proof}
The uniform expansion and compactness of $\mathcal Z$ verify
Theorem~\ref{thm:weibull-interior-zero} with the single normal weight $1/2$,
so $Q=d/2$ and
$G_{\mathcal Z}(p,\xi)=\inner{L_p\xi}{\xi}/2$.  The unit sublevel set is an
ellipsoid of volume
\[
 \vol\{\xi:G_{\mathcal Z}(p,\xi)\le1\}
 =\frac{(2\pi)^{d/2}}
 {\Gamma(d/2+1)\sqrt{\det L_p}}.
\]
The conclusion follows from
\eqref{eq:weibull-interior-sublevel-form} and
\eqref{eq:weibull-interior-x-form}.
\end{proof}

\subsection{A boundary frontier with simultaneous vanishing}
\label{subsec:weibull-boundary-frontier}

The second mechanism occurs at the boundary of
$\mathcal G=\{H>0\}$.  A boundary direction itself has $H=0$ and therefore
cannot produce a positive exceedance at any finite radius.  For interior directions close to the
boundary, however, $H$ is small, so achieving $H(r\theta)>x$ requires an
especially large radius.  If $F$ remains positive, this creates a positive
exponential cost.  A boundary frontier arises when $F$ also tends to zero and
may compensate for the required radius.

For the remainder of the section, assume $\mathcal Z_F=\varnothing$, so the
radial cost is positive at every direction with $H>0$; it may nevertheless
vanish as such directions approach $\partial\mathcal G$.  Let $\mathcal B$ be
a compact smooth component of $\partial\mathcal G$.  Assume that the
restriction of $H$ to the sphere is $C^2$ near $\mathcal B$ and that its
spherical differential, equivalently its gradient tangent to the sphere, does
not vanish there.  Thus $H$ itself can be used as a distance-like inward
coordinate.  Shrinking the neighborhood if necessary, set $u=H$ and choose a
$C^1$ collar, meaning a product parametrization by a boundary point $t$ and
the inward coordinate $u$,
\begin{equation}\label{eq:weibull-collar}
 \varphi:\mathcal B\times[0,\eps)\longrightarrow
 \overline{\mathcal G},
 \qquad
 \varphi(t,0)=t,
 \qquad
 H(\varphi(t,u))=u.
\end{equation}
Let $\sigma_{\mathcal B}$ be the Riemannian measure induced on
$\mathcal B$.  Write the spherical measure in this collar as
\begin{equation}\label{eq:weibull-collar-jacobian}
 \dd\sigma(\varphi(t,u))
 =J(t,u)\dd\sigma_{\mathcal B}(t)\dd u
\end{equation}
and define the effective angular density
\begin{equation}\label{eq:weibull-effective-density}
 \omega(t,u):=W(\varphi(t,u))J(t,u).
\end{equation}
Since $H=u$ in the collar, the effective cost is
$\Phi=F/u^\varrho$.  This gives a direct radial interpretation of the two
regimes.  If $\varrho=m+\zeta$ with $0<\zeta<1$ and
$F\asymp u^{m+1}$, then
$\Phi\asymp u^{1-\zeta}\downarrow0$.  A shrinking boundary layer has no
positive exponential rate and produces a polynomial tail.  If instead
$\varrho=m$ and $F\asymp u^m$, then $\Phi$ approaches a positive function on the
frontier.  The tail remains exponentially small, and a tangential Gaussian
scale combines with a one-sided linear boundary scale.

\begin{proposition}[Fractional and integer frontier laws]
\label{prop:weibull-frontier}
Assume that $\omega$ extends continuously and boundedly to
$\mathcal B\times[0,\eps]$.

\begin{enumerate}[(i)]
\item Suppose
\begin{equation}\label{eq:weibull-fractional-ratio}
 \varrho=m+\zeta,
 \qquad m\in\mathbb N_0,
 \qquad 0<\zeta<1,
\end{equation}
and, uniformly on the collar,
\begin{equation}\label{eq:weibull-fractional-phase}
 F(\varphi(t,u))
 =u^{m+1}\bigl(c(t)+u\mathcal R_{\mathrm{frac}}(t,u)\bigr),
 \qquad
 \inf_{t\in\mathcal B}c(t)>0,
\end{equation}
where $c$ is continuous and $\mathcal R_{\mathrm{frac}}$ is bounded.  Assume also that
$\Phi\ge\delta>0$ on the complement of the collar in $\mathcal G$.  Then
\begin{equation}\label{eq:weibull-fractional-asymptotic}
 \mathbb P\{H(\bm\eta)>x\}
 =C_{\mathrm{frac}}z^{-1/(1-\zeta)}+o\bigl(z^{-1/(1-\zeta)}\bigr),
\end{equation}
where
\begin{equation}\label{eq:weibull-fractional-constant}
 C_{\mathrm{frac}}
 :=\frac{\Gamma(\lambda+1/(1-\zeta))}{\beta_F}
 \int_{\mathcal B}
 \omega(t,0)c(t)^{-1/(1-\zeta)}\dd\sigma_{\mathcal B}(t).
\end{equation}
In particular, if the boundary trace of $\omega$ is not identically zero,
then $C_{\mathrm{frac}}>0$ and
\begin{equation}\label{eq:weibull-fractional-x-form}
 \mathbb P\{H(\bm\eta)>x\}
 \sim C_{\mathrm{frac}}
 x^{-\beta_F/[\alpha_H(1-\zeta)]}.
\end{equation}

\item Suppose $\varrho=m\in\mathbb N$ and, uniformly on the collar,
\begin{equation}\label{eq:weibull-integer-phase}
 F(\varphi(t,u))
 =u^m\bigl(c_0(t)+u c_1(t)+O(u^2)\bigr).
\end{equation}
Here $c_0\in C^2(\mathcal B)$, $c_1$ is continuous, and the remainder is
uniform on the collar.  Assume that $c_0$ has a unique minimum at
$t_0\in\mathcal B$,
\begin{equation}\label{eq:weibull-integer-nondegeneracy}
 \mu:=c_0(t_0)>0,
 \qquad
 \nabla_{\mathcal B}c_0(t_0)=0,
 \qquad
 \nabla_{\mathcal B}^2c_0(t_0)>0,
 \qquad
 c_1(t_0)>0,
\end{equation}
and that, for some $\delta>0$, one has $\Phi\ge\mu+\delta$ outside a
sufficiently small collar neighborhood of $(t_0,0)$.  If $\omega(t_0,0)>0$, then
\begin{equation}\label{eq:weibull-integer-asymptotic}
 \mathbb P\{H(\bm\eta)>x\}
 \sim C_{\mathrm{integer}}
 z^{\lambda-1-D/2}e^{-\mu z},
\end{equation}
where
\begin{equation}\label{eq:weibull-integer-constant}
 C_{\mathrm{integer}}
 :=\frac{(2\pi)^{(D-2)/2}}{\beta_F}
 \frac{
   \omega(t_0,0)\mu^{\lambda-1}
 }{
   c_1(t_0)
   \sqrt{\det\bigl(\nabla_{\mathcal B}^2c_0(t_0)\bigr)}
 }.
\end{equation}
For $D=2$, the determinant of the zero-dimensional Hessian and the factor
$(2\pi)^0$ are understood to be one.  Since $z=x^m$, the power in
\eqref{eq:weibull-integer-asymptotic} may equivalently be written as
$x^{m(\lambda-1-D/2)}$.
\end{enumerate}
\end{proposition}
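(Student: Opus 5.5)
For both parts I would start from the exact angular identity \eqref{eq:weibull-exact-angular-reduction}, split the angular domain $\mathcal G$ into the collar (or a small neighborhood of the dominant point) and its complement, and then apply Theorem~\ref{thm:profile-leading} to the collar part. In collar coordinates the collar contribution is
\[
\frac1{\beta_F}\int_{\mathcal B}\int_0^\eps\omega(t,u)\,\Gamma\bigl(\lambda,z\Phi(\varphi(t,u))\bigr)\dd u\,\dd\sigma_{\mathcal B}(t),
\qquad
\Phi(\varphi(t,u))=F(\varphi(t,u))\,u^{-\varrho},
\]
since $H=u$ there. No new rescaling theorem is needed; the work is in checking the hypotheses.

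\emph{Part (i).} From \eqref{eq:weibull-fractional-phase} we get
\[
\Phi=u^{1-\zeta}\bigl(c(t)+u\mathcal R_{\mathrm{frac}}\bigr),
\]
and the integer $m$ drops out.

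\emph{Scaling.} I take base $P=\mathcal B$ with $\nu=\sigma_{\mathcal B}$, a one-dimensional fiber $w$ with weight $\beta=1/(1-\zeta)$ so that $Q=1/(1-\zeta)$, and scale coefficient $c\equiv1$. The model is $G(t,w)=c(t)|w|^{1-\zeta}$, which is degree-one homogeneous: $G(t,u^\beta w)=u\,G(t,w)$. The remaining data are $\mu=0$, $\mathcal K=\mathcal K_\infty=\Gamma(\lambda,\cdot)$, $\mathcal N_{\mathcal K}\equiv1$, amplitude $b=\omega$, limit amplitude $b_0=\omega(\cdot,0)$, cell $\Afib=[0,\eps)$ and limiting cell $\Afib^\infty=\R_+$.

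\emph{Convergence.} The rescaled phase is
\[
X_z=G\bigl(1+c^{-1}z^{-\beta}w\,\mathcal R_{\mathrm{frac}}\bigr),
\]
so local convergence holds uniformly in $t$ because $\inf c>0$ and $\mathcal R_{\mathrm{frac}}$ is bounded. Shrinking $\eps$ gives the lower bound $X_z\ge\frac12G$.

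\emph{Envelope, cells and tightness.} The envelope \eqref{eq:profile-envelope} comes from \eqref{eq:incomplete-gamma-bound}. The cells increase to $\R_+$, and $e^{-\gamma c_- w^{1-\zeta}}$ is integrable on $\R_+$, which gives cell convergence and tightness.

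\emph{Coefficient and complement.} Proposition~\ref{prop:profile-sublevel-formula} with $h=Q$ gives
\[
\int_0^\infty\Gamma\bigl(\lambda,c\,w^{1-\zeta}\bigr)\dd w=\Gamma(\lambda+Q)\,c^{-Q},
\]
which yields \eqref{eq:weibull-fractional-constant}. The complement, where $\Phi\ge\delta$, is $O(e^{-\delta z/3})$ by \eqref{eq:incomplete-gamma-bound} and $W\in L^1$. Positivity of $C_{\mathrm{frac}}$ is immediate when the boundary trace of $\omega$ is not identically zero.

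\emph{Part (ii).} Now $\Phi=c_0(t)+u\,c_1(t)+O(u^2)$, and the tail is exponentially small.

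\emph{Shifted profile.} I would use the shifted version of Theorem~\ref{thm:profile-leading} with $\mu$ the minimum value $c_0(t_0)$, $\mathcal K=\Gamma(\lambda,\cdot)$ and normalization
\[
\mathcal N_{\mathcal K}(z)=\Gamma(\lambda,z\mu)\sim(z\mu)^{\lambda-1}e^{-\mu z}.
\]
The normalized profiles are $\mathcal K_z(v)=\Gamma(\lambda,z\mu+v)/\Gamma(\lambda,z\mu)$. Writing
\[
\Gamma(\lambda,y)=e^{-y}\int_0^\infty(y+s)^{\lambda-1}e^{-s}\dd s,
\]
one sees that $\mathcal K_z(v)\to e^{-v}$ locally uniformly. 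The envelope is $\mathcal K_z(v)\le e^{-v}$ when $\lambda\le1$, and $\mathcal K_z(v)\le C(1+v)^{\lambda-1}e^{-v}$ when $\lambda>1$, using $(y+v+s)^{\lambda-1}\le C(1+v)^{\lambda-1}(y+s)^{\lambda-1}$ for $y\ge1$.

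\emph{Local chart.} Localize to a small neighborhood of $(t_0,0)$; the rest of the collar and of $\mathcal G$ has $\Phi\ge\mu+\delta$. By \eqref{eq:incomplete-gamma-sharp-bound} that remainder is $O(z^{(\lambda-1)_+}e^{-(\mu+\delta)z})$, which is $o(\mathcal N_{\mathcal K}z^{-D/2})$. Near $t_0$ I take a normal chart $t=\exp^{\mathcal B}_{t_0}(y)$ with $y\in\R^{D-2}$, and absorb its Jacobian into $b$, so $b_0=\omega(t_0,0)$. The weights are $\frac12$ on $y$ and $1$ on $u$, so $Q=D/2$. The model is
\[
G(y,u)=\tfrac12\inner{Hy}{y}+c_1(t_0)|u|,
\]
where $H=\nabla_{\mathcal B}^2c_0(t_0)$, and the limiting cell is $\R^{D-2}\times\R_+$.

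\emph{Hypotheses.} Local convergence of
\[
X_z=z\bigl[c_0-\mu+u\,c_1(t)+O(u^2)\bigr]
\]
to $G$ follows from the $C^2$ Taylor expansion of $c_0$, continuity of $c_1$, and $z\cdot O(u^2)=O(z^{-1})$ on compacts. The lower bound $X_z\ge\kappa G$ holds on a small enough neighborhood because $c_0-\mu\ge\kappa|y|^2$, $c_1\ge c_1(t_0)/2$, and $u^2\le\eps u$. Coercivity of $G$ on the cell gives cell convergence and tightness via Lemma~\ref{lem:moments}.

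\emph{Constant.} The limit integral is
\[
\int e^{-G}=\frac{(2\pi)^{(D-2)/2}}{c_1(t_0)\sqrt{\det H}}.
\]
Multiplying by $\mathcal N_{\mathcal K}(z)z^{-D/2}$ and $1/\beta_F$ gives \eqref{eq:weibull-integer-asymptotic} with constant \eqref{eq:weibull-integer-constant}, and $\omega(t_0,0)>0$ makes this the true leading term.

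I expect the main obstacle to be part (ii). The envelope and locally uniform convergence of the normalized shifted gamma profile must be made uniform in $z$. The lower bound must also hold near the corner $u=0$ simultaneously with the tangential Hessian, and the excluded region must be shown negligible relative to the algebraic prefactor $z^{\lambda-1-D/2}$. I would handle this first with the integral representation above and then with \eqref{eq:incomplete-gamma-sharp-bound}.
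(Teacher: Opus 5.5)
Your proposal is correct and follows the paper's proof essentially step for step. In both parts you apply Theorem~\ref{thm:profile-leading} to the exact angular identity: in (i) with the unshifted gamma profile, and in (ii) with the shifted profile normalized by $\Gamma(\lambda,z\mu)$, weights $(1/2,\ldots,1/2,1)$, and the one-sided model $\tfrac12\inner{\mathsf H_0y}{y}+c_1(t_0)|u|$. The only difference is cosmetic and occurs in (i). You use the model $G(t,w)=c(t)|w|^{1-\zeta}$ with unit scale coefficient, so the factor $c(t)^{-1/(1-\zeta)}$ comes from Proposition~\ref{prop:profile-sublevel-formula}. The paper instead takes $c(t)$ as the scale coefficient with model $|s|^{1-\zeta}$, and obtains the same factor as $c(p)^{-Q}$ in the integrated formula.
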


\begin{proof}
For~(i), put $a:=1-\zeta\in(0,1)$.  Equation
\eqref{eq:weibull-fractional-phase} gives
\[
 \Phi(\varphi(t,u))
 =c(t)u^a\left(1+\frac{u\mathcal R_{\mathrm{frac}}(t,u)}{c(t)}\right).
\]
Let $c_-:=\min_{t\in\mathcal B}c(t)>0$.  Since
$\mathcal R_{\mathrm{frac}}$ is bounded, after reducing $\eps$ one has,
uniformly on the collar,
\begin{equation}\label{eq:weibull-fractional-two-sided-bound}
 \frac12c(t)u^a
 \le \Phi(\varphi(t,u))
 \le \frac32c(t)u^a.
\end{equation}
The complement of the collar contributes
$O((1+z^{(\lambda-1)_+})e^{-\delta z/2})$ by
\eqref{eq:incomplete-gamma-bound} and $W\in L^1(\mathcal G,\sigma)$, and is
therefore $o(z^{-1/a})$.  Use the ambient homogeneous extension
$G_{\mathrm{frac}}(s):=|s|^a$, which agrees with the phase model on the
relative cell $\R_+$.  Apply Theorem~\ref{thm:profile-leading} to the collar
with
\[
 P=\mathcal B,
 \quad \Afib(t)=(0,\eps),
 \quad \Afib^\infty(t)=\R_+,
 \quad \beta_u=\frac{1}{a},
 \quad Q=\frac{1}{a},
 \quad G=G_{\mathrm{frac}},
 \quad c=c(t),
\]
\[
 \mu=0,
 \quad \mathcal K(v)=\Gamma(\lambda,v),
 \quad \mathcal N_{\mathcal K}(z)=1,
 \quad \mathcal K_\infty(v)=\Gamma(\lambda,v),
 \quad b(t,u)=\omega(t,u).
\]
The local phase convergence follows from
\eqref{eq:weibull-fractional-phase}, while the lower estimate in
\eqref{eq:weibull-fractional-two-sided-bound} gives
\eqref{eq:profile-scaled-lower-bound}.  The exact scaled intervals are
\[
 (0,\eps(zc(t))^{1/a}),
\]
which exhaust $\R_+$ locally and uniformly in $t$ because $c$ is bounded
above and below on the compact base.  This local exhaustion, together with
coercivity of $G_{\mathrm{frac}}$ and Lemma~\ref{lem:moments}, verifies
\eqref{eq:profile-cell-convergence}--\eqref{eq:profile-cell-tightness}.
Finally,~\eqref{eq:incomplete-gamma-bound} verifies the profile envelope.
Formula~\eqref{eq:integrated-profile-leading} therefore yields
\begin{equation}\label{eq:weibull-fractional-profile-application}
 \mathbb P\{H(\bm\eta)>x\}
 =\frac{z^{-1/(1-\zeta)}}{\beta_F}\left[
 \int_{\mathcal B}\omega(t,0)c(t)^{-1/(1-\zeta)}\dd\sigma_{\mathcal B}(t)
 \int_0^\infty\Gamma(\lambda,s^{1-\zeta})\dd s+o(1)\right].
\end{equation}
Proposition~\ref{prop:profile-sublevel-formula}, applied on $\R_+$ with
$G(s)=|s|^{1-\zeta}$ and homogeneous dimension $1/(1-\zeta)$, gives
\[
 \int_0^\infty\Gamma(\lambda,s^{1-\zeta})\dd s
 =\Gamma\left(\lambda+\frac{1}{1-\zeta}\right).
\]
This proves~\eqref{eq:weibull-fractional-asymptotic}--
\eqref{eq:weibull-fractional-x-form}.

For~(ii), choose geodesic normal coordinates
$y\in\R^{D-2}$ on $\mathcal B$ centered at $t_0$, and let
$\mathsf H_0:=\nabla_{\mathcal B}^2c_0(t_0)$.  Writing $j_{\mathcal B}(y)$ for the
volume density, one has
\[
 c_0(t(y))=\mu+\frac12\inner{\mathsf H_0y}{y}+o(|y|^2),
 \qquad
 c_1(t(y))=c_1(t_0)+o(1).
\]
Dividing~\eqref{eq:weibull-integer-phase} by $u^m$ therefore gives
\begin{equation}\label{eq:weibull-integer-local-model}
 \Phi(t(y),u)
 =\mu+\frac12\inner{\mathsf H_0y}{y}+c_1(t_0)u
 +o(|y|^2+u).
\end{equation}
Positive definiteness of $\mathsf H_0$ and $c_1(t_0)>0$ imply, after shrinking
the coordinate neighborhood,
\begin{equation}\label{eq:weibull-integer-local-lower-bound}
 \Phi(t(y),u)-\mu\ge c(|y|^2+u),
 \qquad u\ge0.
\end{equation}
By the assumed gap and the sharp bound
\eqref{eq:incomplete-gamma-sharp-bound}, the complement is exponentially
small relative to $\Gamma(\lambda,z\mu)z^{-D/2}$.  Use the
ambient homogeneous extension
\[
 G_{\mathrm{integer}}(v,w)
 :=\frac12\inner{\mathsf H_0v}{v}+c_1(t_0)|w|,
\]
which agrees with the one-sided local model on $w\ge0$.  Apply
Theorem~\ref{thm:profile-leading} with $P=\{0\}$, $c\equiv1$, local
coordinates $s=(y,u)$, $\Afib$ equal to the chosen bounded coordinate
neighborhood with $u\ge0$, limiting cell
$\Afib^\infty=\R^{D-2}\times\R_+$, weights $(1/2,\ldots,1/2,1)$, homogeneous dimension
$D/2$, $G=G_{\mathrm{integer}}$, and
\[
 b(y,u)=\omega(t(y),u)j_{\mathcal B}(y),
\]
using
\[
 \mathcal K(v)=\Gamma(\lambda,v),
 \qquad
 \mathcal N_{\mathcal K}(z)=\Gamma(\lambda,z\mu),
 \qquad
 \mathcal K_\infty(v)=e^{-v}.
\]
Indeed, after the substitution $\xi=r+v$ in the numerator,
\[
 \frac{\Gamma(\lambda,z\mu+v)}{\Gamma(\lambda,z\mu)}
 =e^{-v}
 \frac{\int_{z\mu}^\infty(r+v)^{\lambda-1}e^{-r}\dd r}
      {\int_{z\mu}^\infty r^{\lambda-1}e^{-r}\dd r}.
\]
For $v$ in a fixed bounded interval,
$((r+v)/r)^{\lambda-1}\to1$ uniformly for $r\ge z\mu$.  Hence, locally
uniformly for $v\ge0$,
\begin{equation}\label{eq:weibull-shifted-profile-limit}
 \frac{\Gamma(\lambda,z\mu+v)}{\Gamma(\lambda,z\mu)}
 \longrightarrow e^{-v}.
\end{equation}
For the global envelope, if $0<\lambda\le1$, then
$(r+v)^{\lambda-1}\le r^{\lambda-1}$ and the quotient in the preceding
display is at most $e^{-v}$.  If $\lambda>1$, then for $z$ large enough that
$z\mu\ge1$,
\[
 \left(1+\frac vr\right)^{\lambda-1}
 \le (1+v)^{\lambda-1},\qquad r\ge z\mu.
\]
Thus, in both cases,
\begin{equation}\label{eq:weibull-shifted-profile-envelope}
 \frac{\Gamma(\lambda,z\mu+v)}{\Gamma(\lambda,z\mu)}
 \le C(1+v^{(\lambda-1)_+})e^{-v},
 \qquad v\ge0,
\end{equation}
for all sufficiently large $z$, so the profile envelope holds.  Equations
\eqref{eq:weibull-integer-local-model} and
\eqref{eq:weibull-integer-local-lower-bound} verify the rescaled phase
hypotheses.  The bounded coordinate half-neighborhood exhausts
$\R^{D-2}\times\R_+$ locally under the dilation with weights
$(1/2,\ldots,1/2,1)$.  This local exhaustion, together with coercivity of
$G_{\mathrm{integer}}$ and Lemma~\ref{lem:moments}, verifies the cell
convergence and tightness conditions.  Theorem~\ref{thm:profile-leading}
therefore gives
\begin{equation}\label{eq:weibull-integer-profile-application}
 \begin{aligned}
 \mathbb P\{H(\bm\eta)>x\}
 &=\frac{\Gamma(\lambda,z\mu)}{\beta_F} z^{-D/2}\Bigg[
 \omega(t_0,0)\int_{\R^{D-2}\times\R_+}e^{-G_{\mathrm{integer}}(v,w)}\dd v\dd w\\
 &\hspace{16em}{}+o(1)\Bigg].
 \end{aligned}
\end{equation}
The model integral equals
\[
 \frac{(2\pi)^{(D-2)/2}}
 {c_1(t_0)\sqrt{\det\mathsf H_0}},
\]
and
$\Gamma(\lambda,z\mu)\sim(z\mu)^{\lambda-1}e^{-z\mu}$.
Substitution proves~\eqref{eq:weibull-integer-asymptotic} and
\eqref{eq:weibull-integer-constant}.
\end{proof}

\begin{example}[Matched angular cost on the contributing hemisphere]
\label{ex:weibull-matched-angular-cost}
The boundary mechanism is sharpest when the radial cost matches the functional
on the contributing region.  If $F=H$ on $\mathcal G$, then the effective
angular cost of~\eqref{eq:weibull-angular-data} reduces to
$\Phi=H^{1-\varrho}$, which vanishes along $\partial\mathcal G$ whenever
$\varrho<1$: the cost of the large radius and the yield of the functional
cancel, and the cheapest directions are exactly those approaching the boundary.

Such a matching cannot hold on the whole sphere.  Since $F\ge0$, the identity
$F=H$ everywhere would force $H\ge0$, and a nonnegative $C^1$ function has
vanishing differential at each of its zeros, contradicting the assumption that
the spherical differential of $H$ is nonzero along $\mathcal B$.  We therefore
impose $F=H$ on $\mathcal G$ only and extend it by $F=|H|$ across the boundary,
which keeps the cost nonnegative and continuous.

For $D\ge2$, write $v=(v',v_D)\in\R^{D-1}\times\R$ and set
\begin{equation}\label{eq:weibull-matched-example-data}
 H(v):=|v|v_D,
 \qquad
 F(v):=|v_D|,
 \qquad
 A(v):=C_D\left(\frac{|v_D|}{|v|}\right)^D
 \quad(v\ne0),
\end{equation}
where $A(0):=0$ and
\[
 C_D:=\frac{1}{\Gamma(D)\vol(\mathbb S^{D-1})}.
\]
Then $\alpha_H=2$, $\beta_F=1$, $\beta_A=0$, and the density
$A(v)e^{-F(v)}$ is normalized.  Indeed, ignoring the null equator
$\{\theta_D=0\}$,
\[
 \int_{\R^D}A(v)e^{-F(v)}\dd v
 =C_D\int_{\mathbb S^{D-1}}|\theta_D|^D
 \left(\int_0^\infty r^{D-1}e^{-r|\theta_D|}\dd r\right)\dd\sigma(\theta)
 =C_D\Gamma(D)\vol(\mathbb S^{D-1})=1.
\]
On the sphere,
\[
 H(\theta)=\theta_D,
 \qquad
 F(\theta)=|\theta_D|.
\]
Hence
\[
 \mathcal G=\{\theta_D>0\},
 \qquad
 F=H=\theta_D\quad\hbox{on }\mathcal G,
\]
and
\[
 \lambda=D,
 \qquad
 \varrho=\frac12,
 \qquad
 W(\theta)=C_D\quad(\theta\in\mathcal G).
\]
The boundary component is the equator
$\mathcal B=\{\theta_D=0\}\simeq\mathbb S^{D-2}$.  With the collar
\[
 \varphi(t,u)=\bigl(\sqrt{1-u^2}\,t,u\bigr),
 \qquad t\in\mathbb S^{D-2},
\]
one has
\[
 H(\varphi(t,u))=u,
 \qquad
 F(\varphi(t,u))=u,
 \qquad
 J(t,u)=(1-u^2)^{(D-3)/2}.
\]
Thus Proposition~\ref{prop:weibull-frontier}(i) applies with
$m=0$, $\zeta=1/2$, $c\equiv1$, and
$\omega(t,0)=C_D$.  Equivalently, the exact angular reduction is
\[
 \mathbb P\{H(\bm\eta)>x\}
 =C_D\int_{\{\theta_D>0\}}
 \Gamma\bigl(D,\sqrt{x\theta_D}\bigr)\dd\sigma(\theta).
\]
The resulting boundary law is
\begin{equation}\label{eq:weibull-matched-example-tail}
 \mathbb P\{|\bm\eta|\eta_D>x\}
 \sim
 D(D+1)\frac{\vol(\mathbb S^{D-2})}{\vol(\mathbb S^{D-1})}\,x^{-1}.
\end{equation}
For $D=3$, this becomes
\[
 p_{\bm\eta}(v)
 =\frac{1}{8\pi}\left(\frac{|v_3|}{|v|}\right)^3e^{-|v_3|},
 \qquad
 \mathbb P\{|\bm\eta|\eta_3>x\}\sim\frac6x.
\]
\end{example}

\begin{remark}[Zero-cost and positive-cost profile limits]
\label{rem:weibull-exact-gamma}
When $0<\zeta<1$, the leading collar scale is
$u\asymp z^{-1/(1-\zeta)}$, and therefore
$z\Phi(\varphi(t,u))=O(1)$.  The expansion
\[
 \Gamma(\lambda,y)\sim y^{\lambda-1}e^{-y}
 \left(1+\frac{\lambda-1}{y}+\cdots\right)
\]
is not uniform on that scale and cannot be truncated before angular
integration.  In fact, after the induced powers of $\Phi$ are included, all
formal terms can contribute at the same power $z^{-1/(1-\zeta)}$; the exact
profile sums them into the single factor
$\Gamma(\lambda+1/(1-\zeta))$ in
\eqref{eq:weibull-fractional-constant}.  In the integer case, by contrast,
the positive minimum is removed by the normalization
$\mathcal N_{\mathcal K}(z)=\Gamma(\lambda,z\mu)$; the shifted profile converges to $e^{-v}$ as in
\eqref{eq:weibull-shifted-profile-limit}.  Thus both regimes are
instances of Theorem~\ref{thm:profile-leading}: the zero-cost case retains the
full gamma profile, whereas the positive-cost case has an exponential limiting
profile.
\end{remark}

\end{document}